\documentclass[10pt]{article}

\usepackage{url}
\usepackage{mathtools}
\usepackage{amssymb}
\usepackage{amsthm}
\usepackage{empheq}
\usepackage{latexsym}
\usepackage{enumitem}
\usepackage{eurosym}
\usepackage{dsfont}
\usepackage{appendix}
\usepackage{color} 
\usepackage[unicode]{hyperref}
\usepackage{frcursive}
\usepackage[utf8]{inputenc}
\usepackage[T1]{fontenc}
\usepackage{geometry}
\usepackage{multirow}
\usepackage{todonotes}
\usepackage{lmodern}
\usepackage{anyfontsize}
\usepackage{stmaryrd}
\usepackage{natbib}
\usepackage{cleveref}

\bibliographystyle{abbrvnat}
\setcitestyle{numbers,open={[},close={]}}



\definecolor{red}{rgb}{0.7,0.15,0.15}
\definecolor{green}{rgb}{0,0.5,0}
\definecolor{blue}{rgb}{0,0,0.7}
\hypersetup{colorlinks, linkcolor={red},citecolor={green}, urlcolor={blue}}
			
\makeatletter \@addtoreset{equation}{section}

\newtheorem{theorem}{Theorem}[section]
\newtheorem{assumption}[theorem]{Assumption}
\newtheorem{corollary}[theorem]{Corollary}

\newtheorem{lemma}[theorem]{Lemma}
\newtheorem{proposition}[theorem]{Proposition}

\newtheorem{definition}[theorem]{Definition}
\newtheorem{remark}[theorem]{Remark}

\setlength\parindent{0pt}
\geometry{hmargin=1.8cm,vmargin=2.4cm}
\DeclareUnicodeCharacter{014D}{\=o}
\setcounter{secnumdepth}{4}


\def \E{\mathbb{E}}
\def \F{\mathbb{F}}
\def \G{\mathbb{G}}
\def \H{\mathbb{H}}

\def \L{\mathbb{L}}
\def \M{\mathbb{M}}
\def \N{\mathbb{N}}

\def \P{\mathbb{P}}
\def \Q{\mathbb{Q}}
\def \R{\mathbb{R}}
\def \S{\mathbb{S}}

\def \X{\mathbb{X}}

\def \Z{\mathbb{Z}}

\def \Pr{\mathrm{P}}

\def\Ac{{\cal A}}
\def\Bc{{\cal B}}
\def\Cc{{\cal C}}
\def\Dc{{\cal D}}

\def\Fc{{\cal F}}
\def\Gc{{\cal G}}
\def\Hc{{\cal H}}

\def\Kc{{\cal K}}
\def\Lc{{\cal L}}

\def\Pc{{\cal P}}

\def\Rc{{\cal R}}
\def\Sc{{\cal S}}

\def\Uc{{\cal U}}
\def\Vc{{\cal V}}
\def\Wc{{\cal W}}

\def\Zc{{\cal Z}}


\def\Fb{{\bar F}}
\def\Gb{{\overline \G}}
\def\Gcb{\overline \Gc}


\def\x{\times}

\def\Om{\Omega}

\def\om{\omega}

\def\Omb{\overline{\Om}}

\def\Fb{\overline{\F}}
\def\Gcb{\overline{\Gc}}
\def\Fcb{\overline{\Fc}}

\def\Xt{\widetilde{X}}

\def\etab{\overline{\eta}}
\def\0{\mathbf{0}}

\def \xb{\mathbf{x}}

\def \mub{\overline{\mu}}

\def \zetab{\overline{\zeta}}

\def \muh{\widehat{\mu}}

\def \mut{\widetilde{\mu}}
\def \Ar{\mathrm{A}}

\def \alphat{\widetilde{\alpha}}

\def \Qr{\mathrm{Q}}

\def \Prt{\widetilde{\Pr}}

\def \qr{{\rm q}}
\def \qrt{\widetilde{\rm q}}
\def \pirt{\widetilde{\rm \pi}}

\def \St{\widetilde{S}}

\def \Zt{\widetilde{Z}}

\def \betat{\widetilde{\beta}}

\def \alepht{\widetilde{\aleph}}

\def \Lambdat{\widetilde{\Lambda}}
\def \Rt{\widetilde{R}}

\def \Xh{\widehat{X}}

\def\normeL2#1{\left\|{#1}\right\|_{L^2}}

\def\Pcb{\overline \Pc}

\def \ib {\boldsymbol{i}}

\def \Lim{\displaystyle\lim}

\def \Xbb{\mathbf{X}}

\def \Ybb{\mathbf{Y}}

\def \alphab {\boldsymbol{\alpha}}

\def \Xbb{\mathbf{X}}

\def \Ybb{\mathbf{Y}}

\def\Er{{\rm E}}
\def\Gr{{\rm G}}
\def\Ir{{\rm I}}
\def\Jr{{\rm J}}
\def\Rr{{\rm R}}
\def\Kr{{\rm K}}

\def\Cf{\mathfrak{C}}

\def\Zf{\mathfrak{Z}}

\def\br{{\rm b}}

\def\rr{{\rm r}}

\def \Qr{\mathrm{Q}}

\setlength\parindent{0pt}

 \title{Stackelberg Mean Field Games: convergence and existence results to the problem of Principal with multiple Agents in competition\footnote{The author thanks Dylan {\sc Possama\"{i}}  for his helpful suggestions and interesting discussions.}}

\author{
    Mao Fabrice {\sc Djete}\thanks{Ecole Polytechnique Paris, Centre de Math\'ematiques Appliqu\'ees,
mao-fabrice.djete@polytechnique.edu. This work benefits from the financial support of the Chairs {\it Financial Risk} and {\it Finance and Sustainable Development}. }
    }
             \date{\today}

\begin{document}

\maketitle
 
\begin{abstract}
    In a situation of moral hazard, this paper investigates the problem of Principal with $n$ Agents when the number of Agents $n$ goes to infinity. There is competition between the Agents expressed by the fact that they optimize their utility functions through a Nash equilibrium criterion. Each Agent is offered by the Principal a contract which is divided into a Markovian part involving the state/production of the Agent and a non--Markovian part involving the states/productions of all the other Agents. The Agents are in interactions. These interactions are characterized by common noise, the empirical distribution of states/productions and controls, and the contract which is not assumed to be a map of the empirical distribution. By the help of the mean field games theory, we are able to formulate an appropriate $limit$ problem involving a Principal with a $representative$ Agent. We start by solving the problem of both the Principal and the $representative$ Agent in this $limit$ problem. Then, when $n$ goes to infinity, we show that the problem of Principal with $n$ Agents converges to the $limit$ problem of Principal with a $representative$ Agent. That is, first, any convergent sequence of (approximate) best responses from the $n$ Agents associated to an (approximate) $n$--dependent optimal contract from the Principal in the problem of Principal with $n$ Agents converges to the best response of the $representative$ Agent in the $limit$ problem when the Principal offers the limit contract of the (approximate) $n$--dependent optimal contract. The limit contract is also optimal for the Principal's problem in the $limit$ problem. Second, given an optimal contract for the Principal and the associated best response from the $representative$ Agent, we are able to construct the (approximate) contract and the corresponding (approximate) best responses for the problem of Principal with $n$ Agents. A notable result is that, despite allowing a general type of contracts, it is approximately optimal for the Principal to offer contracts to the $n$ Agents that are maps of the empirical distribution of states/productions and controls of the Agents.

\end{abstract}



\section{Introduction}

By analyzing the relationship between a Principal and an Agent, the Principal--Agent theory turns out to play an important role in economics, management, game theory, $\cdots$ This theory focuses on situations where one party, the Principal (employer, manager, or owner), delegates decision--making authority or tasks to another party, the Agent (employee, worker, or manager), who acts on the Principal's behalf. The Principal--Agent relationship is prevalent in various settings, such as corporate governance, employment contracts, government administration, and more. By addressing the challenges that arise from the delegation of tasks and decision--making, this theory helps to design better contracts, governance structures, and incentive systems, leading to more effective and successful outcomes in various real--world settings.

\medskip
The primary focus of the Principal--Agent theory is to design incentive mechanisms and contracts that align the interests of the Principal and the Agent, ensuring that the Agent acts in the best interest of the Principal. The central issue is how to design a contract that motivates the Agent to take actions that maximize the Principal's welfare, even when the Agent's own objectives may not align perfectly with those of the Principal. There is therefore an asymmetry of information which is  known in the literature as $``$moral hazard$"$, as the Agent's actions are not fully observable by the Principal. In the situation described, from a game theory perspective, this desire to find the appropriate contracts corresponding to the interests of the Principal and the Agent leads to the search for a Stackelberg equilibrium. Indeed, in simple terms, given a contract, the Principal will compute the potential best response of the Agent. Then, with this best response from the Agent, the Principal will try to maximize his utility and find the resulting optimal contract.

\medskip
The problem of Principal--Agent has been extensively studied in the field of economics, and there is a significant body of literature that addresses its mathematical treatment. \citeauthor*{LaffontMartimort2002}\cite{LaffontMartimort2002}, \citeauthor*{SchmitzReview}\cite{SchmitzReview}, \citeauthor*{laffont1993theory}\cite{laffont1993theory}, \citeauthor*{salanie1997economics}\cite{salanie1997economics} are some examples of classical references that provide some general overview on the topic while trying to solve some questions raised by the subject. The mathematical resolution has been treated in only static or discrete--time for a long time before the seminal paper of \citeauthor*{Milgrom1987} \cite{Milgrom1987} who provided a way to deal with this problem in continuous time. \citeauthor*{schattler1993first} \cite{schattler1993first,schattler1997optimal}, \citeauthor*{sung1995linearity} \cite{sung1995linearity}, \citeauthor*{muller1998first}\cite{muller1998first,muller2000asymptotic} follow the basis of \cite{Milgrom1987} and extend their work in various situations. By using techniques borrowed from the stochastic control theory (SCT), the notable work of \citeauthor*{sannikov2008continuous} \cite{sannikov2008continuous,sannikov2012contracts} brought some tools in the approach of the resolution of this problem. This use of SCT inspired \citeauthor*{cvitanic2015dynamic} \cite{cvitanic2015dynamic,cvitanic2014moral} to provide a general framework for solving the Princial--Agent problem. Their approach provides a generic way to deal with this problem that can be summarize as follows: given a contract, a resolution of Backward Stochastic Differential Equation (BSDE) helps solving the problem of the Agent. Then, the problem of the Principal is tackled by solving a classical stochastic control problem with two states variables: the output controlled by the Agent, and his continuation utility.

\medskip
An important case from the point of view of applications is the consideration of several Agents in the presence of one Principal. It is sometimes referred to in the literature as the $``$multi--Agent Principal--Agent problem$"$. One Key challenge in the multi--Agent Principal--Agent problem is that the Principal needs to account for the interactions and dependencies between different Agents' actions. The decisions of one Agent may affect the incentives or outcomes for other Agents, leading to potential coordination problems. Under various approaches/techniques, the multi--Agent Principal--Agent problem has been treated by several papers/books such as \citeauthor*{holmstrom1982moral} \cite{holmstrom1982moral}, \citeauthor*{Mookherjee1984} \cite{Mookherjee1984}, \citeauthor*{green1983comparison} \cite{green1983comparison}, \citeauthor*{demski1984optimal} \cite{demski1984optimal}, \citeauthor*{SungOptimal2008} \cite{SungOptimal2008}, \citeauthor*{possamai2019contracting} \cite{possamai2019contracting}, $\cdots$ The Agents are generally assumed to be in competition, which is represented by the use of Nash equilibrium criteria for the maximization of their utility functions.
From the perspective of Stochastic Control Theory with the presence of competition between Agents, which is the approach we would like to emphasize here, it is worth emphasizing the paper of \cite{possamai2019contracting} (see also \citeauthor*{espinosa2015optimal} \cite{espinosa2015optimal}) who uses the approach of \cite{cvitanic2015dynamic} in the situation of mutlti--Agent. The resolution in terms of BSDE--(classical)Stochastic control Problem is still valid. However, among other things, the main difficulty is the handling of multi--dimensional BSDE. Contrary to the one dimensional setting, quadratic multi--dimensional BSDEs are known to suffer some ill--posedness see for instance \citeauthor*{Frei2011AFM} \cite{Frei2011AFM}. \citeauthor*{possamai2019contracting} \cite{possamai2019contracting} deal with this problem by imposing the wellposedness as a requirement for the admissibility of the contracts offered by the Principal. Since the optimal contracts they find at the end turn out to verify the admissibility they imposed, this restriction does not seem to be a huge limitation.

\medskip
In many situations, the multi--Agent model is considered with a very large number of Agents (government policies, electricity management, crowd control, $\cdots$). It is well--known that as the number of Agents increases, the complexity of the problem also rises significantly especially in the presence of competitiveness among the Agents. The usual approach to treat this situation in the literature is to represent this large number of Agents through an appropriate unique entity. It is sometimes referred to as the problem of Principal--Agent with a continuum of Agents.
In the past recent years, when it comes down to deal with a large number/continuum of Agents in competition (Nash equilibria), it is becoming  quite natural and appropriate to use Mean Field Games (MFG). Mean Field Games were introduced in the seminal works of \citeauthor*{lasry2006jeux} \cite{lasry2007mean} and \citeauthor*{huang2003individual} \cite{huang2006large} as a tool for the study of $n$--player game when the number of players $n$ is very large. Since then it has been the subject of intensive study and use in many areas, see  \citeauthor*{carmona2018probabilisticI} \cite{carmona2018probabilisticI} for an overview. 

\medskip
This approach by MFG to solve this problem has been used by \citeauthor*{mastroliaAtale2019} \cite{mastroliaAtale2019}. In \cite{mastroliaAtale2019}, given a contract, the Principal computes the best response of the $representative$ Agent by solving a MFG problem through some BSDEs. Here, the $representative$ Agent can be seen as an entity characterizing all the Agents (the continuum of Agents). Then, with this best response, the Principal solves his problem. In their particular setting, \citeauthor*{mastroliaAtale2019} \cite{mastroliaAtale2019} show that this Principal's problem turns out to be a McKean--Vlasov stochastic control problem. Later, in order to analyze the electricity demand, \citeauthor*{HubertMean2021} \cite{HubertMean2021} extend some aspects of \cite{mastroliaAtale2019} by adding a source of external noise impacting all the players called in the literature common--noise, and by considering the case where the diffusion coefficient can be controlled. In order to provide some ideas for epidemic control, \citeauthor*{DayanikliOptimal2022}\cite{DayanikliOptimal2022} have used this MFG(BSDE)--Mckean Vlasov stochastic control problem  approach to treat the problem of Principal--Agent with a continuum of agents (see also \citeauthor*{Carmona2018FiniteStateCT}\cite{Carmona2018FiniteStateCT}).  This current paper puts itself in this problematic of Principal--Agent with multi--Agent/continuum of Agents.


\medskip
Over the years, a lot of efforts have been made in the literature for solving the problem of Principal with $n$ Agents ($n \ge 1$) and Principal with a continuum of Agents. These resolutions have been made under certain assumptions, either by providing explicit solutions or by showing existence results. With these resolutions in mind, some natural questions appear. In a certain sense, does the problem of Principal with $n$ Agents converges to the problem of Principal with a continuum of Agents when $n \to \infty$ ? How much the assumptions usually considered can be weaken ? This paper tries to provide some answer regarding these questions among other things.

\medskip
Although using quite different techniques, like \cite{possamai2019contracting,mastroliaAtale2019}, in a situation of moral hazard, in this paper we address the problem of the Principal with $n$ Agents / continuum of Agents using the framework and techniques of stochastic control theory. The Agents are considered to be in an environment of competitiveness, that is to say that they optimize their criteria through Nash equilibria.  We begin by proposing a formulation of the problem of Principal with $n$ Agents more general than the usual framework treated in the literature. We refer to \Cref{sec:finite-player-game} for the proper mathematical description. Indeed, in the presence of a common noise, the output/production of the Agents is potentially impacted by the contract proposed by the Principal. This may seem like a small addition at first glance, but it has never been considered in the literature and generates technical problems. Given a contract, the Principal computes the best response of the Agents by looking for the Nash equilibria. Having this best response, the Principal determines what is the best contract to offer that maximizes his utility. We then formulate the associated limit problem of the Principal with a continuum of Agents. With an adequate notion of contracts (see \Cref{section:limitproblem} for the proper definition), the problem turns out to be the following: given a contract, the Principal solves a specific MFG problem and obtains the best action of the representative Agent. After getting this best response, the Principal solves his problem which is a stochastic control of MFG solutions.

\medskip
Under some general assumptions, we start by showing that this limit problem admit a solution i.e. there is an optimal contract maximizing the utility of the Principal associated to the (well--defined) best response of the representative Agent. Next, we prove that, with the optimal contract, we can construct an (approximate) optimal contract for the problem of Principal with $n$ Agents with an associated (approximate) best response for the $n$ Agents. Also, given any sequence of (approximate) optimal contracts and best responses (indexes by $n$) for the problem of Principal with $n$ Agents, putting in an appropriate space, we are able to show that the sequence of  optimal contracts and best responses are relatively compact. In addition, any limit contract is optimal for the limit problem formulated and any limit best response is the best response of the representative Agent. All these results allow us to provide the convergence of the problem of the Principal with $n$ Agents to the problem of the Principal with a continuum of Agents when $n \to \infty$.

\medskip
If we put aside the proof given in \cite[Theorem 5.3.]{mastroliaAtale2019} in a very specific case/example, this paper seems to be the first to give a rigorous proof of the connection between the problem of the Principal with $n$ Agents and the problem of the Principal with a continuum of Agents when $n \to \infty$. Although we do not assume any specific dependence in the contracts, a notable fact is that our result leads to see that it is (approximately) optimal to consider contracts depending on the empirical distribution of the agents' output in the problem of Principal with $n$ Agents. Also, the assumptions we consider here seem to be more general than what is usually considered in the literature. Especially, our existence result in the limit problem is more general than the one mentioned in the literature. Our approach/technique turns out to be quite different from what is done in the literature of Principal--Agent. Namely, contrary to the use of BSDEs as done in \cite{mastroliaAtale2019}, although presented differently, our proof is inspired by the notion of relaxed solutions in stochastic control problem initiated by \citeauthor*{el1987compactification}\cite{el1987compactification} (see \citeauthor*{lacker2016general}\cite{lacker2016general} and \citeauthor*{djete2019general} \cite{djete2019general} for the MFG and Mckean--Vlasov control problem). It is worth mentioning that the framework and the techniques used here share some similitude to \citeauthor*{closed-loop-MFG_MDF} \cite{closed-loop-MFG_MDF}.
Despite the general aspects of our framework, our results suffer the fact that, given an Agent $i ( \le n)$, the Principal needs to only offer contract that is separated in one non--Markovian part which represents the impact of the production of all the other Agents in the contract and one Markovian part which is the impact of the production of the Agent $i$. Another limitation is the inability to consider a control in the diffusion coefficients. Despite this limitation, this paper seems to provide results that provide answers to some natural questions arising in the Principal--Agent problem not addressed so far.

\medskip
The paper is organised as follows. After briefly recalling some notations, \Cref{sec:set-resutls} describes the proper setting considered as well as the main results of this article. Namely, \Cref{sec:finite-player-game} formulates the problem of the Principal with $n$ Agents. Then, in \Cref{section:limitproblem}, the associated limit formulation  i.e. the problem of the Principal with a representative Agent is given. \Cref{subsec:main-results} is finally dedicated to the statement of the main results. All proofs are provided in \Cref{sec:proofs}.

\medskip
{\bf \large Notations}.
	$(i)$
	Given a {\color{black}Polish} space $(E,\Delta)$ and $p \ge 1,$ we denote by $\Pc(E)$ the collection of all Borel probability measures on $E$,
	and by $\Pc_p(E)$ the subset of Borel probability measures $\mu$ 
	such that $\int_E \Delta(e, e_0)^p  \mu(de) < \infty$ for some $e_0 \in E$.
	We equip $\Pc_p(E)$ with the Wasserstein metric $\Wc_p$ defined by
	\[
		\Wc_p(\mu , \mu') 
		~:=~
		\bigg(
			\inf_{\lambda \in \Lambda(\mu, \mu')}  \int_{E \x E} \Delta(e, e')^p ~\lambda( \mathrm{d}e, \mathrm{d}e') 
		\bigg)^{1/p},
	\]
	where $\Lambda(\mu, \mu')$ denotes the collection of all probability measures $\lambda$ on $E \x E$ 
	such that $\lambda( \mathrm{d}e, E) = \mu$ and $\lambda(E,  \mathrm{d}e') = \mu'( \mathrm{d}e')$. Equipped with $\Wc_p,$ $\Pc_p(E)$ is a Polish space (see \cite[Theorem 6.18]{villani2008optimal}). For any  $\mu \in \Pc(E)$ and $\mu$--integrable function $\varphi: E \to \R,$ we define
	\begin{align*}
	    \langle \varphi, \mu \rangle
	    =
	    \langle \mu, \varphi \rangle
	    :=
	    \int_E \varphi(e) \mu(\mathrm{d}e),
	\end{align*}
	and for another metric space $(E^\prime,\Delta^\prime)$, we denote by $\mu \otimes \mu^\prime \in \Pc(E \x E')$ the product probability of any $(\mu,\mu^\prime) \in \Pc(E) \x \Pc(E^\prime)$.

\medskip
    \noindent $(ii)$ Let $(S,\Delta)$ and $(S',\Delta')$ be two Polish space with the Borel $\sigma$--fields $\Fc$ and $\Fc'$ respectively. An application $V:S \to S'$ will be called universally measurable if $V$ is a measurable map from $(S, \Fc^U)$ to $(S',\Fc')$ where $\Fc^U$ is the universal completion of $\Fc$ i.e. $\Fc^U:=\cap_{\P \in \Pc(S)} \Fc^{\P}$ where $\Fc^\P$ is the $\P$--completed $\sigma$--field of $\Fc$.  
	Given a probability space $(\Om, \Hc, \P)$ supporting a sub-$\sigma$-algebra $\Gc \subset \Hc$ then for a Polish space $E$ and any random variable $\xi: \Om \longrightarrow E$, both the notations $\Lc^{\P}( \xi | \Gc)(\om)$ and $\P^{\Gc}_{\om} \circ (\xi)^{-1}$ are used to denote the conditional distribution of $\xi$ knowing $\Gc$ under $\P$.

\medskip
	\noindent $(iii)$	
	For any $(E,\Delta)$ and $(E',\Delta')$ two Polish spaces, we use $C_b(E,E')$ to denote the set of continuous functions $f$ from $E$ into $E'$ such that $\sup_{e \in E} \Delta'(f(e),e'_0) < \infty$ for some $e'_0 \in E'$.
	Let $\N^*$ denote the set of positive integers. Given non--negative integers $m$ and $n$, we denote by $\S^{m \x n}$ the collection of all $m \x n$--dimensional matrices with real entries, equipped with the standard Euclidean norm, which we denote by $|\cdot|$ regardless of the dimensions. 
	We also denote $\S^n:=\S^{n \times n}$, and denote by $0_{m \times n}$ the element in $\S^{m \times n}$ whose entries are all $0$, and by $\mathrm{I}_n$ the identity matrix in $\S^n$. 

\medskip
    \noindent $(iv)$
	Let $T > 0$ and $(\Sigma,\rho)$ be a Polish space, we denote by $C([0,T]; \Sigma)$ the space of all continuous functions on $[0,T]$ taking values in $\Sigma$.
	When $\Sigma=\R^k$ for some $k\in\N$, we simply write $\Cc^k := C([0,T]; \R^k),$ also we shall denote by $\Cc^{k}_{\Wc}:=C([0,T]; \Pc(\R^k)).$ 

\medskip
	With a Polish space $E$, we denote by $\M(E)$ the space of all Borel measures $q( \mathrm{d}t,  \mathrm{d}e)$ on $[0,T] \x E$, 
	whose marginal distribution on $[0,T]$ is the Lebesgue measure $ \mathrm{d}t$, 
	that is to say $q( \mathrm{d}t, \mathrm{d}e)=q(t,  \mathrm{d}e) \mathrm{d}t$ for a family $(q(t,  \mathrm{d}e))_{t \in [0,T]}$ of Borel probability measures on $E$.
	For any $q \in \M(E)$ and $t \in [0,T],$ we define $q_{t \wedge \cdot} \in \M(E)$ by
	\begin{equation}\label{eq:lambda}
	q_{t \wedge \cdot}(\mathrm{d}s, \mathrm{d}e) :=  q(\mathrm{d}s, \mathrm{d}e) \big|_{ [0,t] \x E} + \delta_{e_0}(\mathrm{d}e) \mathrm{d}s \big|_{(t,T] \x E},\; \text{for some fixed $e_0 \in E$.}
	\end{equation}
    We will say that $q \in \M_0(E)$ if $q \in \M(E)$ and there exists a Borel map $[0,T] \ni t \mapsto \overline{e}(t) \in E$ such that $q=\delta_{\overline{e}(t)}(\mathrm{d}e)\mathrm{d}t$.
	A map $h:[0,T] \x \R^k \x C([0,T];\Sigma) \x \M(E)$ is called a progressively Borel measurable if it verifies $h(t,x,\pi,q)=h(t,x,\pi_{t \wedge \cdot},q_{t \wedge \cdot}),$ for any $(t,x,\pi,q) \in [0,T]\x \R^k \x C([0,T];\Sigma) \x \M(E).$

\section{ Setup and main results} \label{sec:set-resutls}

The general assumptions used throughout this paper are now formulated. The dimension $d \ge 1$, the nonempty Polish spaces $(A,\rho_A)$, $(\Er,\rho_\Er)$ and $(\Ir,\rho_\Ir)$, and the {\color{black}time horizon} $T>0$ are fixed. We shall denote $\Pc_A$ for the space of all Borel probability measures on $\R^d \x A$, i.e. $\Pc_A:=\Pc(\R^d \x A).$ Also, we set $p \ge 2,$ $\nu \in \Pc_{p'}(\R^d)$ with $p'>p,$ and the probability space $(\Om,\H:=(\Hc_t)_{t \in [0,T]},\Hc,\P)$\footnote{ \label{footnote_enlarge}The probability space $(\Om,\H,\P)$ contains as many random variables as we want in the sense that: each time we need a sequence of independent uniform random variables or Brownian motions, we can find them on $\Om$ with the distribution $\P$ without mentioning an enlarging of the space. }. We are given the following progressively Borel measurable functions:
	\[
		\left(b, L \right):[0,T] \x \R^d \x \Cc_{\Wc} \x A \longrightarrow \R^d \x \R^d,\;\;\left(\overline{b}, \overline{L}, \overline{L}_{\Pr} \right):[0,T] \x \Pc_A \x \Er \longrightarrow \R^d \x \R \x \R\;\;\mbox{and}\;\;\sigma:[0,T] \x \R^d \to \S^{d},
	\]
 and the Borel maps  $U:\R \to \R$, $\Upsilon:\R^d \to \R$, $(g,g_{\Pr}):\R^d \x \Ir \to \R \x \R$ and $(\overline{g},\overline{g}_{\Pr}):C_{\Wc} \x \Er \to \R \x \R$

    \begin{assumption} \label{assum:main1} 
		
		$(i)$ $A$, $\Er$ and $\Ir$ are nonempty convex compact {\color{black}Polish sets}; 
		
		
		\medskip
		$(ii)$ The map $[0,T] \x \R^d \x \Cc_{\Wc} \x A \x \Pc_A \x \Er \ni (t,x,\pi,a,m,e) \mapsto \left( b(t,x,\pi,a),\overline{b}(t,m,e), \sigma(t,x) \right) \in \R^d \x \R^d \x \S^{d}$ is bounded Lipschitz in $(x,\pi,m)$ uniformly in $(t,a,e)$, and continuous in $(x,\pi,a,m,e)$ for any $t$;

    \medskip
		$(iii)$ {\rm Non--degeneracy condition:} 
        $\inf_{(t,x)} \sigma \sigma^{\top}(t,x) >0$ and $\sigma_0 \in \S^{d}$ is an invertible matrix; 
		
    \medskip
		$(iv)$ 
        The map $ (t,x,\pi,a,m,e,i,v,u) \mapsto \left( L(t,x,\pi,a), (\overline{L}, \overline{L}_{\Pr})(t,m,e), (g,g_{\Pr})(x,i), (\overline{g},\overline{g}_{\Pr})(\pi,e),\Upsilon(v),U(u) \right)$  is continuous in $(x,\pi,a,m,e,i,v,u)$ for each $t$, and  with $p$--linear growth in $(x,\pi,m,v,u)$ uniformly in $(t,a,e,i)$.
		
	\end{assumption}
	
	\begin{remark}
	   The maps $(b,L,g,\overline{b},\overline{L},\overline{g},\sigma)$ are related to the Agents and the maps $(g_{\Pr},\overline{L}_{\Pr}, \overline{g}_{\Pr},\Upsilon, U)$ will be associated to the Principal. Some of the conditions can be relaxed especially the fact that the maps $(b,\overline{b},\sigma)$ are bounded and the sets $(A,\Er,\Ir)$ compact. We have chosen these conditions to allow the reader to concentrate on the main points instead of being confused by long classical technicalities generated by these considerations.
   \end{remark}

\subsection{Principal and $n$ Agents } \label{sec:finite-player-game}

Let $n \ge 1$. Before introducing the proper mathematical framework we will consider here. Let us first defining what we call controls and contracts in the situation of the problem of Principal with $n$ Agents.

\medskip
$\boldsymbol{\rm Controls}$: We denote by $\Ac_n$ the set of all progressively Borel measurable maps $\alpha:[0,T] \x (\Cc^d)^n \to A$. Any element $n$--tuple $\alphab=(\alpha^1,\cdots,\alpha^n) \in (\Ac_n)^n$ belongs to $\Dc_{n}$ if there exists a progressively Borel measurable map  $[0,T] \x \R^d \x \Cc_{\Wc} \ni (t,x,\pi) \mapsto \widehat{\alpha}(t,x,\pi) \in A $ Lipschitz in $(x,\pi)$ uniformly in $t$ s.t. $\alpha^i(t,\boldsymbol{x})=\widehat{\alpha}(t,x^i(t),\pi^n)$ for each $t \in [0,T]$ and $1 \le i \le n$ where $\boldsymbol{x}=(x^1,\dots,x^n) \in (\Cc^d)^n$ and $\pi^n(t):=\frac{1}{n} \sum_{i=1}^n \delta_{x^i(t)}$.

\medskip
$\boldsymbol{\rm Contracts}$: We will say $\Cf^n:=(\phi^n,\xi^n,\aleph^n)$ is a contract if: $\phi^n:\R^d \to \Ir$ and $\xi^n:(\Cc^d)^n \to \Er$ are Borel maps, and $\aleph^n:[0,T] \x (\Cc^d)^n \to \Er$ is a progressively Borel measurable map. A contract $\Cf^n:=(\phi^n,\xi^n,\aleph^n)$ will be called ${distributed}$ contract if there exists  a Borel map $[0,T] \x \Cc_{\Wc} \ni (t,\pi) \mapsto \left( \widehat{\aleph}^n(t,\pi), \widehat{\xi}^n(\pi) \right) \in \Er \x \Er$ Lipschitz in $\pi$ uniformly in $t$ s.t. $\aleph^n(t,\boldsymbol{x})=\widehat{\aleph}^n(t,\pi^n)$ and $\xi^n(\boldsymbol{x})=\widehat{\xi}^n(\pi^n)$ where $\boldsymbol{x}=(x^1,\dots,x^n) \in (\Cc^d)^n$ and $\pi^n(t):=\frac{1}{n} \sum_{i=1}^n \delta_{x^i(t)}$.

\begin{remark}
    A contract $\Cf^n=(\phi^n, \xi^n,\aleph^n)$ offered by the Principal consists, on the one hand, of $(\phi^n,\xi^n)$ which represents the final payments received at time $T$ by the Agents. And, on the other hand, of $\aleph^n$ which is the instantaneous payments given to the Agents at time $t \in [0,T]$.
\end{remark}

\subsubsection{The problem of the $n$ Agents} \label{sec:n_agents}
On the filtered probability space $(\Om,\H,\Hc,\P),$ let $(W^i)_{i \in \N^*}$ be a sequence of independent $\R^d$--valued $\H$--adapted  Brownian motions, $B$ be an $\R^d$--valued $\H$--adapted Brownian motion and $(\iota^i)_{i \in \N^*}$ a sequence of iid $\Hc_0$--random variables of law $\nu.$ Besides, $(W^i)_{i \in \N^*},$ $B$ and $(\iota^i)_{i \in \N^*}$ are independent. Then, given a contract ${\Cf}^n:=(\phi^n,\xi^n,\aleph^n)$ and the controls $\alphab^n:=(\alpha^{1,n},\dots,\alpha^{n,n}) \in (\Ac_n)^n$, we denote by $\Xbb^{\aleph^n,\alphab^n}:=\Xbb=(X^1_{\cdot},\dots,X^n_{\cdot})$ the process satisfying: for each $i \in \{1,\dots,n\},$ $X^i_0:=\iota^i$, $\E\big[\sup_{t \in [0,T]}|X^i_t|^p \big]< \infty,$ and $\P$--a.e.
    \begin{align} \label{eq:N-agents_StrongMV_CommonNoise-law-of-controls}
        \mathrm{d}X^i_t
        =
        \overline{b}\left(t, \overline{\varphi}^{n}_{t}[\alphab^n], \aleph^n(t,\Xbb) \right)
        +
        b\left(t,X^i_{t}, \varphi^{n}[\alphab^n] ,\alpha^{i,n}(t,\Xbb) \right) \;\;\mathrm{d}t 
        +
        \sigma(t,X^i_t) \mathrm{d}W^i_t
        +
        \sigma_0 \mathrm{d}B_t
    \end{align}
	with 
	\[
	    \varphi^{n}_{t}[\alphab^n](\mathrm{d} x) := \frac{1}{n}\sum_{i=1}^n \delta_{X^{i}_{t}}(\mathrm{d} x)\;\;\mbox{and}\;\;
	    \overline{\varphi}^{n}_{t}[\alphab^n](\mathrm{d} x, \mathrm{d} a) := \frac{1}{n}\sum_{i=1}^n \delta_{\big(X^{i}_{t},\;\alpha^{i,n}(t,\Xbb) \big)}(\mathrm{d} x, \mathrm{d} a)
	    ,~
	    \mbox{for all}
	    ~~
	    t \in [0,T].
	\] 
The reward value of player $i$ associated with the contract $\Cf^n=(\phi^n,\xi^n,\aleph^n)$ and the control rule/strategy $\alphab^n:=(\alpha^{1,n},\dots,\alpha^{n,n})$ is then defined by
	\begin{align*}
	    &J^{\Cf^n}_{n,i}(\alphab^n)
     \\
	    &:=
	    \E \left[
        \int_0^T 
        \overline{L}\left(t, \overline{\varphi}^{n}_{t}[\alphab^n],\aleph^n(t,\Xbb) \right)
        +
        L\left(t,X^{i}_t,{\varphi}^{n}[\alphab^n],\alpha^{i,n}(t,\Xbb) \right)\;\mathrm{d}t 
        +
        \overline{g} \left( {\varphi}^{n}[\alphab^n],\xi^n(\Xbb) \right)
        + 
        {g} \left( X^i_T,\phi^n(X^i_T) \right)
        \right].
	\end{align*}

\begin{remark}
    {\rm $(i)$} By using Girsanov's Theorem, we can see that {\rm\Cref{eq:N-agents_StrongMV_CommonNoise-law-of-controls}} is uniquely defined in distribution. However, since we only consider that the maps $(\alpha^{1,n},\cdots,\alpha^{n,n})$ and $\aleph^n$ are Borel measurable, the strong well--posedness of {\rm\Cref{eq:N-agents_StrongMV_CommonNoise-law-of-controls}} is unclear. The process $\Xbb$ should therefore be seen as a weak solution. We may need to extend $(\Om,\H,\P)$ to write $\Xbb$, but to avoid heavy notations, we assumed that $(\Om,\H,\P)$ is such that we do not need any further extensions $($see {\rm \Cref{footnote_enlarge}}$)$.
    
    \medskip
    {\rm $(ii)$}The processes $\Xbb$ represent the production of all the $n$ Agents and $\alphab^n$ their associated controls.
    In the final payment $(\phi^n,\xi^n)$, given an Agent $i$, $\phi^n$ is the Markovian part involving the terminal production of the Agent i.e. $X^i_T$ and $\xi^n$ is the non--Markovian part involving the production of all the Agents i.e. $\Xbb$. However, the instantaneous payment $\aleph^n$ is only non--Markovian and is composed of the production of all the Agents.

    \medskip
    {\rm $(iii)$} It is worth mentioning that, even without the presence of the empirical distribution $\overline{\varphi}^{n}[\alphab^n]$, the processes $(X^1,\cdots,X^n)$ are in interactions because of the presence of the common noise and especially of $\aleph^n$. Besides, $\aleph^n$ is not necessary a map of the empirical distribution $\overline{\varphi}^{n}[\alphab^n]$, but it just depends on $(X^1,\cdots,X^n)$.
\end{remark}

    We now give the optimization criterion considered between the Agents, that is to say what we will call approximate Nash equilibria.
	
\begin{definition}{\rm((approximate) equilibria)} \label{def:Nplayers--equilibria}

\medskip    
    Let $n \ge 1$, $\varepsilon \in \R_{+},$ and the contract $\Cf^n:=(\phi^n,\aleph^n,\xi^n)$. We will say that a control rule/strategy $(\alpha^{1,n},\dots,\alpha^{n,n}) \in (\Ac_n)^n$ is an  $\varepsilon$--{\rm Nash equilibrium} if 
	    \[
	        J^{\Cf^n}_{n,i}(\alpha^{1,n},\dots,\alpha^{n,n}) \ge \sup_{\beta \in \Ac_n} J^{\Cf^n}_{n,i}\big( \alpha^{1,n},\dots,\alpha^{i-1,n},\beta,\alpha^{i+1,n},\dots,\alpha^{n,n} \big)-\varepsilon,\;\mbox{for each}\;i \in \{1,\dots,n\}.
	    \]    
\end{definition}
We set
\begin{align*}
    \mbox{\rm NE}[\Cf^n,\varepsilon]
    :=
    \left\{ \alphab^n=(\alpha^{1,n},\cdots,\alpha^{n,n})\mbox{ an $\varepsilon$--Nash equilibrium given }\Cf^n=(\phi^n,\xi^n,\aleph^n) \right\}.
\end{align*}
In addition, we denote by $\mbox{\rm NE}^{\rm dist}[\Cf^n,\varepsilon]$ the subset of $\mbox{\rm NE}[\Cf^n,\varepsilon]$ containing all the $n$--tuple $\alphab^n$ belonging to $\Dc_n$.

\begin{remark}
    If the contract $(\phi^n,\xi^n,\aleph^n)$ is a $distributed$ contract independent of $n$, by the help of the {\rm MFG} theory, the set ${\rm NE}^{\rm dist}[\Cf^n,\varepsilon]$ turns out to be non--empty $($see for instance {\rm\cite[Theorem 2.12.]{closed-loop-MFG_MDF}}$)$.  However, as the map $(\xi^n,\aleph^n)$ is only a Borel measurable map of $n$ variables, it is worth mentioning that the set ${\rm NE}[\Cf^n,\varepsilon]$ can be empty.  
\end{remark}

\subsubsection{The Principal's problem} \label{subsub-principal_n}

In case ${\rm NE}[\Cf^n,\varepsilon]$ is non--empty, most of the time ${\rm NE}[\Cf^n,\varepsilon]$ is not a singleton, it contains many solutions. In this situation, the choice of the controls of the Agents is given to the Principal as it is done in the literature (see for instance \cite{possamai2019contracting,mastroliaAtale2019}). The interpretation is that, first, the Principal must know whether the Agents will agree to the proposed contract and anticipate the response/behaviour of the Agents. Second, once the contract is offered, the Principal also recommends a behavior to the Agents. However, there is a minimum level of utility below which Agents will not consider the contract, it is called the reservation utility and denoted here $R_0-\varepsilon_{0}.$  Keeping this mechanism in mind, we then formulate the problem of the Principal.

\medskip
We fix $R_0 \in \R$. Let $\varepsilon, \varepsilon_{0} \ge 0$, we define $\underline{\rm NE}[\Cf^n,\varepsilon,\varepsilon_0]$ the set of $\alphab \in {\rm NE}[\Cf^n,\varepsilon]$ s.t. $J^{\Cf^n}_{n,i}(\alphab) \ge R_0 - \varepsilon_0$ for each $1 \le i \le n$. In a same way we define $\underline{\rm NE}^{\rm dist}[\Cf^n,\varepsilon,\varepsilon_0]$. 
We now denote by $\Xi_n[\varepsilon,\varepsilon_0]$ the set of admissible contracts $\Cf^n=(\phi^n,\xi^n,\aleph^n)$ verifying: $\underline{\rm NE}[\Cf^n,\varepsilon,\varepsilon_0]$ is non--empty. Besides,  $\Cf^n=(\phi^n,\xi^n,\aleph^n) \in \Xi_n[\varepsilon,\varepsilon_0]$ will belong to $\Xi_{n}^{\rm dist}[\varepsilon,\varepsilon_0]$ if $\Cf^n$ is a $distributed$ contract and $\underline{\rm NE}^{\rm dist}[\Cf^n,\varepsilon,\varepsilon_0]$ is non--empty.

\medskip
Let us consider the two problems faced by the Principal
\begin{align*}
    V_{n,\Pr}[\varepsilon,\varepsilon_0]
    :=
    \sup_{\Cf^n \in \Xi_n[\varepsilon,\varepsilon_0]} \;\;\sup_{\alphab^n \in \underline{\rm NE}[\Cf^n,\varepsilon,\varepsilon_0]} J_{n,\Pr}^{{\color{black}\alphab^n}}(\Cf^n)\;\;\;\mbox{and}\;\;\; V^{\rm dist}_{n,\Pr}[\varepsilon,\varepsilon_0]
    :=
    \sup_{\Cf^n \in \Xi^{\rm 
 dist}_{n}[\varepsilon,\varepsilon_0]} \;\;\sup_{\alphab^n \in \underline{\rm NE}^{\rm dist}[\Cf^n,\varepsilon,\varepsilon_0]} J_{n,\Pr}^{{\color{black}\alphab^n}}(\Cf^n)
\end{align*}
where the reward of the Principal is
\begin{align*}
    J_{n,\Pr}^{{\color{black}\alphab^n}}(\Cf^n)
    :=
    &\E \left[ U \left( \frac{1}{n} \sum_{i=1}^n \Upsilon \left(X^i_T\right) - g_{\Pr}\left(X^i_T,\phi^n(X^i_T) \right)
    -\overline{g}_{\Pr} \left(\varphi^n[\alphab^n],\xi^n(\Xbb) \right)
    -
    \int_0^T \overline{L}_{\Pr} \left(t,\overline{\varphi}^{n}_{t}[\alphab^n],\aleph^n(t,\Xbb) \right)\;\mathrm{d}t\right) \right].
\end{align*}

\begin{remark} \label{rm:utility_reservation}
    The actual reservation utility is $R_0$. Nevertheless, we consider that the Agents are willing to go a little below i.e. $R_0- \varepsilon_0$. The interpretation is that the Principal is in a strong position to negotiate and compromise with the finite number of Agents. However, when the number of Agents is growing, the Agents are less and less willing to budge from  their positions as they are feeling strong, therefore $\varepsilon_0$ becomes more and more small. In the end, in the case of an infinite number of Agents that we will see below, they will no longer be willing to go below $R_0$.
\end{remark}


\subsection{Limit problem : Stochastic Control of Mean Field Games Solutions} \label{section:limitproblem}

For convenience, we are still on the probability space $(\Om,\H,\P)$. On this space, we consider $(W,B)$ an $\R^d \x \R^d$--valued $\H$--Brownian motion and an $\Hc_0$--random variable $\iota$ s.t. $\Lc(\iota)=\nu$.

\medskip
We set $\M:=\M(\Pc_A \x \Er)$ and $\M_0:=\M_0(\Pc_A \x \E)$. For any progressively Borel measurable map $h:[0,T] \x \R^d \x \Cc_\Wc \x \M_0 \to \R^d$ and any $\left(t,x,\pi,q=\delta_{\overline{e}(t)}(\mathrm{d}e)\mathrm{d}t \right) \in [0,T] \x \R^d \x \Cc_\Wc \x \M_0$, for notational convenience, we will write $h(t,x,\pi,\overline{e})$ instead of $h(t,x,\pi,q)$. Also, whenever we write $h(t,x,\pi,\overline{e})$ where $[0,T] \ni t \mapsto \overline{e}(t) \in \Pc_A \x \Er$ is a Borel map, it should be understood as $h\left(t,x,\pi,\delta_{\overline{e}(t)}(\mathrm{d}e)\mathrm{d}t \right)$.

\medskip
$\boldsymbol{\rm Controls}$: In the case of the limit problem, we denote by $\Ac$ the set of all progressively Borel measurable maps $\alpha:[0,T] \x \R^d \x \M_0 \to A$. 

\medskip
$\boldsymbol{\rm Contracts}$: We will say $\Cf:=(\phi,\xi,\aleph)$ is a contract if: $\phi:\R^d \to \Ir$ and $\xi:\M_0 \to \Er$ are Borel maps, and 
$\aleph=(\aleph_t)_{t \in [0,T]}$ is an $\Er$--valued $\H$--predictable process. Besides, $(\aleph,B)$, $\iota$ and $W$ are independent.

\begin{remark}
    To make the connection with the framework of the Principal and $n$ Agents, here for a contract $\Cf=(\phi,\xi,\aleph)$, $(\phi,\xi)$ represents the final payment and $\aleph$ is the instantaneous fees. 
\end{remark}


\subsubsection{The representative Agent problem}

Given $\varepsilon \ge 0$ and  a contract $\mathfrak{C}:=(\phi,\xi,\aleph)$, we introduce the set of (approximate) MFG solutions $\mbox{{\rm MFG}}[\Cf,\varepsilon]$. We say $(\alpha,\mub)$ belongs to $\mbox{{\rm MFG}}[\Cf,\varepsilon]$ if: $\alpha \in \Ac$,  $\mub=(\mub_t)_{t \in [0,T]}$ is an $\Pc_A$--valued $\H$--predictable process,
    \begin{align*}
        J^{\Cf}_{A,\mub}({\color{black}\alpha}) \ge J^{\Cf}_{A,\mub}(\beta)-\varepsilon,\;\;\mbox{for any }\beta \in \Ac
    \end{align*}
    with the reward $J^{\Cf}_{A,\mub}$ defined by
    \begin{align*}
        J_{A,\mub}^{\Cf}(\beta)
    :=
    \E \bigg[
        \int_0^T 
        \overline{L}\left(t,\mub_t,\aleph_t \right) 
        +
        L\left(t,X^{\beta}_t,\mu_t ,\beta(t,X^\beta_t,\mub,\aleph) \right)\mathrm{d}t 
        +
        \overline{g} \left( \mu_T, \xi(\mub,\aleph) \right)
        + 
        g \left( X^\beta_T,\phi(X^\beta_T) \right)
        \bigg],
    \end{align*}
    and $\E^\P\left[\sup_{t \in [0,T]} |X^\beta_t|^p \right] < \infty$, $X^\beta_0:=\iota$, $\P$--a.e.
    \begin{align*}
        \mathrm{d}X^\beta_t
            =
            \overline{b}\left(t,\mub_t,\aleph_t\right)
            +
            b\left(t,X^\beta_t,\mu_t,\beta(t,X^\beta_t,\mub,\aleph)\right)\mathrm{d}t + \sigma(t,X^\beta_t)\mathrm{d}W_t + \sigma_0\mathrm{d}B_t,\;\;\;\mu_t=\mub_t(\mathrm{d}x,A).
    \end{align*}
    In addition, for $R:=\delta_{\left(\mub_t, \aleph_t \right)}(\mathrm{d}m,\mathrm{d}e)\mathrm{d}t$, the variables $\left(B,R\right)$, $W$ and $X_0$ are independent,   the filtration $\G$ is defined by               $\G=\left(\Gc_t:=\sigma\{R_{t \wedge \cdot}, B_{t \wedge \cdot}\} \right)_{t \in [0,T]}$ 
    with, $\mathrm{d}\P \otimes \mathrm{d}t$--a.e. 
    \begin{align}\label{eq:consitency1}
        \Lc(X^\alpha_t|\Gc_T)=\mu_t,\;\;\E^\P \left[ L \left( t, X^\alpha_t, \Lc\left(X^\alpha_t|\Gc_t \right),\alpha(t,X^\alpha_t,\mub,\aleph) \right) | \Gc_T \right]
        =
        \int_{\R^d \x A} L \left(x,\mu_t,a \right)\mub_t(\mathrm{d}x,\mathrm{d}a)
    \end{align}
    and
    \begin{align} \label{eq:consitency2}
        \E^\P \left[ \varphi(X^\alpha_t) b \left( t, X^\alpha_t, \Lc\left(X^\alpha_t|\Gc_t \right),\alpha(t,X^\alpha_t,\mub,\aleph) \right) | \Gc_T \right]
        =
        \int_{\R^d \x A} \varphi(x)b \left(x,\mu_t,a \right)\mub_t(\mathrm{d}x,\mathrm{d}a)\;\;\mbox{for all $\varphi \in C_c$.}
    \end{align}

When $\varepsilon =0,$ we will simply write $\mbox{{\rm MFG}}[\Cf]$ instead of $\mbox{{\rm MFG}}[\Cf,0]$. The elements of $\mbox{{\rm MFG}}[\Cf, \varepsilon]$ are in the spirit of $weak$ solutions of MFG in closed--loop setting (see \citeauthor*{Lacker-closedloop}\cite{Lacker-closedloop}, \citeauthor*{leflemclosed2023}\cite{leflemclosed2023}, \cite{closed-loop-MFG_MDF}). We say that the map $(\alpha, \xi,\aleph)$ belong to $\Cc_s$ if $[0,T] \x \R \x \Cc_{\Wc} \ni (t,x,\pi) \mapsto  \left(\alpha(t,x,\pi), \aleph(t,\pi), \xi(\pi) \right) \in A \x \Er \x \Er $ is Lipschitz in $(x,\pi)$ uniformly in $t$. We use this definition to introduce the set of (approximate) \underline{strong solution} $\mbox{{\rm MFG}}_S[\Cf,\varepsilon]$ by
\begin{align*}
    \mbox{{\rm MFG}}_S[\Cf,\varepsilon]
    :=
    \Big\{ (\alpha,\mub) \in  \mbox{{\rm MFG}}[\Cf,\varepsilon]:\;\exists\;(\widehat{\alpha},\widehat{\xi},\widehat{\aleph}) \in \Cc_s\;\mbox{s.t.}\;&\mub_t=\Lc\left(X^\alpha_t,\widehat{\alpha}(t,X^\alpha_t,\mu) |B_{t \wedge \cdot}\right),
    \\
    &\aleph_t=\widehat{\aleph}(t,\mu),\;\xi(\mub,\aleph)=\widehat{\xi}(\mu)\;\;\;\mbox{ 
 a.e. }(\om,t) \in \Om \x [0,T]
    \Big\}.
\end{align*}

\begin{remark}
    {\rm$(i)$} Notice that, for $(\alpha, \mub) \in \mbox{{\rm MFG}}[\Cf,\varepsilon]$, the $($approximate$)$ optimal control $\alpha$ does not necessary verify the classical consistency condition of the {\rm MFG} of controls literature i.e. $\mub_t=\Lc(X^\alpha_t,\alpha(t,X^\alpha_t,\mub,\aleph)|\Gc_T)$ $($see {\rm \citeauthor*{LackerCarmona-Ext}\cite{LackerCarmona-Ext}, \citeauthor*{kobeissiOnclassical2022} \cite{kobeissiOnclassical2022}, \cite{closed-loop-MFG_MDF}} $)$. We only need to check that $\alpha$ satisfies {\rm\Cref{eq:consitency1}} and {\rm\Cref{eq:consitency2}} which is associated to the maps $b$ and $L$.  

\medskip
    {\rm $(ii)$} For any contract $\Cf$, under some conditions, we will see that $\mbox{{\rm MFG}}[\Cf]$ is non--empty. Besides, for $[0,T] \x \Cc_{\Wc} \ni(t,\pi) \mapsto \left(\overline{e}(t,\pi), z(\pi) \right)\in \Er \x \Er$ a Lipschitz map in $\pi$ uniformly in $t$, there is $(\alpha,\mub) \in \mbox{{\rm MFG}}_S[\Cf,\varepsilon]$ for each $\varepsilon >0$ where $\Cf=(\phi,\xi,\aleph)$ with $\aleph_t=\overline{e}(t,\mu)$ and $\xi(\mub,\aleph)=z(\mu)$.
\end{remark}

\subsubsection{The Principal's problem} For any $\varepsilon, \varepsilon_0 \ge 0$, with the reservation utility $R_0$, like in \Cref{subsub-principal_n}, we introduce $\underline{\rm MFG}[\Cf,\varepsilon,\varepsilon_0]$ the set of $(\alpha,\mub) \in \mbox{{\rm MFG}}[\Cf,\varepsilon]$ s.t. $J^{\Cf}_{A,\mub}(\alpha) \ge R_0 - \varepsilon_0$. Similarly we define $\underline{\rm MFG}_S[\Cf,\varepsilon,\varepsilon_0]$.   We now consider $\Xi[\varepsilon,\varepsilon_0]$ the set of admissible contracts $\Cf=(\phi,\xi,\aleph)$ i.e. $\underline{{\rm MFG}}[\Cf,\varepsilon,\varepsilon_0]$ is non--empty. Also, $\Xi_S[\varepsilon,\varepsilon_0]$ will denote the subset of $\Cf \in \Xi[\varepsilon,\varepsilon_0]$ s.t. $\underline{\rm MFG}_S[\Cf,\varepsilon,\varepsilon_0]$ is non--empty. When $\varepsilon=\varepsilon_0=0$, we simply write $\Xi$, $\underline{{\rm MFG}}[\Cf]$, $\Xi_S$ and $\underline{{\rm MFG}}_S[\Cf]$.  
Let us consider the following problems
\begin{align} \label{eq:principal_pr}
    V_{\Pr}
    :=
    \sup_{\Cf \in \Xi} \;\;\sup_{{\color{black}(\alpha,\mub)} \in \underline{{\rm MFG}}[\Cf]} J_{\Pr}^{\alpha,\mub}(\Cf)~~~~\mbox{and}~~~~V_{\Pr}^S[\varepsilon,\varepsilon_0]
    :=
    \sup_{\Cf \in \Xi_S[\varepsilon,\varepsilon_0]} \;\;\sup_{{\color{black}(\alpha,\mub)} \in \underline{{\rm MFG}}_S[\Cf,\varepsilon]} J_{\Pr}^{\alpha,\mub}(\Cf)
\end{align}
where
\begin{align*}
    J_{\Pr}^{\alpha,\mub}(\Cf)
    :=\E \left[ U \left( \E\left[  \Upsilon\left(X^\alpha_T \right) - g_{\Pr} \left(X^\alpha_T,\phi(X^\alpha_T) \right) 
    -\overline{g}_{\Pr} \left(\mu, \xi(\mub,\aleph) \right)
    -
    \int_0^T \overline{L}_{\Pr} \left(t, \mub_t,\aleph_t \right)\;\mathrm{d}t \Big|\Gc_T \right]\right) \right].
\end{align*}

\begin{remark}
    To follow up on the discussion started in {\rm \Cref{rm:utility_reservation}}, in $V_{\Pr}$ the representative Agent, representing an infinite number of Agents, is considering a utility greater than or equal to $R_0$ and will not agree to drop below this level. However, in $V^S_{\Pr}$, the representative Agent will be willing to compromise by accepting $R_0- \varepsilon_0$. The idea is that the representative Agent agrees to go below $R_0$ if he obtains contracts with a simpler structure than in $V_{\Pr}$, this is the case in $V^S_{\Pr}$.
\end{remark}

\subsection{Main limit results} \label{subsec:main-results}

In this section, we provide the main results of this paper. We start by an existence result for the problem of the Principal with a representative Agent. For this purpose, let us consider additional assumptions. Since $\Pc(A)$ is compact, we say $m \in \mathrm{L}^\infty(\R^d;\Pc(A))$ if $m:\R^d \to \Pc(A)$ is just a Borel measurable map. For any $\eta \in \Pc(\R^d)$ and $m \in \mathrm{L}^\infty(\R^d;\Pc(A))$, we denote by $[\eta,m] \in \Pc(\R^d \x A)$ defined by $[\eta,m]:=m(x)(\mathrm{d}a)\eta(\mathrm{d}x)$. Also, for $\pi \in \Cc_{\Wc}$, we set $\Kr_{b,\pi,m}(x):=\int_A b(t,x,\pi,a)m(x)(\mathrm{d}a)$ for a.e. $x \in \R^d$. Notice that $K_{b,\pi,m}:\R^d \to \R^d$ is a bounded Borel map i.e. $K_{b,\pi,m} \in \mathrm{L}^\infty(\R^d;\R^d)$.

\begin{assumption} \label{assum:convexity}
The utility map of the Principal $U:\R \to \R$ is non--decreasing and concave. For any $(t,x,\pi) \in [0,T] \x \R^d \x \Cc_{\Wc}$,  $\left\{
        \left( b(t,x,\pi,a),\;z \right):\;\;z \le L(t,x,\pi,a),\;a \in A
    \right\}$ and
\begin{align*}
    \bigg\{
        \Big( K_{b,\pi,m},\;\left(\overline{b}, \overline{L} \right)\left(t, [\pi(t),m],e \right),\;z, \;\overline{z}_{\Pr} \Big):\;&m \in \mathrm{L}^\infty\left(\R^d;\Pc(A)\right)\;\;\mbox{and}\;\; e \in \Er\;\;\mbox{with}\;\;
        \\
        &z \le \langle L(t,\cdot,\pi,\cdot),[\pi(t),m] \rangle ,\;\;\;\overline{z}_{\Pr} \le - \overline{L}_{\Pr} \left( t, [\pi(t),m], e \right)
    \bigg\}
\end{align*}
are closed convex sets. Furthermore, for each $(\pi,x) \in \Cc_{\Wc} \x \R^d$,
\begin{align*}
    \left\{
        \left( \overline{g}(\pi,e),\;\overline{z}_{\Pr} \right):\;\;\overline{z}_{\Pr} \le -\overline{g}_{\Pr}(\pi,e),\;e \in \Er
    \right\}\;\;\mbox{and}\;\;\left\{
        \left( g(x,i),\;z_{\Pr} \right):\;\;z_{\Pr} \le -g_{\Pr}(x,i),\;i \in \Ir
    \right\}
\end{align*}
are closed convex sets.
\end{assumption}

\begin{remark}
    These assumptions will be used in the same spirit as in {\rm \citeauthor*{Filippov1962}\cite{Filippov1962}} and {\rm\citeauthor*{Roxin1962}\cite{Roxin1962}}. A simple situation where {\rm \Cref{assum:convexity}} is verified is when: the maps $a \mapsto b(t,x,\pi,a)$ and $m \mapsto \left(K_{b,\pi,m},\;\left(\overline{b}, \overline{L} \right)\left(t, [\pi(t),m],e \right) \right)$ are affine and, $(a,m,e,i,u) \mapsto \left( L(t,x,\pi,a),\; - \overline{L}_{\Pr} \left( t, [\pi(t),m], e \right),\; -\overline{g}_{\Pr}(\pi,e), -g_{\Pr}(x,i) \right)$ is concave $($we considered the product order to check the concavity inequality$)$.
\end{remark}

\begin{theorem} \label{thm:existence_mfg}
    Under {\rm \Cref{assum:main1}} and {\rm \Cref{assum:convexity}}, for any contract $\Cf=(\phi,\xi,\aleph)$, the set ${\rm MFG}[\Cf]$ is non--empty.
\end{theorem}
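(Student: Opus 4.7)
The plan is to prove existence by compactification combined with a fixed-point argument, in the spirit of El Karoui's relaxed control formulation and the closed-loop MFG approach of \cite{closed-loop-MFG_MDF}. I would first set up a canonical probability space supporting $(\iota, W, B, \aleph)$ together with an abstract ``candidate'' $\G$-predictable random measure flow $\mub = (\mub_t)_{t \in [0,T]}$ and the controlled state $X$. Since $A$ is compact, I would enlarge the class of controls to relaxed controls (i.e.\ $\Pc(A)$-valued predictable processes), which compactifies the admissible set in a topology for which the cost functional is continuous, and ultimately rely on \Cref{assum:convexity} to extract a strict (Markovian) control satisfying \Cref{eq:consitency1}--\Cref{eq:consitency2}.

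Next I would define a set-valued map $\Psi$ on the space of admissible candidate flows. Given a fixed $\mub$ and the fixed contract data $(\aleph, \phi, \xi)$, the representative agent's problem $\sup_\beta J^{\Cf}_{A,\mub}(\beta)$ reduces to a classical stochastic control problem with common noise, for which a (possibly relaxed) optimizer $\alpha$ exists by standard compactness arguments. The optimal state $X^\alpha$ then determines a new candidate flow $\mub' := \Lc(X^\alpha_t, \alpha(t, X^\alpha_t, \mub, \aleph) \mid \Gc_T)$; I would project $\mub'$ onto a flow compatible with the relaxed consistency conditions using the convex structure provided by \Cref{assum:convexity} together with a Filippov--Roxin type measurable selection, and set $\Psi(\mub)$ to be the set of such projected flows.

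I would then apply the Kakutani--Fan--Glicksberg fixed-point theorem. Compactness (in $\Wc_p$) of the admissible set uses the $p'$-moment bound on $\nu$ with $p' > p$ for uniform integrability, together with boundedness of $(b, \overline{b}, \sigma)$ in \Cref{assum:main1}(ii) for tightness of state trajectories. Upper hemi-continuity of $\Psi$ follows from joint continuity of $(b, L, \overline{b}, \overline{L}, g, \overline{g}, \sigma)$ via Skorokhod representation and dominated convergence, combined with continuity of the optimal value $\mub \mapsto \sup_\beta J^{\Cf}_{A,\mub}(\beta)$. Non-emptiness of $\Psi(\mub)$ is classical stochastic control; convexity of $\Psi(\mub)$ is the crucial use of \Cref{assum:convexity}, since the sets of achievable drift/running-cost pairs are convex by hypothesis, making the ``relaxed-to-strict'' preimage convex-valued. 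A fixed point $\mub \in \Psi(\mub)$ then yields a solution in ${\rm MFG}[\Cf]$.

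The main obstacle is the interplay of three features: the common noise $B$, the non-Markovian contract $\aleph$, and the relaxed consistency conditions \Cref{eq:consitency1}--\Cref{eq:consitency2}, which constrain only how $\mub_t$ pairs against $b$ and $L$ rather than the full conditional joint law of $(X^\alpha_t, \alpha(t, \cdot))$. The first two force us to track $\mub$ as a $\G$-predictable random measure where $\Gc_t = \sigma(R_{t \wedge \cdot}, B_{t \wedge \cdot})$; the third provides the flexibility needed for convexification but complicates the verification that the Filippov--Roxin selection produces a Borel-measurable $\alpha \in \Ac$ that is compatible with the conditional structure. Preserving independence of $W$ from $(\aleph, B, \iota)$ throughout the fixed-point iteration, and establishing continuity of $\mub \mapsto X^\alpha$ on the enlarged canonical space, is where most of the technical effort will go.
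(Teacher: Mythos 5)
Your proposal and the paper's proof share the compactification/relaxed-control philosophy and both invoke \Cref{assum:convexity} via a Filippov--Roxin selection at the very end (cf.\ \Cref{prop:relaxed_to_weak}), but the central mechanism is genuinely different, and the one you propose has a gap that is precisely the known hard point of MFG with common noise.

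The paper does \emph{not} run a Kakutani--Fan--Glicksberg fixed point on the space of $\G$-predictable flows $\mub$. Instead it \emph{conditions away the common noise}: it freezes the realized paths $(\br,e)$ of $(B,\aleph)$, solves a \emph{no-common-noise} MFG $\Rc^{\phi,e,\br}_{\pi,\qr}$ for each such pair (obtaining existence there by regularizing the Borel terminal map $\phi$ with smooth $\phi^k$ and passing to the limit, invoking results from the no-common-noise theory), and then applies a Jankov--von Neumann-type \emph{universally measurable selection} theorem (\Cref{prop:mfg_no}) to build a measurable map $(\br,e)\mapsto\Psi(\br,e)$ picking an equilibrium for each realization. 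Plugging $\Psi(B,\aleph)$ back in and checking the conditional optimality gives a relaxed MFG solution; \Cref{prop:relaxed_to_weak} and \Cref{prop:eq--measure--valuedMFG_control} then convert it into an element of ${\rm MFG}[\Cf]$. The paper explicitly advertises this as avoiding the discretization of the common noise used in the Carmona--Delarue--Lacker weak-formulation approach.

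The gap in your proposal sits exactly where you wave at Kakutani--Fan--Glicksberg. In the presence of common noise, the candidate flows $\mub$ are $\G$-predictable random measures with $\Gc_t=\sigma\{R_{t\wedge\cdot},B_{t\wedge\cdot}\}$, and the consistency conditions \Cref{eq:consitency1}--\Cref{eq:consitency2} are stated as conditional expectations given $\Gc_T$, a $\sigma$-algebra that \emph{depends on} $\mub$ through $R$. You cannot work with $\mub$ as a point in a fixed compact convex subset of a locally convex space: the object to compactify is the \emph{joint} law of $(\mub, B, \aleph, W, X_0)$ subject to the independence constraint in \Cref{def:RelaxedCcontrol}(iii), and upper hemi-continuity of $\mub\mapsto\Psi(\mub)$ breaks down because weak limits of conditional expectations relative to varying filtrations do not pass to the limit in general. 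This is exactly why the literature either discretizes the common noise or, as here, decomposes conditionally and glues with a measurable selection. Your write-up acknowledges that the common noise plus the non-Markovian $\aleph$ are ``the main obstacle'' but does not say how the fixed-point scheme resolves it. You also omit the regularization of $\phi$, which the paper needs because $\phi$ is merely Borel and the no-common-noise existence results it cites require continuous terminal data. As stated, your proof would not close.
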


\begin{remark}
    {\rm \Cref{thm:existence_mfg}} is in fact an extension of {\rm \citeauthor*{djete2020some}\cite[Theorem 7.2.4.]{djete2020some}}, however the proof given here is simpler and does not use any discretization arguments as is usually the case with weak solutions of {\rm MFG} with common noise $($see {\rm \cite{Lacker_carmona_delarue_CN}}$)$.  Putting aside the dependence on the contract, the assumptions considered in {\rm \Cref{thm:existence_mfg}} seems general in comparison to the literature regarding the wellposedness of {\rm MFG} solutions $(${\rm MFG} of controls solutions to be more precise, see for instance {\rm \citeauthor*{zengOptimal2022}\cite{zengOptimal2022}, \citeauthor*{JaberEquilibrium2023}\cite{JaberEquilibrium2023}}$)$.   
\end{remark}

\begin{theorem} \label{thm:existence_contract}
    Let {\rm \Cref{assum:main1}} and {\rm \Cref{assum:convexity}} hold true. If $R_0$ is s.t. $\Xi$ is non--empty then there exist progressively Borel measurable map $[0,T] \x \R^d \x \Cc_{\Wc} \x \Pc_A \ni (t,x,\pi,m)  \mapsto  \widehat{\alpha}(t,x,\pi,m) \in \Er \x A$, a contract $\Cf^\star=(\phi^\star,\xi^\star,\aleph^\star) \in \Xi$ and $\left(\alpha^\star,\mub^\star \right) \in \underline{\rm MFG}[\Cf^\star]$ verifying: 
    \begin{align*}
        \alpha^\star(t,X^{\alpha^\star}_t,\mub^\star,\aleph^\star)=\widehat{\alpha}(t,X^{\alpha^\star}_t,\mu^\star,\mub^\star_t)\;\;\;\mathrm{d}t \otimes \mathrm{d}\P\mbox{--a.e.},~~~~\mbox{\rm and}~~~~V_{\Pr} = J_{\Pr}^{{\alpha^\star},\mub^\star}(\Cf^\star). 
    \end{align*}
    We will say that $\Cf^\star=(\phi^\star,\xi^\star,\aleph^\star)$ solves the problem $V_{\Pr}$.
\end{theorem}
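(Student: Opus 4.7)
The strategy I would use is a compactification-and-selection scheme, in the spirit of the relaxed-control approach of \cite{el1987compactification} and the weak--MFG formulation of \cite{closed-loop-MFG_MDF}, with the convex--closed--graph conditions of \Cref{assum:convexity} playing the role of the classical Filippov--Roxin hypothesis. I would begin by fixing a maximizing sequence $(\Cf^n, (\alpha^n, \mub^n))_{n \ge 1}$ with $\Cf^n = (\phi^n, \xi^n, \aleph^n) \in \Xi$, $(\alpha^n, \mub^n) \in \underline{\rm MFG}[\Cf^n]$ and $J_{\Pr}^{\alpha^n, \mub^n}(\Cf^n) \uparrow V_{\Pr}$; writing $X^n := X^{\alpha^n}$ and $\mu^n_t := \Lc(X^n_t \mid \Gc_T)$, I form the random tuple
\[
    \Theta^n := \bigl(X^n,\; \mu^n,\; \mub^n,\; \aleph^n,\; \phi^n(X^n_T),\; \xi^n(\mub^n, \aleph^n),\; W,\; B,\; \iota\bigr).
\]
Boundedness of $(b, \overline{b}, \sigma)$ and the $p'$--moment on $\nu$ give tightness of $X^n$ in $\Cc^d$; compactness of $\Pc_A \x \Er$ gives tightness of the control--measure process $\delta_{(\mub^n_t, \aleph^n_t)}(\mathrm{d}m, \mathrm{d}e)\,\mathrm{d}t$ in $\M$; compactness of $\Ir$ and $\Er$ handles the two terminal contract coordinates; $\mu^n$ inherits tightness from $X^n$ in $\Cc_{\Wc}$. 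Extracting a subsequence and invoking Skorokhod on an enlarged probability space yields a.s.\ limits $\Theta^\star = (X^\star, \mu^\star, \mub^\star, \aleph^\star, I^\star, Z^\star, W, B, \iota)$.

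Second, I would recover strict feedback controls and a contract from this limit. Passing to the limit in the state SDE and in the consistency relations \Cref{eq:consitency1} and \Cref{eq:consitency2} is routine under \Cref{assum:main1} and produces a Borel transition kernel $(t,x) \mapsto m^\star(t,x) \in \Pc(A)$ such that $\mub^\star_t(\mathrm{d}x,\mathrm{d}a) = \mu^\star_t(\mathrm{d}x)\, m^\star(t,x)(\mathrm{d}a)$. The first convex set in \Cref{assum:convexity}, together with a standard Berge/Filippov measurable selection theorem, then produces a progressive Borel $\widehat{\alpha}(t, x, \pi, m)$ whose feedback $\alpha^\star(t, x, \mub, \aleph) := \widehat{\alpha}(t, x, \mu, \mub_t)$ reproduces both the drift $b$ and the pointwise integral of $L$ under $m^\star$; the second convex set does the same simultaneously for $(\overline{b}, \overline{L}, -\overline{L}_{\Pr})$ and yields a Borel $\aleph^\star$; the last two convex sets give Borel maps $\phi^\star : \R^d \to \Ir$ and $\xi^\star : \M_0 \to \Er$ that realize in distribution the principal's payoff extracted from $I^\star$ and $Z^\star$.

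Third, I would verify that $\Cf^\star := (\phi^\star, \xi^\star, \aleph^\star) \in \Xi$ with $(\alpha^\star, \mub^\star) \in \underline{\rm MFG}[\Cf^\star]$, and then conclude optimality. The MFG consistency conditions are preserved by the above selection; the sup--optimality of $\alpha^\star$ against any deviation $\beta \in \Ac$ is obtained by approximating $\beta$ at level $n$ (using weak uniqueness of the state SDE given $(\mub,\aleph)$, classical by Girsanov under the non-degeneracy of \Cref{assum:main1}) and passing to the limit in the sub--optimality bound $J^{\Cf^n}_{A,\mub^n}(\alpha^n) \ge J^{\Cf^n}_{A,\mub^n}(\beta^n) - o(1)$; the reservation $J^{\Cf^n}_{A,\mub^n}(\alpha^n) \ge R_0$ passes to the limit by continuity. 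Concavity of $U$ plus Jensen's inequality, combined with the Filippov--Roxin selections above, give the upper--semicontinuity estimate $\lim_n J_{\Pr}^{\alpha^n, \mub^n}(\Cf^n) \le J_{\Pr}^{\alpha^\star, \mub^\star}(\Cf^\star)$, and equality with $V_{\Pr}$ follows by definition of the sup. The main obstacle I anticipate is this last transfer of sup--optimality while the contract $\Cf^n$ itself varies with $n$; the paper's structural separation of each contract into a Markovian terminal piece $\phi^n$ (acting only on $X^n_T$) and non--Markovian pieces $(\xi^n, \aleph^n)$ (acting through $\mub^n$) is precisely what makes this transfer tractable.
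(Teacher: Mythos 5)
Your plan is sound and mirrors the machinery the paper actually runs: a maximizing sequence, tightness from the bounded coefficients, compactness of $\Pc_A\x\Er\x\Ir$, and a Filippov--Roxin selection using \Cref{assum:convexity} to strictify the relaxed limit, with $U$ concave and nondecreasing plus Jensen giving upper semicontinuity of the Principal's reward. The one thing you are not exploiting is that the paper has already packaged precisely this argument into \Cref{thm:convergence_limit-contract} (itself proved via Propositions \ref{prop_convergence:limitcontract}, \ref{prop:relaxed_to_weak} and \ref{prop:eq--measure--valuedMFG_control}): the paper's proof of \Cref{thm:existence_contract} is therefore just ``pick $(\Cf^\ell,\alpha^\ell,\mub^\ell)$ with $V_{\Pr}\le J^{\alpha^\ell,\mub^\ell}_{\Pr}(\Cf^\ell)+2^{-\ell}$, apply the first bullet of \Cref{thm:convergence_limit-contract} with $\varepsilon_\ell=0$, observe the limit is in $\Xi$ because the reservation constraint passes through, and read the feedback shape off Propositions \ref{prop:relaxed_to_weak} and \ref{prop:eq--measure--valuedMFG_control}.'' Your inline redevelopment is conceptually correct but duplicates that theorem; the one place where being more careful matters is the identification of the limiting Markovian terminal piece: tightness of the random variable $\phi^n(X^n_T)$ in $\Ir$ alone does not reconstruct a Borel map $\phi^\star:\R^d\to\Ir$ --- what actually converges is the kernel $\delta_{\phi^n(x)}(\mathrm{d}i)\,\Gr(x)\mathrm{d}x$ (the paper's $\Phi$, recovered via \Cref{prop:charac-convergence}), and only then does the Filippov step on the last convex set of \Cref{assum:convexity} hand you a Borel $\phi^\star$.
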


\begin{remark}
    $(i)$ Compared to the results known in the literature of Principal--Agent, {\rm \Cref{thm:existence_contract}} provides existence results under general assumptions $($see {\rm\cite{mastroliaAtale2019,DayanikliOptimal2022}} $)$. The condition $``$$R_0$ is s.t. $\Xi$ is non--empty$"$ may seem restrictive but it is actually a reasonable condition. Indeed, if the reservation utility is unreasonably high, for instance $\overline{L}, L,g$ and $\overline{g}$ are all negative and $R_0 >0$, the problem has no interest since there will not be a solution. Therefore, this condition is there to avoid these kinds of situations and to make sure that the problem is worth solving. Given the maps $\overline{L}, L,g$ and $\overline{g}$, notice that sometimes it is not difficult to get $R_0$ s.t. $\Xi$ is non--empty. For example, if $(\overline{L},L,g,\overline{g})$ are lower bounded, it is straightforward to check that for any $R_0 \in \left(\inf (\overline{L} + L + g+\overline{g}), -\infty \right)$, $\Xi$ is non--empty.

    \medskip
    $(ii)$ When we see the statement of {\rm \Cref{thm:existence_contract}}, it might lead to think that the optimal control $\alpha^\star$ does not need to use the information of the contract $\aleph^\star$. This is not the case. Since the process $\mub^\star$ achieves an optimum when $\aleph^\star$ is fixed, it contains part of the information of the contract $\aleph^\star$. The shape of the optimal contract given in {\rm \Cref{thm:existence_contract}} provides some insight into the type of information needed to reach the optimum. It should be mentioned that the form of the coefficients plays an important role, in particular the separability of the variables $(m,e)$ on the one hand and $(x,\pi,a)$ on the other hand.

\end{remark}

\medskip
We now give some convergence results. In the next Theorem, we first provide the characterization of the limits of sequence of contracts and (approximate) MFG solutions. Second, we state the approximation of any MFG solution associated to a contract by a sequence of contracts and of (approximate) MFG solutions.
\begin{theorem}\label{thm:convergence_limit-contract}
    Let {\rm \Cref{assum:main1}} hold true.
    \begin{itemize}
        \item We consider a sequence of non--negative number $(\varepsilon_\ell)_{\ell \ge 1}$ s.t. $\lim_{\ell \to \infty} \varepsilon_{\ell}=0.$ Let  $(\alpha^\ell,\mub^\ell)_{\ell \ge 1}$ and $(\Cf^\ell)_{\ell \ge 1}=(\phi^\ell,\xi^\ell,\aleph^\ell)_{\ell \ge 1}$ be sequences such that  $(\alpha^\ell,\mub^\ell) \in \mbox{{\rm MFG}}[\Cf^\ell,\varepsilon_{\ell}]$ for each $\ell \ge 1$ then $\left(J^{\alpha^\ell,\mub^\ell}_{\Pr}(\Cf^\ell),J^{\Cf^\ell}_{A,\mub^\ell}(\alpha^\ell) \right)_{\ell \ge 1}$ is relatively compact and for any convergent sequence $(\ell_j)_{j\ge 1}$, under {\rm \Cref{assum:convexity}}, there exist $\Cf$ and $(\alpha,\mub)$ s.t. $(\alpha,\mub) \in \mbox{{\rm MFG}}[\Cf]$,
        \begin{align*}
            \lim_{j \to \infty}J^{\Cf^{\ell_j}}_{A,\mub^{\ell_j}}({\alpha^{\ell_j}})\le
            J^{\Cf}_{A,\mub}({\alpha})~~~~~\mbox{\rm and}~~~~\lim_{j \to \infty} J^{\alpha^{\ell_j},\mub^{\ell_j}}_{\Pr}(\Cf^{\ell_j}) \le
            J^{\alpha,\mub}_{\Pr}(\Cf).
        \end{align*}

        \item Let $\Cf=(\phi,\xi,\aleph)$ be a contract and $(\alpha,\mub) \in \mbox{{\rm MFG}}[\Cf]$. There exist a sequence of non--negative numbers $(\varepsilon_{\ell})_{\ell \ge 1}$ s.t. $\lim_{\ell \to \infty} \varepsilon_\ell=0,$ and a sequence $(\alpha^\ell,\mub^\ell,\phi^\ell, \xi^\ell,\aleph^\ell)_{\ell \ge 1}$ s.t. for each $\ell \ge 1$, $\Cf^\ell=(\phi^\ell,\xi^\ell,\aleph^\ell)$ is a contract, $(\alpha^\ell, \mub^\ell )\in \mbox{{\rm MFG}}_S[\Cf^\ell,\varepsilon_\ell]$ and 
        \begin{align*}
            \lim_{\ell \to \infty}J^{\Cf^{\ell}}_{A,\mub^\ell}(\alpha^{\ell})=
            J^{\Cf}_{A,\mub}({\alpha})~~~~~\mbox{\rm and}~~~~\lim_{\ell \to \infty} J^{\alpha^{\ell},\mub^\ell}_{\Pr}(\Cf^\ell)=
            J^{\alpha,\mub}_{\Pr}(\Cf).
        \end{align*}
    \end{itemize}
    
\end{theorem}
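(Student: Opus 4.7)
My plan for the first bullet is to recast each $(\alpha^\ell, \mub^\ell)$ as a relaxed control rule by introducing the random element
\[
\Theta^\ell := \bigl( X^{\alpha^\ell}, W, B, \iota, \mub^\ell, \aleph^\ell, q^\ell, \phi^\ell(X^{\alpha^\ell}_T), \xi^\ell(\mub^\ell,\aleph^\ell) \bigr),
\]
where $q^\ell(\mathrm{d}t,\mathrm{d}a) := \delta_{\alpha^\ell(t, X^{\alpha^\ell}_t, \mub^\ell, \aleph^\ell)}(\mathrm{d}a)\,\mathrm{d}t$ is the occupation measure of $\alpha^\ell$. Tightness of $\{\Lc(\Theta^\ell)\}_{\ell}$ follows from Kolmogorov estimates on $X^{\alpha^\ell}$ based on the boundedness of $(b,\overline{b},\sigma)$, together with compactness of $A,\Er,\Ir$, which keeps $\mub^\ell,\aleph^\ell,q^\ell,\phi^\ell(X^{\alpha^\ell}_T),\xi^\ell(\mub^\ell,\aleph^\ell)$ in relatively compact subsets for weak convergence; the uniform $p'$-moment of $\iota$ with $p' > p$ handles the initial condition. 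Since the integrands defining both rewards have $p$-linear growth by \Cref{assum:main1}(iv) and the states have uniformly bounded $p$-moments, the sequence of rewards is bounded in $\R^2$, giving the asserted relative compactness. Extracting a weakly convergent subsequence and applying Skorokhod's representation on an enlarged probability space yields a limit $\Theta = (X, W, B, \iota, \mub, \aleph, q, Z_\phi, Z_\xi)$.

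I then identify this limit as a genuine MFG solution for a reconstructed contract. A martingale-problem argument for the relaxed McKean-Vlasov SDE gives
\[
\mathrm{d}X_t = \Bigl(\overline{b}(t,\mub_t,\aleph_t) + \int_A b(t, X_t, \mu_t, a)\,q_t(\mathrm{d}a)\Bigr)\mathrm{d}t + \sigma(t, X_t)\mathrm{d}W_t + \sigma_0\mathrm{d}B_t,
\]
the consistency conditions \Cref{eq:consitency1} and \Cref{eq:consitency2} pass to the limit by continuity of $b,L$ and joint convergence of conditional laws with respect to $\Gc_T$, and the $\varepsilon_\ell$-optimality of $\alpha^\ell$ against every fixed $\beta \in \Ac$ becomes exact optimality of $q$ as $\ell \to \infty$. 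Using the Filippov-Roxin type property built into \Cref{assum:convexity}, I select a Markovian $\alpha \in \Ac$ realizing the same drift as $q$ and such that $L(t, X_t, \mu_t, \alpha_t) \ge \int_A L(t, X_t, \mu_t, a)\,q_t(\mathrm{d}a)$; this puts $(\alpha, \mub)$ in ${\rm MFG}[\Cf]$ once the limit contract $\Cf=(\phi,\xi,\aleph)$ is defined: $\aleph$ is already at hand, while $\phi:\R^d \to \Ir$ and $\xi:\M_0 \to \Er$ are built by a measurable selection from the closed convex sets of \Cref{assum:convexity} so that $g(X_T, \phi(X_T)) = Z_\phi$ and $\overline{g}(\mu, \xi(\mub,\aleph)) = Z_\xi$ while $-g_\Pr(X_T, \phi(X_T))$ and $-\overline{g}_\Pr(\mu, \xi(\mub,\aleph))$ dominate their relaxed limits. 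Combining these selection inequalities with the monotonicity and concavity of $U$ via Jensen applied inside the conditional expectation yields the two upper bounds claimed for $J_{A,\mub}^\Cf(\alpha)$ and $J_\Pr^{\alpha,\mub}(\Cf)$.

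For the second bullet, given $(\alpha, \mub) \in {\rm MFG}[\Cf]$, I construct the approximating strong solutions by sequential regularization. First, I replace the predictable process $\aleph$ by a Lipschitz feedback $\widehat{\aleph}^\ell(t, \mu)$ of the flow of marginals of $\mub$ via time-mollification and projection onto the compact convex set $\Er$, and I produce $\widehat{\xi}^\ell(\mu)$ Lipschitz in $\pi \in \Cc_\Wc$ in the same spirit. Second, I regularize $\alpha$ to a Lipschitz feedback $\widehat{\alpha}^\ell(t, x, \pi)$ via a Lusin-type density argument in $\Ac$, paying an error $\varepsilon_\ell \to 0$ in the MFG-optimality. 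Third, for the regularized data I invoke the existence of strong closed-loop MFG solutions (the remark following the definition of ${\rm MFG}_S$ together with \cite[Theorem 2.12]{closed-loop-MFG_MDF}) to obtain $(\alpha^\ell, \mub^\ell) \in {\rm MFG}_S[\Cf^\ell, \varepsilon_\ell]$. Continuity of both rewards in the Wasserstein topology combined with stability of McKean-Vlasov SDEs under Lipschitz approximation of their coefficients then delivers the convergence of $J^{\Cf^\ell}_{A,\mub^\ell}(\alpha^\ell)$ and $J^{\alpha^\ell,\mub^\ell}_\Pr(\Cf^\ell)$ to $J^{\Cf}_{A,\mub}(\alpha)$ and $J^{\alpha,\mub}_\Pr(\Cf)$.

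The main obstacle is the limit identification in the first bullet: $\Theta$ is a priori only a relaxed MFG solution, and the contract $(\phi, \xi)$ in the limit must be rebuilt from scratch because the sequences $\phi^\ell, \xi^\ell$ are merely Borel and provide no functional convergence. The twin use of \Cref{assum:convexity}---once on the $(b,L)$ side to discard the relaxation of $\alpha$, once on the $(g,g_\Pr)$ and $(\overline{g},\overline{g}_\Pr)$ sides to select deterministic Borel maps $\phi,\xi$ dominating the Principal's payoff---is precisely what closes the argument. Propagating the Principal's inequality through the concave monotone $U$ also requires careful joint convergence of the conditional law of $X_T$ and of $\mub$ given $\Gc_T$ together with the auxiliary limits $Z_\phi, Z_\xi$, which is where most of the technical work sits.
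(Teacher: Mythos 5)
Your plan follows the same architecture as the paper's proof: pass to relaxed occupation measures, extract a weak limit, use the martingale problem to identify the limiting dynamics and consistency conditions, and then convexify back to a genuine $(\alpha,\mub)\in{\rm MFG}[\Cf]$ via a Filippov--Roxin selection under \Cref{assum:convexity}. The paper packages this through the canonical-space formulation of \Cref{sec:reformulation} (control rules, $\Pcb_{\rm mfg}$, the kernels $\Phi\in\Kc(\R,\Ir)$ and $\Zf\in\Kc(\M,\Er)$), while you keep everything on an enlarged Skorokhod space; that difference is only presentational. Two steps in your sketch, however, are under-described to the point of being genuine gaps.

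First, your treatment of the terminal payments. You track $Z_\phi:=\phi^\ell(X^{\alpha^\ell}_T)$ and $Z_\xi:=\xi^\ell(\mub^\ell,\aleph^\ell)$ as random variables and then assert that measurable selection from the closed convex sets of \Cref{assum:convexity} produces $\phi,\xi$ with $g(X_T,\phi(X_T))=Z_\phi$, etc. But the selection needs a concrete kernel: you must know that the \emph{conditional} law of the limiting $Z_\phi$ given $X_T$ is supported on $\{g(X_T,i):i\in\Ir\}$-valued convex combinations, which is not automatic from the weak limit of $(X^\ell_T,\phi^\ell(X^\ell_T))$. The paper closes this by not tracking $\phi^\ell(X^\ell_T)$ at all; it tracks the deterministic objects $\delta_{\phi^\ell(x)}(\mathrm{d}i)\,\Gr(x)\mathrm{d}x$ against a fixed reference density and then invokes \Cref{prop:charac-convergence}, whose engine is the Aronson--Serrin H\"older bound on the density of $X^\ell_T$ (available because $\sigma$ is uniformly elliptic and $\sigma_0$ is invertible). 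That estimate is what lets one deduce that $\delta_{\phi^\ell(x)}(\mathrm{d}i)\,\mu^\ell_T(\mathrm{d}x)\to\Phi(x)(\mathrm{d}i)\,\mu_T(\mathrm{d}x)$ for a kernel $\Phi$, so that the terminal cost passes to the limit in the required product form. Your phrase ``careful joint convergence of the conditional law of $X_T$'' points in the right direction but does not supply this mechanism; without it the claimed identity $g(X_T,\phi(X_T))=Z_\phi$ cannot be justified.

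Second, your construction for the second bullet. You propose to mollify the $\H$-predictable process $\aleph$ into a Lipschitz feedback $\widehat{\aleph}^\ell(t,\mu)$, but there is no reason a priori why $\aleph$ is even measurable with respect to $\sigma\{\mu\}$: in the definition of ${\rm MFG}[\Cf]$, $\aleph$ is an $\H$-predictable process with $(\aleph,B)\perp(\iota,W)$, not a function of $\mu$. The missing step is the paper's chain \Cref{lemma:first_appro}--\Cref{lemma:law_equality}--\Cref{prop:converg}: first reduce relaxed $R$ to a genuine $(\mub^\ell,\aleph^\ell)$-pair, then shift out the common noise (via $Y_t:=X_t-\sigma_0 B_t$) and use a Yamada--Watanabe-type argument to realize the controls as functions of $(X_0,W,B)$ and $B$ alone, and only then invert $B$ from $\mu$ (through \Cref{rm:measurable_B} and \cite[Proposition A.10]{closed-loop-MFG_MDF}) to obtain genuine Lipschitz feedbacks in $\mu$. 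Your sketch also proposes to ``invoke the existence of strong closed-loop MFG solutions'' for the regularized data; this would produce \emph{some} $(\alpha^\ell,\mub^\ell)$, not one whose reward converges to $J^{\Cf}_{A,\mub}(\alpha)$. The paper instead shows directly that the regularized pair $(\alpha^{j,k},\mub^{j,k})$ inherits $\varepsilon_{j,k,\ell}$-optimality with $\limsup\varepsilon_{j,k,\ell}\le\varepsilon$, by comparing the deviations against any $\beta\in\Ac$ on both sides of the law-equality bridge; this optimality transfer is the heart of \Cref{prop:approximation_by_strong} and must replace your appeal to existence.
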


\begin{remark}
    To the best of our knowledge, {\rm \Cref{thm:convergence_limit-contract}} is the first of this type in the literature of contract theory. From the point of view of {\rm MFG} theory, this result can be seen as the characterization of {\rm MFG} solutions associated with varying parameters. This appears to be new. In the proofs of this result, we provide more general convergence results than those mentioned in the Theorem by using the notion of $``$relaxed$"$ controls. We have chosen this presentation for its ease of reading and because it gives the main information that we wish to share.
\end{remark}

By using {\rm \Cref{thm:convergence_limit-contract}}, we are able to give a convergence result on the problem of the Principal. In simple words, the problem in which the Principal considers contracts leading to approximate strong MFG solutions for the representative Agent is close to the problem in which the Principal offers contracts leading to MFG solutions for the representative Agent.
\begin{corollary} \label{cor:limit_strong_valuefunc}
    Let {\rm \Cref{assum:main1}} be true and $R_0$ s.t. $\Xi$ is non--empty. Then, for each $(\varepsilon, \varepsilon_0) \in (0,\infty) \x (0, \infty),$ the set $\Xi_S[\varepsilon,\varepsilon_0]$ is non--empty and we have
    \begin{align*}
        \lim_{\varepsilon \to 0, \varepsilon >0} \lim_{\varepsilon_0 \to 0, \varepsilon_0 >0} V_{\Pr}^S[\varepsilon,\varepsilon_0]=V_{\Pr}.
    \end{align*}
    Besides, for each $\delta>0$, there exist $\varepsilon, \varepsilon_0 >0$, $(\alpha,\mub)$ and $\Cf$ s.t. $\Cf \in \Xi_S[\varepsilon,\varepsilon_0]$, $(\alpha,\mub) \in \underline{\rm MFG}_S[\Cf,\varepsilon,\varepsilon_0]$ and $V_{\Pr} \le J^{\alpha,\mub}_{\Pr}(\Cf) + \delta$.
\end{corollary}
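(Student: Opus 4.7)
The plan is to establish two one-sided bounds: (i) the pointwise inequality $V_{\Pr}^S[\varepsilon,\varepsilon_0]\ge V_{\Pr}$ for every $(\varepsilon,\varepsilon_0)\in(0,\infty)^2$, and (ii) the asymptotic inequality $\limsup_{k}V_{\Pr}^S[\varepsilon_k,\varepsilon_{0,k}]\le V_{\Pr}$ along any sequence $(\varepsilon_k,\varepsilon_{0,k})\to(0,0)$. The sets $\mbox{\rm MFG}_S[\Cf,\varepsilon]$ and $\{J^{\Cf}_{A,\mub}(\alpha)\ge R_0-\varepsilon_0\}$ only enlarge when $\varepsilon$ or $\varepsilon_0$ grow, so $V_{\Pr}^S$ is non-decreasing in each argument; hence the iterated limit equals $\inf_{\varepsilon,\varepsilon_0>0}V_{\Pr}^S[\varepsilon,\varepsilon_0]$ and (i)--(ii) will pin it to $V_{\Pr}$. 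The upper bound relies on the first part of \Cref{thm:convergence_limit-contract}, which tacitly requires \Cref{assum:convexity} in addition to \Cref{assum:main1}.

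\medskip
\textbf{Lower bound, non-emptiness of $\Xi_S[\varepsilon,\varepsilon_0]$, and the \emph{Besides} statement.} Fix $\varepsilon,\varepsilon_0>0$ and $\delta>0$. By definition of the supremum, pick $\Cf\in\Xi$ and $(\alpha,\mub)\in\underline{\rm MFG}[\Cf]$ with $J^{\alpha,\mub}_{\Pr}(\Cf)\ge V_{\Pr}-\delta$ and $J^{\Cf}_{A,\mub}(\alpha)\ge R_0$. The second part of \Cref{thm:convergence_limit-contract} yields $\varepsilon_\ell\downarrow 0$ and a sequence of contracts $\Cf^\ell$ with $(\alpha^\ell,\mub^\ell)\in\mbox{\rm MFG}_S[\Cf^\ell,\varepsilon_\ell]$ along which $J^{\Cf^\ell}_{A,\mub^\ell}(\alpha^\ell)\to J^{\Cf}_{A,\mub}(\alpha)$ and $J^{\alpha^\ell,\mub^\ell}_{\Pr}(\Cf^\ell)\to J^{\alpha,\mub}_{\Pr}(\Cf)$. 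For $\ell$ large enough, $\varepsilon_\ell<\varepsilon$, $J^{\Cf^\ell}_{A,\mub^\ell}(\alpha^\ell)\ge R_0-\varepsilon_0$, and $J^{\alpha^\ell,\mub^\ell}_{\Pr}(\Cf^\ell)\ge V_{\Pr}-2\delta$. Since $\mbox{\rm MFG}_S[\Cf^\ell,\varepsilon_\ell]\subset\mbox{\rm MFG}_S[\Cf^\ell,\varepsilon]$, this gives $(\alpha^\ell,\mub^\ell)\in\underline{\rm MFG}_S[\Cf^\ell,\varepsilon,\varepsilon_0]$, whence $\Cf^\ell\in\Xi_S[\varepsilon,\varepsilon_0]$ and $V_{\Pr}^S[\varepsilon,\varepsilon_0]\ge V_{\Pr}-2\delta$. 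Letting $\delta\downarrow 0$ proves (i), and also shows $\Xi_S[\varepsilon,\varepsilon_0]\ne\emptyset$ for every $(\varepsilon,\varepsilon_0)$. For the \emph{Besides} claim, given the prescribed tolerance $\delta>0$ we repeat the argument with $\delta/2$ in place of $\delta$ and set $\varepsilon_0:=\delta/2$, $\varepsilon:=\varepsilon_\ell$ for $\ell$ large, so that the constructed pair satisfies $V_{\Pr}\le J^{\alpha^\ell,\mub^\ell}_{\Pr}(\Cf^\ell)+\delta$.

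\medskip
\textbf{Upper bound and conclusion.} Let $(\varepsilon_k,\varepsilon_{0,k})\to(0,0)$ and pick $\Cf^k\in\Xi_S[\varepsilon_k,\varepsilon_{0,k}]$, $(\alpha^k,\mub^k)\in\underline{\rm MFG}_S[\Cf^k,\varepsilon_k,\varepsilon_{0,k}]$ with $J^{\alpha^k,\mub^k}_{\Pr}(\Cf^k)\ge V_{\Pr}^S[\varepsilon_k,\varepsilon_{0,k}]-1/k$. Since $(\alpha^k,\mub^k)\in\mbox{\rm MFG}[\Cf^k,\varepsilon_k]$ with $\varepsilon_k\downarrow 0$, the first part of \Cref{thm:convergence_limit-contract} delivers relative compactness of the reward pairs and, along any convergent subsequence, a limit $(\Cf,\alpha,\mub)$ with $(\alpha,\mub)\in\mbox{\rm MFG}[\Cf]$, $\lim_k J^{\Cf^k}_{A,\mub^k}(\alpha^k)\le J^{\Cf}_{A,\mub}(\alpha)$, and $\lim_k J^{\alpha^k,\mub^k}_{\Pr}(\Cf^k)\le J^{\alpha,\mub}_{\Pr}(\Cf)$. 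The crucial step is to certify admissibility of the limiting contract: the chain $R_0=\lim_k(R_0-\varepsilon_{0,k})\le\lim_k J^{\Cf^k}_{A,\mub^k}(\alpha^k)\le J^{\Cf}_{A,\mub}(\alpha)$ gives $\Cf\in\Xi$ and $(\alpha,\mub)\in\underline{\rm MFG}[\Cf]$, so the limiting Principal reward is at most $V_{\Pr}$. As every subsequence contains such a convergent sub-subsequence, $\limsup_k V_{\Pr}^S[\varepsilon_k,\varepsilon_{0,k}]\le V_{\Pr}$. Combining this with (i) and the monotonicity discussed above proves the iterated-limit identity. The main obstacle is precisely this admissibility step: the one-sided limit inequality of \Cref{thm:convergence_limit-contract} (rather than full continuity of the Agent's reward functional) is what makes the passage from $J^{\Cf^k}_{A,\mub^k}(\alpha^k)\ge R_0-\varepsilon_{0,k}$ to $J^{\Cf}_{A,\mub}(\alpha)\ge R_0$ rigorously available.
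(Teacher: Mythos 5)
Your proof is correct and follows essentially the same route as the paper's: the lower bound, the non-emptiness of $\Xi_S[\varepsilon,\varepsilon_0]$, and the \emph{Besides} claim all come from the second bullet of \Cref{thm:convergence_limit-contract} applied to a near-optimizer of $V_{\Pr}$, while the upper bound comes from the first bullet applied to near-optimizers of $V^S_{\Pr}[\varepsilon_k,\varepsilon_{0,k}]$, with the reservation-utility constraint passed to the limit exactly as you describe. Two of your refinements are worth keeping. First, the monotonicity remark (that $\underline{\rm MFG}_S[\Cf,\cdot,\cdot]$, hence $\Xi_S[\cdot,\cdot]$ and $V^S_{\Pr}[\cdot,\cdot]$, are non-decreasing in $(\varepsilon,\varepsilon_0)$) identifies the iterated limit with $\inf_{\varepsilon,\varepsilon_0>0}V^S_{\Pr}[\varepsilon,\varepsilon_0]$ and cleans up the paper's $\liminf$/$\limsup$ bookkeeping. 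Second, you correctly flag that the limit-characterization half of \Cref{thm:convergence_limit-contract} invokes \Cref{assum:convexity}; since both your proof and the paper's rely on that part of the theorem, \Cref{assum:convexity} should really appear among the corollary's hypotheses even though it is not listed there.
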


\medskip
The following Theorem starts by giving an understanding of the limits of sequence of contracts and approximate Nash equilibria. Next, it provides an approximation of any MFG solution associated to a contract by a sequence of approximate Nash equilibria associated to a sequence of contracts for the problem of Principal and $n$ Agents. 
\begin{theorem} \label{thm:conv:n-player}
    Let {\rm \Cref{assum:main1}} hold true.

    \begin{itemize}
        \item Let $(\varepsilon_n)_{n \ge 1}$ be a sequence of non--negative numbers s.t. $\Lim_{n \to \infty} \varepsilon_n=0$. We consider a sequence $(\alphab^n,\phi^n,\xi^n,\aleph^n)$ s.t. $\Cf^n=(\phi^n,\xi^n,\aleph^n)$ is a contract and $\alphab^n \in \mbox{\rm NE}[\Cf^n,\varepsilon_n]$ for each $ n \ge 1$. The sequence 
        $$
            \left( \frac{1}{n}\sum_{i=1}^nJ^{\Cf^n}_{n,i}(\alphab^n),~~ J_{n,\Pr}^{{\color{black}\alphab^n}}(\Cf^n) \right)_{n \ge 1}
        $$
        is relatively compact and for any convergent sub--sequence $(n_j)_{j \ge 1}$, under {\rm \Cref{assum:convexity}}, there exist $\Cf=(\phi,\xi,\aleph)$ and $(\alpha,\mub) \in \mbox{{\rm MFG}}[\Cf]$ verifying
        \begin{align*}
            \lim_{j \to \infty}\frac{1}{n_j}\sum_{i=1}^{n_j}J^{\Cf^{n_j}}_{n_j,i}(\alphab^{n_j}) \le
            J^{\Cf}_{A,\mub}({\alpha})~~~~~\mbox{\rm and}~~~~\lim_{j \to \infty} J_{n_j,\Pr}^{{\color{black}\alphab^{n_j}}}(\Cf^{n_j}) \le
            J_{\Pr}^{\alpha,\mub}(\Cf).
        \end{align*}

        \medskip
        \item Let $\Cf=(\phi,\xi,\aleph)$ be a contract and $(\alpha,\mub) \in \mbox{{\rm MFG}}[\Cf]$. There exist a sequence of non--negative numbers $(\varepsilon_{n})_{n \ge 1}$ s.t. $\lim_{n \to \infty} \varepsilon_n=0,$ and a sequence $(\alphab^n,\phi^n, \xi^n,\aleph^n)_{\ell \ge 1}$ s.t. for each $n \ge 1$, $\Cf^n=(\phi^n,\xi^n,\aleph^n)$ is a distributed contract, $\alphab^n \in {\rm NE}^{\rm dist}[\Cf^n,\varepsilon_n]$ 
        and we have
        \begin{align*}
            \lim_{n \to \infty}\frac{1}{n}\sum_{i=1}^{n}J^{\Cf^n}_{n,i}(\alphab^{n})=
            J^{\Cf}_{A,\mub}({\alpha})~~~~~\mbox{\rm and}~~~~\lim_{n \to \infty} J_{n,\Pr}^{{\color{black}\alphab^n}}(\Cf^n)=J_{\Pr}^{\alpha,\mub}(\Cf).
        \end{align*}
    \end{itemize}
\end{theorem}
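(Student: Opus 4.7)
The overall plan is to reduce both directions to \Cref{thm:convergence_limit-contract} by constructing an explicit bridge between the $n$--Agent system and the canonical space $\M$ underlying the limit problem. That bridge is the pair consisting of the common noise $B$ and the random measure
\[
R^n(\mathrm{d}t, \mathrm{d}m, \mathrm{d}e) := \delta_{\big(\overline{\varphi}^n_t[\alphab^n],\, \aleph^n(t,\Xbb)\big)}(\mathrm{d}m,\mathrm{d}e)\,\mathrm{d}t \in \M,
\]
supplemented, for the first bullet, by the trajectory of a single representative Agent $X^{1}$ and its control process; the reward functionals are symmetric in the agents so I would symmetrize the joint distribution first.

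\textbf{First bullet (convergence).} I would first establish tightness of $(R^n, B, X^1)$ on $\M \x \Cc^d \x \Cc^d$: tightness of $R^n$ follows from compactness of $\Er$ and the uniform $L^{p'}$ moment bound on the states (which tightens the projection on $\Pc_A$), while tightness of $X^{1}$ uses \Cref{assum:main1}(ii). Along a subsequence the triple converges in law to a limit from which one reads a limit contract $\aleph_t$ and a limit $\mub_t$ via disintegration of the limit of $R^n$. The remaining task is to verify that the limiting objects form an element of $\mbox{{\rm MFG}}[\Cf]$ for the reconstructed contract $\Cf$: the consistency conditions \eqref{eq:consitency1}--\eqref{eq:consitency2} are inherited from the empirical structure of $R^n$ after passing a martingale problem to the limit, while the optimality of the limit control $\alpha$ against every $\beta \in \Ac$ is obtained from the $\varepsilon_n$--Nash property by exploiting the fact that a single--agent deviation in $\Ac_n$ perturbs $\overline{\varphi}^n_t[\alphab^n]$ by $O(1/n)$ in $\Wc_p$. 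The assumption \Cref{assum:convexity} is invoked, in the spirit of Filippov--Roxin, to collapse the inevitably relaxed limit into a genuine Markovian element of $\Ac$, exactly as in the proof of \Cref{thm:convergence_limit-contract}. Convergence of $\frac{1}{n}\sum_{i=1}^n J^{\Cf^n}_{n,i}(\alphab^n)$ and of $J_{n,\Pr}^{\alphab^n}(\Cf^n)$ then follows from the $p$--growth in \Cref{assum:main1}(iv), the uniform $L^{p'}$--integrability of the states, and the continuity/concavity of $U$ combined with Jensen's inequality for the conditional expectation appearing in $J_{\Pr}$.

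\textbf{Second bullet (approximation).} Starting from $(\alpha, \mub) \in \mbox{{\rm MFG}}[\Cf]$, I would first invoke the second bullet of \Cref{thm:convergence_limit-contract} to find $(\alpha^\ell, \mub^\ell)\in\mbox{{\rm MFG}}_S[\Cf^\ell,\varepsilon_\ell]$ with rewards converging to those of $(\alpha,\mub)$ under $\Cf$. By definition of $\mbox{{\rm MFG}}_S$ there are Lipschitz data $(\widehat{\alpha}^\ell, \widehat{\xi}^\ell, \widehat{\aleph}^\ell) \in \Cc_s$. For each $n$ I would then set the distributed contract $\aleph^{n,\ell}(t,\xb) := \widehat{\aleph}^\ell(t,\pi^n)$, $\xi^{n,\ell}(\xb) := \widehat{\xi}^\ell(\pi^n)$ and the symmetric strategy $\alpha^{i,n,\ell}(t,\xb) := \widehat{\alpha}^\ell(t,x^i(t),\pi^n)$. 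Standard propagation of chaos for McKean--Vlasov SDEs with common noise gives $\Wc_p\big(\overline{\varphi}^n_t[\alphab^{n,\ell}], \mub^\ell_t\big) \to 0$ as $n \to \infty$, whence the $n$--Agent rewards converge to those of $(\alpha^\ell, \mub^\ell)$ under $\Cf^\ell$. The Lipschitz character of $\widehat{\alpha}^\ell$ allows one to bound any single--agent deviation in $\Ac_n$ by a McKean--Vlasov deviation in the limit, which gives $\alphab^{n,\ell} \in \mbox{\rm NE}^{\rm dist}[\Cf^{n,\ell},\, \eta^{n,\ell}+\varepsilon_\ell]$ with $\eta^{n,\ell}\to 0$ as $n\to\infty$. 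A diagonal extraction $n = n_\ell$ then produces the desired single sequence with a vanishing Nash gap $\varepsilon_n$.

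\textbf{Main obstacle.} The delicate step is the identification of the limit in the first bullet: the contracts $\aleph^n$ and the strategies $\alpha^{i,n}$ are only Borel in $\xb \in (\Cc^d)^n$, with no uniformity whatsoever in $n$, so there is no direct continuity argument to pass to the limit in the Nash optimality condition. The natural fix, following \cite{el1987compactification,lacker2016general} and \cite{closed-loop-MFG_MDF}, is to work entirely in the relaxed framework, viewing the controls and $\aleph^n$ as Young measures against which one tests continuous functions, identifying the limit dynamics by the martingale problem associated to \eqref{eq:N-agents_StrongMV_CommonNoise-law-of-controls}, and using \Cref{assum:convexity} to collapse the relaxed limit to a strict element of $\Ac$. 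The extra bookkeeping compared to \Cref{thm:convergence_limit-contract} is the $n$--Agent symmetry, which is exploited through the exchangeability of the joint law of $(X^1,\ldots,X^n)$ under the symmetrized system.
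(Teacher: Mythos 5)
Your overall architecture is correct and mirrors the paper's: symmetrize the joint law, pass to a limit through the random measure $R^n$ built from the empirical distribution and the contract, identify the limit as a relaxed control rule via the martingale problem, invoke \Cref{assum:convexity} to de-relax, and reconstruct $(\Cf,\alpha,\mub)$. The second bullet is also structurally the paper's argument (approximate by strong MFG solutions, feed the Lipschitz feedback maps in $\Cc_s$ into a distributed contract and a symmetric strategy, run propagation of chaos, diagonal-extract).

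The genuine gap is in your optimality step in the first bullet. You assert that the limit optimality against every $\beta\in\Ac$ follows because ``a single-agent deviation perturbs $\overline{\varphi}^n_t[\alphab^n]$ by $O(1/n)$ in $\Wc_p$.'' That is the right heuristic for why the mean-field object is stable under a unilateral deviation, but it is not by itself a proof, and it papers over two things the paper has to work hard for. First, you never say what the competing control $\beta$ is at the $n$-agent level; the paper has to build, from an abstract control rule $\Pr\in\Pcb$ with feedback $\Lambda'(t,x,r)=\delta_{\beta(t,x,r)}$, the $n$-player deviation $\beta^{\alphab^n,i}(t,\boldsymbol{x}):=\beta(t,x^i(t),R^{\alphab^n}[\boldsymbol{x}])$, where $R^{\alphab^n}[\boldsymbol{x}]$ is the finite-$n$ proxy for the canonical $R$. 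This is nontrivial since $\aleph^n$ is only a Borel function of all $n$ paths with no regularity uniform in $n$. Second, even granting the $O(1/n)$ stability of the empirical measures, the deviating agent's own state $Y^{i,i}$ moves by an $O(1)$ amount, so the symmetrized average $\frac{1}{n}\sum_i J^{\Cf^n}_{n,i}(\ldots,\beta^{\alphab^n,i},\ldots)$ cannot be read off by continuity. The paper (Lemma \ref{lemma:deviating-player}) handles this with a Girsanov change of measure: it constructs densities $Z^i_T$ that map the undeviated law onto the deviated one, passes the $\frac{1}{n}\sum_i Z^i_T(\cdots)$ average to the limit via a martingale-measure argument, and then undoes the change of measure in the limit, exploiting independence of $(W^i,\iota^i)$ and the law of large numbers to decouple the Girsanov weight from the empirical objects. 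Without this (or an equivalent coupling/McKean--Vlasov stability argument), your claim that $\lim_j \frac{1}{n_j}\sum_i J^{\Cf^{n_j}}_{n_j,i}$ is dominated by $J^{\Cf}_{A,\mub}(\alpha)$ does not follow from tightness and the $O(1/n)$ perturbation of $\overline{\varphi}^{n}_t$ alone.

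A smaller remark on the second bullet: your diagonal extraction is fine in principle, but you should confirm that the Nash gap $\eta^{n,\ell}$ you obtain from propagation of chaos can be made uniform over unilateral deviations (that is, over all $\beta\in\Ac_n$, not just deviations with Lipschitz feedback), which is where the Lipschitz structure of $\widehat{\alpha}^\ell$ and $\widehat{\aleph}^\ell$ actually gets used; the paper delegates this to the quantitative result it cites in Proposition \ref{prop:conv_n-player-strong}.
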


\begin{remark}
    Similarly to {\rm \Cref{thm:convergence_limit-contract}}, {\rm \Cref{thm:conv:n-player}} seems to be the first of this kind in the literature of contract theory. From a {\rm MFG} theory perspective, it is worth mentioning that, since the map $(\xi^n,\aleph^n)$ depends on $(X^1,\dots,X^n)$ in a general way, This Theorem goes beyond the classical {\rm MFG} framework. See {\rm \citeauthor*{Lackermeanfield2022}\cite{Lackermeanfield2022}} and {\rm\citeauthor*{JacksonAppro2023}\cite{JacksonAppro2023}} for related analysis beyond the classical mean--field dependence framework. Some classical results of {\rm MFG} literature are also contained in this Theorem. Also as in {\rm \Cref{thm:convergence_limit-contract}}, the results presented could be more general using $``$relaxed$"$ controls, this presentation was chosen for ease of reading.
\end{remark}

In the same spirit as in \Cref{cor:limit_strong_valuefunc}, by the use of {\rm \Cref{thm:conv:n-player}}, the following result simply shows that the problem of the Principal with $n$ agents converges when $n$ goes to infinity to the problem of the Principal with the representative agent. 
\begin{corollary} \label{cor:conv_strong_limit_n-player}
    Let $R_0$ be s.t. $\Xi$ is non--empty and {\rm \Cref{assum:main1}} hold true. For each $(\varepsilon,\varepsilon_0) \in (0,\infty) \x (0,\infty)$, there exists $\overline{n} \ge 1$ s.t. for each $n \ge \overline{n}$,  $\Xi^{\rm dist}_n[\varepsilon,\varepsilon_{0}]$ is non--empty, and we have
    \begin{align*}
        \lim_{\varepsilon \to 0}\lim_{ \varepsilon_0 \to 0}\lim_{n \to \infty}V_{n,\Pr}[\varepsilon,\varepsilon_{0}]=\lim_{\varepsilon \to 0}\lim_{ \varepsilon_0 \to 0}\lim_{n \to \infty}V^{\rm dist}_{n,\Pr}[\varepsilon,\varepsilon_{0}]=V_{\Pr}.
    \end{align*}
    Also, for each $\delta>0$, there exist $n \ge 1$, $(\varepsilon_{n}, \varepsilon_{0,n}) \in (0,\infty) \x (0,\infty)$, $\alphab^n$ and $\Cf^n$ s.t. $\Cf^n$ is a distributed contract, $\alphab^n \in \underline{\rm NE}^{\rm dist}[\Cf^n,\varepsilon_n,\varepsilon_{0,n}]$ and $V_{n,\Pr}[\varepsilon_n,\varepsilon_{0,n}] \le J_{n,\Pr}^{{\color{black}\alphab^n}}(\Cf^n) + \delta$.
\end{corollary}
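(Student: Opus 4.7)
The plan is to sandwich $V_{n,\Pr}[\varepsilon,\varepsilon_0]$ and $V^{\rm dist}_{n,\Pr}[\varepsilon,\varepsilon_0]$ between $V_{\Pr}$ by using the two halves of \Cref{thm:conv:n-player}. Since $\Xi^{\rm dist}_n[\varepsilon,\varepsilon_0] \subset \Xi_n[\varepsilon,\varepsilon_0]$ and $\underline{\rm NE}^{\rm dist}[\Cf^n,\varepsilon,\varepsilon_0] \subset \underline{\rm NE}[\Cf^n,\varepsilon,\varepsilon_0]$, the inequality $V^{\rm dist}_{n,\Pr}[\varepsilon,\varepsilon_0] \le V_{n,\Pr}[\varepsilon,\varepsilon_0]$ is trivial, so it is enough to prove (i) $\Xi^{\rm dist}_n[\varepsilon,\varepsilon_0] \neq \emptyset$ for $n$ large together with $\liminf_n V^{\rm dist}_{n,\Pr}[\varepsilon,\varepsilon_0] \ge V_{\Pr}$, and (ii) the iterated upper bound $\lim_{\varepsilon \downarrow 0}\lim_{\varepsilon_0 \downarrow 0} \limsup_n V_{n,\Pr}[\varepsilon,\varepsilon_0] \le V_{\Pr}$; the two combine into the claimed triple equality.

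For (i) I first invoke \Cref{thm:existence_contract} to secure an optimizer $\Cf^\star \in \Xi$ and $(\alpha^\star, \mub^\star) \in \underline{\rm MFG}[\Cf^\star]$ with $J_{\Pr}^{\alpha^\star,\mub^\star}(\Cf^\star) = V_{\Pr}$, and I then apply the second bullet of \Cref{thm:conv:n-player} to the triple $(\Cf^\star, \alpha^\star, \mub^\star)$. This produces a vanishing sequence $(\tilde\varepsilon_n)_{n \ge 1}$, distributed contracts $\Cf^n$ and strategies $\alphab^n \in {\rm NE}^{\rm dist}[\Cf^n,\tilde\varepsilon_n]$ along which $\tfrac{1}{n}\sum_{i=1}^n J^{\Cf^n}_{n,i}(\alphab^n) \to J^{\Cf^\star}_{A,\mub^\star}(\alpha^\star) \ge R_0$ and $J_{n,\Pr}^{\alphab^n}(\Cf^n) \to V_{\Pr}$. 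Because $\Cf^n$ is distributed and $\alphab^n \in \Dc_n$, the joint law of $(X^i, \alpha^{i,n}(t,\Xbb))_{i=1}^n$ is exchangeable under $\P$, so the individual rewards $J^{\Cf^n}_{n,i}(\alphab^n)$ do not depend on $i$; the averaged convergence then transfers to each $i$ and gives $J^{\Cf^n}_{n,i}(\alphab^n) \ge R_0 - \varepsilon_0$ uniformly for all $n$ large. Combined with $\tilde\varepsilon_n < \varepsilon$, this places $\alphab^n$ in $\underline{\rm NE}^{\rm dist}[\Cf^n,\varepsilon,\varepsilon_0]$, settling the non--emptiness of $\Xi^{\rm dist}_n[\varepsilon,\varepsilon_0]$ for $n \ge \overline{n}$ and yielding $\liminf_n V^{\rm dist}_{n,\Pr}[\varepsilon,\varepsilon_0] \ge V_{\Pr}$.

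For (ii) I select near--maximizers $\Cf^n \in \Xi_n[\varepsilon,\varepsilon_0]$ and $\alphab^n \in \underline{\rm NE}[\Cf^n,\varepsilon,\varepsilon_0]$ with $J_{n,\Pr}^{\alphab^n}(\Cf^n) \ge V_{n,\Pr}[\varepsilon,\varepsilon_0] - 1/n$. Since the Nash slack is a fixed $\varepsilon > 0$ rather than a vanishing sequence, I appeal to the relaxed--control compactness underlying the first bullet of \Cref{thm:conv:n-player} (as alluded to in its subsequent remark and using \Cref{assum:convexity}): along a subsequence one obtains a limit contract $\Cf$ and a pair $(\alpha,\mub) \in \mbox{\rm MFG}[\Cf,\varepsilon]$ with $\limsup_n J_{n,\Pr}^{\alphab^n}(\Cf^n) \le J_{\Pr}^{\alpha,\mub}(\Cf)$, while the averaged reservation inequality passes to the limit as $J^{\Cf}_{A,\mub}(\alpha) \ge R_0 - \varepsilon_0$. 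Sending $\varepsilon_0 \downarrow 0$ and then $\varepsilon \downarrow 0$ through a further diagonal extraction promotes $(\alpha,\mub)$ to an element of $\underline{\rm MFG}[\Cf]$, so $\Cf \in \Xi$ and $J_{\Pr}^{\alpha,\mub}(\Cf) \le V_{\Pr}$, which closes the sandwich. The final approximation assertion for arbitrary $\delta > 0$ is a single--triple readout of the lower--bound construction above, with $(\varepsilon_n, \varepsilon_{0,n}) \downarrow 0$ and $n$ large enough that $J_{n,\Pr}^{\alphab^n}(\Cf^n) \ge V_{\Pr} - \delta$.

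The main technical obstacle is not the compactness--plus--limit machinery, which is packaged inside \Cref{thm:conv:n-player}, but rather the promotion of the averaged reservation constraint $\tfrac{1}{n}\sum_i J^{\Cf^n}_{n,i}(\alphab^n) \ge R_0 - \varepsilon_0$ to individual constraints. The exchangeability of the distributed construction handles this cleanly in the lower bound, but on the upper--bound side only the averaged form is available at finite $n$ and becomes an individual inequality only in the representative--Agent limit; this is precisely why the outer $\varepsilon_0 \downarrow 0$ limit is needed to land inside $\underline{\rm MFG}[\Cf]$ and the outer $\varepsilon \downarrow 0$ limit is needed to turn the approximate MFG solution into an exact one.
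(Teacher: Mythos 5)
Your sandwiching strategy — lower bound via the constructive second bullet of \Cref{thm:conv:n-player} with exchangeability handling the individual reservation constraints, upper bound via compactness, then combine — is the same as the paper's, and most of the details match. Two points, however, deviate from the paper's route in ways that leave small gaps.

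First, on the upper bound: the first bullet of \Cref{thm:conv:n-player} (and the underlying \Cref{prop:convegence_n-player}) is stated only for a Nash slack $\varepsilon_n \to 0$, so for fixed $\varepsilon>0$ you cannot apply it to the inner limit $n\to\infty$; you would indeed land in $\mbox{\rm MFG}[\Cf,\varepsilon]$, but that requires re-running the proof of \Cref{prop:convegence_n-player} with a constant $\varepsilon$, which is not a statement in the paper. You then need a second compactness step (essentially \Cref{thm:convergence_limit-contract}, first bullet) to push $\varepsilon,\varepsilon_0\to 0$; you gesture at a ``further diagonal extraction'' but do not carry it out. The paper avoids all this by extracting a single joint subsequence $(n_j,\varepsilon_j,\varepsilon_{0,j})$ with $\varepsilon_j,\varepsilon_{0,j}\to 0$ simultaneously as $n_j\to\infty$, so that $\alphab^{n_j}\in\mbox{\rm NE}[\Cf^{n_j},\varepsilon_j]$ satisfies the literal hypotheses of \Cref{thm:conv:n-player}; this one-shot diagonalisation is cleaner and should replace your nested-limits argument.

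Second, on the lower bound: you invoke \Cref{thm:existence_contract} to fix an exact optimizer $\Cf^\star$. That theorem needs \Cref{assum:convexity}, which is not among the stated hypotheses of the corollary (only \Cref{assum:main1} appears), so this is an over-reach; the paper instead takes an arbitrary $\Cf\in\Xi$ with $(\alpha,\mub)\in\underline{\rm MFG}[\Cf]$ and feeds it into the second bullet of \Cref{thm:conv:n-player}, which suffices to produce the lower bound $J^{\alpha,\mub}_{\Pr}(\Cf)\le\liminf_n V_{n,\Pr}^{\rm dist}$ for every such $\Cf$, hence $V_{\Pr}$ as supremum. (In fairness, the upper bound needs \Cref{assum:convexity} through \Cref{thm:conv:n-player} anyway, so the assumption is implicitly present; still, the arbitrary-admissible-contract route is the one the paper takes and avoids appealing to existence.) The rest of your argument — exchangeability forcing $J^{\Cf^n}_{n,i}(\alphab^n)$ to be $i$-independent under a distributed contract, hence the average constraint becoming individual — matches the paper's use of $\varepsilon_{0,n}:=|J^{\Cf^n}_{n,1}(\alphab^n)-J^{\Cf}_{A,\mub}(\alpha)|$ and is fine.
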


\begin{remark}
    It is worth emphasizing that the previous result point out the fact that when the number of Agents $n$ is large enough, considering distributed contracts is equivalent to considering contracts of a general type for the Principal when he faces $n$ Agents in competition.
\end{remark}

\section{Proof of the main results} \label{sec:proofs}

This whole section is dedicated to the proofs of our main results. We start in \Cref{sec:reformulation}  by reformulating our problem in an adequate setup for the proofs. Then, in \Cref{sec:Nash_to_MFG}, we give the proofs of the characterization of the limits of sequence of approximate Nash equilibria and approximate MFG solutions. In \Cref{sec:MFG_to_Nash}, we show how to construct a sequence of approximate Nash equilibria from a MFG solution. Next, we show why the set of MFG solutions considered in this paper is non--empty in \Cref{sec:existence_MFG}. Finally, \Cref{sec:proofs_main} presents the actual proofs of the main Theorem/Corollary stated in the paper.

\subsection{Reformulation of the MFG} \label{sec:reformulation}

  Denote by $\M := \M \left(\Pc_A \x \Er \right)$ the collection of all finite (Borel) measures $q(\mathrm{d}t, \mathrm{d}m, \mathrm{d}e)$ on $\Pc_A \x \Er \x [0,T]$, 
	whose marginal distribution on $[0,T]$ is the Lebesgue measure $\mathrm{d}s$ 
	i.e., $q(\mathrm{d}m,\mathrm{d}e,\mathrm{d}s)=q(s,\mathrm{d}m,\mathrm{d}e)\mathrm{d}s$ for a measurable family $(q(s, \mathrm{d}m,\mathrm{d}e))_{s \in [0,T]}$ of Borel probability measures on $\Pc_A \x \Er.$
	For $\Lambda$ the canonical element on $\M$ and the canonical filtration $\F^\Lambda = (\Fc^\Lambda_t)_{0 \le t \le T}$ on $\M$ defined by
	$
		\Fc^\Lambda_t := \sigma \big\{ \Lambda(C \x [0,s]) ~: \forall s \le t,\;C \in \Bc(\Pc_A \x \Er) \big\},
	$
	we have the disintegration property: $q(\mathrm{d}m, \mathrm{d}e, \mathrm{d}t) = q(t, \mathrm{d}m, \mathrm{d}e) \mathrm{d}t$, with a version of the disintegration
	such that $(t, q) \mapsto q(t, \mathrm{d}m, \mathrm{d}e)$ is $\F^\Lambda$--predictable.
	

    \vspace{4mm}
    \noindent
    We introduce the canonical space $\Omb:=\Cc \x \Cc \x \Cc \x \M \x \Cc_{\Wc}$ and its associated canonical variables $(X',W,B,R,\mu).$ Then, the canonical filtration $\Fb = (\Fcb_t)_{t \in [0,T]}$ is defined by: for all $t \in [0,T]$,  
    $$ 
        \Fcb_t
        :=
        \sigma 
        \big\{X'_{t \wedge \cdot},W_{t \wedge \cdot},B_{t \wedge \cdot},  R_{t \wedge \cdot}, \mu_{t \wedge \cdot} \big\} 
    $$
    with $R_{t \wedge \cdot}$ denotes the restriction of $R$ on $\Pc_A \x \Er \x [0,t]$ (see definition \ref{eq:lambda}). Notice that we can choose a version of the disintegration  $R(\mathrm{d}m,\mathrm{d}e,\mathrm{d}t)=R_t(\mathrm{d}m,\mathrm{d}e)\mathrm{d}t$ with $(R_t)_{t \in [0,T]}$ is a $\Pc_A \x \Er$--valued $\Fb$--predictable process.
    Let us also introduce the \underline{common noise} filtration $\Gb:=(\Gcb_t)_{t \in [0,T]}$ by: for all $t \in [0,T]$,
    $$ 
        \Gcb_t
        :=
        \sigma \big\{B_{t \wedge \cdot},  R_{t \wedge \cdot}, \mu_{t \wedge \cdot} \big\}.
    $$

\begin{definition}[control rule] \label{def:RelaxedCcontrol}
    We say that $\mathrm{P} \in \Pc(\Omb)$ belongs to $\Pcb$ i.e. a control rule if:
    \begin{enumerate}
        \item[$(i)$] $(W,B)$ is an $\R^d \x \R^d$--valued $\Pr$--Brownian motion and the distribution of $X'_0$ under $\Pr$ is $\nu$

        \item[$(ii)$] There exists a progressively Borel measurable map $\Lambda':[0,T] \x \R^d \x \M \to \Pc(A)$ such that the tuple $(X',W,B,R,\mu)$ satisfies: $\mathrm{d}\Pr \otimes \mathrm{d}t$--a.e. $(\om,t)$, $R_t\left(\left\{(m,e):\;\mu_t=m(\mathrm{d}x,A) \right\} \right)=1$ and
        \begin{align} \label{eq:process_con trolled}
            \mathrm{d}X'_t
            =
            \int_{\Pc_A \x \Er} \overline{b}\left(t,m,e \right)  R_t(\mathrm{d}m,\mathrm{d}e)
            +
            \int_{A} b\left(t,X_t',\mu,a \right) \Lambda'\left(t,X'_t,R\right)(\mathrm{d}a) \;\mathrm{d}t 
            +
            \sigma(t,X_t')\;\mathrm{d}W_t
            +
            \sigma_0\; \mathrm{d}B_t\;\;\Pr\mbox{--a.e.}
        \end{align}

        \item[$(iii)$] 
        In addition, 
        the variables $(B,R,\mu)$, W and $X'_0$ are $\Pr$--independent.
    \end{enumerate}

\end{definition}

\medskip
For Polish spaces $\Theta$ and $\Theta'$, we will say that $\Kr$ belongs to $\Kc(\Theta,\Theta')$ if $\Kr$ is a Borel map from $\Theta$ to $\Pc(\Theta')$. If for $\Kr \in \Kc(\Theta,\Theta')$, we have $\Kr(\theta)=\delta_{\kappa(\theta)}(\mathrm{d}i)$ for some Borel map $\kappa:\Theta \to \Theta'$, we will say $\Kr \in \Kc_{\rm 0}(\Theta,\Theta')$. For any $(\gamma',\pi',\rr,\pi, \Phi , \mathfrak{Z}) \in  \M(\R^d \x A) \x \Cc_{\Wc} \x \M \x \Cc_{\Wc} \x \Kc(\R,\Ir) \x \Kc(\M,\Er)$, one defines
\begin{align*}
    &{\rm J}^{\Phi , \mathfrak{Z}}_A(\gamma',\pi',\rr,\pi)
    \\
        &:=
        \int_0^T  \langle \overline{L}\left(t,\cdot,\cdot \right),\rr(t) \rangle
        +
        \langle L\left(t, \cdot,\pi,\cdot \right), \gamma'(t) \rangle\;\;\mathrm{d}t +
         \int_{\R \x \Ir} \overline{g}(x,i) \Phi(x)(\mathrm{d}i)\pi'(T)(\mathrm{d}x)
         + \int_{\Er}g(\pi,e)\mathfrak{Z}(\rr)(\mathrm{d}e).
\end{align*}
For any $\Pr \in \Pc(\Omb)$, we set, $\Pr$--a.e., $\mu'_t:=\Lc(X_t'|\Gcb_t)\;\mbox{for all}\;t \in [0,T]$ and when $\Pr \in \Pcb$,   
\begin{align*}
    \Gamma'
    :=
    \E^{\Pr} \left[ \delta_{X_t'}(\mathrm{d}x) \Lambda'\left(t,X'_t,R\right)(\mathrm{d}a) | \Gcb_t\right] \mathrm{d}t.
\end{align*}

\begin{remark} \label{rm:proba_equivalent}
        $(i)$ To make an analogy with the definition given in {\rm \Cref{section:limitproblem}}, the canonical variables $(X',W,B,R,\mu)$ will play the $``$role$"$ of the variables  $(X^\beta,W,B,R,\mu)$ on $(\Om,\H,\P)$ for some $\beta \in \Ac$.

\medskip
    $(ii)$ Notice that, for any $\Pr \in \Pcb$, since $\mathrm{d}\Pr \otimes \mathrm{d}t$--a.e. $(\om,t)$, $R_t\left(\left\{(m,e):\;\mu_t=m(\mathrm{d}x,A) \right\} \right)=1$, we can see that $\mu_t$ is a Borel map of $R_t$.

\medskip    
        $(iii)$ For $\Pr \in \Pc(\Omb)$, we will \underline{abuse the terminology} and say that $\Pr \in \Pcb$ if we can find $\Pr' \in \Pcb$ s.t. $\Pr \circ \left ( \mu', X_0',W,B,R, \mu \right)^{-1}=\Pr' \circ \left ( \mu', X_0',W,B,R, \mu \right)^{-1}$. In other words, our variables of interest are $(\mu', X_0',W,B,R, \mu)$. The process $X'$ is here for simplifying the presentation.


\end{remark}

For any $\varphi \in C_b(\R^d)$ and $(t,x,\pi,a)$, in order to accommodate the notation, we write $(\varphi b)(t,x,\pi,a)$ instead of $\varphi(x) b(t,x,\pi,a)$.

\begin{definition}{\rm (MFG solution)}
     For each $\varepsilon \ge 0$, we say that $\Pr^\star \in \Pcb$ is an $\varepsilon$--{\rm MFG} solution if: $\mathrm{d}\Pr^\star \otimes \mathrm{d}t$--a.e., $\mu_t=\mu_t'=\Lc^{\Pr^\star}(X'_t|\Gcb_T)$, for any $\varphi \in C_b(\R^d)$, 
        \begin{align} \label{eq:consistency}
            \E^{\Pr^\star}\left[  \int_A\left(\varphi b,\;\;L \right)\left(t,X'_t,\mu,a \right) \Lambda'(t,X'_t,R)(\mathrm{d}a) |\Gcb_T \right]
            =
            \int_{\Pc_A} \int_{\R^d \x A} \left(\varphi b,\;\;L\right)\left(t,x, \mu,a \right) m(\mathrm{d}x,\mathrm{d}a) R_t(\mathrm{d}m,\Er),
        \end{align}
        and there exists $(\Phi,\mathfrak{Z}) \in \Kc(\R,\Ir) \x \Kc(\M,\Er)$ s.t. 
        for every $\mathrm{P} \in \Pcb$ verifying $\Lc^{\mathrm{P}^{\star}}\big(B,R\big)=\Lc^{\mathrm{P}}\big(B,R\big)$, 
        we have
        \begin{align} \label{eq:optimality-relaxed}
            \E^{\mathrm{P}^{\star}}\big[{\rm J}^{\Phi,\Zf}_A\left(\Gamma',\mu',R,\mu \right) \big] \ge \E^{\mathrm{P}} \big[{\rm J}^{\Phi,\Zf}_A\left(\Gamma',\mu',R,\mu \right) \big]-\varepsilon.
        \end{align}
\end{definition}
We will say $\Pr^\star$ is an $\varepsilon$--MFG solution associated to $(\Phi,\Zf)$.
We denote by $\Pcb_{\rm \varepsilon\mbox{-}mfg}$ the set of all MFG solutions. When $\varepsilon=0$, we will simply write $\Pcb_{\rm mfg}$.

\medskip
We now introduce $\Pcb_{0,\;\rm \varepsilon\mbox{-}mfg}$ by
\begin{align*}
    &\Pcb_{0,\;\rm \varepsilon\mbox{-}mfg}
    \\
    &:=\Big\{
        \Pr \in \Pcb_{\rm \varepsilon\mbox{-}mfg},\;\exists\;(\overline{m}_t,\overline{e}_t)_{t \in [0,T]}\;\mbox{and}\;\alpha\;\mbox{s.t.}\;\P\mbox{--a.e.}\;R=\delta_{(\overline{m}_t,\overline{e}_t)}(\mathrm{d}m,\mathrm{d}e)\mathrm{d}t,
        \\        &~~~~~~~~~~~~~~~~~~~~~~~~~~~~~~~~~~~~~~~~~~~~~~~~\;\Lambda'(t,X_t',R)(\mathrm{d}a)\mathrm{d}t=\delta_{\alpha(t,X_t',\mu,\overline{m}_t)}(\mathrm{d}a)\mathrm{d}t\;\mbox{and}\;(\Phi,\mathfrak{Z}) \in \Kc_{\rm 0}(\R,\Ir) \x \Kc_{\rm 0}(\M,\Er)
    \Big\}.
\end{align*}

\begin{remark} \label{rm:measurable_B}
    Let us observe that: by taking the conditional expectation w.r.t. $\Gcb_T$ in \eqref{eq:process_con trolled}, for $\Pr \in \Pcb_{\rm \varepsilon\mbox{-}mfg}$, $\Pr$--a.e., for all $t \in [0,T]$,
    \begin{align*}
        \sigma_0 B_t=\int_{\R^d} x \mu_t(\mathrm{d}x) - \int_{\R^d} x \nu(\mathrm{d}x) - \int_0^t\;\int_{\Pc_A \x \Er} \left[\overline{b}\left(r,m,e \right)  
            +
            \int_{\R^d \x A} b\left(r,x,\mu,a \right) m(\mathrm{d}x,\mathrm{d}a) \right]\;R_r(\mathrm{d}m,\mathrm{d}e)\mathrm{d}r.
    \end{align*}
    Since $\sigma_0$ is invertible, similarly to {\rm \cite[Lemma 3.4.]{closed-loop-MFG_MDF}},  there exists a progressively Borel measurable map $[0,T] \x \M \x C([0,T];\Pc_p(\R^d)) \ni (t,\rr,\pi)\mapsto \varphi(t,\rr,\pi) \in \R^d$ continuous in all variables s.t. for any $\Pr \in \Pcb_{\rm \varepsilon\mbox{-}mfg}$, one has, for all $t \in [0,T]$, $B_t=\varphi\left(t,R,\mu \right)$,  $\Pr$--a.e. Furthermore, since $\mu_t$ is a Borel map of $R_t$, the Brownian motion $B$ can be seen as a progressively Borel map of $R$. 
\end{remark}

For each $(\rr,\pi)$, we introduce
\begin{align*}
    \Jr_{\Pr}^{\rr,\pi} \left(\Phi,\Zf \right)
    :=
    \int_{\Er}U \left(\int_{\R^d} \Upsilon(x) \;\pi(T)(\mathrm{d}x) - \int_{\Ir \x \R^d} g_{\Pr} \left(x,i \right) \Phi(x)(\mathrm{d}i)\pi(T)(\mathrm{d}x) - \overline{g}_{\Pr} \left(\pi,e \right) - \int_0^T \langle \overline{L}_{\Pr}(t,\cdot,\cdot), \rr(t) \rangle \; \mathrm{d}t \right)\Zf(\rr)(\mathrm{d}e).
\end{align*}

Simply put, the next Proposition shows that, when {\rm\Cref{assum:convexity}} is satisfied, we can use the set $\Pcb_{0,\;\rm \varepsilon\mbox{-}mfg}$ instead of $\Pcb_{\rm \varepsilon\mbox{-}mfg}$.
\begin{proposition} \label{prop:relaxed_to_weak}
    Under {\rm\Cref{assum:convexity}}, for any $\Pr^\star \in \Pcb_{\rm \varepsilon\mbox{-}mfg}$ associated to $(\Phi,\Zf)$, there exists  $\widetilde{\Pr}^\star \in \Pcb_{0,\;\rm \varepsilon\mbox{-}mfg}$ associated to $(\widetilde{\Phi},\widetilde{\Zf})$ s.t.
\begin{align*}                          \E^{{\Pr}^\star}\big[{\rm J}^{\Phi,\Zf}_A(\Gamma',\mu',R,\mu) \big] \le \E^{\widetilde{\Pr}^\star} \big[{\rm J}^{\tilde{\Phi},\tilde{\Zf}}_A(\Gamma',\mu',R,\mu) \big]\;\;\mbox{and}\;\;\E^{{\Pr}^\star}\big[{\rm J}^{R,\mu}_{\Pr}(\Phi,\Zf) \big] \le \E^{\widetilde{\Pr}^\star} \big[{\rm J}^{R,\mu}_{\Pr}(\widetilde{\Phi},\widetilde{\Zf}) \big].
\end{align*}
\end{proposition}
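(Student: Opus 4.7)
The strategy is to transform $\Pr^\star$ by three successive measurable selections, one per closed-convex hypothesis in Assumption \ref{assum:convexity}, in the spirit of Filippov--Roxin relaxation theorems. Each step preserves or strictly improves both rewards, and after all three the ``relaxed'' ingredients of $\Pr^\star$ --- the kernel $R$, the relaxed action kernel $\Lambda'$, and the contract kernels $(\Phi,\Zf)$ --- become Dirac.

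We first apply measurable selection to $R_t(\omega)$ on the second closed convex set of Assumption \ref{assum:convexity} to obtain an $\Fb$-predictable pair $(\overline{m}_t,\overline{e}_t)$ with $\R^d$-marginal of $\overline{m}_t$ equal to $\mu_t$ and satisfying
\begin{align*}
    \overline{b}(t,[\mu_t,\overline{m}_t],\overline{e}_t) &= \int \overline{b}(t,m,e)\,R_t(\mathrm{d}m,\mathrm{d}e), &
    K_{b,\mu,\overline{m}_t} &= \int K_{b,\mu,m}\,R_t, \\
    \overline{L}(t,[\mu_t,\overline{m}_t],\overline{e}_t) &\ge \int \overline{L}\,R_t, &
    \overline{L}_{\Pr}(t,[\mu_t,\overline{m}_t],\overline{e}_t) &\le \int \overline{L}_{\Pr}\,R_t,
\end{align*}
together with $\langle L(t,\cdot,\mu,\cdot),[\mu_t,\overline{m}_t]\rangle \ge \int \langle L(t,\cdot,\mu,\cdot),[\mu_t,\cdot]\rangle\,R_t$. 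Setting $\widetilde{R}:=\delta_{(\overline{m}_t,\overline{e}_t)}(\mathrm{d}m,\mathrm{d}e)\mathrm{d}t$, the equality of the $\overline{b}$- and $K_b$-barycenters preserves the drift of the $X'$-SDE, and since $B$ depends on $(R,\mu)$ only through these same barycenters (Remark \ref{rm:measurable_B}), $B$ remains a Brownian motion under the new law and the consistency $\mu_t=\Lc(X'_t|\Gcb_T)$ persists.

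A second measurable selection on the first closed convex set of Assumption \ref{assum:convexity} converts $\Lambda'$ into a strict Borel control $\widetilde{\alpha}(t,x,\mu,\overline{m}_t)$ verifying $b(t,x,\mu,\widetilde{\alpha})=\int b\,\Lambda'$ and $L(t,x,\mu,\widetilde{\alpha})\ge \int L\,\Lambda'$: the SDE is again preserved and the Agent's running reward only increases. A third measurable selection on the remaining two convex sets yields Borel maps $\widetilde{\Phi}:\R^d\to\Ir$ and $\widetilde{\Zf}:\M\to\Er$ with $g(x,\widetilde{\Phi}(x))=\int g\,\Phi$, $g_{\Pr}(x,\widetilde{\Phi}(x))\le \int g_{\Pr}\,\Phi$, and the analogues for $\widetilde{\Zf}$. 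Denoting by $\widetilde{\Pr}^\star$ the law of the resulting canonical variables and associating it with $(\widetilde{\Phi},\widetilde{\Zf})$, summing the three groups of inequalities gives the Agent inequality $\E^{\widetilde{\Pr}^\star}[\Jr^{\widetilde{\Phi},\widetilde{\Zf}}_A] \ge \E^{\Pr^\star}[\Jr^{\Phi,\Zf}_A]$. For the Principal, the monotonicity and concavity of $U$, combined with Jensen applied to push the $e$-integral against the kernel $\Zf(\rr)(\mathrm{d}e)$ inside $U$, together with the inequalities on $-g_{\Pr}$, $-\overline{g}_{\Pr}$, $-\overline{L}_{\Pr}$, yield $\E^{\widetilde{\Pr}^\star}[\Jr_\Pr^{\widetilde{R},\mu}(\widetilde{\Phi},\widetilde{\Zf})]\ge \E^{\Pr^\star}[\Jr_\Pr^{R,\mu}(\Phi,\Zf)]$.

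The main obstacle is to verify that $\widetilde{\Pr}^\star$ is itself an $\varepsilon$-MFG solution, i.e.\ that $\widetilde{\alpha}$ remains $\varepsilon$-optimal against every $\Pr\in\Pcb$ with $\Lc^\Pr(B,\widetilde{R})=\Lc^{\widetilde{\Pr}^\star}(B,\widetilde{R})$. The idea is to lift any such competitor $\Pr$ to a companion $\Pr'\in\Pcb$ with $\Lc^{\Pr'}(B,R)=\Lc^{\Pr^\star}(B,R)$ by using the reverse inclusion of $\widetilde{R}$ (and the competing action) into the original relaxed convex set; the Agent reward under $\Pr'$ dominates that under $\Pr$ by the same barycenter inequalities, so the $\varepsilon$-optimality of $\Pr^\star$ in its own test class transfers to $\widetilde{\Pr}^\star$ in the new one. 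Joint measurability of all selections in $(\omega,t,x)$ follows routinely from the continuity hypotheses in Assumption \ref{assum:main1} combined with standard measurable-selection arguments (Kuratowski--Ryll-Nardzewski type).
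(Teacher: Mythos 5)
Your proposal follows essentially the same Filippov--Roxin route as the paper: measurable selections on the three closed convex sets of \Cref{assum:convexity} (applied via the paper's \Cref{prop:projection}) to collapse $R$, $\Lambda'$, and the contract kernels $(\Phi,\Zf)$ into Diracs, preservation of the controlled drift and of the consistency condition through the barycenter equalities in $(\overline b, K_b)$, monotonicity and concavity of $U$ for the Principal's side, and finally a lift of any competitor of $\widetilde{\Pr}^\star$ back to a competitor of $\Pr^\star$ to transfer $\varepsilon$-optimality. One small slip to correct: in your first selection you write $\overline L(t,[\mu_t,\overline m_t],\overline e_t)\ge\int\overline L\,R_t$, but since $(\overline b,\overline L)$ occupies an unconstrained coordinate of the convex set (only $z$ and $\overline z_\Pr$ are inequality slots), the selection produces an \emph{equality} for $\overline L$ --- and this equality, rather than an inequality, is precisely what the lift step in the last paragraph relies on to get $\E^{\widetilde{\Pr}}\big[{\rm J}^{\tilde\Phi,\tilde\Zf}_A\big]=\E^{\Pr}\big[{\rm J}^{\Phi,\Zf}_A\big]$ for the lifted competitor.
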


\begin{proof}
    Let $\mathrm{P}^\star \in \Pcb_{\rm \varepsilon\mbox{-}mfg}$. We use the same ideas of \citeauthor*{Filippov1962}\cite{Filippov1962} and \citeauthor*{Roxin1962}\cite{Roxin1962}. Under \Cref{assum:convexity}, by applying \Cref{prop:projection} and using the fact that $\mu_t$ is a Borel map of $R_t$, first, we can find progressively Borel measurable maps $\overline{m}:[0,T] \x \M \to \Pc_A$ and $\overline{e}:[0,T] \x \M \to \Er$ s.t. $\mathrm{d}\Pr^\star \otimes \mathrm{d}t$--a.e. $\overline{m}(t,R)(\mathrm{d}x,A)=\mu_t$,
    \begin{align*}
        \int_{\Pc_A \x \Er} \overline{L}(t,m,e)R_t(\mathrm{d}m,\mathrm{d}e)
        =
        \overline{L}\left(t,\overline{m}(t,R),\overline{e}\left(t,R \right) \right),\;\int_{\Pc_A \x \Er} \langle L\;(t,\cdot,\mu,\cdot),m \rangle R_t(\mathrm{d}m,\mathrm{d}e)
        \le
        \langle L\;(t,\cdot,\mu,\cdot),\overline{m}(t,R) \rangle,
    \end{align*}
    \begin{align*}
        -\int_{\Pc_A \x \Er} \overline{L}_{\Pr}(t,m,e)R_t(\mathrm{d}m,\mathrm{d}e)
        \le
        -\overline{L}_{\Pr}\left(t,\overline{m}(t,R),\overline{e}\left(t,R \right) \right)
    \end{align*}
    and for $\varphi \in C_c(\R)$
    \begin{align} \label{eq:equality_convex_1}
        \int_{\Pc_A \x \Er} \left(\overline{b}(t,m,e),\;\langle \varphi b\;(t,\cdot,\mu,\cdot),m \rangle \right)R_t(\mathrm{d}m,\mathrm{d}e)
        =
        \left(\overline{b}\left(t,\overline{m}(t,R),\overline{e}\left(t,R\right) \right),\; \langle \varphi b\;(t,\cdot,\mu,\cdot),\overline{m}(t,R) \rangle \right).
    \end{align}
    Second, let $\overline{R}:=\delta_{\left( \overline{m}(t,R), \overline{e}(t,R) \right)}(\mathrm{d}m,\mathrm{d}e)\mathrm{d}t$, we get Borel maps $\R \x \M \ni (x,\rr) \mapsto (\phi(x),z(\rr)) \in \Ir \x \Er  $ s.t. 
    $$
        \E^{\Pr} \left[\int_{\Er} \overline{g}(\mu,e))\Zf(R)(\mathrm{d}e) | \overline{R} \right] = \overline{g}(\mu,z\left(\overline{R} \right))\;\;\mbox{and}\;\;\int_{\Ir} g(x,i))\Phi(x)(\mathrm{d}i) = g(x,\phi\left(x \right)),
    $$
    $$
        \E^{\Pr} \left[-\int_{\Er} \overline{g}_{\Pr}(\mu,e))\Zf(R)(\mathrm{d}e) | \overline{R} \right] \le -\overline{g}_{\Pr}(\mu,z\left(\overline{R} \right))\;\;\mbox{and}\;\;-\int_{\Ir} g_{\Pr}(x,i))\Phi(x)(\mathrm{d}i) \le -g_{\Pr}(x,\phi\left(x \right)).
    $$
    Notice that, since the utility map $U:\R^d \to \R$ is non--decreasing and concave, we have, $\Pr$--a.e.
    \begin{align*}
        \E^{\Pr} \left[\Jr_{\Pr}^{R,\mu} \left(\Phi,\Zf \right) | \overline{R} \right] \le U \left(\int_{\R^d} \Upsilon(x) \;\mu_T(\mathrm{d}x) - \int_{\R^d} g_{\Pr} \left(x,\phi(x) \right) \mu_T(\mathrm{d}x) - \overline{g}_{\Pr} \left(\pi,z(\overline{R}) \right) - \int_0^T \overline{L}_{\Pr} \left(t,\overline{m}(t,R), \overline{e}(t,R) \right) \; \mathrm{d}t \right).
    \end{align*}
    
    Lastly, by the same \Cref{prop:projection}, there exists a Borel progressively measurable $\alpha: [0,T] \x \R^d \x \Cc_{\Wc} \x \Pc_A \to A$ s.t. $\mathrm{d}\Pr^\star \otimes \mathrm{d}t$--a.e. for $\varphi \in C_c(\R)$
    \begin{align} \label{eq:equality_convex_2}
        &\E^{\Pr^\star} \left[\int_{A} \varphi(X_t')b\left(t,X'_t,\mu,a \right) \Lambda'\left(t,X_t',R\right)(\mathrm{d}a) |\Gcb_t\right]
        =
        \int_{\Pc_A}\int_{\R^d \x A} \varphi(x) b(t,x,\mu,a)m(\mathrm{d}x,\mathrm{d}a) R_t(\mathrm{d}m,\Er) \nonumber
        \\
        &=
        \langle \varphi b\;(t,\cdot,\mu,\cdot),\overline{m}(t,R) \rangle
        =\int_{\R^d}\varphi(x)b\left(t,x,\mu,\alpha\left(t,x,\mu,\overline{m}(t,R)\right) \right) \mu_t(\mathrm{d}x)
    \end{align}
    and
    \begin{align*}
        \left\langle L\left(t,X'_t,\mu,\cdot \right),\; \Lambda'\left(t,X_t',R\right) \right\rangle
        \le L\left(t,X'_t,\mu,\alpha\left(t,X'_t,\mu,\overline{m}(t,R)\right) \right).
    \end{align*}
    We define $X^\alpha$ the weak solution of: $\Pr^\star$--a.e., $X^\alpha_0=X'_0$ and 
    \begin{align*}
        \mathrm{d}X^\alpha_t
            =
            \overline{b}\left(t,\overline{m}(t,R),\overline{e}\left(t,R \right)\right)\mathrm{d}t
            +
            b\left(t,X^\alpha_t,\mu,\alpha(t,X^\alpha_t,\mu,\overline{m}(t,R))\right)\mathrm{d}t + \sigma(t,X^\alpha_t)\mathrm{d}W_t + \sigma_0\mathrm{d}B_t.
    \end{align*}
    We set $\mut_t:=\Lc^{\Pr^\star}(X^\alpha_t|R,B)$, $\mub_t:=\overline{m}(t,R)$, $\Pr^\star$--a.e. for all $t \in [0,T]$, $R^\alpha:=\delta_{\left(\mub_t,\;\overline{e}_t \right)}(\mathrm{d}m,\mathrm{d}e)\mathrm{d}t$ and $\overline{e}_t:=\overline{e}(t,R)$. Thanks to the equalities in \eqref{eq:equality_convex_1} and \eqref{eq:equality_convex_2}, we can check that $\mut=\mu$ $\Pr^\star$--a.e. Let us set $\widetilde{\Phi}(x)(\mathrm{d}i):=\delta_{\phi(x)}(\mathrm{d}i)$,  $\widetilde{\Zf}(\rr)(\mathrm{d}e):=\delta_{z(\rr)}(\mathrm{d}e)$, and define the probability $\widetilde{\Pr}^\star$ defined by 
    $$
        \widetilde{\Pr}^\star:=\Pr^\star \circ \left( X^\alpha, W,B,R^\alpha,\mut \right)^{-1}.
    $$
    By combining the previous results, it is straightforward that $\E^{{\Pr}^\star}\big[{\rm J}^{\Phi,\Zf}_A(\Gamma',\mu',R,\mu) \big] \le \E^{\widetilde{\Pr}^\star} \big[{\rm J}^{\tilde{\Phi},\tilde{\Zf}}_A(\Gamma',\mu',R,\mu) \big]$ and $\E^{{\Pr}^\star}\big[{\rm J}^{R,\mu}_{\Pr}(\Phi,\Zf) \big] \le \E^{\widetilde{\Pr}^\star} \big[{\rm J}^{R,\mu}_{\Pr}(\widetilde{\Phi},\widetilde{\Zf}) \big]$. Also, for $\widetilde{\Pr} \in \Pcb$ s.t. $\Lc^{\widetilde{\Pr}}(B,R)=\Lc^{\widetilde{\Pr}^\star}(B,R)$, we can find $\Pr \in \Pcb$ verifying $\Lc^{{\Pr}}(B,R)=\Lc^{{\Pr}^\star}(B,R)$ s.t.
    $$
        \E^{\widetilde{\Pr}} \big[{\rm J}^{\tilde{\Phi},\tilde{\Zf}}_A(\Gamma',\mu',R,\mu) \big]
        =
        \E^{{\Pr}}\big[{\rm J}^{\Phi,\Zf}_A(\Gamma',\mu',R,\mu) \big]
        \le \E^{{\Pr}^\star}\big[{\rm J}^{\Phi,\Zf}_A(\Gamma',\mu',R,\mu) \big]
        \le \E^{\widetilde{\Pr}^\star}\big[{\rm J}^{\tilde\Phi,\tilde\Zf}_A(\Gamma',\mu',R,\mu) \big].
    $$
    Therefore, we obtain that $\widetilde{\Pr}^\star \in \Pcb_{0,\;\rm \varepsilon\mbox{-}mfg}$.
    
\end{proof}

\medskip
We recall that the progressively Borel map $\Lambda':[0,T] \x \R^d \x \M \to \Pc(A)$ given in \Cref{def:RelaxedCcontrol} has to be seen as a control. Under {\rm \Cref{assum:main1}},  the next result shows that, by an approximation result, we can replace $\Lambda'(t,X'_t,R)(\mathrm{d}a)$ by $\delta_{\lambda'(t,X'_t,R)}(\mathrm{d}a)$ for some progressively Borel map $\lambda':[0,T] \x \R^d \x \M \to A$.  
\begin{lemma} \label{lemma:conv_adm-control}
    Let {\rm \Cref{assum:main1}} hold true. Let $\Pr \in \Pcb$ s.t. $\mathrm{d}t \otimes \mathrm{d}\Pr$--a.e.  $B_t=\varphi(t,R)$ for some progressively Borel map $\varphi$. Then, there exist a sequence of Borel maps $(\lambda'^\ell)_{\ell \ge 1}$, a sequence of probabilities $(\Pr^\ell)_{\ell \ge 1} \subset \Pcb$ s.t. for each $\ell \ge 1$, $[0,T] \x \R^d \x \M \ni (t,x,\rr)\mapsto \lambda'^\ell(t,x,\rr) \in A$ is Lipschitz in $(x,\rr)$ uniformly in $t$, 
    $$
        \Pr^\ell \left( \Lambda'(t,X'_t,R)(\mathrm{d}a)\mathrm{d}t=\delta_{\lambda'^\ell(t,X_t',R)}(\mathrm{d}a)\mathrm{d}t \right)=1
    $$
    and 
        \begin{align*}
            \lim_{\ell \to \infty} \Pr^\ell \left( \mu', \Kr(y)(\mathrm{d}i)\mu_T'(\mathrm{d}y), \mu,R,B \right)^{-1}
            =
            \Pr \left( \mu', \Kr(y)(\mathrm{d}i)\mu_T'(\mathrm{d}y), \mu,R,B \right)^{-1}\;\;\mbox{in }\Wc_p\;\;\mbox{for any }\Kr \in \Kc(\R^d,\Ir).
        \end{align*}
\end{lemma}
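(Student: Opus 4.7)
The plan is a two-stage approximation: first replace the Borel relaxed kernel $\Lambda'(t,x,\rr)(\mathrm{d}a)$ by a Borel strict feedback $\tilde{\lambda}^\ell(t,x,\rr)\in A$ whose controlled dynamics are $\Wc_p$-close to the original, then mollify $\tilde{\lambda}^\ell$ in $(x,\rr)$ to produce Lipschitz $\lambda'^\ell$. The key structural observation is that the common-noise data $(R,B,W,X_0')$ have laws that do not depend on the choice of control; only the SDE for $X'$ changes, so all approximations are weak solutions of perturbed SDEs driven by the same $(R,B,W,X_0')$.

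For stage one, I would apply the classical chattering construction in the spirit of \cite{el1987compactification,lacker2016general}: discretize time into a mesh of size $1/\ell$ and discretize $(x,\rr)$ using a countable dense family of continuous test functionals that finitely parametrize $\rr\in\M$. On each cell, prescribe a piecewise constant $A$-valued action whose empirical occupation matches $\Lambda'(t,x,\rr)$ up to a small error. The boundedness of $(b,\overline{b})$, the Lipschitz dependence of $\sigma$ in $x$ and the $p'$-moments of $\nu$ from \Cref{assum:main1} give the $\Wc_p$-stability needed to ensure that the resulting strict-control SDE solutions converge in $\Wc_p$ to the relaxed one, jointly with $(R,B)$.

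For stage two, I would convolve $\tilde{\lambda}^\ell$ against a smooth kernel of bandwidth $\eta_\ell\to 0$ in the $x$ variable, and in $\rr$ first restrict to the finite-dimensional image under the test functionals used in stage one before mollifying. Convexity of $A$ from \Cref{assum:main1}(i) keeps the mollified values in $A$. The Lipschitz constant may diverge in $\ell$, but a standard diagonal selection preserves the $\Wc_p$-convergence from stage one. This yields $\Pr^\ell\in\Pcb$ concentrated on the event $\{\Lambda'(t,X_t',R)(\mathrm{d}a)\mathrm{d}t = \delta_{\lambda'^\ell(t,X_t',R)}(\mathrm{d}a)\mathrm{d}t\}$ with the claimed regularity.

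The main obstacle I expect is the final upgrade from $\Wc_p$-convergence of the state processes $X'^\ell\to X'$ to convergence in $\Wc_p$ of the conditional laws $\mu'^\ell_t = \Lc^{\Pr^\ell}(X'^\ell_t\,|\,\Gcb_t)$ towards $\mu'_t$. Using that $B_t=\varphi(t,R)$ and that $\mu_t$ is a Borel function of $R_t$ by \Cref{rm:measurable_B}, the filtration $\Gcb_t$ reduces essentially to $\sigma(R_{t\wedge\cdot})$, so conditioning is with respect to $R$ alone. I would then invoke a continuity-of-disintegration argument in the spirit of \cite{Lacker_carmona_delarue_CN,closed-loop-MFG_MDF}, exploiting the non-degeneracy of $\sigma\sigma^\top$ and the Lipschitz feedback from stage two to conclude convergence of $\mu'^\ell$ to $\mu'$ in $\Wc_p$, and then, by continuity of the map $y\mapsto \Kr(y)\in\Pc(\Ir)$, convergence of the integrated marginal $\Kr(y)(\mathrm{d}i)\mu'^\ell_T(\mathrm{d}y)$.
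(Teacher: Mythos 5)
Your plan is correct in spirit and follows essentially the same strategy as the paper, but you are re-deriving from scratch what the paper resolves by a single citation. The paper invokes an extension of {\rm \cite[Proposition A.5 and Lemma 3.5]{closed-loop-MFG_MDF}}, which in one stroke provides a sequence of Lipschitz feedbacks $\lambda'^\ell$ \emph{and} the $\Pr$--a.s.\ $\Wc_p$--convergence of the pair $\big(\mu'^\ell,\; \delta_{\lambda'^\ell(t,x,R)}(\mathrm{d}a)\mu'^\ell_t(\mathrm{d}x)\mathrm{d}t \big)$ to $(\mu',\Gamma')$, after which only the $\Kr$--upgrade via {\rm \Cref{prop:charac-convergence}} remains. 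Your two--stage chattering--then--mollify route ultimately has to re-prove precisely that cited statement.

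Where your plan is under-specified — and where the actual mathematical content lies — is the passage from convergence of the state processes $X'^\ell$ to convergence of the conditional laws $\mu'^\ell_t = \Lc^{\Pr^\ell}(X'^\ell_t\,|\,R)$. You label this a ``continuity-of-disintegration argument'' but this is exactly the non-trivial part that the cited lemma supplies; it requires a quantitative conditional stability estimate (typically via the non-degenerate density bounds used in {\rm \Cref{prop:charac-convergence}}), not a soft continuity argument. A second, smaller concern: chattering in its classical form produces an open-loop strict control; producing a Borel \emph{feedback} $(t,x,\rr)\mapsto\tilde{\lambda}^\ell(t,x,\rr)$ requires the cell-based discretization you sketch, but discretizing the infinite-dimensional variable $\rr\in\M$ by ``a countable dense family of continuous test functionals'' is not a construction — one needs a genuine finite-dimensional projection with control on the error it induces in the drift, and this must be compatible with the conditioning $\sigma$--algebra so that the limit $\mu'^\ell\to\mu'$ survives. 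Both issues are resolved inside the cited auxiliary result, which is why the paper's proof is short.
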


\begin{proof}
    For $\Pr \in \Pcb$. Recall that, $\Pr$--a.e.
    \begin{align*}
        \mathrm{d}X'_t
            =
            \int_{\Pc_A \x \Er} \overline{b}(t,m,e) R_t(\mathrm{d}m,\mathrm{d}e) + \int_{A} b\left(t,X_t',\mu,a \right) \Lambda'\left(t,X_t', R\right)(\mathrm{d}a)\;\;\mathrm{d}t 
            +
            \sigma(t,X_t')\;\mathrm{d}W_t
            +
            \sigma_0\; \mathrm{d}B_t.
    \end{align*}
    By easy extension of \cite[Proposition A.5.]{closed-loop-MFG_MDF} (see also \cite[Lemma 3.5.]{closed-loop-MFG_MDF}), there exists a sequence $(\lambda'^\ell)_{\ell \ge 1} \subset \Ac$ s.t. for each $\ell \ge 1$, $[0,T] \x \R^d \x \M \ni (t,x,\rr)\mapsto \lambda'^\ell(t,x,\rr) \in A$ is Lipschitz in $(x,\rr)$ uniformly in $t$ and if $X'^\ell$ is the solution of 
    \begin{align*}
        \mathrm{d}X'^\ell_t
            =
            \int_{\Pc_A \x \Er} \overline{b}(t,m,e) R_t(\mathrm{d}m,\mathrm{d}e) + b\left(t,X'^\ell_t,\mu,\lambda'^\ell\left(t,X'^\ell_t, R\right) \right)\;\;\mathrm{d}t 
            +
            \sigma(t,X'^\ell_t)\;\mathrm{d}W_t
            +
            \sigma_0\; \mathrm{d}B_t
    \end{align*}
    then $\lim_{\ell \to \infty} \left( \mu'^\ell,\; \delta_{\lambda'^\ell(t,x,R)}(\mathrm{d}a)\mu'^\ell_t(\mathrm{d}x)\mathrm{d}t \right)=\left(\mu',\;\Gamma' \right)$ $\Pr$--a.e. where $\mu'^\ell_t=\Lc^{\Pr}(X'^\ell_t|R)$ for all $t \in [0,T]$. By using the same argument as in \Cref{prop:charac-convergence}, we are able to get that $\lim_{\ell \to \infty} \Pr \left( \mu'^\ell, \Kr(y)(\mathrm{d}i)\mu'^\ell_T(\mathrm{d}y), \mu,R,B \right)^{-1}
            =
            \Pr \left( \mu', \Kr(y)(\mathrm{d}i)\mu'_T(\mathrm{d}y), \mu,R,B \right)^{-1}$ in $\Wc_p$ for each $\Kr \in \Kc(\R^d,\Ir)$.

\end{proof}

\medskip
We are now ready to show the equivalence result between the canonical formulation used here and the formulation of \Cref{section:limitproblem}.   Let us introduce another set of MFG equilibria. For any contract $\Cf=(\phi,\xi,\aleph)$ and $\varepsilon \ge 0$, we define
    \begin{align*}
        \Sc^{\star}(\Cf,\varepsilon)
        :=
        \left\{
            \P \circ (\mu,R)^{-1}:\;\mbox{where }(\alpha,\mub) \in \mbox{{\rm MFG}}[\Cf,\varepsilon]\;\;\mbox{and}\;\;R:=\delta_{(\mub_t,\aleph_t)}(\mathrm{d}m,\mathrm{d}e)\mathrm{d}t
        \right\}.
    \end{align*}

\begin{proposition}  \label{prop:eq--measure--valuedMFG_control}
        Let $\varepsilon \ge 0$. For any contract $\Cf=(\phi,\xi,\aleph)$ and probability measure $\mathrm{Q}^{\star} \in \Sc^{\star}(\Cf,\varepsilon)$ then  $\mathrm{Q}^{\star}=\Pr^\star \circ(\mu,R)^{-1}$ where $\mathrm{P}^\star \in \Pcb_{\rm \varepsilon \mbox{-} mfg}$. Also, for any $\mathrm{P}^\star \in \Pcb_{0,\;\rm \varepsilon \mbox{-} mfg}$ there exist a contract $\Cf=(\phi,\xi,\aleph)$ and $\mathrm{Q}^{\star} \in \Sc^{\star}(\Cf,\varepsilon)$ s.t.  $\mathrm{Q}^{\star}=\Pr^\star \circ(\mu,R)^{-1}$. 
\end{proposition}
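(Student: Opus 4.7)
The plan is to establish the equivalence by constructing explicit pushforward/pullback correspondences between probability measures on the canonical space $\Omb$ and probability measures on $(\Om, \H, \P)$, verifying that the defining properties of the two MFG formulations match under the transfer. In both directions, the bijection is between the canonical variables $(X', W, B, R, \mu)$ and the natural data $(X^\alpha, W, B, \delta_{(\mub_t, \aleph_t)}(\mathrm{d}m, \mathrm{d}e)\mathrm{d}t, \mu)$ available on the original probability space.

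For the first direction, given $\Cf = (\phi, \xi, \aleph)$ and $(\alpha, \mub) \in \mbox{{\rm MFG}}[\Cf, \varepsilon]$ with state $X^\alpha$ on $(\Om, \H, \P)$, I set $R^\alpha := \delta_{(\mub_t, \aleph_t)}(\mathrm{d}m, \mathrm{d}e)\mathrm{d}t$ and define $\Pr^\star := \P \circ (X^\alpha, W, B, R^\alpha, \mu)^{-1} \in \Pc(\Omb)$. Because $R^\alpha_t$ is a Dirac mass, $(\mub_t, \aleph_t)$ is recoverable as a Borel function of $R_t$, so $\alpha(t, x, \mub, \aleph)$ lifts to a progressively Borel map $\widetilde\alpha(t, x, R)$; one takes $\Lambda'(t, x, R)(\mathrm{d}a) := \delta_{\widetilde\alpha(t, x, R)}(\mathrm{d}a)$, $\Phi(x)(\mathrm{d}i) := \delta_{\phi(x)}(\mathrm{d}i)$, and $\Zf(R)(\mathrm{d}e) := \delta_{\xi(\mub(R), \aleph(R))}(\mathrm{d}e)$. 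The controlled SDE, the independence of $(B, R^\alpha, \mu)$, $W$, $X'_0$, and the law of $X'_0$ transfer directly from $(\Om, \H, \P)$, and the consistency \eqref{eq:consistency} is a rewriting of \eqref{eq:consitency1}--\eqref{eq:consitency2} in the canonical variables. To obtain the optimality \eqref{eq:optimality-relaxed}, for any $\mathrm{P} \in \Pcb$ with $\Lc^{\mathrm{P}}(B, R) = \Lc^{\Pr^\star}(B, R)$, I would invoke \Cref{lemma:conv_adm-control} to approximate $\mathrm{P}$ by measures $(\mathrm{P}^\ell)$ whose relaxed controls are Dirac at Lipschitz strict maps $\lambda'^\ell$. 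These maps pull back to elements $\beta^\ell \in \Ac$ on $(\Om, \H, \P)$ (where $\mub$ and $\aleph$ are already present) reproducing the same joint law of $(X^{\beta^\ell}, \mub, \aleph, W, B)$ and hence the same reward; the $\varepsilon$-optimality of $\alpha$ in $\mbox{{\rm MFG}}[\Cf, \varepsilon]$ then yields the claimed inequality after passing to the limit, using the $\Wc_p$-convergence in \Cref{lemma:conv_adm-control} together with continuity of $\mathrm{J}_A^{\Phi, \Zf}$ in its arguments.

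For the second direction, given $\Pr^\star \in \Pcb_{0, \rm \varepsilon\mbox{-}mfg}$, the very definition of this set supplies maps $(\overline{m}, \overline{e}, \alpha, \phi, z)$ such that $R = \delta_{(\overline{m}_t, \overline{e}_t)}(\mathrm{d}m, \mathrm{d}e)\mathrm{d}t$, $\Lambda'(t, X'_t, R) = \delta_{\alpha(t, X'_t, \mu, \overline{m}_t)}$, $\Phi = \delta_{\phi(\cdot)}$, $\Zf = \delta_{z(\cdot)}$. I would invoke \Cref{footnote_enlarge} to realize copies $(\widehat B, \widehat W, \widehat\iota, \widehat\aleph)$ on $(\Om, \H, \P)$ with the same joint law as the corresponding canonical variables, then construct the state $X^\alpha$ on that space as the weak solution of the SDE driven by $(\widehat W, \widehat B)$ with coefficients involving $\widehat\aleph$ and $\mub_t := \overline{m}_t$ (transported to $\Om$). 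Defining $\xi(\mub, \widehat\aleph) := z(\delta_{(\mub_t, \widehat\aleph_t)}(\mathrm{d}m, \mathrm{d}e)\mathrm{d}t)$, the contract $\Cf := (\phi, \xi, \widehat\aleph)$ together with $(\alpha, \mub)$ lands in $\mbox{{\rm MFG}}[\Cf, \varepsilon]$: the consistency conditions are the canonical ones read back, and the $\varepsilon$-optimality, valid against all of $\Pcb$ and in particular against pure-control competitors coming from $\beta \in \Ac$, gives the optimality in the original formulation.

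The main obstacle will be the optimality verification in the first direction, namely translating the $\varepsilon$-optimality against pure controls $\beta \in \Ac$ on $(\Om, \H, \P)$ into $\varepsilon$-optimality against the much larger class $\Pcb$ of relaxed competitors on $\Omb$. This rests crucially on \Cref{lemma:conv_adm-control}, combined with careful continuity of $\mathrm{J}_A^{\Phi, \Zf}$ along the approximating sequence and with correct handling of the common-noise filtration $\Gb$ (\Cref{rm:measurable_B} is used to express $B$ as a Borel function of $R$) so that conditional laws, and therefore the consistency conditions, are preserved under all the transfers.
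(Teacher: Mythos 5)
Your proposal follows essentially the same route as the paper's proof: in the first direction you push forward $(X^\alpha,W,B,\delta_{(\mub_t,\aleph_t)}\mathrm{d}t,\mu)$ to define $\Pr^\star$, read $(\mub,\aleph)$ and hence $\alpha$, $\Phi$, $\Zf$ as Borel functions of $R$, verify consistency by transfer, and reduce optimality against $\Pcb$ to optimality against $\Ac$ via \Cref{lemma:conv_adm-control} with \Cref{rm:measurable_B}; in the second direction you realize copies of $(B,R)$ on $(\Om,\H,\P)$, build the state from the strict control $\alpha$ supplied by $\Pcb_{0,\varepsilon\mbox{-}\rm mfg}$, and recover the contract from $(\phi,z,\overline e)$. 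This is the argument the paper gives, including the key invocation of \Cref{lemma:conv_adm-control} and the use of uniqueness in distribution of the controlled SDE to match laws across the two spaces.
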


\begin{proof}
    We only prove for $\varepsilon=0,$ the case for any $\varepsilon \ge 0$ follows immediately. Let $\Cf=(\phi,\xi,\aleph)$ be a contract and $\Qr^\star \in \Sc^\star(\Cf)$. Let $(\alpha,\mub) \in \mbox{{\rm MFG}}[\Cf]$ be s.t. $\Qr^\star=\P \circ (\mu,R)^{-1}$. We introduce
    \begin{align*}
        \Pr^\star
        :=
        \P \circ \left(X^\alpha,W,B, R, \mu \right)^{-1}\;\;\mbox{where}\;\;R:=\delta_{\left(\mub_t,\aleph_t\right)}(\mathrm{d}m,\mathrm{d}e)\mathrm{d}t,\;\Phi(\cdot):=\delta_{\phi(\cdot)}(\mathrm{d}i)\;\mbox{and}\;\Zf(\cdot):=\delta_{\xi(\cdot)}(\mathrm{d}e).
    \end{align*}
    We can easily check that $\Pr^\star \in \Pcb$. By the definition of $(B,R,\mu)$, it is easy to see that \eqref{eq:consistency} is satisfied for $\Pr^\star$. Now, let $\mathrm{P} \in \Pcb$ be such that $\Lc^{\mathrm{P}^{\star}}\big(B,R\big)=\Lc^{\mathrm{P}}\big( B, R\big)$. 
    Notice that, by uniqueness in distribution of \eqref{eq:process_con trolled}, we verify that $\Lc^{\Pr}(X',W,B,R,\mu)=\Lc^{\P}(Z,W,B,R,\mu)$ where $\P$--a.e.
    \begin{align*}
        \mathrm{d}Z_t
            =
            \int_{\Pc_A \x \Er} \overline{b}(t,m,e) R_t(\mathrm{d}m,\mathrm{d}e) + \int_{A} b\left(t,Z_t,\mu,u \right) \Lambda'\left(t,Z_t, R\right)(\mathrm{d}a)\;\;\mathrm{d}t 
            +
            \sigma(t,Z_t)\;\mathrm{d}W_t
            +
            \sigma_0\; \mathrm{d}B_t.
    \end{align*}
    We set $\P$--a.e. $\Theta:=\E^{\P} \left[\delta_{Z_t} (\mathrm{d}z)\Lambda'\left(t,Z_t,R\right)(\mathrm{d}a) | \Gc_t\right] \mathrm{d}t\;\mbox{and}\;\;\mut_t:=\Lc^\P(Z_t|\Gc_t)$ for all $t \in [0,T]$. As in \Cref{rm:measurable_B}, $B$ is a progressively Borel map of $R$ under $(\Om,\H,\P)$, under \Cref{assum:main1} by  \Cref{lemma:conv_adm-control}, there exists a sequence $(\lambda'^\ell)_{\ell \ge 1} \subset \Ac$ s.t. if $Z^\ell$ is the solution of 
    \begin{align*}
        \mathrm{d}Z^\ell_t
            =
            \int_{\Pc_A \x \Er} \overline{b}(t,m,e) R_t(\mathrm{d}m,\mathrm{d}e) + b\left(t,Z^\ell_t,\mu,\lambda'^\ell\left(t,Z^\ell_t, R\right) \right)\;\;\mathrm{d}t 
            +
            \sigma(t,Z^\ell_t)\;\mathrm{d}W_t
            +
            \sigma_0\; \mathrm{d}B_t
    \end{align*}
    then $\lim_{\ell \to \infty}J_{A,\mub}^{\Cf}(\lambda'^\ell)
        =
        \E^{\P}\big[{\rm J}^{\Phi,\Zf}_{A}(\Theta,\mut,R,\mu) \big]=\E^{\mathrm{P}}\big[{\rm J}^{\Phi,\Zf}_A(\Gamma',\mu',R,\mu) \big].$ Consequently, as we know that $(\alpha,\mub) \in \mbox{{\rm MFG}}[\Cf]$, we find
    \begin{align*}
        \E^{\mathrm{P}^{\star}}\big[{\rm J}^{\Phi,\Zf}_A(\Gamma',\mu',R,\mu) \big]
        =
        J_{A,\mub}^{\Cf}(\alpha) \ge \lim_{\ell \to \infty}J_{A,\mub}^{\Cf}(\lambda'^\ell)
        =
        \E^{\mathrm{P}}\big[{\rm J}^{\Phi,\Zf}_A(\Gamma',\mu',R,\mu) \big].
    \end{align*}
    Then $\Pr^\star \in \Pcb_{\rm mfg}$.

    \medskip
    Let $\mathrm{P}^\star \in \Pcb_{\rm 0,mfg}$ associated to $\Phi=\delta_{\phi(\cdot)}(\mathrm{d}i)$ and $\Zf=\delta_{z(\cdot)}(\mathrm{d}e)$.
    On the probability space $(\Om,\H,\P)$, we consider the process $(B,\overline{m},\overline{e})$ s.t. $\Lc^{\P}\left(B, \overline{R}\right)=\Lc^{\P}\left(B,\delta_{\left(\overline{m}_t, \overline{e}_t \right)}(\mathrm{d}m,\mathrm{d}e) \mathrm{d}t \right)=\Lc^{\Pr^\star}(B,R)$ where $\overline{R}:=\delta_{\left(\overline{m}_t, \overline{e}_t \right)}(\mathrm{d}m,\mathrm{d}e) \mathrm{d}t$. With the Borel map $\alpha$ associated to $\mathrm{P}^\star (\in \Pcb_{\rm 0,mfg})$, we define $X^\alpha$ the process verifying: $X^\alpha_0=\iota$ and $\P$--a.e.
    \begin{align*}
        \mathrm{d}X^\alpha_t
            =
            \overline{b}\left(t,\overline{m}_t,\overline{e}_t\right)\mathrm{d}t
            +
            b\left(t,X^\alpha_t,\mu,\alpha\left(t,X^\alpha_t,\mu,\overline{m}_t\right) \right)\mathrm{d}t + \sigma(t,X^\alpha_t)\mathrm{d}W_t + \sigma_0\mathrm{d}B_t.
    \end{align*}
    Notice that we can find a Borel map $\varphi$ s.t. $(\mu_{t \wedge \cdot},\overline{m}_t)=\varphi(t,\overline{R})$ $\mathrm{d}t \otimes \mathrm{d}\P$--a.e. 
    We set $\mu_t:=\overline{m}_t(\mathrm{d}x,A)$, $\mathrm{d}t \otimes \mathrm{d}\Pr^\star$--a.e., $\overline{\alpha}(t,x,\rr):=\alpha(t,x,\varphi(t,\rr))$ and $\Cf:=(\phi,z,\overline{e})$.  Then, it is easy to verify that $(\overline{\alpha},\overline{m}) \in {\rm MFG}[\Cf]$ and $\P \circ \left( \mu, R \right)^{-1}= \Pr^\star \circ(\mu,R)^{-1}$.  

\end{proof}

\subsection{From Nash equilibria to MFG solutions} \label{sec:Nash_to_MFG}

In this section we give the characterization of the limits of all sequences of approximate Nash equilibria and approximate MFG solutions.

\medskip
Let us take a control rule $\Pr \in \Pcb$ satisfying $\Lambda'(t,X_t',R)(\mathrm{d}a)\mathrm{d}t=\delta_{\beta(t,X_t',R)}(\mathrm{d}a)\mathrm{d}t$ $\Pr$--a.e. for some progressively Borel measurable map $[0,T] \x \R^d \x \M \ni (t,x,\rr) \mapsto \beta(t,x,\rr) \in A$ Lipschitz in $(x,\rr)$ uniformly in $t$. For each $n \ge 1,$ we consider $n$ progressively Borel measurable functions $\alphab^n:=(\alpha^{1,n},\cdots,\alpha^{n,n}) \in (\Ac_n)^n$  i.e. $\alpha^{i,n}: [0,T] \x (\Cc)^n \to A$ and we define for a fixed $i \in \{1,\cdots,n\}$  
\begin{align} \label{eq:def-controlBeta}
    \beta^{\alphab,i}(t,\boldsymbol{x})
    :=
    \beta \left(t, {x}^i(t),R^{\alphab}[\boldsymbol{x}]  \right)
\end{align}
where
\begin{align*}
    \boldsymbol{x}:=(x^1,\cdots,x^n),\; \overline{m}^{\alphab}(t,\boldsymbol{x}):=\frac{1}{n} \sum_{k=1}^n \delta_{\left({x}^k(t),\;\alpha^{k,n}(t,\boldsymbol{x}) \right)},\;\;\mbox{and}\;\;R^{\alphab}[\boldsymbol{x}]:=\delta_{ \left(\overline{m}^{\alphab}(t,\boldsymbol{x}), \aleph^n(t,\boldsymbol{x}) \right)}(\mathrm{d}m,\mathrm{d}e)\mathrm{d}t
\end{align*}
for some sequence of contracts $(\Cf^n)_{n \ge 1}=(\phi^n,\xi^n,\aleph^n)_{n \ge 1}$. Let $\Gr$ be the density $\Gr(x):=\frac{1}{1+|x|^p} \left(\int_{\R^d} \frac{1}{1+|x'|^p} \mathrm{d}x' \right)^{-1}$. We recall that the notations we will use below have been introduced in \Cref{sec:n_agents}.

\begin{lemma} \label{lemma:deviating-player}
    Let {\rm \Cref{assum:main1}} hold true and a sequence $(\alphab^n)_{n \in \N^*}$ s.t. for each $n \in \N^*,$ $\alphab^n:=(\alpha^{1,n},\cdots,\alpha^{n,n}) \in (\Ac_n)^n$,
    \begin{align} \label{cond:converg}
        \Lim_{n \to \infty} \P \circ \left(B, \varphi^{n}[\alphab^n], R^{\alphab^n}[\Xbb^n] \right)^{-1}
        =
        \Pr \circ \left(B, \mu,R \right)^{-1}\;\;\mbox{in}\;\Wc_p,
    \end{align}
    \begin{align*}
        \lim_{n \to \infty}\delta_{\phi^n(x)}(\mathrm{d}i)\mathrm{G}(x)\mathrm{d}x=\Phi(x)(\mathrm{d}i) \mathrm{G}(x)\mathrm{d}x\;\;\;\mbox{and}\;\;\;\lim_{n \to \infty} \Lc^\P\left(\xi^n(\Xbb^n), R^{\alphab^n}[\Xbb^n] \right)
        =
        \Zf(r)(\mathrm{d}e)\Lc^{\Pr} ( R )(\mathrm{d}r).
    \end{align*}
    Then
\begin{align*}
    &\Lim_{n \to \infty}
    \frac{1}{n} \sum_{i=1}^n
    J_{n,i}^{\Cf^n} ( \alpha^{1,n}, \cdots, \alpha^{i-1,n}, \beta^{\alphab^n,i}, \alpha^{i+1,n},\cdots, \alpha^{n,n} )
        =
        \E^{\Pr}\left[{\rm J}^{\Phi,\Zf}_A(\Gamma',\mu',R,\mu) \right].
\end{align*}
\end{lemma}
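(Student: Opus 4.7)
The plan is to convert each deviated reward into an expectation computed on the \emph{original} system $\Xbb^n$ by means of Girsanov's theorem, and then to pass to the mean--field limit using the convergence hypothesis \eqref{cond:converg}. I would first fix $n$ and $i$ and let $\Xbb^{(i),n}$ denote the $n$--player system in which player $i$ substitutes $\beta^{\alphab^n,i}$ for $\alpha^{i,n}$. By boundedness of the drifts and invertibility of $\sigma$ (Assumption~\ref{assum:main1}(iii)), Girsanov applied to $(W^1,\ldots,W^n)$ produces a density $Z^{n,i}_T$ with $\Lc^\P(\Xbb^{(i),n}) = \Lc^{Z^{n,i}_T\cdot\P}(\Xbb^n)$. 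The key quantitative feature is that the drift--correction integrand $\theta^{n,i,k}$ appearing in $Z^{n,i}_T$ is of order $1$ when $k = i$ (it encodes the replacement of $\alpha^{i,n}$ by $\beta^{\alphab^n,i}$) and of order $1/n$ when $k \ne i$: this follows from the Lipschitz continuity of $\overline b$ in its measure argument (Assumption~\ref{assum:main1}(ii)) combined with $\Wc_p(\overline\varphi^n_t[\bar\alphab^{n,(i)}], \overline\varphi^n_t[\alphab^n]) = O(1/n)$, since replacing a single player changes at most one atom out of $n$.

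I would next factor $Z^{n,i}_T = Z^{n,i,\star}_T \cdot \mathcal{E}^{n,i}_T$, where $Z^{n,i,\star}_T$ collects only the $W^i$--integral of $\theta^{n,i,i}$. Standard $L^q$ estimates for Dol\'eans--Dade exponentials with $O(1/n)$--bounded integrands give $\|\mathcal{E}^{n,i}_T - 1\|_{L^q} = O(n^{-1/2})$ uniformly in $i$. Introducing on an extension of $(\Om,\Hc,\P)$ a uniform random index $I \in \{1,\ldots,n\}$ independent of $(W,B,\iota)$, the averaged deviated reward then reduces to
\[
\frac{1}{n}\sum_{i=1}^n J^{\Cf^n}_{n,i}\bigl(\alpha^{1,n},\ldots,\beta^{\alphab^n,i},\ldots,\alpha^{n,n}\bigr) = \E^\P\!\left[Z^{n,I,\star}_T \, G^n_I(\Xbb^n)\right] + o(1),
\]
where $G^n_I(\Xbb^n)$ is the reward functional on the original trajectory with player $I$'s action inserted as $\beta(t, X^I_t, R^{\alphab^n}[\Xbb^n])$; up to a further $O(1/n)$ error one may moreover replace $\overline\varphi^n_t[\bar\alphab^{n,(I)}]$ inside $G^n_I$ by $\overline\varphi^n_t[\alphab^n]$.

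The passage to the limit is then as follows. Conditionally on the empirical paths $\pi^n := \frac{1}{n}\sum_j \delta_{X^j_\cdot}$ and on $(B, R^{\alphab^n}[\Xbb^n])$, the pair $(X^I, W^I)$ is just a sample from $\pi^n$ tensored with an independent Wiener measure. Combining this with \eqref{cond:converg}, the prescribed convergences of $\delta_{\phi^n(x)}(\mathrm{d}i)\mathrm{G}(x)\mathrm{d}x$ and of $\Lc(\xi^n(\Xbb^n), R^{\alphab^n}[\Xbb^n])$, and the continuity of $(b, L, \overline b, \overline L, g, \overline g, \beta)$ in their arguments, one obtains the joint weak convergence
\[
\Lc^\P\bigl(X^I, W^I, B, \varphi^n[\alphab^n], R^{\alphab^n}[\Xbb^n], \delta_{\phi^n(X^I_T)}, \delta_{\xi^n(\Xbb^n)}\bigr) \longrightarrow \Lc^\Pr\bigl(X', W, B, \mu, R, \Phi(X'_T), \Zf(R)\bigr),
\]
with uniform $p$--th moment bounds supplying the required uniform integrability. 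In the limit, $Z^{n,I,\star}_T$ becomes the Girsanov density aligning the drift with $b(\cdot,X',\mu,\beta(\cdot,X',R))$; since $\Pr$ was chosen with $\Lambda'(t,X'_t,R) = \delta_{\beta(t,X'_t,R)}$, this limiting density is in fact identically $1$, and unpacking the functional recovers $\E^\Pr\bigl[\mathrm{J}^{\Phi,\Zf}_A(\Gamma',\mu',R,\mu)\bigr]$.

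The hard part of the proof is that $\aleph^n$ and $\xi^n$ are only Borel in $\Xbb$, so a small pointwise perturbation of $\Xbb^n$ may produce an arbitrary change in $\aleph^n(t,\Xbb^n)$ or $\xi^n(\Xbb^n)$, and direct Gronwall--type comparisons between $\Xbb^n$ and $\Xbb^{(i),n}$ are unavailable. The Girsanov reduction is essential precisely because, after the change of measure, these maps are evaluated only on the original trajectory $\Xbb^n$, whose joint law with $(B, \varphi^n[\alphab^n], R^{\alphab^n}[\Xbb^n])$ is directly controlled by the hypothesis \eqref{cond:converg}; the $O(1/n)$ bound on the cross drift--corrections for $k \ne i$ is what decouples, in the mean--field limit, the deviation of player $i$ from the dynamics of the remaining $n-1$ players.
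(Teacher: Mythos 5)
Your proposal follows the same two-stage strategy as the paper (Girsanov change of measure to re-express the deviated reward on the original trajectory, then pass to the mean--field limit), and your end-of-proof remark about why Girsanov is unavoidable -- $\aleph^n$ and $\xi^n$ are only Borel, so pathwise Gronwall comparisons are unavailable -- is exactly the right diagnosis. Your Girsanov setup is arguably \emph{more} careful than what is written in the paper: the paper's density $Z^i$ tilts only $W^i$, and the asserted identity $\Lc^{\Q^i}(\Xbb,B)=\Lc^{\P}(\Ybb^i,B)$ is not exact, since after deviating player $i$ the drifts of the remaining players also shift through $\overline\varphi^n$ and $\aleph^n$. Your factorization $Z^{n,i}_T = Z^{n,i,\star}_T\,\Ec^{n,i}_T$ with an $\mathrm{L}^q$ bound $\|\Ec^{n,i}_T-1\|_{L^q}=O(n^{-1/2})$ makes precise what the paper glosses over.

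However, your last step has a genuine gap, and it is exactly the place where the paper invests its real work. You claim that $\Lc^{\P}\bigl(X^I,W^I,B,\varphi^n[\alphab^n],\ldots\bigr)\to\Lc^{\Pr}\bigl(X',W,B,\mu,\ldots\bigr)$ and that the limit of $Z^{n,I,\star}_T$ is ``identically $1$.'' Neither is true. Under $\P$, $X^I$ is driven by the \emph{original} control $\alpha^{I,n}$, which is not assumed to converge to $\beta$; so the $\P$-law of $X^I$ does not converge to the $\Pr$-law of $X'$ (which has control $\beta$), and the integrand $\phi^I_t$ contains the term $b(t,X^I_t,\varphi^n,\beta^{\alphab^n,I})-b(t,X^I_t,\varphi^n,\alpha^{I,n})$, which is $O(1)$, so $Z^{n,I,\star}_T$ does not tend to $1$. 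What does happen is that after the change of measure, i.e.\ when working with $\Lc^{\Q^I}$ rather than $\Lc^{\P}$, the deviated player's dynamics match the $\Pr$-SDE -- but passing from \eqref{cond:converg} (a statement about $\Lc^{\P}$) to the analogous convergence under $\Q^I$ is exactly what requires an argument. The paper's Step~2 does this via a martingale--measure construction: it tracks the empirical law $\widehat\mu^n=\frac1n\sum_i\delta_{(X^i_0,Z^i,W^i,\Lambda^i)}$, identifies its limit $\widehat\mu$ as supporting a Dol\'eans--Dade density $Z_T$, and uses the factorization $\E^{\P^{\widehat\mu}}\bigl[G(X_0,W_\cdot-\int_0^\cdot\int v\,\Lambda_t(\mathrm{d}v)\mathrm{d}t)\bigr]=\E^\P[G(X^1_0,W^1)]$ to conclude that the tilted pair $(X_0,W-\int\int v\Lambda)$ is distributed as $\nu\otimes$Wiener \emph{independently of $(B,R,\mu)$}. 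Your proposal asserts the outcome of this identification but supplies no substitute for the mechanism, so you should either incorporate the martingale--measure step or replace ``the density is $1$'' and ``$\Lc^\P\to\Lc^\Pr$'' with a careful statement about the tilted law $\Lc^{\Q^I}$ and an argument that \eqref{cond:converg} survives the tilt (noting that the tilt moves $X^I$, which is only one atom in $\varphi^n$, and therefore perturbs $(\varphi^n,R^n)$ only to order $1/n$).
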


\begin{proof}
    The proof is divided into three parts.
    
    \medskip
    $\boldsymbol{\underline{\rm Step\;1}}$ Given the controls $\alphab:=(\alpha^{1,n},\dots,\alpha^{n,n}) \in (\Ac_n)^n,$ $\Xbb^{\alphab,\aleph}:=(X^{1},\dots,X^{n})$ satisfies \eqref{eq:N-agents_StrongMV_CommonNoise-law-of-controls}.
    For each $i \in \{1,\cdots,n\},$ we define 
    $$
        \alphab^i:=\left(\alpha^{1,n},\cdots,\alpha^{i-1,n},\beta^{\alphab,i},\alpha^{i+1,n},\cdots,\alpha^{n,n} \right)\;\;\mbox{and}\;\;\Ybb^i:=(Y^{i,1},\cdots,Y^{i,n}):=\Xbb^{\alphab^i,\aleph}.
    $$
    Notice that, $Y^{i,i}$ satisfies
    \begin{align*} 
        \mathrm{d}Y^{i,i}_t
        =
        \overline{b}\big(t,\mub^{i,n}_{t}, \aleph^n(t,\Ybb^i)\big)
        +        b\big(t,Y^{i,i}_{t},\mu^{i,n} ,\beta^{\alphab,i}(t,\Ybb^{i}) \big)\;\; \mathrm{d}t 
        +
        \sigma \big(t,Y^{i,i}_t \big) \mathrm{d}W^i_t
        +
        \sigma_0 \mathrm{d}B_t\;\mbox{with}\;Y^{i,i}_0=\iota^i
    \end{align*}
	with 
	\[
	    \mu^{i,n}_{t}(\mathrm{d} x) := \frac{1}{n}\sum_{k=1}^n \delta_{Y^{i,k}_{t} }(\mathrm{d} x)\;\;\mbox{and}\;\;
	    \mub^{i,n}_{t}(\mathrm{d} x, \mathrm{d} a) := \frac{1}{n} \left(\sum_{k \neq i}^n \delta_{(Y^{i,k}_{t} ,\;\alpha^{k,n}(t,\Ybb^{i}) )}(\mathrm{d} x, \mathrm{d} a) + \delta_{(Y^{i,i}_{t} ,\;\beta^{\alphab,i}(t,\Ybb^{i})) }(\mathrm{d} x, \mathrm{d} a) \right).
	\] 

 We now introduce
\begin{align*}
    R^{i,n}
    :=
    \delta_{\left( \mub^{i,n}_t, \aleph^n(t,\Ybb^i)   \right)} (\mathrm{d}m,\mathrm{d}e)\mathrm{d}t\;\mbox{and}\;\widetilde{R}^{i,n}:=\delta_{\left(\Tilde\mu^{i,n}_t,\aleph^n(t,\Ybb^i) \right)}(\mathrm{d}m,\mathrm{d}e)\mathrm{d}t\;\;\mbox{with}\;\;\mut^{i,n}_t:=\frac{1}{n} \sum_{k = 1}^n \delta_{(Y^{i,k}_t ,\;\alpha^{k,n}(t,\Ybb^{i}) )}(\mathrm{d} x, \mathrm{d} a).
\end{align*}
We can rewrite 
\begin{align*} 
        \mathrm{d}Y^{i,i}_t
        =
        \langle \overline{b}(t,\cdot,\cdot), R^{i,n}_t \rangle
        +
        b\left(t,Y^{i,i}_{t},\mu^{i,n},\beta (t,Y^{i,i}_t, \widetilde{R}^{i,n}) \right) \;\;\mathrm{d}t 
        +
        \sigma \big(t,Y^{i,i}_t \big) \mathrm{d}W^i_t
        +
        \sigma_0 \mathrm{d}B_t\;\mbox{with}\;Y^{i,i}_0=\iota^i.
    \end{align*}

Let $(\widehat{\Qr}^n)_{n \ge 1}$ be the sequence defined by 
\begin{align*}
    \widehat{\Qr}^n
    :=
    \frac{1}{n} \sum_{i=1}^n \P \circ \left( Y^{i,i}, R^{i,n}, \mu^{i,n}, \widetilde{R}^{i,n}, W^i,B  \right)^{-1}.
\end{align*}
Since $\nu \in \Pc_{p'}(\R^d)$ with $p' >p$, $(b,\sigma,\sigma_0)$ is bounded and $\Er \x \Ir$ is compact, it is straightforward to check that the sequence  $(\widehat{\Qr}^n)_{n \ge 1}$ is relatively compact in $\Wc_p$. We recall that for simplicity we have chosen $(\Om,\H,\P)$ s.t. we can write all our random variables on this space without enlarging the space. Let $\widehat{\Qr}^\infty=\P \circ \left( Y,R,\mu,\widetilde{R},W,B \right)^{-1}$ be the limit of a convergent sub--sequence. It is easy to see that $(W,B)$ is an $\R^d \x \R^d$ Brownian motion and, by using the continuity of the coefficients and classical weak convergence characterization, we get
\begin{align} \label{eq:Y_controlled}
    \mathrm{d}Y_t
        =
        \langle \overline{b}(t,\cdot,\cdot), R_t \rangle
        +
        b\left(t,Y_{t},\mu,\beta (t,Y_t, \widetilde{R}) \right) \;\;\mathrm{d}t 
        +
        \sigma \big(t,Y_t \big) \mathrm{d}W_t
        +
        \sigma_0 \mathrm{d}B_t\;\mbox{with}\;\Lc^{\P}(Y_0)=\Lc^{\P}(\iota^1).
\end{align}

Notice that by uniqueness of this equation, we can find a measurable map $\Upsilon:\R^d \x \Cc \x \Cc \x \Cc_{\Wc} \x \M \x \M \to \Cc$ s.t. $Y=\Upsilon\left(Y_0, W,B,\mu,R,\widetilde{R} \right)$. We will show that $\Lc\left( Y_0, W,B,\mu,R,\widetilde{R} \right)=\Pr \circ \left(X_0', W, B, \mu, R, R \right)^{-1}$. This will allow us to deduce that $\Lc\left( Y,R,\mu,\widetilde{R},W,B \right)=\Pr \circ \left(X',R,\mu,R, W, B\right)^{-1}$.

\medskip
$\boldsymbol{\underline{\rm Step\;2}}$
Next, if we set $R^n:=\delta_{\left( \overline{\varphi}^n_t[\alphab], \aleph^n(t,\Xbb)   \right)} (\mathrm{d}m,\mathrm{d}e)\mathrm{d}t$, we show that, for $F$ and $G$ two bounded continuous maps,
\begin{align*}
    \lim_{n \to \infty}\frac{1}{n} \sum_{i=1}^n \E^\P \left[ G(X^i_0,W^i) F \left(R^{i,n}, \mu^{i,n},\widetilde{R}^{i,n}, B \right)  \right]
    =
    \lim_{n \to \infty}\frac{1}{n} \sum_{i=1}^n \E^\P \left[ G(X^i_0,W^i) F \left(R^{n}, \varphi^n[\alphab],R^{n}, B \right) \right].
\end{align*}

\medskip
For this purpose, let us introduce
	\begin{align*}
	    Z^i_t
	    :=
	    \exp\bigg\{ \int_0^t \phi^i_r \mathrm{d}W^i_r - \frac{1}{2} \int_0^t |\phi^i_r|^2 \mathrm{d}r \bigg\}\;\mbox{for all}\;t \in [0,T],\; \mbox{and}\;\frac{\mathrm{d}\Q^i}{\mathrm{d}\P}:=Z^i_T
	\end{align*}
	with 
	\begin{align*}
\phi^i_t:=\sigma(t,X^{i}_t)^{-1}\Big(\overline{b}\left(t,\overline{\zeta}^{i,n,\beta}_{t},\aleph^n(t,\Xbb^{\alphab,\aleph}) \right)&+
b\left(t,X^{i}_{t},\varphi^{n}[\alphab],\beta^{\alphab,i}(t,\Xbb^{\alphab,\aleph}) \right)
\\&- \overline{b}\left(t,\overline{\varphi}^{n}_{t}[\alphab],\aleph^n(t,\Xbb^{\alphab,\aleph}) \right) - b\left(t,X^{i}_{t},\varphi^{n}[\alphab],\alpha^{i,n}(t,\Xbb^{\alphab,\aleph}) \right) \Big)
\end{align*}
	and
	\begin{align*}
	    \overline{\zeta}^{i,n,\beta}_t
	    :=
	    \frac{1}{n} \left(\sum_{k \neq i}^n \delta_{(X^{k}_{t} ,\;\alpha^{k,n}(t,\Xbb^{\alphab,\aleph}) )}(\mathrm{d} x, \mathrm{d} a) + \delta_{(X^{i}_{t} ,\;\beta^{\alphab,i}(t,\Xbb^{\alphab,\aleph})) }(\mathrm{d} x, \mathrm{d} a) \right).
	\end{align*}
    By uniqueness in distribution, $\Lc^{\Q^i} (\Xbb^{\alpha},B)=\Lc^{\P}(\Ybb^i,B)$ for each $i.$ Also, since the difference between $\mub^{i,n}_t$ and $\mut^{i,n}_t$ is only $\beta^{\alphab,i}$, it is easy to see that  $\Lim_{n \to \infty} \Wc_p \left(\mub^{i,n}_t,\mut^{i,n}_t \right)=0$ $\mathrm{d}t \otimes \mathrm{d}\P$--a.e. Then, we can notice that 
\begin{align*}
    &\lim_{n \to \infty}\frac{1}{n} \sum_{i=1}^n\E^\P \left[ G(X^i_0,W^i) F \left(R^{i,n}, \mu^{i,n},\widetilde{R}^{i,n}, B \right)  \right]
    \\
    &=\lim_{n \to \infty}\frac{1}{n} \sum_{i=1}^n\E^\P \left[ G(X^i_0,W^i) F \left(\widetilde{R}^{i,n}, \mu^{i,n},\widetilde{R}^{i,n}, B \right)  \right]
    =
    \lim_{n \to \infty}\frac{1}{n} \sum_{i=1}^n\E^\P \left[ Z^i_T G\left(X^i_0,W^i_\cdot-\int_0^\cdot \phi^i_t \mathrm{d}t \right) F \left(R^{n}, \varphi^n[\alphab^n],R^{n}, B \right)  \right]
    \\
    &=
    \lim_{n \to \infty}\E^\P \left[ \E^{\hat\mu^n} \left[  Z_T G\left(X_0,W_\cdot-\int_0^\cdot \int_{\Vc} v \;\Lambda_t(\mathrm{d}v) \mathrm{d}t \right) \right] F \left(R^{n}, \varphi^n[\alphab^n],R^{n}, B \right)  \right]
\end{align*}
where $(X_0,Z,W,\Lambda)$ is the canonical variable of $\R^d \x \Cc \x \Cc \x \M(\Vc)$ with $\Vc \subset \R^d$ an appropirate compact and
\begin{align*}
    \muh^n
    :=
    \frac{1}{n} \sum_{i=1}^n \delta_{\left(X^i_0, Z^i, W^i, \Lambda^i \right)}\;\;\mbox{where}\;\;\Lambda^i:=\delta_{\phi^i_t}(\mathrm{d}v)\mathrm{d}t.
\end{align*}
It is easy to see that the sequence $\left(\P \circ \left(\muh^n,R^n,\varphi^n[\alphab^n],B \right)^{-1} \right)_{n \ge 1}$ is relatively compact in $\Wc_p$. Let us take a convergent sub--sequence. For simplification, we use the same notation for the sequence and the sub--sequence.  So, we are considering $\lim_{n \to \infty} \P \circ \left(\muh^n,R^n,\varphi^n[\alphab^n],B \right)^{-1}=\P \circ (\muh, R, \mu, B)^{-1}$. By observing that $\mathrm{d}Z^i_t=Z^i_t\phi^i_t\mathrm{d}W^i_t$ and $Z^i_0=1$ for each $i$, then by using similar techniques as in \eqref{eq:Y_controlled} (see also \cite[Proof of Proposition 4.17.]{djete2019general}), we can verify that $\P$--a.e. $\muh = \muh \circ (X_0,Z,W,\Lambda)^{-1}$, $\muh$--a.e.
\begin{align*}
    \mathrm{d}Z_t = Z_t \int_{\Vc} v\; M(\mathrm{d}v,\mathrm{d}t)\mbox{   and   }W_t=M(\Vc \x [0,t])\mbox{  for all }t \in [0,T]
\end{align*}
where $Z_0=1$ and $M$ is a $\muh$--martingale measure with quadratic variation $\Lambda$ (see \cite{el1990martingale} for an overview on martingale measure). We define $\frac{\mathrm{d}\P^{\hat \mu}}{\mathrm{d}\hat \mu}:=Z_T$. By Girsanov Theorem, we can see that under $\P^{\hat \mu}$,  $W_\cdot-\int_0^\cdot \int_{\Vc} v \;\Lambda_t(\mathrm{d}v) \mathrm{d}t$ is a Brownian motion. Therefore
\begin{align} \label{eq:change_of_proba_equality}
    &\E^{\Pr} \left[ G(Y_0,W) F \left(R, \mu,\widetilde{R}, B \right)  \right]
    =
    \lim_{n \to \infty}\frac{1}{n} \sum_{i=1}^n\E^\P \left[ G(X^i_0,W^i) F \left({R}^{i,n}, \mu^{i,n},\widetilde{R}^{i,n}, B \right) \right] \nonumber
    \\
    &=
    \E^\P \left[\E^{\P^{\hat \mu}} \left[ G\left(X_0,W_\cdot-\int_0^\cdot \int_{\Vc} v \;\Lambda_t(\mathrm{d}v) \mathrm{d}t \right)  \right] F (R, \mu,R, B)  \right] \nonumber
    \\
    &=
    \E^\P \left[ G(X^1_0,W)  \right]\E^\P \left[ F (R, \mu,R, B)  \right] \nonumber
    \\
    &=
    \lim_{n \to \infty}\frac{1}{n} \sum_{i=1}^n \E^\P \left[ G(X^i_0,W^i) F \left({R}^{n}, \varphi^n[\alphab],{R}^{n}, B \right)  \right]=\E^{\Pr} \left[ G(X_0',W) F \left(R, \mu,R, B \right)  \right]
\end{align}
where we used the law of large number with the i.i.d sequence $(W^i,\iota^i)_{i \ge 1}$. This result remains true for any convergent sub--sequence of $\left(\P \circ \left(\muh^n,R^n,\varphi^n[\alphab^n],B \right)^{-1} \right)_{n \ge 1}$ and any bounded maps  $(G,F)$. It is enough to conclude that $\Lc\left( Y_0, W,B,\mu,R,\widetilde{R} \right)=\Pr \circ \left(X_0', W, B, \mu, R, R \right)^{-1}$, so
$$
    \lim_{n \to \infty} \widehat{\Qr}^n=\Lc^{\P}\left( Y,R,\mu,\widetilde{R},W,B \right)=\Pr \circ \left(X',R,\mu,R, W, B\right)^{-1}.
$$

\medskip
$\boldsymbol{\underline{\rm Step\;3}}$
Let $(\Qr^n)_{n \ge 1}$ be defined by $\Qr^n:= \P \circ \left( \delta_{\phi^n(x)}(\mathrm{d}i) \eta^n_T(\mathrm{d}x), \eta^n \right)^{-1}$ where $\eta^n_t:=\frac{1}{n} \sum_{i=1}^n \delta_{Y^{i,i}_t}(\mathrm{d}x)$. Since we know that $\lim_{n \to \infty}\delta_{\phi^n(x)}(\mathrm{d}i)\mathrm{G}(x)\mathrm{d}x=\Phi(x)(\mathrm{d}i) \mathrm{G}(x)\mathrm{d}x$, by \Cref{prop:charac-convergence}, the sequence $(\Qr^n)_{n \ge 1}$ is relatively compact in $\Wc_p$, and any limit point $\Qr=\P \circ (\kappa,\eta)^{-1}$ satisfies: $\P$--a.e. $\kappa=\Phi(x)(\mathrm{d}i)\eta_T(\mathrm{d}x)$. Notice that, 
$$
    \E^{\P} \left[ \eta_T(\mathrm{d}x) \right]
    =
    \lim_{n \to \infty} \frac{1}{n} \sum_{i=1}^n \Lc(Y^{i,i}_T)=\Pr \circ (X_T')^{-1}.
$$
Therefore,
\begin{align*}
    \lim_{n \to \infty} \frac{1}{n} \sum_{i=1}^n \E^{\P} \left[ g \left( Y^{i,i}_T, \phi^n(Y^{i,i}_T) \right) \right]
    =
    \lim_{n \to \infty}  \E^{\P} \left[ \int_{\Ir \x \R} g \left( x, i \right) \delta_{\phi^n(x)}(\mathrm{d}i)\eta^n_T(\mathrm{d}x) \right]
    &=
    \E^{\P} \left[ \int_{\Ir \x \R} g \left( x, i \right) \Phi(x)(\mathrm{d}i)\eta_T(\mathrm{d}x) \right]
    \\
    &=
    \E^{\P} \left[ \int_{\Ir} g \left( X_T', i \right) \Phi(X_T')(\mathrm{d}i) \right].
\end{align*}

\medskip
Also, since $\mu_T$ is a Borel map of $R$, by the same techniques used in \eqref{eq:change_of_proba_equality} through change of probability, we have
\begin{align*}
    \lim_{n \to \infty} \frac{1}{n} \sum_{i=1}^n \E^{\P} \left[ \overline{g} \left( \mu^{i,n}, \xi^n(\Ybb^i) \right) \right]
    =
    \lim_{n \to \infty} \frac{1}{n} \sum_{i=1}^n \E^{\P} \left[ Z^i_T\overline{g} \left( \varphi^n[\alphab^n], \xi^n(\Xbb) \right) \right]
    &=
    \lim_{n \to \infty} \E^{\P} \left[ \overline{g} \left( \varphi^n[\alphab^n], \xi^n(\Xbb) \right) \right]
    \\
    &=\E^{\Pr} \left[ \int_{\Er}\overline{g} \left( \mu, e \right) \Zf(R)(\mathrm{d}e) \right].
\end{align*}

\medskip    
    By combining all the results, we find
	\begin{align*}
	    &\lim_{n \to \infty} \frac{1}{n} \sum_{i=1}^n J_{n,i}^{\Cf^n} \big( \alpha^{1,n}, \cdots, \alpha^{i-1,n}, \beta^{\alphab,i}, \alpha^{i+1,n},\cdots, \alpha^{n,n} \big)
        \\
	    &=
	    \lim_{n \to \infty} \frac{1}{n} \sum_{i=1}^n\E^{\P} \left[
        \int_0^T \overline{L}\left(t,\mub^{i,n}_{t}, \aleph^n(t,\Ybb^i)\right)
        +
        L\left(t,Y^{i,i}_t,\mu^{i,n} ,\beta^{\alphab,i}(t,\Ybb^i) \right)\; \mathrm{d}t 
        +
        \overline{g}\left(\mu^{i,n},\xi^n(\Ybb^i) \right)
        + 
        g \big( Y^{i,i}_T, \phi^n(Y^{i,i}_T) \big)
        \right]
        \\
        &=
        \E^{\Pr}\left[{\rm J}^{\Phi,\Zf}_A(\Gamma',\mu',R,\mu) \right].
	\end{align*}

\end{proof}

\medskip
For each $n \ge 1$, we consider the controls $\alphab^n:=(\alpha^{1,n},\cdots,\alpha^{n,n}) \in (\Ac_n)^n$. Let us introduce
\begin{align*}
    \Pr^n
    :=
    \frac{1}{n} \sum_{i=1}^n \P \circ \left( X^i, W^i, B, R^n, \varphi^n[\alphab^n] \right)^{-1}\;\;
\end{align*}
$\mbox{where}\;R^n:=\delta_{\left( \overline{\varphi}^n_t[\alphab^n], \aleph^n(t,\Xbb_t)\right)}(\mathrm{d}m,\mathrm{d}e)\mathrm{d}t.$ We also consider the sequence of non--negative numbers $(\varepsilon_n)_{n \ge 1}$ satisfying $\Lim_{n \to \infty} \varepsilon_n=0.$
\begin{proposition} \label{prop:convegence_n-player}
    Under {\rm \Cref{assum:main1}}, the sequence $\left(\Pr^n \right)_{n \ge 1}$ is relatively compact in $\Wc_p$ with $p'>p$, and any limit point is a control rule in the sense of {\rm \Cref{def:RelaxedCcontrol}}.
    Moreover, if for each $n \ge 1$, $\alphab^n$ is an $\varepsilon_n$--Nash equilibrium then any limit point $\Pr$ of a convergent sub--sequence $\left(\Pr^{n_k} \right)_{k \ge 1}$ is a MFG solution i.e. belongs to $\Pcb_{\rm mfg}$ associated to $(\Phi,\Zf)$ which verifies: $\lim_{k \to \infty} \Lc^\P \left( \xi^{n_k}(\Xbb),\; R^{n_k} \right) 
        =
        \Zf(r)(\mathrm{d}e) \Lc^{\Pr} \left( R \right)(\mathrm{d}r)$ and
    \begin{align*}
        \lim_{k \to \infty} \Lc^\P \left( \varphi^{n_k}[\alphab^{n_k}],\; \delta_{\phi^{n_k}(x)}(\mathrm{d}i)\varphi^{n_k}_T[\alphab^{n_k}](\mathrm{d}x)\right) 
        =
        \Lc^{\Pr} \left( \mu,\; \Phi(x)(\mathrm{d}i)\mu_T(\mathrm{d}x) \right)\;\mbox{in}\;\Wc_p.
    \end{align*}  
\end{proposition}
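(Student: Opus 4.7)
The argument splits into three stages: (i) relative compactness of $(\Pr^n)$ and identification of limits as control rules, (ii) extraction of payment kernels $(\Phi,\Zf)$ together with the MFG consistency \eqref{eq:consistency}, and (iii) verification of the MFG optimality \eqref{eq:optimality-relaxed} via \Cref{lemma:deviating-player}.

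\emph{Stage 1 (compactness and control-rule property).} Boundedness of $(b,\overline{b},\sigma,\sigma_0)$ and $\nu \in \Pc_{p'}(\R^d)$ with $p'>p$ give $\sup_n \max_i \E\bigl[\sup_{t\le T} |X^i_t|^{p'}\bigr]<\infty$, hence $p$-uniform integrability of the $X'$-marginal of $\Pr^n$. Combined with compactness of $A,\Er$ and standard Kolmogorov estimates, all marginals of $\Pr^n$ are tight, so $(\Pr^n)$ is relatively compact in $\Wc_p$. For a convergent subsequence with limit $\Pr^\star$, pass to the limit in the martingale formulation of \eqref{eq:N-agents_StrongMV_CommonNoise-law-of-controls}; the continuous coefficients pass through and the Borel control $\alpha^{i,n}$ is absorbed by identifying its conditional law given $(X',R)$ as the kernel $\Lambda'(t,X'_t,R)(\mathrm{d}a)$. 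The independence requirement in Definition \ref{def:RelaxedCcontrol}\,(iii) follows from the independence of $(W^i,\iota^i)$ from $B$ at finite $n$ surviving the empirical averaging over $i$. Hence $\Pr^\star \in \Pcb$.

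\emph{Stage 2 (extraction of $(\Phi,\Zf)$ and MFG consistency).} Since $\Ir,\Er$ are compact, the laws $\Lc^\P\bigl(\delta_{\phi^n(x)}(\mathrm{d}i)\varphi^n_T[\alphab^n](\mathrm{d}x), \varphi^n[\alphab^n]\bigr)$ and $\Lc^\P(\xi^n(\Xbb), R^n)$ are tight, so by extracting a further subsequence and using \Cref{prop:charac-convergence} we obtain disintegration kernels $\Phi \in \Kc(\R,\Ir)$ and $\Zf \in \Kc(\M,\Er)$ realising the two displayed limits. For \eqref{eq:consistency}, the empirical structure $R^n_t = \delta_{\overline{\varphi}^n_t[\alphab^n]}\otimes \delta_{\aleph^n(t,\Xbb)}$ gives, for any $\varphi \in C_b(\R^d)$ and any bounded continuous $F$ of $(R,\mu)$,
\[
    \E^{\Pr^n}\!\left[\int_A (\varphi b, L)(t,X'_t,\mu,a)\,\Lambda'^n(t,X'_t,R)(\mathrm{d}a)\,F(R,\mu)\right] = \E^{\Pr^n}\!\left[\int_{\Pc_A}\!\bigl\langle (\varphi b, L)(t,\cdot,\mu,\cdot), m\bigr\rangle R_t(\mathrm{d}m,\Er)\,F(R,\mu)\right],
\]
since at finite $n$ both sides equal $\E^\P\bigl[\frac{1}{n}\sum_i (\varphi b, L)(t,X^i_t,\varphi^n[\alphab^n],\alpha^{i,n})\,F(R^n,\varphi^n[\alphab^n])\bigr]$. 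Weak-limit passage and density of such $F$ in the $\Gcb_T$-measurable functions yield \eqref{eq:consistency}.

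\emph{Stage 3 (MFG optimality).} Let $\Pr \in \Pcb$ satisfy $\Lc^\Pr(B,R)=\Lc^{\Pr^\star}(B,R)$. By \Cref{lemma:conv_adm-control} we may approximate its kernel by $\delta_{\lambda'^\ell(t,X'_t,R)}$ with $\lambda'^\ell$ Lipschitz in $(x,\rr)$ and joint laws converging in $\Wc_p$, so it suffices to prove \eqref{eq:optimality-relaxed} when $\Lambda' = \delta_{\beta(t,X'_t,R)}$ for Lipschitz $\beta$. At the $n$-player level, consider the deviation $\beta^{\alphab^n,i}$ of \eqref{eq:def-controlBeta}. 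The $\varepsilon_n$-Nash property yields
\[
    \frac{1}{n}\sum_{i=1}^n J_{n,i}^{\Cf^n}(\alphab^n) \ \ge\ \frac{1}{n}\sum_{i=1}^n J_{n,i}^{\Cf^n}\!\bigl(\alpha^{1,n},\ldots,\beta^{\alphab^n,i},\ldots,\alpha^{n,n}\bigr) - \varepsilon_n.
\]
The left-hand side converges to $\E^{\Pr^\star}\bigl[\Jr^{\Phi,\Zf}_A(\Gamma',\mu',R,\mu)\bigr]$ by the weak-convergence machinery of Stage 1 combined with the identifications of $(\Phi,\Zf)$ from Stage 2 (this is in fact a special case of \Cref{lemma:deviating-player} with $\beta=\alpha$); the right-hand side converges to $\E^{\Pr}\bigl[\Jr^{\Phi,\Zf}_A(\Gamma',\mu',R,\mu)\bigr]$ by a direct application of \Cref{lemma:deviating-player}, whose hypothesis \eqref{cond:converg} holds by the convergence of $\Pr^n$ and the requirement $\Lc^\Pr(B,R)=\Lc^{\Pr^\star}(B,R)$. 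This yields \eqref{eq:optimality-relaxed} with $\varepsilon=0$, so $\Pr^\star \in \Pcb_{\mathrm{mfg}}$ associated to $(\Phi,\Zf)$.

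\emph{Main obstacle.} Stage 3 is the delicate step: one must show that a coordinated single-player deviation by a Lipschitz feedback does not perturb the mean-field limit at leading order. The heart of the argument is the Girsanov change-of-measure inside \Cref{lemma:deviating-player}; the Radon--Nikodym density affects only one empirical-measure entry out of $n$, so its average concentrates on $1$ and decouples the deviation cost from the common-noise structure in the limit. A secondary technical point is that the kernels $(\Phi,\Zf)$ must be extracted jointly with $\Pr^n$ so that they serve simultaneously in the Agent and Principal functionals; this is exactly the role of the two joint-law convergences in the statement.
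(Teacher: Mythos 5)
Your proposal follows the paper's overall strategy -- relative compactness, identification of limits as control rules, extraction of $(\Phi,\Zf)$ with the consistency condition, then optimality via the $\varepsilon_n$--Nash inequality combined with \Cref{lemma:conv_adm-control} and \Cref{lemma:deviating-player} -- and correctly identifies \Cref{prop:charac-convergence} as the tool for handling the $\phi^n$ and $\xi^n$ marginals. Stage 2 and the main lines of Stage 3 are essentially the paper's. However Stage 1 is too sketchy in a way that hides the core technical content. The claim that "the Borel control $\alpha^{i,n}$ is absorbed by identifying its conditional law given $(X',R)$ as the kernel $\Lambda'(t,X'_t,R)(\mathrm{d}a)$" elides a genuine step: the limit along $\Pr^n$ naturally produces a path--space relaxed control $\Lambda$, not a feedback kernel, and simply conditioning on $(X'_t,R)$ is not automatically compatible with the definition of $\Pcb$. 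The paper enlarges $\Pr^n$ to $\Prt^n$ by carrying the relaxed controls $\Lambda^i=\delta_{\alpha^{i,n}(t,\Xbb)}(\mathrm{d}a)\mathrm{d}t$ as extra coordinates, passes to the limit on this bigger space, then uses the disintegration of $R$ to build a candidate feedback $\Lambda(t,x,\rr)(\mathrm{d}a):=\int_{\Pc_A}m(x)(\mathrm{d}a)\rr(t)(\mathrm{d}m,\Er)$ and the Markovian--projection lemma \Cref{lemma:projection} to show the associated process $S$ reproduces the same conditional law $\mu_t=\Lc(S_t\mid\Gc_T)$; only then, through the abuse--of--terminology device of \Cref{rm:proba_equivalent}\,(iii), is $\Pr=\Prt\circ(X,W,B,R,\mu)^{-1}$ accepted as a control rule. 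You should say all of this explicitly, because without it Stage 1 is a tautology.

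Two further omissions. First, you never verify the marginal conditions $R_t\bigl(\{(m,e):m(\mathrm{d}x,A)=\mu_t\}\bigr)=1$ and, more importantly, $\mu_t=\mu'_t=\Lc^{\Pr^\star}(X'_t\mid\Gcb_T)$, which is part of the definition of an MFG solution and is not implied by the $(\varphi b,L)$--consistency alone; at finite $n$ both follow from the empirical structure of $\varphi^n[\alphab^n]$ and $R^n$, and in the limit from testing $\langle f,\varphi^n_t\rangle$ against bounded continuous functionals of $(B,R^n)$ together with the i.i.d.\ law of large numbers for $(W^i,\iota^i)$. Second, your parenthetical that the convergence of $\frac{1}{n}\sum_i J^{\Cf^n}_{n,i}(\alphab^n)$ to $\E^{\Pr^\star}[\Jr^{\Phi,\Zf}_A(\Gamma',\mu',R,\mu)]$ is "a special case of \Cref{lemma:deviating-player} with $\beta=\alpha$" does not parse: \Cref{lemma:deviating-player} requires a Lipschitz feedback $\beta:[0,T]\times\R^d\times\M\to A$, whereas $\alpha^{i,n}$ is a Borel map on $[0,T]\times(\Cc^d)^n$ and the limiting relaxed control is not a feedback at all. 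That limit should instead be read off directly from $\Wc_p$--convergence of $\Pr^n$ and \Cref{prop:charac-convergence}, which is what the paper does before invoking the Nash inequality.
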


\begin{proof}
    We set $\Lambda^i:=\delta_{\alpha^{i,n}(t,\Xbb)}(\mathrm{d}a)\mathrm{d}t$ and we define the sequence $(\Prt^n )_{n \ge 1}$ by 
    \begin{align*}
        \Prt^n
        :=
        \frac{1}{n} \sum_{i=1}^n \P \circ \left(\Lambda^i, X^i, W^i, B, R^n, \varphi^n[\alphab^n] \right)^{-1}.
    \end{align*}
    Since the initial distribution $\Lc^{\P}(X^1_0) \in \Wc_{p'}$, under {\rm \Cref{assum:main1}}, it is straightforward to check that $(\Prt^n )_{n \ge 1}$ is relatively compact in $\Wc_p$ with $p' >p$. Let $\Prt$ be the limit of a convergent sub--sequence of $(\Prt^n )_{n \ge 1}$. For simplification, we use the same notation for the sequence and its sub--sequence. 
    Notice that, we can rewrite the dynamics of $X^i$ as
    \begin{align*}
        \mathrm{d}X^i_t
        =
        \langle \overline{b}\left(t, \cdot, \cdot\right), R^n_t \rangle
        +
        \int_{A} b\left(t,X^i_{t}, \varphi^{n}[\alphab^n] ,a \right)\Lambda^i_t(\mathrm{d}a)\; \mathrm{d}t 
        +
        \sigma(t,X^i_t) \mathrm{d}W^i_t
        +
        \sigma_0 \mathrm{d}B_t.
    \end{align*}
    By using some classical martingale problem (see for instance \cite{djete2019general}), we can check that $\Prt=\P \circ \left( \Lambda, X, W,B, R, \mu \right)^{-1}$  where $W$ and $B$ are independent Brownian motions and $X$ satisfies: $\P$--a.e.,  $X_0=\iota$ and
    \begin{align*}
        \mathrm{d}X_t
        =
        \langle \overline{b}\left(t, \cdot, \cdot\right), R_t \rangle
        +
        \int_{A} b\left(t,X_{t}, \mu,a \right)\Lambda_t(\mathrm{d}a)\; \mathrm{d}t 
        +
        \sigma(t,X_t) \mathrm{d}W_t
        +
        \sigma_0 \mathrm{d}B_t.
    \end{align*}

    Let $F$, $V$ and $G$ be two bounded continuous functions
    \begin{align*}
        \E^{\P} \left[ V\left(X_0 \right) F\left( W \right) G \left(B, R,\mu \right) \right]
        &=
        \lim_{n \to \infty} \frac{1}{n} \sum_{i=1}^n\E^{\P} \left[ V\left(X^i_0 \right)F\left( W^i \right) G \left(B, R^n,\varphi^n[\alphab^n] \right) \right]
        \\
        &=
        \E^{\P} \left[ V\left(X_0 \right) \right]\E^{\P} \left[ F\left( W \right) \right]\E^{\P} \left[ G \left(B, R,\mu \right) \right]
    \end{align*}
    where we use the fact that $(X^i_0,W^i)_{i \ge 1}$ is an i.i.d. sequence combined with the law of large number. This is true for any maps $F$, $V$ and $G$, we can deduce that $(B,R,\mu)$, $W$ and $X_0$ are independent. By the definition of $R^n$ and $\varphi^n[\alphab^n]$, it is easy to check that $\mathrm{d}\P \otimes \mathrm{d}t$--a.e. $R_t\left(\{ (m,e):\;m(\mathrm{d}x,A)=\mu_t \} \right)=1$ (see also the proof of \cite[Proposition 4.4]{MFD-2020}). We can then see $\mu_t$ as a Borel map of $R_t$.  Next, we verify that $\mu_t=\Lc^\P\left(X_t| \Gc_T \right)$, $\P$--a.e. for all $t \in [0,T]$ where $\Gc_t:=\sigma\{B_{t \wedge \cdot}, R_{t \wedge \cdot}\}$. For this purpose, let $t \in [0,T]$ and, $f$ and $G$ be two bounded continuous maps, we have
    \begin{align*}
        \E^\P \left[f(X_t) G \left(B,R \right)\right]
        =
        \lim_{n \to \infty} \frac{1}{n} \sum_{i=1}^n \E^\P \left[f(X^i_t) G \left(B,R^n \right)\right]
        =
        \lim_{n \to \infty} \E^\P \left[\langle f, \varphi^n_t[\alphab^n] \rangle G \left(B,R^n \right)\right]
        =
        \E^\P \left[\langle f, \mu_t \rangle G \left(B,R \right)\right].
    \end{align*}
    This is true for any $(f,G)$. This is enough to conclude that $\mu_t=\Lc^\P\left(X_t| \Gc_T \right)$, $\P$--a.e. for all $t \in [0,T]$. Now, for any bounded continuous maps $(h,\varphi,G)$, we find 
    \begin{align*}
        &\E^{\P}\left[ \int_0^T  \int_A h(t)\varphi(X_t) b\left(t,X_t,\mu,a \right)\Lambda_t(\mathrm{d}a) \mathrm{d}t \; G\left(B, R \right)  \right]
        \\
        &=
        \lim_{n\to \infty}\frac{1}{n}\sum_{i=1}^n\E^{\P}\left[ \int_0^T  \int_A h(t)\varphi(X^i_t) b\left(t,X^i_t,\varphi^n[\alphab^n],a \right) \Lambda^i_t(\mathrm{d}a) \mathrm{d}t \;G\left(B, R^n \right)  \right]
        \\
        &=
        \lim_{n\to \infty}\E^{\P}\left[ \int_{[0,T] \x \Pc_A}  \int_{\R^d \x A} h(t)\varphi(x) b\left(t,x,\varphi^n[\alphab^n],a \right) m(\mathrm{d}x,\mathrm{d}a)R^n_t(\mathrm{d}m,\Er) \mathrm{d}t \; G\left(B, R^n\right)  \right]
        \\
        &=
        \E^{\P}\left[ \int_{[0,T] \x \Pc_A}  \int_{\R^d \x A}  h(t) \varphi(x) b\left(t,x,\mu,a \right) m(\mathrm{d}x,\mathrm{d}a) R_t(\mathrm{d}m,\Er) \mathrm{d}t \; G\left(B, R \right)  \right].
    \end{align*}
    This is true for any $(h,\varphi,G)$. We deduce that $\mathrm{d}t \otimes \mathrm{d}\P$--a.e. 
    \begin{align*}
         \E^{\P}\left[  \int_A\left(\varphi b \right)\left(t,X_t,\mu,a \right) \Lambda_t(\mathrm{d}a) |\Gc_T \right]
            =
            \int_{\Pc_A} \int_{\R^d \x A} \left(\varphi b\right)\left(t,x, \mu,a \right) m(\mathrm{d}x,\mathrm{d}a) R_t(\mathrm{d}m,\Er).
    \end{align*}
    By similar arguments, we get $ \E^{\P}\left[  \int_A L\left(t,X_t,\mu,a \right) \Lambda_t(\mathrm{d}a) |\Gc_T \right]
            =
            \int_{\Pc_A} \int_{\R^d \x A} L\left(t,x, \mu,a \right) m(\mathrm{d}x,\mathrm{d}a) R_t(\mathrm{d}m,\Er)$, $\mathrm{d}t \otimes \mathrm{d}\P$--a.e.  We set $\Lambda(t,x,\rr)(\mathrm{d}a)\mathrm{d}t:=\int_{\Pc_A} m(x)(\mathrm{d}a)\rr(t)(\mathrm{d}m,\Er)\mathrm{d}t$ where for $m \in \Pc_A$, $\R \ni x \mapsto m(x) \in \Pc(A)$ is a Borel map satisfying $m(x)(\mathrm{d}a)m(\mathrm{d}x,A)=m$. The previous result combined with \Cref{lemma:projection} allows to verify that $\mu_t=\Lc^\P \left(S_t|\Gc_T \right)$, $\P$--a.e. where
    \begin{align*}
        \mathrm{d}S_t
        =
        \langle \overline{b}\left(t, \cdot, \cdot\right), R_t \rangle
        +
        \int_{A} b\left(t,S_{t}, \mu ,a \right)\Lambda(t,S_t,R)(\mathrm{d}a)\; \mathrm{d}t 
        +
        \sigma(t,S_t) \mathrm{d}W_t
        +
        \sigma_0 \mathrm{d}B_t,\;\P\mbox{--a.e.}
    \end{align*}
    By combining all the results, we find that $\lim_{n \to \infty} \Pr^n= \P \circ \left(X,W,B,R,\mu \right)^{-1}$. Consequently, $\Pr=\Prt \circ \left( X,W,B,R,\mu \right)^{-1}$ is a control rule (see \Cref{rm:proba_equivalent}) . This is true for any convergent sub--sequence.

    \medskip
    We now prove the second part of the Proposition i.e. $\Pr \in \Pcb_{\rm mfg}$. Notice that the sequences $\left( \delta_{\phi^n(x)}(\mathrm{d}i) \Gr(x)\mathrm{d}x \right)_{n \ge 1}$ and $\left(\P \circ \left( \xi^n(\Xbb^n), R^n \right)^{-1} \right)_{n \ge 1}$ are relatively compact in $\Wc_p$. Let $\Phi(x)(\mathrm{d}i)\Gr(x)\mathrm{d}x$ and $\Zf(r)(\mathrm{d}e)\Lc^\P \left(R \right)(\mathrm{d}r)$ be the limits of a sub--sequence. We choose the sub--sequence $(n_k)_{k \ge 1}$ s.t.
    \begin{align*}
        (\Pr^{n_k})_{k \ge 1},  \;\left( \delta_{\phi^{n_k}(x)}(\mathrm{d}i) \Gr(x)\mathrm{d}x \right)_{k \ge 1} \mbox{ and } \left(\P \circ \left(\xi^{n_k}(\Xbb^{n_k}), R^{n_k} \right)^{-1} \right)_{k \ge 1}
    \end{align*} 
    are convergent. Again, we use the same notation for the sequence and the sub--sequence. It is straightforward that $\Lc^{\P}(R)=\Lc^{\Pr}(R)$ and by using \Cref{prop:charac-convergence}, we find that 
    \begin{align*}
        \lim_{n \to \infty} \P \circ \left( \delta_{\phi^n(x)}(\mathrm{d}i) \varphi^n_T[\alphab^n](\mathrm{d}x), \varphi^n[\alphab^n], R^n \right)^{-1}
        =
        \Pr \circ \left( \Phi(x)(\mathrm{d}i) \mu_T(\mathrm{d}x), \mu, R \right)^{-1}.
    \end{align*}

    We now check the optimality property. Let $\Qr$ be a control rule s.t. $\Lc^{\Pr}(B,R)=\Lc^{\Qr}(B,R)$. Thanks to \Cref{lemma:conv_adm-control}, we can assume that $\mathrm{d}\Qr \otimes \mathrm{d}t$--a.e. $\Lambda'(t,X_t',R)(\mathrm{d}a)=\delta_{\beta(t,X_t',R)}(\mathrm{d}a)$ where $\beta$ is a Lipschitz map in $(x,\rr)$ uniformly in $t$. 
    Next, we can use \Cref{lemma:deviating-player} and find
    \begin{align*}
        \E^{\Qr} \big[{\rm J}^{\Phi,\Zf}_A(\Gamma',\mu',R,\mu) \big]
        &=
        \Lim_{n \to \infty}
        \frac{1}{n} \sum_{i=1}^n
        J_{n,i}^{\Cf^n} \big( \alpha^{1,n}, \cdots, \alpha^{i-1,n}, \beta^{\alphab^n,i}, \alpha^{i+1,n},\cdots, \alpha^{n,n} \big)
        \\
        &\le 
        \Lim_{n \to \infty}
        \frac{1}{n} \sum_{i=1}^n
        J_{n,i}^{\Cf^n} \left( \alphab^n \right) + \varepsilon_n
        =
        \E^{\Pr} \big[{\rm J}^{\Phi,\Zf}_A(\Gamma',\mu',R,\mu) \big].
    \end{align*}
    We can therefore conclude the proof of the Proposition.
\end{proof}

\medskip
We provide here a result similar to \Cref{lemma:deviating-player} for approximate MFG solutions. We take a control rule $\Pr \in \Pcb$ satisfying $\Lambda'(t,X_t',R)(\mathrm{d}a)\mathrm{d}t=\delta_{\beta(t,X_t',R)}(\mathrm{d}a)\mathrm{d}t$ $\Pr$--a.e. for some progressively Borel map $[0,T] \x \R \x \M \ni (t,x,\rr)\mapsto \beta(t,x,\rr) \in A$ Lipschitz in $(x,\rr)$ uniformly in $t$.

\begin{lemma} \label{lemma:deviating MFGstrong}
    Let {\rm \Cref{assum:main1}} hold true. We consider a sequence of contracts $\left(\Cf^\ell=(\phi^\ell,\xi^\ell,\aleph^\ell) \right)_{\ell \ge 1}$ and a sequence $(\alpha^\ell, \mub^\ell)_{\ell \ge 1}$ s.t. for each $\ell \ge 1,$  $(\alpha^\ell,\mub^\ell) \in {\rm MFG}[\Cf^\ell,\varepsilon_\ell]$,
    \begin{align*}
        \Lim_{\ell \to \infty} \P \circ \left(B, \mu^\ell, \delta_{\left( \mub^\ell_t,\aleph^\ell_t \right)} (\mathrm{d}m,\mathrm{d}e) \mathrm{d}t \right)^{-1}
        =
        \Pr \circ \left(B, \mu,R \right)^{-1}\;\;\mbox{in}\;\Wc_p
    \end{align*}
    and
    \begin{align*}
        \lim_{\ell \to \infty}\delta_{\phi^\ell(x)}(\mathrm{d}i)\mathrm{G}(x)\mathrm{d}x=\Phi(x)(\mathrm{d}i) \mathrm{G}(x)\mathrm{d}x\;\;\;\mbox{and}\;\;\;\lim_{\ell \to \infty} \Lc^\P \left( \xi^\ell(\mub^\ell,\aleph^\ell), \delta_{\left( \mub^\ell_t,\aleph^\ell_t \right)} (\mathrm{d}m,\mathrm{d}e) \mathrm{d}t\right)
        =
         \Zf(r)(\mathrm{d}e)\Lc^{\Pr} \left( R \right)(\mathrm{d}r).
    \end{align*}
    Then, there exists a sequence $(\beta^\ell)_{\ell \ge 1} \subset \Ac$ s.t.
\begin{align*}
    &\Lim_{\ell \to \infty}
    J_{A,\mub^\ell}^{\Cf^\ell}(\beta^\ell)
        =
        \E^{\Pr}\left[{\rm J}^{\Phi,\Zf}_A(\Gamma',\mu',R,\mu) \right].
\end{align*}
\end{lemma}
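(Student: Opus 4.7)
Since $\beta:[0,T]\times\R^d\times\M\to A$ is already Lipschitz in $(x,\rr)$ uniformly in $t$, I simply define $\beta^\ell\in\Ac$ by restricting $\beta$ to the subset $\M_0\subset\M$, i.e.\ $\beta^\ell(t,x,q):=\beta(t,x,q)$ for $q\in\M_0$. For each $\ell$, let $X^{\beta^\ell}$ be the unique strong solution (using the Lipschitz property) of
\[
\mathrm{d}X^{\beta^\ell}_t \;=\; \overline{b}\!\left(t,\mub^\ell_t,\aleph^\ell_t\right)\mathrm{d}t + b\!\left(t,X^{\beta^\ell}_t,\mu^\ell_t,\beta(t,X^{\beta^\ell}_t,R^\ell)\right)\mathrm{d}t + \sigma(t,X^{\beta^\ell}_t)\,\mathrm{d}W_t + \sigma_0\,\mathrm{d}B_t,
\]
with $R^\ell:=\delta_{(\mub^\ell_t,\aleph^\ell_t)}(\mathrm{d}m,\mathrm{d}e)\mathrm{d}t$ and $X^{\beta^\ell}_0=\iota$. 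The reward $J^{\Cf^\ell}_{A,\mub^\ell}(\beta^\ell)$ is then computed in terms of $X^{\beta^\ell}$.

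\textbf{Step 1 (compactness and independence).} Thanks to the boundedness of $(b,\overline{b},\sigma)$, the $p'$--moment of $\iota$ with $p'>p$, and compactness of $A,\Er,\Ir$, the sequence $\Lc^\P\bigl(X^{\beta^\ell},W,B,\mu^\ell,R^\ell,\xi^\ell(\mub^\ell,\aleph^\ell),\delta_{\phi^\ell(x)}(\mathrm{d}i)\Gr(x)\mathrm{d}x\bigr)$ is relatively compact in $\Wc_p$. Combining the hypothesis $\P\circ(B,\mu^\ell,R^\ell)^{-1}\to\Pr\circ(B,\mu,R)^{-1}$ with the fact that $W$ is $\P$--independent of $(B,R^\ell,\mu^\ell)$ for each $\ell$ (from the definition of control rule) and that $W$ is $\Pr$--independent of $(B,R,\mu)$, one upgrades the convergence to the joint law $\Lc^\P(W,B,\mu^\ell,R^\ell)\to\Lc^\Pr(W,B,\mu,R)$.

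\textbf{Step 2 (identification of the limit).} Let $\widehat{\mathrm{Q}}=\Lc(\widehat{X},\widehat{W},\widehat{B},\widehat{\mu},\widehat{R},\ldots)$ be any limit point of a convergent subsequence. By Skorokhod representation together with the Lipschitz property of $\beta$ and of $b$ in $(x,\pi)$, standard SDE stability (in the spirit of Proposition~\ref{prop:convegence_n-player} and the martingale problem argument used there) yields that $\widehat{X}$ solves
\[
\mathrm{d}\widehat{X}_t = \langle\overline{b}(t,\cdot,\cdot),\widehat{R}_t\rangle\mathrm{d}t + b\!\left(t,\widehat{X}_t,\widehat{\mu}_t,\beta(t,\widehat{X}_t,\widehat{R})\right)\mathrm{d}t + \sigma(t,\widehat{X}_t)\,\mathrm{d}\widehat{W}_t + \sigma_0\,\mathrm{d}\widehat{B}_t,
\]
with $\widehat{X}_0\sim\nu$ independent of $(\widehat{W},\widehat{B},\widehat{R},\widehat{\mu})$. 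Since $\beta$ is Lipschitz, this SDE admits a unique strong solution given its driving data, and since $\Lc(\widehat{W},\widehat{B},\widehat{\mu},\widehat{R})=\Lc^\Pr(W,B,\mu,R)$, uniqueness in law forces $\Lc(\widehat{X},\widehat{W},\widehat{B},\widehat{\mu},\widehat{R})=\Lc^\Pr(X',W,B,\mu,R)$, as $\Lambda'(t,X'_t,R)(\mathrm{d}a)\mathrm{d}t=\delta_{\beta(t,X'_t,R)}(\mathrm{d}a)\mathrm{d}t$ under $\Pr$. The limit is therefore unique, so the whole sequence converges.

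\textbf{Step 3 (passage to the limit in the reward).} With the joint convergence in hand and the hypotheses on $\phi^\ell\to\Phi$ and $\xi^\ell\to\Zf$, I pass to the limit term by term in $J^{\Cf^\ell}_{A,\mub^\ell}(\beta^\ell)$: the running costs $\bar{L}(t,\mub^\ell_t,\aleph^\ell_t)$ and $L(t,X^{\beta^\ell}_t,\mu^\ell_t,\beta(t,X^{\beta^\ell}_t,R^\ell))$ converge by continuity of the coefficients and $p$--uniform integrability (from $p'$--moment bounds on $X^{\beta^\ell}$ and the $p$--linear growth of $L,\bar{L}$); the terminal term $\bar{g}(\mu^\ell_T,\xi^\ell(\mub^\ell,\aleph^\ell))$ converges by the assumed joint convergence of $(\xi^\ell(\mub^\ell,\aleph^\ell),R^\ell)$ together with $\mu^\ell_T\to\mu_T$; and the Markovian terminal term $g(X^{\beta^\ell}_T,\phi^\ell(X^{\beta^\ell}_T))$ is handled via the same argument used at the end of Lemma~\ref{lemma:deviating-player}, rewriting it as $\int_{\R\times\Ir}g(x,i)\,\delta_{\phi^\ell(x)}(\mathrm{d}i)\Lc(X^{\beta^\ell}_T)(\mathrm{d}x)$ and using Proposition~\ref{prop:charac-convergence} to pass to $\int g(x,i)\Phi(x)(\mathrm{d}i)\Lc^\Pr(X'_T)(\mathrm{d}x)$. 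Assembling these, the limit equals $\E^\Pr[\mathrm{J}^{\Phi,\Zf}_A(\Gamma',\mu',R,\mu)]$ with $\Gamma'=\E^\Pr[\delta_{X'_t}(\mathrm{d}x)\delta_{\beta(t,X'_t,R)}(\mathrm{d}a)\,|\,\Gcb_t]\mathrm{d}t$, as required.

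\textbf{Main obstacle.} The delicate point is the identification step, since the drift of $X^{\beta^\ell}$ couples simultaneously to $\mu^\ell$ (measure component) and $R^\ell$ (through $\beta(\cdot,\cdot,R^\ell)$). The Lipschitz regularity of both $\beta$ in $\rr$ and $b$ in $(x,\pi)$ is what makes the SDE stable enough to identify a unique limit driven by $(W,B,\mu,R)$; without this, one would face the same non--uniqueness pathologies that complicate weak MFG solutions with common noise. The Markovian terminal term involving $\phi^\ell$ is the secondary technical point and requires the density--weighted weak convergence hypothesis stated in the lemma.
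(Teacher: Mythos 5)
Your proposal is correct and follows essentially the same route as the paper's proof: you take the same $\beta^\ell$ (the fixed Lipschitz map $\beta$ evaluated at $R^\ell$), solve the driven SDE, upgrade the hypothesized convergence of $(B,\mu^\ell,R^\ell)$ to joint convergence with $(W,X_0)$ via independence, identify the limit as $\Pr\circ(X',W,B,\mu,R)^{-1}$ through a martingale-problem/stability argument and strong uniqueness coming from the Lipschitz structure of $\beta$ and $b$, and then pass to the limit term by term in the reward, treating the Markovian terminal payment via the density-weighted weak convergence of $\phi^\ell$ and the Hölder equicontinuity of the conditional densities (the mechanism behind \Cref{prop:charac-convergence}). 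The paper compresses Steps 2 and 3 into a one-line reference to the arguments of \Cref{lemma:deviating-player} "without change of probability"; your proposal simply makes those steps explicit, and your observation that no Girsanov argument is needed here (since there is only a representative agent, not $n$ interacting players) matches the paper's parenthetical remark.

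One small inaccuracy you could tighten: \Cref{prop:charac-convergence} as stated is formulated for empirical measures of $n$ i.i.d.\ conditionally independent processes, whereas here you have a single representative process. What is actually being invoked (as in the proof of \Cref{lemma:conv_adm-control}, which writes "by the same argument as in \Cref{prop:charac-convergence}") is the underlying equicontinuity-of-densities argument applied to the conditional law $\Lc(X^{\beta^\ell}_T\mid B,R^\ell)$, not the statement verbatim. Citing the mechanism rather than the literal proposition would make the step airtight, but the idea and its justification are correct.
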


\begin{proof}
    Recall $X'$ satisfies 
     \begin{align*}
        \mathrm{d}X'_t
        =
        \int_{\Pc_A \x \Er} \overline{b}(t,m,e)R_t(\mathrm{d}m,\mathrm{d}e) +b(t,X'_t,\mu_t,\beta(t,X'_t,R))\;\; \mathrm{d}t
        +
        \sigma(t,X'_t) \mathrm{d}W_t
        + \sigma_0 \mathrm{d}B_t\;\mbox{with}\;\Lc(X'_0)=\nu.
    \end{align*}
    On the space $(\Om, \H, \P)$, we introduce $X'^\ell$ the solution of: $\Lc(X'^\ell_0)=\nu$,
    \begin{align*}
        \mathrm{d}X'^\ell_t
        =
        \overline{b}(t,\mub^\ell_t,\aleph^\ell_t) +b(t,X'^\ell_t,\mu^\ell,\beta(t,X'^\ell_t,R^\ell))\;\; \mathrm{d}t
        +
        \sigma(t,X'^\ell_t) \mathrm{d}W_t
        + \sigma_0 \mathrm{d}B_t\;\mbox{with}\;\;R^\ell:=\delta_{\left( \mub^\ell_t, \aleph^\ell_t \right)}(\mathrm{d}m,\mathrm{d}e)\mathrm{d}t.
    \end{align*}
    Since $\Lim_{\ell \to \infty} \P \circ \left(B, \mu^\ell, \delta_{\left( \mub^\ell_t,\aleph^\ell_t \right)} (\mathrm{d}m,\mathrm{d}e) \mathrm{d}t \right)^{-1}
        =
        \Pr \circ \left(B, \mu,R \right)^{-1}$ and $\left(B, \mu^\ell, \delta_{\left( \mub^\ell_t,\aleph^\ell_t \right)} (\mathrm{d}m,\mathrm{d}e) \mathrm{d}t \right)$, $X_0$ and $W$ are independent for each $\ell \ge 1$, by using martingale problem combined with similar arguments to  \Cref{lemma:deviating-player}, we can deduce that
        \begin{align*}
            \Lim_{\ell \to \infty} \P \circ \left(X'^\ell, W,B, \mu^\ell, \delta_{\left( \mub^\ell_t,\xi^\ell_t \right)} (\mathrm{d}m,\mathrm{d}e) \mathrm{d}t \right)^{-1}
        =
        \Pr \circ \left(X',W,B, \mu,R \right)^{-1}.
        \end{align*}
        Again, by the similar techniques used in \Cref{lemma:deviating-player} (without change of probability), we deduce that $\Lim_{\ell \to \infty}
    J_{A,\mub^\ell}^{\Cf^\ell}(\beta^\ell)
        =
        \E^{\Pr}\left[{\rm J}^{\Phi,\Zf}_A(\Gamma',\mu',R,\mu) \right]$.
\end{proof}

\medskip
With the sequence $(\alpha^{\ell},\mub^\ell)_{\ell \ge 1}$ given in \Cref{lemma:deviating MFGstrong} i.e. for each $\ell \ge 1$, $(\alpha^\ell,\mub^\ell) \in {\rm MFG}[\Cf^\ell,\varepsilon_\ell]$, we define
\begin{align*}
    \Pr^\ell
    :=
    \P \circ \left( X^{\alpha^\ell}, W, B, R^\ell, \mu^\ell \right)^{-1}\;\;
\end{align*}
$\mbox{where}\;\;R^\ell:=\delta_{\left( \mub^\ell_t, \aleph^\ell_t\right)}(\mathrm{d}m,\mathrm{d}e)\mathrm{d}t$ and $(\varepsilon_\ell)_{\ell \ge 1}$ is a sequence of non--negative numbers satisfying $\Lim_{\ell \to \infty} \varepsilon_\ell=0.$
\begin{proposition} \label{prop_convergence:limitcontract}
    Under {\rm \Cref{assum:main1}}, the sequence $\left(\Pr^\ell \right)_{\ell \ge 1}$ is relatively compact in $\Wc_p$ with $p'>p$, and any limit point $\Pr$ of a convergent sub--sequence $\left(\Pr^{\ell_k} \right)_{k \ge 1}$ belongs to $\Pcb_{\rm mfg}$ associated to $(\Phi,\Zf)$ that satisfies: $\lim_{k \to \infty} \Lc^\P \left( \xi^{\ell_k}(\mub^{\ell_k},\aleph^{\ell_k}),\; R^{\ell_k} \right) 
        =
         \Zf(r)(\mathrm{d}e)\Lc^{\Pr} \left(R \right)(\mathrm{d}e)$ and 
    \begin{align*}
        \lim_{k \to \infty} \Lc^\P \left( \mu^{\ell_k},\; \delta_{\phi^{n_k}(x)}(\mathrm{d}i)\mu^{\ell_k}_T(\mathrm{d}x) \right) 
        =
        \Lc^{\Pr} \left( \mu,\; \Phi(x)(\mathrm{d}i)\mu_T(\mathrm{d}x) \right)\;\mbox{in}\;\Wc_p.
    \end{align*}  
\end{proposition}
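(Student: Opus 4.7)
The strategy mirrors that of Proposition \ref{prop:convegence_n-player}, with the $n$--particle empirical objects replaced by their single representative counterparts. First I would establish tightness of $(\Pr^\ell)_{\ell \ge 1}$ in $\Wc_p$. Because $(b,\overline b,\sigma,\sigma_0)$ are bounded, $\Pc_A\times\Er$ is compact (so $R^\ell$ lives in a compact subset of $\M$), and the initial law $\nu\in\Pc_{p'}(\R^d)$ with $p'>p$, the laws of $X^{\alpha^\ell}$ are uniformly bounded in $L^{p'}$ and Aldous--Kolmogorov's criterion yields tightness of the $X$--component, while $W$ and $B$ are fixed Brownian motions and $\mu^\ell$ is tight by the boundedness of moments of $X^{\alpha^\ell}$. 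This gives relative compactness of $(\Pr^\ell)$ in $\Wc_p$. Simultaneously, since $\Ir$ is compact, $\delta_{\phi^\ell(x)}(\mathrm{d}i)\Gr(x)\mathrm{d}x$ is tight on $\Pc(\Ir\times\R^d)$, and similarly $\Lc^\P(\xi^\ell(\mub^\ell,\aleph^\ell),R^\ell)$ is tight; along a further subsequence I extract limits $\Phi(x)(\mathrm{d}i)\Gr(x)\mathrm{d}x$ and $\Zf(r)(\mathrm{d}e)\Lc^{\Pr}(R)(\mathrm{d}r)$, and \Cref{prop:charac-convergence} gives the claimed joint convergences involving $(\mu^{\ell_k},\Phi,\Zf)$.

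Next I would show that any limit point $\Pr=\P\circ(X,W,B,R,\mu)^{-1}$ is a control rule. Passing to the limit in the SDE for $X^{\alpha^\ell}$ via the standard martingale--problem technique (using continuity in $(x,\pi,a,m,e)$ from \Cref{assum:main1}) produces a relaxed drift of the form $\int_{\Pc_A\times\Er}\overline b(t,m,e)R_t(\mathrm{d}m,\mathrm{d}e)+\int_A b(t,X_t,\mu,a)\Lambda'(t,X_t,R)(\mathrm{d}a)$ for some progressively Borel $\Lambda'$, obtained by disintegrating $R_t$ with respect to its $\Pc_A$--marginal and then disintegrating each $m\in\Pc_A$ along its $\R^d$--marginal, as in \Cref{lemma:projection}. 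Independence of $(B,R,\mu)$, $W$ and $X_0$ under $\Pr$ is inherited from $\Pr^\ell$ in the limit. The consistency equality $\mu_t=\mu'_t=\Lc^\Pr(X_t\mid\Gcb_T)$ and the identities \eqref{eq:consistency} follow: by construction each $\mub^\ell_t(\cdot,A)=\mu^\ell_t$ and $(\alpha^\ell,\mub^\ell)\in\mbox{MFG}[\Cf^\ell,\varepsilon_\ell]$ implies the analogous equalities in the prelimit, which then pass to the limit via $\Wc_p$--convergence against bounded continuous test functions $\varphi b$ and $L$ (both being continuous with at most $p$--linear growth, controlled by the $p'$--moment).

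The optimality requirement \eqref{eq:optimality-relaxed} is then the heart of the argument. Given any $\Qr\in\Pcb$ with $\Lc^\Qr(B,R)=\Lc^\Pr(B,R)$, I invoke \Cref{lemma:conv_adm-control} to replace the relaxed control of $\Qr$ by an approximating Lipschitz sequence of strict controls $\beta$, and then apply \Cref{lemma:deviating MFGstrong} (using precisely the three hypotheses verified in Step 1: the joint convergence of $\left(B,\mu^\ell,R^\ell\right)$ to $(B,\mu,R)$, and of the $\phi^\ell$ and $\xi^\ell$ parts to $\Phi$ and $\Zf$) to obtain a sequence $(\beta^\ell)\subset\Ac$ with $\lim_\ell J^{\Cf^\ell}_{A,\mub^\ell}(\beta^\ell)=\E^\Qr[{\rm J}^{\Phi,\Zf}_A(\Gamma',\mu',R,\mu)]$. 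Since $(\alpha^\ell,\mub^\ell)\in\mbox{MFG}[\Cf^\ell,\varepsilon_\ell]$ gives $J^{\Cf^\ell}_{A,\mub^\ell}(\alpha^\ell)\ge J^{\Cf^\ell}_{A,\mub^\ell}(\beta^\ell)-\varepsilon_\ell$ and $\lim_\ell J^{\Cf^\ell}_{A,\mub^\ell}(\alpha^\ell)=\E^\Pr[{\rm J}^{\Phi,\Zf}_A(\Gamma',\mu',R,\mu)]$ by another application of the weak convergence (this limit being obtained along the same lines as the deviator computation but without the change of measure), letting $\ell\to\infty$ and $\varepsilon_\ell\to0$ yields \eqref{eq:optimality-relaxed}.

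The main obstacle I foresee is not any single step but the careful bookkeeping in identifying the limiting drift as a genuine control rule: because $\mub^\ell_t$ is not necessarily $\Lc(X^{\alpha^\ell}_t,\alpha^\ell(\cdots)\mid\Gcb_T)$ (the MFG of controls consistency is weak, only \eqref{eq:consitency1}--\eqref{eq:consitency2} hold), one must be careful to pass to the limit using only those weak consistency relations and the compactness of $\Pc_A$, and to correctly identify the common--noise filtration of the limit via \Cref{rm:measurable_B} so that $\mu_t$ remains $\Gcb_T$--measurable. This is essentially a transcription of the analogous bookkeeping in \Cref{prop:convegence_n-player}, the main simplification being that there is no change of measure à la Girsanov needed here, since we start from a single representative dynamics rather than from a symmetrized average over $n$ players.
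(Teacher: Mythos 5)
The proposal is correct and follows essentially the same route as the paper: tightness in $\Wc_p$ via bounded coefficients and the $p'$-moment of $\nu$, a martingale-problem passage to the limit producing a control rule (with the same $K$-projection and the identification of the common-noise filtration transcribed from Proposition~\ref{prop:convegence_n-player} and Lemma~\ref{lemma:projection}/Remark~\ref{rm:proba_equivalent}), extraction of $(\Phi,\Zf)$ from the tight sequences $\delta_{\phi^\ell}\mathrm G$ and $\Lc^\P(\xi^\ell,R^\ell)$ using Proposition~\ref{prop:charac-convergence}, and optimality via Lemma~\ref{lemma:conv_adm-control} combined with Lemma~\ref{lemma:deviating MFGstrong} and the $\varepsilon_\ell$-near-optimality of $\alpha^\ell$. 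Your observation that no Girsanov change of measure is needed here (contrary to the $n$-player case) matches the paper's remark in the proof of Lemma~\ref{lemma:deviating MFGstrong}.
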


\begin{proof}
    The proof is quite similar to the proof of \Cref{prop:convegence_n-player}. For any $\ell \ge 1$, we  can rewrite $X^{\alpha^\ell}$ as follows
    \begin{align*}
        \mathrm{d}X^{\alpha^\ell}_t
            =
            \langle\overline{b}\left(t,\cdot,\cdot\right), R^\ell_t \rangle
            +
            \int_A b\left(t,X^{\alpha^\ell}_t,\mu^\ell,a\right)\Lambda^\ell_t(\mathrm{d}a)\;\;\mathrm{d}t + \sigma(t,X^{\alpha^\ell}_t)\mathrm{d}W_t + \sigma_0\mathrm{d}B_t\;\;\mbox{with}\;\;\Lambda^\ell:=\delta_{\alpha^\ell(t,X^{\alpha^\ell}_t,R^\ell)}(\mathrm{d}a)\mathrm{d}t.
    \end{align*}
    We define
    \begin{align*}
        \Prt^\ell
        :=
        \P \circ \left( \Lambda^\ell, X^{\alpha^\ell}, W, B, R^\ell, \mu^\ell \right)^{-1}.
    \end{align*}
    Usual techniques allow to say that the sequence $(\Prt^\ell)_{\ell \ge 1}$ is relatively compact $\Wc_p$. Let $\Prt=\P \circ \left( \Lambda,X,W,B,R,\mu \right)^{-1}$ be the limit of a convergent sub--sequence. Similar to the previous proof, we use the same notation for the sequence and the sub--sequence. By standard techniques, the probability $\Prt=\P \circ \left( \Lambda,X,W,B,R,\mu \right)^{-1}$ is s.t. $(W,B)$ is an $\R^d \x \R^d$--valued Brownian motion, $X$ verifies
    \begin{align*}
        \mathrm{d}X_t
            =
            \langle\overline{b}\left(t,\cdot,\cdot\right), R_t \rangle
            +
            \int_A b\left(t,X_t,\mu_t,a\right)\Lambda_t(\mathrm{d}a)\;\;\mathrm{d}t + \sigma(t,X_t)\mathrm{d}W_t + \sigma_0\mathrm{d}B_t.
    \end{align*}
    We set $\G=(\Gc_t)_{t \in [0,T]}:=\left( \sigma\{ B_{t \wedge \cdot}, R_{t \wedge \cdot}\} \right)_{t \in [0,T]}$. By using the similar approach used in the proof of \Cref{prop:convegence_n-player}, we easily check that $\mathrm{d}t \otimes \mathrm{d}\P$--a.e., $R_t \left( \{(m,e):\;\mu_t=m(\mathrm{d}x,A)\} \right)=1$, $\mu_t=\Lc(X_t|\Gc_T)$ and
    \begin{align*}
         \E^{\P}\left[  \int_A\left(\varphi b \right)\left(t,X_t,\mu,a \right) \Lambda_t(\mathrm{d}a) |\Gc_T \right]
            =
            \int_{\Pc_A} \int_{\R^d \x A} \left(\varphi b\right)\left(t,x, \mu,a \right) m(\mathrm{d}x,\mathrm{d}a) R_t(\mathrm{d}m,\Er)\;\;\mbox{for any }\varphi \in C_c,
    \end{align*}
    and,  $ \E^{\P}\left[  \int_A L\left(t,X_t,\mu,a \right) \Lambda_t(\mathrm{d}a) |\Gc_T \right]
            =
            \int_{\Pc_A} \int_{\R^d \x A} L\left(t,x, \mu,a \right) m(\mathrm{d}x,\mathrm{d}a) R_t(\mathrm{d}m,\Er)$, $\mathrm{d}t \otimes \mathrm{d}\P$--a.e. We can find a Borel map $K$ verifying $K(t,x,\rr)=m(x)(\mathrm{d}u) \rr(t)(\mathrm{d}m,\Er)$ where $(m(x))_{x \in \R} $ satisfies $m=m(x)(\mathrm{d}a)m(\mathrm{d}x,A)$. It follows that (see \Cref{lemma:projection}) $\mu_t=\Lc(S_t|\Gc_T)$ where $\Lc(S_0)=\nu$ and
    \begin{align*}
        \mathrm{d}S_t
        =
        \langle \overline{b}\left(t, \cdot, \cdot\right), R_t \rangle
        +
        \int_{A} b\left(t,S_{t}, \mu ,a \right)K(t,S_t,R)(\mathrm{d}a)\; \mathrm{d}t 
        +
        \sigma(t,S_t) \mathrm{d}W_t
        +
        \sigma_0 \mathrm{d}B_t,\;\P\mbox{--a.e.}
    \end{align*}
    We deduce that $\Pr:=\P \circ \left( X,W,B,R,\mu \right)^{-1} \in \Pcb$ (see \Cref{rm:proba_equivalent}). The probability $\Pr$ is in fact in $\Pcb_{\rm mfg}.$ We consider the unique sub--sequence $(\ell_k)_{k \ge 1}$ s.t. $(\Pr^{\ell_k})_{k \ge 1}$, $\left( \delta_{\phi^{\ell_k}(x)}(\mathrm{d}i) \Gr(x)\mathrm{d}x \right)_{k \ge 1}$ and $\left(\P \circ \left( \xi^{\ell_k}, R^{\ell_k} \right)^{-1} \right)_{k \ge 1}$ are convergent sequences. Let $\Phi(x)(\mathrm{d}i)\Gr(x)\mathrm{d}x$ and $ \Zf(r)(\mathrm{d}e)\Lc^{\Pr} \left( R \right)(\mathrm{d}r)$ be the limit of $\left( \delta_{\phi^{\ell_k}(x)}(\mathrm{d}i) \Gr(x)\mathrm{d}x \right)_{k \ge 1}$ and $\left(\P \circ \left( \xi^{\ell_k}, R^{\ell_k} \right)^{-1} \right)_{k \ge 1}$. By \Cref{lemma:deviating MFGstrong} and similar techniques to the proof of \Cref{prop:convegence_n-player}, we deduce that $\Pr \in \Pcb_{\rm mfg}$ associated to $(\Phi,\Zf)$, with $(\Phi,\Zf)$ satisfying the convergence mentioned. 
\end{proof}

\subsection{From MFG solutions to approximate Nash equilibria} \label{sec:MFG_to_Nash}

We now deal in this section with the construction of approximate MFG solutions and approximate Nash equilibria from MFG solutions.

\medskip
On the probability space $(\Om,\H,\P)$, we consider the processes $(X,W,B,R,\mu)$ s.t. $(W,B)$ is an $\R^{d} \x \R^d$--Brownian motion, $R$ is an $\Pc_A \x \Er$--valued $\H$--predictable process, $(X,\mu)$ is an $\R^d \x \Pc(\R^d)$--valued $\H$--adapted continuous process, $\mu_t=\Lc^\P(X_t|\Gc_T)$, $R_t\left(\{(m,e):\;\;m(\mathrm{d}x,A)=\mu_t\} \right)=1$ $\mathrm{d}\P \otimes \mathrm{d}t$--a.e. and
\begin{align*}
    \mathrm{d}X_t
            =
            \int_{\Pc_A \x \Er}\overline{b}\left(t,m,e\right) R_t(\mathrm{d}m,\mathrm{d}e)
            +
            \int_{\Pc_A}\int_A b\left(t,X_t,\mu,a\right)m(X_t)(\mathrm{d}a)R_t(\mathrm{d}m,\Er)\mathrm{d}t + \sigma(t,X_t)\mathrm{d}W_t + \sigma_0\mathrm{d}B_t,\;\;\P\mbox{--a.e.}
\end{align*}
where $\G=\left(\Gc_t \right)_{t \in [0,T]}:= \left(\sigma\{ R_{t \wedge \cdot}, B_{t \wedge \cdot}\} \right)_{t \in [0,T]}$ and for each $m \in \Pc_A$, the Borel map $\R^d \ni x \mapsto m(x) \in \Pc(A)$ satisfies $m=m(x)(\mathrm{d}a)m(\mathrm{d}x,A).$ Besides, $\Lc^\P(X_0)=\nu$ and, $(R,B)$, $X_0$ and $W$ are independent. 

\begin{lemma} \label{lemma:first_appro}
    There exists a sequence $(\mub^\ell,\aleph^\ell)_{\ell \ge 1}$ s.t. for each $\ell \ge 1$, $\mub^\ell_t=\Lc^{\P} \left( X^\ell_t, \alpha^\ell_t | \Gc_t \right)=\Lc^{\P} \left( X^\ell_t, \alpha^\ell_t | \Gc_T \right)$ $\mathrm{d}t \otimes \mathrm{d}\P$--a.e. where
    \begin{align*}
        \mathrm{d}X^\ell_t
            =
            \overline{b}\left(t,\mub^\ell_t,\aleph^\ell_t\right)
            +
            b\left(t,X^\ell_t,\mu^\ell_t,\alpha^\ell_t\right)\mathrm{d}t + \sigma(t,X^\ell_t)\mathrm{d}W_t + \sigma_0\mathrm{d}B_t,\;\;\mu^\ell_t= \left( X^\ell_t| \Gc_t \right),\;\;\P\mbox{--a.e.},
    \end{align*}
    $(\alpha^\ell, \aleph^\ell)$ is an $A \x \Er$--valued $\F$--predictable process and
    \begin{align*}
        \lim_{\ell \to \infty} \left( \mu^\ell, \delta_{\left( \mub^\ell_t, \aleph^\ell_t \right)} (\mathrm{d}m,\mathrm{d}e)\mathrm{d}t \right)= \left( \mu,R \right),\;\mbox{in }\Wc_p,\;\P\mbox{--a.e.}
    \end{align*}
    In addition, for any bounded Borel map $\Bc:\R^d \to \R$,
    \begin{align*}
        \lim_{\ell} \E^\P \left[ \left| \int_{\R^d} \Bc(x) \mu^\ell_T(\mathrm{d}x)
        -
        \int_{\R^d} \Bc(x) \mu_T(\mathrm{d}x) \right| \right]=0.
    \end{align*}
\end{lemma}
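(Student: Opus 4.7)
The strategy is a chattering--plus--sampling construction. Fix $\ell \ge 1$ and partition $[0,T]$ into equal intervals $I^\ell_k := [kT/\ell, (k+1)T/\ell)$. For each $k$, Borel--measurably quantize the $\Pc(\Pc_A \x \Er)$--valued random variable $R_{kT/\ell}$ by selecting representatives $(\widehat{m}^\ell_{k,j}, \widehat{e}^\ell_{k,j})_{j=1}^{N_\ell}$ so that $\frac{1}{N_\ell}\sum_{j=1}^{N_\ell} \delta_{(\widehat{m}^\ell_{k,j}, \widehat{e}^\ell_{k,j})} \to R_{kT/\ell}$ in $\Wc_p$ as $\ell \to \infty$. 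Using the structural constraint $R_t(\{(m,e):\;m(\mathrm{d}x,A)=\mu_t\})=1$, we may arrange $\widehat{m}^\ell_{k,j}(\mathrm{d}x,A) = \mu_{kT/\ell}$. Subdivide $I^\ell_k$ into equal sub--intervals $I^\ell_{k,j}$ of length $T/(\ell N_\ell)$, set $\aleph^\ell_t := \widehat{e}^\ell_{k,j}$ on $I^\ell_{k,j}$, and, after enlarging the probability space with a $[0,1]$--uniform process $U$ independent of $(X_0, W, B, R)$, define $\alpha^\ell_t := \kappa(X^\ell_t, \widehat{m}^\ell_{k,j}, U_t)$ on $I^\ell_{k,j}$, where $\kappa$ is a measurable selection so that, conditionally on $(X^\ell_t, \Gc_t)$, $\alpha^\ell_t$ has law $\widehat{m}^\ell_{k,j}(\mathrm{d}a|X^\ell_t)$.

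We then solve the McKean--Vlasov SDE
\[
    \mathrm{d}X^\ell_t = \overline{b}(t, \mub^\ell_t, \aleph^\ell_t)\,\mathrm{d}t + b(t, X^\ell_t, \mu^\ell_t, \alpha^\ell_t)\,\mathrm{d}t + \sigma(t, X^\ell_t)\,\mathrm{d}W_t + \sigma_0\,\mathrm{d}B_t,
\]
with $\mub^\ell_t := \Lc^{\P}(X^\ell_t, \alpha^\ell_t | \Gc_t)$ and $\mu^\ell_t := \mub^\ell_t(\mathrm{d}x, A) = \Lc^{\P}(X^\ell_t | \Gc_t)$. On $I^\ell_{k,j}$ this reduces to $\mub^\ell_t = \mu^\ell_t(\mathrm{d}x)\widehat{m}^\ell_{k,j}(\mathrm{d}a|x)$, so the drift becomes a classical Lipschitz McKean--Vlasov drift in $\mu^\ell$, and well--posedness follows from the bounded--Lipschitz assumptions in \Cref{assum:main1}. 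The independence of $(X_0, W, U)$ from $\Gc_T$ ensures the double--conditioning equality $\Lc^{\P}(X^\ell_t, \alpha^\ell_t | \Gc_t) = \Lc^{\P}(X^\ell_t, \alpha^\ell_t | \Gc_T)$.

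For the convergences, integrating out $U$ shows that the drift of $X^\ell$ averages on $I^\ell_{k,j}$ to $\overline{b}(t, \widehat{m}^\ell_{k,j}, \widehat{e}^\ell_{k,j}) + \int_A b(t, X^\ell_t, \mu^\ell_t, a) \widehat{m}^\ell_{k,j}(\mathrm{d}a|X^\ell_t)$; chattering across $j$ reconstructs the drifts $\int \overline{b}(t, m, e) R_t(\mathrm{d}m,\mathrm{d}e)$ and $\int_{\Pc_A} \int_A b(t, x, \mu_t, a) m(x)(\mathrm{d}a) R_t(\mathrm{d}m, \Er)$ of $X$. A relaxed--control/martingale--problem argument in the spirit of the proofs of \Cref{prop:convegence_n-player} and \Cref{prop_convergence:limitcontract}, combined with tightness of $(X^\ell, \mu^\ell)$ from uniform $L^{p'}$ bounds, yields joint weak convergence $(X^\ell, \mu^\ell) \to (X, \mu)$, and hence $\Wc_p$--almost--sure convergence $\mu^\ell \to \mu$ along a subsequence. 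Since $\widehat{m}^\ell_{k,j}(\mathrm{d}x,A) = \mu_{kT/\ell}$ and $\mu^\ell_t \to \mu_t$, one deduces $\mub^\ell_t = \mu^\ell_t(\mathrm{d}x)\widehat{m}^\ell_{k,j}(\mathrm{d}a|x) \to \widehat{m}^\ell_{k,j}$, whence $\delta_{(\mub^\ell_t, \aleph^\ell_t)}(\mathrm{d}m,\mathrm{d}e)\mathrm{d}t \to R(\mathrm{d}m,\mathrm{d}e,\mathrm{d}t)$ almost surely in $\M(\Pc_A \x \Er)$.

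For the final statement, the non--degeneracy of $(\sigma, \sigma_0)$ in \Cref{assum:main1} gives via Krylov--type bounds that $\mu^\ell_T$ admits a density with respect to Lebesgue measure with uniform $L^q$ control in $\ell$; this upgrades the weak convergence $\mu^\ell_T \to \mu_T$ to $L^1(\mathrm{d}x\,\mathrm{d}\P)$ convergence of densities, which suffices for any bounded Borel $\Bc$. The main obstacle is synchronising the chattering time scale with the McKean--Vlasov fixed--point structure so that the effective drift passes to the correct limit; this is resolved by combining a law--of--large--numbers effect along the sub--intervals $I^\ell_{k,j}$ with the uniqueness of the limiting McKean--Vlasov SDE granted by non--degeneracy.
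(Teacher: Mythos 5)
Your proposal shares the paper's broad discretisation strategy, but departs from it in two ways that leave genuine gaps.

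First, you skip the paper's conditional translation $Y_t := X_t - \sigma_0 B_t$, which, together with the shifted coefficients $\overline{h}(t,\br,m,e):=\overline{b}(t,m[\br],e)$, $h(t,\br,y,\pi,a):=b(t,y+\sigma_0\br,\pi[\br],a)$ and $v(t,\br,y):=\sigma(t,y+\sigma_0\br)$, removes the additive common noise from the state dynamics. Conditionally on $\Gc_T$, the $Y$--system is then a standard McKean--Vlasov SDE in which the common noise appears only as a parameter, and this is precisely the setting where the cited chattering result of {\rm \cite[Proposition 5.8]{MFD-2020}} applies: it delivers an $\F$--predictable \emph{process} control $(\alpha^\ell_t)_{t\in[0,T]}$ — not a state--feedback map — so the approximating SDE for $Y^\ell$ has a drift that is Lipschitz in state and measure, and its well--posedness and the stated approximation are ready--made. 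Working directly with $X^\ell$ means you keep the full common--noise coupling in the fixed point $\mu^\ell_t=\Lc^\P(X^\ell_t\,|\,\Gc_t)$ and must re--derive the chattering approximation in that setting rather than invoke it; your appeal to "a relaxed--control/martingale--problem argument in the spirit of \Cref{prop:convegence_n-player} and \Cref{prop_convergence:limitcontract}" is doing a substantial amount of unacknowledged work here.

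Second, and more concretely, your control $\alpha^\ell_t := \kappa(X^\ell_t, \widehat{m}^\ell_{k,j}, U_t)$ is a feedback with only Borel regularity in the state: the disintegration kernel $\widehat{m}^\ell_{k,j}(\mathrm{d}a\,|\,x)$ is merely Borel in $x$, so the effective drift $x\mapsto\int_A b(t,x,\pi,a)\,\widehat{m}^\ell_{k,j}(\mathrm{d}a\,|\,x)$ is \emph{not} Lipschitz in $x$ under \Cref{assum:main1}. Your assertion that "well--posedness follows from the bounded--Lipschitz assumptions" is therefore incorrect; at best one could hope for weak existence of this conditional McKean--Vlasov SDE with measurable drift, which already requires a non--trivial fixed--point/compactness argument that your sketch does not supply, whereas the $\P$--a.e.\ convergences in the statement presuppose that all of $(X^\ell,\mu^\ell,\mub^\ell,\aleph^\ell)$ live on the same space $(\Om,\H,\P)$. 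The external uniform process $U$ is compatible with the paper's convention of enlarging the space, and your Krylov--bound route to the last display is a legitimate alternative to the paper's invocation of \Cref{prop:charac-convergence}, but neither point repairs the central well--posedness gap created by using a Borel feedback control in place of a predictable process control.
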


\begin{proof}

\medskip
    We introduce the following quantities: for each $(m,\pi) $ and ${\rm b} \in \R^d$, $m[\br](\mathrm{d}y,\mathrm{d}a):=\int_{\R^d} \delta_{x+\sigma_0 \br}(\mathrm{d}y)m(\mathrm{d}x,\mathrm{d}a)$ and $\pi(t)[\br](\mathrm{d}y):=\int_{\R^d} \delta_{x+\sigma_0 \br}(\mathrm{d}y)\pi(t)(\mathrm{d}x)$. We set $\overline{m}_t:=m[-B_t]$, $\mut_t:=\mu_t[-B_t]$ and $Y_t:=X_t-\sigma_0 B_t$ for all $t \in [0,T]$. The processes $\left(\mut_t,Y_t \right)_{t \in [0,T]}$ satisfies $\;\P\mbox{--a.e.}$
    \begin{align*}
        \mathrm{d}Y_t
        =
            \int_{\Pc_A \x \Er}\overline{b}\left(t,\overline{m}_t[B_t],e\right) R_t(\mathrm{d}m,\mathrm{d}e)
           +
            \int_{\Pc_A}\int_A b\left(t,Y_t + \sigma_0 B_t,\mut_t[B_t],a\right)m(Y_t+ \sigma_0& B_t)(\mathrm{d}a)R_t(\mathrm{d}m,\Er)\;\mathrm{d}t 
            \\
            &+ \sigma(t,Y_t + \sigma_0 B_t)\mathrm{d}W_t.
    \end{align*}
    Let $\Theta_t(\mathrm{d}m,\mathrm{d}e):=\int_{\Pc_A}\delta_{{m}'[-B_t]}(\mathrm{d}m)R_t(\mathrm{d}m',\mathrm{d}e)$. Notice that $\Theta_t(\{(m,e):\;\;m(\mathrm{d}x,A)=\mut_t\})=1$ $\mathrm{d}\P \otimes \mathrm{d}t$--a.e. 
    We define the maps 
    \begin{align*}
        \overline{h}\left(t,\br,m,e \right)
        :=
        \overline{b}\left( t, m[\br],e\right),\;\;h\left(t,\br,y,\pi,a \right)
        :=
        b\left(t,y+\sigma_0 \br, \pi[\br], a \right)\;\;\mbox{and}\;\;v(t,\br,y):=\sigma(t,y+\sigma_0 \br).
    \end{align*}
    Under \Cref{assum:main1}, it is straightforward to check that 
    $$
        [0,T] \x \R^d \x \Pc_A \x \Er \x \R^d \x \Cc_{\Wc} \x A \ni (t,\br,m,e,y,\pi,a) \mapsto \left( \overline{h}\left(t,\br,m,e \right), h\left(t,\br,y,\pi,a \right), v(t,\br,y)  \right) \in \R^d \x \R^d \x \S^d
    $$
    is continuous in $(\br,m,e,y,\pi,a)$ for each $t$ and Lipschitz in $(m,y,\pi)$ uniformly in $(t,\br,e,a)$. We can rewrite $Y$ as
    \begin{align*}
        \mathrm{d}Y_t
            =
            \int_{\Pc_A \x \Er}\overline{h}\left(t,B_t,m,e\right) \Theta_t(\mathrm{d}m,\mathrm{d}e)
            +
            \int_{\Pc_A}\int_A h\left(t,B_t,Y_t ,\mut,a\right)m^{Y_t}(\mathrm{d}a) \Theta_t(\mathrm{d}m)\mathrm{d}t + v(t,B_t,Y_t)\mathrm{d}W_t\;\;\Pr\mbox{--a.e.}
    \end{align*}
    By applying It\^o formula and taking the conditional expectation given the filtration $\Gc_T$, we find: for all $\varphi$
    \begin{align*}
        \mathrm{d}\langle \varphi, \mut_t\rangle
        =
        \int_{\Pc_A \x \Er}\int_{\R^d \x A} \varphi'(y) \;\;\overline{h}(t,B_t,m,e) + h \left(t,B_t,y ,\mut,a\right)\;\;m(\mathrm{d}y,\mathrm{d}a)\Theta_t(\mathrm{d}m,\mathrm{d}e) \mathrm{d}t + \frac{1}{2} \langle \varphi''(\cdot) v^2(t,B_t,\cdot), \mut_t \rangle \mathrm{d}t.
    \end{align*}
    We can find a sequence of an $\Pc_A \x \Er$--valued $\G$--predictable processes $(\widetilde{m}^\ell,\aleph^\ell)_{\ell \ge 1}$ s.t. $\lim_{\ell \to \infty} \delta_{(\tilde{m}^\ell_t,\aleph^\ell_t)}(\mathrm{d}m,\mathrm{d}e)\mathrm{d}t=\Theta$ $\P$--a.e. By using \cite[Proposition 5.8]{MFD-2020}, there exists a sequence of $A$--valued $\F$--predictable processes $(\alpha^\ell)_{\ell \ge 1}$ s.t. if $Y^\ell$ satisfies: $\P$--a.e. 
    \begin{align*}
        \mathrm{d}Y^\ell_t
            =
            \overline{h}\left(t,B_t,\overline{m}^\ell_t,\aleph^\ell_t\right)
            + h\left(t,B_t,Y^\ell_t ,\mut^\ell,\alpha^\ell_t\right)\mathrm{d}t + v(t,B_t,Y^\ell_t)\mathrm{d}W_t,\;\overline{m}^\ell_t:=\Lc(Y^\ell_t,\alpha^\ell_t|\Gc_t)\;\mbox{and}\;\mut^\ell_t:=\Lc(Y^\ell_t|\Gc_t)
    \end{align*}
    we have 
    \begin{align*}
        \lim_{\ell \to \infty}\E^\P \left[ \int_0^T \Wc_p \left( \overline{m}^\ell_t , \widetilde{m}^\ell_t  \right) \mathrm{d}t + \sup_{t \in [0,T]} \Wc_p \left(\mut_t^\ell, \mut_t \right) \right]
        =0.
    \end{align*}
    It is easy to check that $\lim_{\ell \to \infty}\E^\P \left[ \int_0^T \Wc_p \left( \overline{m}^\ell_t[B_t], \widetilde{m}^\ell_t[B_t]  \right) \mathrm{d}t + \sup_{t \in [0,T]} \Wc_p \left(\mut_t^\ell[B_t], \mut_t[B_t] \right) \right]
        =0$. Notice that, $\int_{\Pc_A}\delta_{m'[B_t]}(\mathrm{d}m) \Theta_t(\mathrm{d}m',\mathrm{d}e)\mathrm{d}t=R_t(\mathrm{d}m,\mathrm{d}e)\mathrm{d}t$, $\mut_t[B_t]=\mu_t$ and if we set $\mub^\ell_t:=\overline{m}^\ell_t[B_t]$, $\mu_t^\ell:=\mut^\ell_t[B_t]$, $X^\ell_\cdot:=Y^\ell_\cdot + \sigma_0 B_\cdot$, we get
        \begin{align*}
        \mathrm{d}X^\ell_t
            =
            \overline{b}\left(t,\mub^\ell_t,\aleph^\ell_t\right)
            +
            b\left(t,X^\ell_t,\mu^\ell_t,\alpha^\ell_t\right)\mathrm{d}t + \sigma(t,X^\ell_t)\mathrm{d}W_t + \sigma_0\mathrm{d}B_t
    \end{align*}
    and, up to a sub--sequence, $\P$--a.e., $\lim_{\ell \to \infty} \delta_{(\mub^\ell_t,\aleph^\ell_t)}(\mathrm{d}m,\mathrm{d}e)\mathrm{d}t=R$ and $\lim_{\ell \to \infty} \mu^\ell=\mu$.
    This is enough to deduce the proof of the first part of the Proposition. The second part is concluded by (an easy adaptation of) \Cref{prop:charac-convergence}.

\end{proof}

\medskip
Let $\alpha$ be an $A$--valued $\F$--predictable process and $\aleph$ be an $\Er$--valued $\F$--predictable process. We consider $X$ satisfying: $\Lc^\P(X_0)=\nu$,
\begin{align*}
        \mathrm{d}X_t
            =
            \overline{b}\left(t,\mub_t,\aleph_t\right)
            +
            b\left(t,X_t,\mu_t,\alpha_t\right)\mathrm{d}t + \sigma(t,X_t)\mathrm{d}W_t + \sigma_0\mathrm{d}B_t,\;\mub_t=\Lc(X_t,\alpha_t|\Gc_T),\;\mu_t= \left( X_t| \Gc_T \right),\;\;\P\mbox{--a.e.},
    \end{align*}
where $\Gc_t:=\sigma\{ R_{t \wedge \cdot},B_{t \wedge \cdot}\}$ with $R:=\delta_{\left(\mub_t,\aleph_t \right)}(\mathrm{d}m,\mathrm{d}e)\mathrm{d}t.$ The variables $(R,B)$, $X_0$ and $W$ are independent.

\medskip
Let $(\varepsilon_k)_{k \ge 1}$ be a sequence of positive numbers verifying $\lim_{k \to \infty} \varepsilon_k=0$. And, we set $W^k_\cdot:=W_{\cdot \vee \varepsilon_k}-W_{\varepsilon_k}$ and $B^k_\cdot:=B_{\cdot \vee \varepsilon_k}-B_{\varepsilon_k}$. 
\begin{lemma} \label{lemma_piece-wiseAppr}
    There exist a sequence of piece--wise constant $A$--valued $\F$--predictable processes $(\alpha^k)_{k \ge 1}$ and a sequence of piece--wise constant $\Er$--valued $\G$--predictable processes $(\aleph^k)_{k \ge 1}$ s.t.
    \begin{align*}
        \lim_{k \to \infty}\E^\P \left[ \sup_{t \in [0,T]} \left| X^k_t-X_t \right|^p + \int_0^T \rho_A \left( \alpha_t, \alpha^k_t \right)^p \mathrm{d}t
        +
        \int_0^T \rho_{\Er}\left( \aleph_t, \aleph^k_t \right)^p \mathrm{d}t\right]
        =
        0
    \end{align*}
    where $X^k$ is satisfying: $X^k_t=X_0$ for $t \in [0,\varepsilon_k]$ and $\mbox{for }t \in [\varepsilon_k,T],\;$
\begin{align} \label{eq:piece-wiseAppr}
        \mathrm{d}X^k_t
            =
            \overline{b}\left(t,\mub^k_t,\aleph^k_t\right)
            +
            b\left(t,X^k_t,\mu^k_t,\alpha^k_t\right)\mathrm{d}t + \sigma(t,X^k_t)\mathrm{d}W^k_t + \sigma_0\mathrm{d}B^k_t,\;\mub^k_t=\Lc(X^k_t,\alpha^k_t|\Gc_T),\;\mu^k_t= \Lc\left( X^k_t| \Gc_T \right),\;\;\P\mbox{--a.e.}
    \end{align}
    
\end{lemma}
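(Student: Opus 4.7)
The plan is to construct the approximations by time-discretizing the controls on a refining grid and then propagating this discretization through the McKean--Vlasov SDE via a Gronwall argument exploiting the Lipschitz assumptions of \Cref{assum:main1}. First, I would fix a sequence of partitions $0 = t^k_0 < \varepsilon_k = t^k_1 < \cdots < t^k_{N_k} = T$ with mesh tending to zero. Since $A$ and $\Er$ are compact Polish, one can approximate $\alpha$ in $L^p(\mathrm{d}\P \otimes \mathrm{d}t; \rho_A)$ by a left-continuous $\F$--predictable process $\alpha^{\mathrm{lc}}$ (via temporal mollification composed with a Lipschitz retraction onto $A$, obtained from an embedding $A \hookrightarrow [0,1]^{\N}$ and coordinate-wise projection), and then take the left-endpoint discretization $\alpha^k_t := \alpha^{\mathrm{lc}}_{t^k_{j-1}}$ on $(t^k_{j-1}, t^k_j]$. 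This is piecewise constant and $\F$--predictable since each $\alpha^{\mathrm{lc}}_{t^k_{j-1}}$ is $\Fc_{t^k_{j-1}}$--measurable; the analogous procedure with $\Gc_{t^k_{j-1}}$--measurable atoms yields the piecewise constant $\G$--predictable $\aleph^k$, giving
\[
    \lim_{k \to \infty}\E^{\P}\left[\int_0^T \rho_A(\alpha_t, \alpha^k_t)^p\, \mathrm{d}t + \int_0^T \rho_\Er(\aleph_t, \aleph^k_t)^p\, \mathrm{d}t\right] = 0.
\]

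Next, I would build $X^k$ inductively on $[t^k_{j-1}, t^k_j]$: the controls are time-constant and measurable on each such interval, so a standard Picard iteration in the $L^p$--norm on $[t^k_{j-1}, t^k_j]$ delivers existence, uniqueness, and the consistency $\mu^k_t = \Lc(X^k_t | \Gc_T)$, $\mub^k_t = \Lc(X^k_t, \alpha^k_t | \Gc_T)$. Setting $\Delta X_t := X^k_t - X_t$ and applying BDG together with the Lipschitz property of $(b, \overline{b}, \sigma)$ in $(x, \pi, m)$, combined with the coupling bound $\Wc_p^p(\mub^k_t, \mub_t) \le \E^\P[|\Delta X_t|^p + \rho_A(\alpha_t, \alpha^k_t)^p | \Gc_T]$ and its analogue for $\mu^k$, yields
\[
    \E^\P\left[\sup_{s \le t} |\Delta X_s|^p\right] \le C \int_0^t \E^\P\left[|\Delta X_r|^p + \rho_A(\alpha_r, \alpha^k_r)^p + \rho_\Er(\aleph_r, \aleph^k_r)^p\right] \mathrm{d}r + C \varepsilon_k^{p/2},
\]
where the last term collects the error from the initial freezing on $[0, \varepsilon_k]$. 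Gronwall's lemma then produces $\E^\P[\sup_t |X^k_t - X_t|^p] \to 0$, which combined with the $L^p$--convergence of the controls completes the argument.

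The main technical obstacle lies in handling the initial freezing on $[0,\varepsilon_k]$ together with the time-shifted drivers $(W^k, B^k)$ and the associated independence structure. The shift contributes an $O(\varepsilon_k^{p/2})$ error via the missing Brownian increments on $[0, \varepsilon_k]$, which is controllable because $(b, \overline{b}, \sigma)$ are bounded. More delicate is preserving the independence required for $X^k$ to fit within the admissible framework: one must check that $(R^k, B^k) \perp W^k \perp X_0$ where $R^k := \delta_{(\mub^k_t, \aleph^k_t)}(\mathrm{d}m, \mathrm{d}e)\mathrm{d}t$, which follows from the original independence $(R, B) \perp W \perp X_0$ being invariant under the deterministic time translation by $\varepsilon_k$. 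Finally, one must ensure that $\aleph^k$ is well-defined on $[0, \varepsilon_k]$ (where it may be extended by a deterministic constant) and that the $\G$--predictability and the conditional-law consistency relations are maintained throughout the inductive construction on $[\varepsilon_k, T]$.
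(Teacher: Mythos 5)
Your proof follows the same route as the paper, which simply invokes \cite[Lemma 4.3]{djete2019general} for the $L^p$-Gronwall stability estimate and then records the specific construction of the piecewise controls, namely the trailing-window averages
\[
\alpha^k_t := \frac{1}{\varepsilon_k}\int_{([t]^k-\varepsilon_k)\vee 0}^{[t]^k}\alpha_s\,\mathrm{d}s,
\qquad
\aleph^k_t := \frac{1}{\varepsilon_k}\int_{([t]^k-\varepsilon_k)\vee 0}^{[t]^k}\aleph_s\,\mathrm{d}s,
\]
whose $\F$-- (resp.\ $\G$--) predictability is immediate because the window ends at the left endpoint $[t]^k$ of the current mesh interval. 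The one place you take a detour is in producing an $A$-valued approximant: you invoke a mollification followed by a Lipschitz retraction built from an embedding $A\hookrightarrow[0,1]^{\N}$, but this machinery is unnecessary here. Since Assumption \ref{assum:main1}(i) already makes $A$ and $\Er$ convex and compact, the time-averages above stay in $A$ and $\Er$ directly, which is precisely what the paper exploits; your retraction step would only be needed for non-convex control sets and is a bit delicate to justify (coordinate-wise projection onto the image of $A$ need not land in that image, nor be Lipschitz for the product metric). Otherwise your BDG--Gronwall estimate, the treatment of the $O(\varepsilon_k^{p/2})$ freezing error, and the observation that the independence structure is preserved under the shift are exactly the content of the cited lemma, reconstructed self-containedly.
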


\begin{proof}
    This result comes from an application of \cite[Lemma 4.3]{djete2019general}. To justify the measurability of the processes $(\alpha^k)_{k \ge 1}$ and $(\aleph^k)_{k \ge 1}$, it is enough to see that these processes are constructed by considering first a sub--division $0=t^k_0<t^k_1:=\varepsilon^k< \cdots<t^k_k=T$ verifying $\lim_{k \to \infty} \sup_{0 \le l \le  k-1}|t^k_{l+1}-t^k_{l}|$. And, second by taking 
    \begin{align*}
        \alpha^k_t
        :=
        \frac{1}{\varepsilon_k} \int_{\left([t]^k-\varepsilon_k\right) \vee 0}^{[t]^k} \alpha_s \mathrm{d}s\;\mbox{and}\;\aleph^k_t
        :=
        \frac{1}{\varepsilon_k} \int_{\left([t]^k-\varepsilon_k\right) \vee 0}^{[t]^k} \aleph_s \mathrm{d}s\;\mbox{where}\;[t]^k:=t^k_l\;\mbox{for}\;t\in [t^k_l,t^k_{l+1})\;\mbox{with}\;[T]^k:=T.
    \end{align*}
\end{proof}

\begin{lemma} \label{lemma:law_equality}
    Let us stay in the context of the previous {\rm \Cref{lemma_piece-wiseAppr}} with $k \ge 1$.

\medskip
    There exist an $A$--valued $\left( \sigma\{X_0,W_{t \wedge \cdot}, B_{t \wedge \cdot} \} \right)_{t \in [0,T]}$--predictable  process $\alphat^k$, and an $\Er$--valued $\left( \sigma\{B_{t \wedge \cdot} \} \right)_{t \in [0,T]}$--predictable process $\widetilde{\aleph}^k$ s.t.
    \begin{align} \label{eq:dist_equality}
        \Lc\left( X^k, W^k, B^k, \Lambda^k, R^k \right)
        =
        \Lc\left( \Xt^k, W^k, B^k, \Lambdat^k, \widetilde{R}^k \right)
    \end{align}
    where $X^k$ is given in {\rm\Cref{eq:piece-wiseAppr}} with $\Lambda^k:=\delta_{\alpha^k_t}(\mathrm{d}a)\mathrm{d}t$, $R^k:=\delta_{\left(\mub^k_t,\aleph^k_t\right)}(\mathrm{d}m,\mathrm{d}e)\mathrm{d}t$, $\Xt^k_t=X_0$ for $t \in [0,\varepsilon_k]$ and $\mbox{for }t \in [\varepsilon_k,T],\;$
    \begin{align*}
        \mathrm{d}\Xt^k_t
            =
            \overline{b}\left(t,\zetab^k_t,\alepht^k_t\right)
            +
            b\left(t,\Xt^k_t,\zeta^k_t,\alphat^k_t\right)\mathrm{d}t + \sigma(t,\Xt^k_t)\mathrm{d}W^k_t + \sigma_0\mathrm{d}B^k_t,\;\zetab^k_t=\Lc(\Xt^k_t,\alphat^k_t|B),\;\zeta^k_t= \Lc( \Xt^k_t| B),\;\P\mbox{--a.e.}
    \end{align*}
    with $\Lambdat^k:=\delta_{\tilde\alpha^k_t}(\mathrm{d}a)\mathrm{d}t$ and $\Rt^k:=\delta_{\left( \zetab^k_t, \tilde\aleph^k_t \right)}(\mathrm{d}m,\mathrm{d}e)\mathrm{d}t$.
\end{lemma}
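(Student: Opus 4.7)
The plan is to carry out a step-by-step mimicking/disintegration construction exploiting the piecewise-constant structure of $(\alpha^k,\aleph^k)$ and the independence $(R,B) \perp (X_0,W)$ built into the setup preceding Lemma \ref{lemma_piece-wiseAppr}.

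First I would set up the iteration over the finitely many decision times $0=t^k_0<t^k_1<\cdots<t^k_k=T$. Because $\alpha^k$ and $\aleph^k$ are constant on each $[t^k_l,t^k_{l+1})$, at each step $l$ the random variable $\alpha^k_{t^k_{l+1}}$ is a Borel map of $(X_0,W_{t^k_l\wedge\cdot},B_{t^k_l\wedge\cdot},R_{t^k_l\wedge\cdot})$, and $\aleph^k_{t^k_{l+1}}$ is a Borel map of $(B_{t^k_l\wedge\cdot},R_{t^k_l\wedge\cdot})$. Using the independence $(R,B)\perp(X_0,W)$, the regular conditional distribution $Q(b,\cdot):=\Lc^{\P}(R\mid B=b)$ defines a Markov kernel from $\Cc^d$ to $\M$; composing $Q$ with the conditional kernels defining $\alpha^k_{t^k_{l+1}}$ and $\aleph^k_{t^k_{l+1}}$ yields the relevant conditional laws of the controls given only $B$ or $(X_0,W,B)$.

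Second, I would perform the mimicking inductively. Using auxiliary independent uniform random variables $(U_l,V_l)_{l\ge 1}$ provided by the rich probability space (footnote 1), I define
\[
\tilde{\aleph}^k_{t^k_{l+1}}:=F_l\bigl(B_{t^k_l\wedge\cdot},\,\tilde{\aleph}^k_{[0,t^k_l]},\,U_{l+1}\bigr),
\qquad
\tilde{\alpha}^k_{t^k_{l+1}}:=G_l\bigl(X_0,\,W_{t^k_l\wedge\cdot},\,B_{t^k_l\wedge\cdot},\,\tilde{\alpha}^k_{[0,t^k_l]},\,\tilde{\aleph}^k_{[0,t^k_{l+1}]},\,V_{l+1}\bigr),
\]
where $F_l,G_l$ are constructed via a measurable inverse-CDF selection so that the conditional law of $\tilde{\aleph}^k_{t^k_{l+1}}$ given $B$ and $\tilde{\aleph}^k_{[0,t^k_l]}$ matches $\Lc^{\P}(\aleph^k_{t^k_{l+1}}\mid B,\aleph^k_{[0,t^k_l]})$, and similarly for $\tilde{\alpha}^k$. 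The convention of footnote 1 allows the $(U_l,V_l)$ to be taken $\Hc_0$-measurable and independent of $(X_0,W,B)$, so $\tilde{\aleph}^k$ is $\sigma\{B_{t\wedge\cdot}\}$-predictable (in the augmented sense) and $\tilde{\alpha}^k$ is $\sigma\{X_0,W_{t\wedge\cdot},B_{t\wedge\cdot}\}$-predictable.

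Third, I would verify \eqref{eq:dist_equality} by induction on $l$: at each step the joint distribution of $(X_0,W,B,\tilde{\alpha}^k_{\le l+1},\tilde{\aleph}^k_{\le l+1})$ matches that of $(X_0,W,B,\alpha^k_{\le l+1},\aleph^k_{\le l+1})$ by construction of the kernels $F_l,G_l$. Since $(X^k,\mub^k,\mu^k,\Lambda^k,R^k)$ and its tilded analogue are deterministic functionals of $(X_0,W,B,\alpha^k,\aleph^k)$ respectively $(X_0,W,B,\tilde{\alpha}^k,\tilde{\aleph}^k)$ through the SDE \eqref{eq:piece-wiseAppr} and the definitions $\mub^k_t=\Lc(X^k_t,\alpha^k_t|\Gc_T)$, $\zetab^k_t=\Lc(\tilde{X}^k_t,\tilde{\alpha}^k_t|B)$, the desired joint-law equality follows from uniqueness-in-distribution.

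The main obstacle is the justification that the conditioning on the coarser $\sigma$-algebra $\sigma\{B_{t\wedge\cdot}\}$ (rather than on $\Gc_T$) in the definition of $\zetab^k$ is compatible with the mimicking --- concretely, one must check that, on the tilded side, $\Lc(\tilde{X}^k_t,\tilde{\alpha}^k_t|B)$ indeed reproduces the correct kernel, which follows because the auxiliary uniforms $(U_l,V_l)$ are $\Hc_0$-measurable and independent of $B$, so they play exactly the role of the $R$-randomness on the original side modulo the disintegration through $Q$.
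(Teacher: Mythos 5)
There is a genuine gap, and it concerns exactly the measurability claim you flag as the ``main obstacle'' but then dismiss. The lemma requires $\tilde{\aleph}^k$ to be predictable with respect to the \emph{raw} filtration $\left(\sigma\{B_{t\wedge\cdot}\}\right)_{t\in[0,T]}$, and $\tilde{\alpha}^k$ with respect to $\left(\sigma\{X_0,W_{t\wedge\cdot},B_{t\wedge\cdot}\}\right)_{t\in[0,T]}$. Your construction builds $\tilde{\aleph}^k$ as a Borel function of $B$ and of auxiliary uniforms $(U_l,V_l)$ that you explicitly take to be \emph{independent} of $(X_0,W,B)$. A process that depends nontrivially on such a $U_l$ cannot be measurable with respect to $\sigma\{B\}$ --- augmenting by $\P$-null sets does not change this, since independence is preserved under completion. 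The parenthetical ``in the augmented sense'' does not rescue the argument: it silently replaces the filtration the lemma asks for with a strictly larger one, and that larger filtration would not feed cleanly into the later steps of the paper (where the representation $\tilde{\aleph}^k=\tilde{\aleph}^k(t,B)$ is used to eventually obtain contracts depending on the common noise only).

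The idea you are missing is precisely the role of the time shift by $\varepsilon_k>0$. On $[\varepsilon_k,T]$ the SDE for $\tilde{X}^k$ is driven by the shifted Brownian motions $W^k_\cdot=W_{\cdot\vee\varepsilon_k}-W_{\varepsilon_k}$ and $B^k_\cdot=B_{\cdot\vee\varepsilon_k}-B_{\varepsilon_k}$, which are independent of the initial segment $B_{\varepsilon_k\wedge\cdot}$. Because $\varepsilon_k>0$, the path $B_{\varepsilon_k\wedge\cdot}$ carries a non-atomic law, so one can take the auxiliary uniform to be $A^k=F(B_{\varepsilon_k\wedge\cdot})$ for a suitable Borel map $F$. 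This single random variable is (i) $\sigma\{B_{\varepsilon_k\wedge\cdot}\}$-measurable, hence $\sigma\{B_{t\wedge\cdot}\}$-measurable at every $t\ge 0$ after taking limits (and trivially so for $t\ge\varepsilon_k$, which is where the dynamics live), and (ii) independent of $(X_0,W,B^k,W^k)$. That is the paper's route (via \cite[Lemma 4.4]{djete2019general}): first show that $\tilde{\aleph}^k$ can be built from $(A^k,B^k)$ only, using the independence of $(\aleph^k,\overline{\mathbf{m}}^k)$ from $(X_0,W)$, and then absorb $A^k$ into the $B$-filtration through the freeze interval. Without this identification, your inductive disintegration produces the right joint law but with randomization living outside $\sigma\{B\}$, which does not prove the lemma as stated.

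The rest of your outline (piecewise-constant decision times, kernel disintegration exploiting $(R,B)\perp(X_0,W)$, propagation of the law equality through the SDE by uniqueness in distribution) is the right shape and matches what the cited Lemma 4.4 does under the hood. The missing ingredient is solely the realization of the auxiliary uniform from the first $\varepsilon_k$-segment of $B$.
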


\begin{proof}
    It is essentially an application of \cite[Lemma 4.4]{djete2019general}, we just need to check the measurability of the different process $\alphat^k$ and $\alepht^k$. Indeed, if we apply \cite[Lemma 4.4]{djete2019general}, we find that the processes $\alphat^k$ and $\alepht^k$ given in the statement of our Lemma has to be $\left( \sigma\{A^k,X_0,W_{t \wedge \cdot}, B^k_{t \wedge \cdot} \} \right)_{t \in [0,T]}$--predictable with $A^k$ an $[0,1]$--valued uniform random variable independent of $(X_0,W,B^k)$. But, since for each $k$, $(\aleph^k,\mub^k)$, $X_0$ and $W$ are independent, to get the equality in distribution given in \Cref{eq:dist_equality} (similar to \cite[Equation 4.7 of Lemma 4.4]{djete2019general}), we see that the construction of $\alepht^k$ can indeed be done without needing the variables $(X_0,W)$. Therefore $\alepht^k$ can be taken $\left( \sigma\{A^k,B^k_{t \wedge \cdot} \} \right)_{t \in [0,T]}$--predictable. In addition, we know that $\varepsilon_k >0$ and $B$ is an $\F$--Brownian motion, so its increments are independent and we can take $A^k=F(B_{\varepsilon_k \wedge \cdot})$ for an appropriate Borel map $F$. Consequently, $\alepht^k$ is $\left( \sigma\{B_{t \wedge \cdot} \} \right)_{t \in [0,T]}$--predictable and $\alphat^k$ is $\left( \sigma\{X_0,W_{t \wedge \cdot},B_{t \wedge \cdot} \} \right)_{t \in [0,T]}$--predictable.
    
\end{proof}

With the processes $\alphat^k$ and $\alepht^k$ given in \Cref{lemma:law_equality}. We introduce the process $S^k$ satisfying: $S^k_0=X_0$, and  for $t \in [0,T]$,
\begin{align*}
    \mathrm{d}S^k_t
            =
            \overline{b}\left(t,\etab^k_t,\alepht^k_t\right)
            +
            b\left(t,S^k_t,\eta^k_t,\alphat^k_t\right)\mathrm{d}t + \sigma(t,S^k_t)\mathrm{d}W_t + \sigma_0\mathrm{d}B_t,\;\etab^k_t=\Lc(S^k_t,\alphat^k_t|B),\;\eta^k_t= \Lc\left( S^k_t| B \right),\;\;\P\mbox{--a.e.}
\end{align*}
    We also define $\Psi^k:=\delta_{\left( \etab^k_t, \tilde\aleph^k_t\right)}(\mathrm{d}m,\mathrm{d}e)\mathrm{d}t$, $D^k:=\delta_{\tilde\alpha^k_t}(\mathrm{d}a)\mathrm{d}t$ and $\Lambda:=\delta_{\alpha_t}(\mathrm{d}a)\mathrm{d}t$.
\begin{lemma}
    We have $\lim_{k \to \infty}\E \left[ \sup_{t \in [0,T]} |S^k_t-\Xt^k_t|^p \right]=0$. Consequently,
    \begin{align*}
        \lim_{k \to \infty} \Lc \left(S^k, W,B, D^k, \Psi^k \right)
        =
        \Lc \left(X, W,B, \Lambda, R \right)\;\mbox{in}\;\Wc_p
    \end{align*}
\end{lemma}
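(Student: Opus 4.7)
The key point is that $S^k$ and $\Xt^k$ live on the same probability space and share the same initial condition $X_0$ as well as the same processes $(\alphat^k, \alepht^k)$; they differ only through the time shift between $(W^k, B^k)$ and $(W, B)$, the frozen initial piece of $\Xt^k$ on $[0,\varepsilon_k]$, and the fact that the coefficients see different conditional laws. For $t \in [0, \varepsilon_k]$, the boundedness of $b, \overline{b}, \sigma, \sigma_0$ (\Cref{assum:main1}) and Burkholder--Davis--Gundy give $\E[\sup_{r \le \varepsilon_k}|S^k_r - X_0|^p] \le C\varepsilon_k^{p/2}$. For $t \in [\varepsilon_k, T]$, the identities $\int_0^t f(s)\,\mathrm{d}W^k_s = \int_{\varepsilon_k}^t f(s)\,\mathrm{d}W_s$ and $\int_0^t \mathrm{d}B^k_s = B_t - B_{\varepsilon_k}$ lead to
\begin{align*}
S^k_t - \Xt^k_t &= (S^k_{\varepsilon_k} - X_0) + \int_{\varepsilon_k}^t \bigl[\overline{b}(s, \etab^k_s, \alepht^k_s) - \overline{b}(s, \zetab^k_s, \alepht^k_s)\bigr]\,\mathrm{d}s \\
&\quad + \int_{\varepsilon_k}^t \bigl[b(s, S^k_s, \eta^k, \alphat^k_s) - b(s, \Xt^k_s, \zeta^k, \alphat^k_s)\bigr]\,\mathrm{d}s + \int_{\varepsilon_k}^t \bigl[\sigma(s, S^k_s) - \sigma(s, \Xt^k_s)\bigr]\,\mathrm{d}W_s.
\end{align*}
Since $(S^k_s, \alphat^k_s)$ and $(\Xt^k_s, \alphat^k_s)$ sit on the same probability space with identical second coordinate, coupling them through the identity map on $\Omega$ gives the pointwise-in-$\omega$ bound $\Wc_p(\etab^k_s, \zetab^k_s)^p \le \E[|S^k_s - \Xt^k_s|^p \mid B]$, and likewise for $\Wc_p(\eta^k_s, \zeta^k_s)$. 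The Lipschitz estimates of \Cref{assum:main1}(ii) together with BDG then give $\phi_k(t) := \E[\sup_{r \le t}|S^k_r - \Xt^k_r|^p] \le C\varepsilon_k^{p/2} + C\int_0^t \phi_k(s)\,\mathrm{d}s$, and Gronwall yields $\phi_k(T) \le C\varepsilon_k^{p/2} e^{CT} \to 0$.

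\textbf{Step 2: $\Wc_p$-convergence of the joint laws.} Set $\Rt^k := \delta_{(\zetab^k_t, \alepht^k_t)}(\mathrm{d}m, \mathrm{d}e)\,\mathrm{d}t$. Step 1 and the same conditional coupling give $\E[\int_0^T \Wc_p(\etab^k_t, \zetab^k_t)^p\,\mathrm{d}t] \to 0$, and therefore $\Psi^k$ and $\Rt^k$ are close as $\M$-valued random variables; together with $D^k = \Lambdat^k$ and $\E[\sup_t|S^k_t - \Xt^k_t|^p] \to 0$, this yields $\Wc_p\bigl(\Lc(S^k, W, B, D^k, \Psi^k), \Lc(\Xt^k, W, B, \Lambdat^k, \Rt^k)\bigr) \to 0$. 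Since $\sup_t|W_t - W^k_t| \le 2 \sup_{s \le \varepsilon_k}|W_s|$ and similarly for $B^k$, one has $(W^k, B^k) \to (W, B)$ in $L^p$, so coupling on the same probability space gives $\Wc_p\bigl(\Lc(\Xt^k, W, B, \Lambdat^k, \Rt^k), \Lc(\Xt^k, W^k, B^k, \Lambdat^k, \Rt^k)\bigr) \to 0$ and, by the same argument, $\Wc_p\bigl(\Lc(X^k, W^k, B^k, \Lambda^k, R^k), \Lc(X^k, W, B, \Lambda^k, R^k)\bigr) \to 0$. In between, \Cref{lemma:law_equality} provides the exact identity $\Lc(\Xt^k, W^k, B^k, \Lambdat^k, \Rt^k) = \Lc(X^k, W^k, B^k, \Lambda^k, R^k)$. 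Finally, \Cref{lemma_piece-wiseAppr} gives $X^k \to X$, $\alpha^k \to \alpha$, $\aleph^k \to \aleph$ in $L^p$, whence a last application of the conditional coupling yields $\Lc(X^k, W, B, \Lambda^k, R^k) \to \Lc(X, W, B, \Lambda, R)$ in $\Wc_p$. Chaining these five approximations by the triangle inequality concludes the proof.

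The main point to handle carefully is the propagation of Gronwall through the \emph{conditional} coupling bound on $\Wc_p(\etab^k_s, \zetab^k_s)$: this is admissible because $S^k$ and $\Xt^k$ are jointly defined on the same probability space and conditioned on the same $\sigma$-field generated by $B$, so the identity map on $\Omega$ serves as an admissible transport plan in the Wasserstein infimum. The time shift between $(W, B)$ and $(W^k, B^k)$ is otherwise benign because it contributes only an $O(\varepsilon_k^{1/2})$ term that is absorbed into the inhomogeneous part of the Gronwall inequality.
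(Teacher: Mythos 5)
Your proof is correct and follows exactly the same strategy as the paper: the $L^p$-estimate for $S^k - \Xt^k$ via a Gronwall argument exploiting the boundedness/Lipschitz properties of the coefficients and the smallness of the Brownian increments on $[0,\varepsilon_k]$, and then the $\Wc_p$-convergence by chaining this estimate with the distributional identity from \Cref{lemma:law_equality} and the $L^p$-convergence from \Cref{lemma_piece-wiseAppr}. You have merely spelled out in full what the paper leaves as "classical arguments"; in particular, the conditional-coupling bound $\Wc_p(\etab^k_s,\zetab^k_s)^p\le\E[|S^k_s-\Xt^k_s|^p\,|\,B]$ and the explicit five-step chain of approximations are the correct fillings of the paper's terse outline.
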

\begin{proof}
    The main difference between $S^k$ and $\Xt^k$ is the shifting of the Brownian motions $W$ and $B$. By seeing that $W_{\varepsilon^k \wedge \cdot}$ and $B_{\varepsilon^k \wedge \cdot}$ converge to $0$ when $k \to \infty$, this is enough to deduce the $\L^p$--convergence by using some classical arguments since the map $(b,\sigma)$ is Lipschitz in $(x,\pi,m)$. For the Wasserstein convergence, it is a combination of this $\L^p$--convergence combined with the equality in distribution in \Cref{lemma:law_equality} and the $\L^p$--convergence in \Cref{lemma_piece-wiseAppr}.
\end{proof}

\medskip
Now, let $\alpha$ be an $A$--valued $\left(\sigma\{ X_0,W_{t \wedge \cdot}, B_{t \cdot}\} \right)_{t \in [0,T]}$--predictable process and $\aleph$ be an $\Er$--valued $\left(\sigma\{ B_{t \wedge \cdot}\} \right)_{t \in [0,T]}$--predictable process. We consider $X$ satisfying: $\Lc^\P(X_0)=\nu$,
\begin{align*}
        \mathrm{d}X_t
            =
            \overline{b}\left(t,\mub_t,\aleph_t\right)
            +
            b\left(t,X_t,\mu_t,\alpha_t\right)\mathrm{d}t + \sigma(t,X_t)\mathrm{d}W_t + \sigma_0\mathrm{d}B_t,\;\mub_t=\Lc(X_t,\alpha_t|B),\;\mu_t= \Lc\left( X_t| B \right),\;\;\P\mbox{--a.e.}
    \end{align*}

\begin{lemma}
    There exists a sequence of progressively Borel measurable maps $(\aleph^j,\alpha^j)_{j \ge 1}$ s.t. for each $j \ge 1$, $[0,T] \x \Cc_{\Wc} \ni (t,\pi) \mapsto \aleph^j(t,\pi_{t \wedge \cdot}) \in \Er$ and $[0,T] \x \R \x \Cc_{\Wc} \ni (t,x,\pi) \mapsto \alpha^j(t,x,\pi_{t \wedge \cdot}) \in A$ are Lipschitz maps in $(x,\pi)$ uniformly in $t$ and, if we define $X^j$ the solution of: $\aleph^j_t:=\aleph^j(t,\mu^j)$, $\alpha^j_t=\alpha^j(t,X^j_t,\mu^j)$,
    \begin{align*}
        \mathrm{d}X^j_t
            =
            \overline{b}\left(t,\mub^j_t,\aleph^j_t\right)
            +
            b\left(t,X^j_t,\mu^j_t,\alpha^j_t\right)\mathrm{d}t + \sigma(t,X^j_t)\mathrm{d}W_t + \sigma_0\mathrm{d}B_t,\;\mub^j_t=\Lc(X^j_t,\alpha^j_t| B_{t \wedge \cdot}),\;\mu^j_t= ( X^j_t| B_{t \wedge \cdot} ),\;\;\P\mbox{--a.e.},
    \end{align*}
    then we have 
    \begin{align*}
        \lim_{j \to \infty}\left( \mu^j, \delta_{\left( \mub^j_t, \aleph^j_t \right)}(\mathrm{d}m,\mathrm{d}e) \mathrm{d}t \right)
        =
         \left( \mu, \delta_{\left( \mub_t, \aleph_t \right)}(\mathrm{d}m,\mathrm{d}e) \mathrm{d}t \right)\;\;\mbox{in }\Wc_p,\;\;\P\mbox{--a.e.}
    \end{align*}
\end{lemma}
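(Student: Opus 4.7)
The plan is to produce the Lipschitz feedback approximations in three stages: first reduce $(\alpha,\aleph)$ to Borel feedbacks of $(X_t,\mu)$ and $\mu$ respectively, then mollify them into Lipschitz maps valued in the compact targets $A$ and $\Er$, and finally invoke stability of common-noise McKean--Vlasov SDEs to transfer the approximation to the flows $(\mu^j,\mub^j,\aleph^j)$.

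For the first stage, since $\aleph$ is $\sigma\{B_{t\wedge\cdot}\}$-predictable, taking conditional expectations of the SDE for $X$ given $B$ and using the independence of $W$ from $B$ yields the identity
\[
\sigma_0 B_t \;=\; \int x\,\mu_t(\mathrm{d}x) - \int x\,\nu(\mathrm{d}x) - \int_0^t \bigl[\,\overline{b}(s,\mub_s,\aleph_s) + \langle b(s,\cdot,\mu_s,\cdot),\mub_s\rangle\,\bigr]\,\mathrm{d}s.
\]
Combining this identity with the invertibility of $\sigma_0$ and a pathwise inductive fixed-point argument in the spirit of \Cref{rm:measurable_B} shows that $B$ is a progressively Borel functional of $\mu$, so that $\aleph_t=\widehat{\aleph}(t,\mu_{t\wedge\cdot})$ for some progressively Borel $\widehat{\aleph}:[0,T]\times\Cc_{\Wc}\to\Er$. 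A parallel Markovian-projection argument, as in \cite[Lemma 3.5]{closed-loop-MFG_MDF}, replaces $\alpha$ (a functional of the history $(X_0,W_{t\wedge\cdot},B_{t\wedge\cdot})$) by a Borel feedback $\widehat{\alpha}(t,X_t,\mu_{t\wedge\cdot})\in A$ that reproduces the conditional drift $\int b(s,x,\mu_s,a)\mub_s(\mathrm{d}x,\mathrm{d}a)$ of the state marginals.

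For the second stage, we mollify $\widehat{\alpha}$ and $\widehat{\aleph}$ against a smooth kernel on $\R^d\times\Cc_{\Wc}$ (using a Lipschitz partition of unity on a countable dense family of $\Cc_{\Wc}$) and project onto the compact convex targets $A$ and $\Er$; this yields Lipschitz feedbacks $(\alpha^j,\aleph^j)$ in $(x,\pi)$ uniformly in $t$, converging to $(\widehat{\alpha},\widehat{\aleph})$ $\mathrm{d}t\otimes\mathrm{d}\P$-a.e.\ along $(X_t,\mu_{t\wedge\cdot})$. This is essentially \cite[Proposition A.5]{closed-loop-MFG_MDF} transferred to the conditional McKean--Vlasov setting with common noise. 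Strong well-posedness of the resulting closed-loop MV-SDE for $X^j$ then follows from the uniform Lipschitz assumptions of \Cref{assum:main1} together with the Lipschitz property of $(\alpha^j,\aleph^j)$.

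For the third stage, standard stability of MV-SDEs with common noise and Lipschitz coefficients gives $\sup_{t\in[0,T]}\E[|X^j_t-X_t|^p]\to 0$, hence $\mu^j\to\mu$ in $\Wc_p$ $\P$-a.s.; continuity of the feedback maps, boundedness of $\Pc_A\times\Er$, and dominated convergence then deliver the stated convergence of the random measures $\delta_{(\mub^j_t,\aleph^j_t)}(\mathrm{d}m,\mathrm{d}e)\mathrm{d}t\to\delta_{(\mub_t,\aleph_t)}(\mathrm{d}m,\mathrm{d}e)\mathrm{d}t$ in $\Wc_p$, $\P$-a.s. The main obstacle lies in the first stage: the recovery of $B$ from $\mu$, together with the Markovian projection of $\alpha$, rely on inverting the coupling between the state and the common noise, which works here only because $\sigma_0$ is invertible and $(b,\overline{b})$ are Lipschitz. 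Without these hypotheses the reduction to $(x,\mu)$-feedbacks would fail and the conditional-law identity $\mub^j_t=\Lc(X^j_t,\alpha^j_t\mid B_{t\wedge\cdot})$ could not be preserved under the approximation.
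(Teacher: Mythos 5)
Your three-stage plan captures some of the right tools (shifting out the common noise, Markovian projection, invertibility of $\sigma_0$) but it misses the mechanism that makes the final stability step rigorous, and I believe this is a genuine gap rather than a presentational one.

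\textbf{The ``standard stability'' step does not apply.} In your third stage you invoke stability of Lipschitz MV-SDEs with common noise to conclude $\sup_t \E[|X^j_t-X_t|^p]\to 0$. But the reference process $X$ is \emph{not} a strong solution of a Lipschitz closed-loop MV-SDE: the control $\alpha$ is only a progressively measurable functional of $(X_0, W_{t\wedge\cdot}, B_{t\wedge\cdot})$, and after your Markovian projection $\widehat\alpha(t,x,\pi)$ is merely Borel. After mollification, the Lipschitz constants of $(\alpha^j,\aleph^j)$ necessarily blow up as $j\to\infty$, so a Gr\"onwall comparison between $X$ and $X^j$ produces a bound of the form
$e^{C L_j T}\times(\text{mollification error})$ in which both factors are competing, with the first exploding. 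Moreover the conditioning in the approximate fixed point is on $B_{t\wedge\cdot}$ through $\mu^j$, which is itself an \emph{output} of the closed-loop system; small perturbations of the feedback can move the fixed point $\mu^j$ in an uncontrolled way, and nothing in a mollification argument forces $\aleph^j(t,\mu^j)$ to track the original open-loop $\aleph(t,B)$.

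\textbf{The reconstruction of $B$ from $\mu$ is also not as clean as you state.} \Cref{rm:measurable_B} recovers $B$ as a progressively Borel functional of $R$, i.e.\ of the pair $(\mub,\aleph)$, not of $\mu$ alone; the drift $\overline{b}(s,\mub_s,\aleph_s)+\langle b(s,\cdot,\mu_s,\cdot),\mub_s\rangle$ that appears in your fixed-point identity genuinely involves $\mub_s$, which carries information about $\alpha$ that is not a function of $\mu_{s\wedge\cdot}$. Even if one grants a Borel reconstruction $B=\Psi(\mu)$, the approximate fixed point $\mu^j$ from the mollified closed-loop system will in general satisfy $B_t\neq\Psi(\mu^j_{t\wedge\cdot})$, so $\aleph^j(t,\mu^j)$ is not an approximation of $\aleph(t,B)$ in any quantitative sense.

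\textbf{What the paper does instead.} Step~1 of the paper's proof conditions on the path $B=\br$ and works entirely for a \emph{fixed} common-noise path: the shifted system $Y^{\br}$ satisfies a Fokker--Planck equation without common noise, the controls are disintegrated as $\overline{m}^{\br}_t(x)(\mathrm{d}a)$, and the approximating feedbacks $\alphat^l(t,x,\br)$ are constructed path-by-path and then glued back together. The conclusion of Step~1 is only a Lipschitz feedback in $(x,\br)$, with the $\br$-dependence still explicit. Step~2 then invokes a specific construction (the paper's \cite[Proposition A.10]{closed-loop-MFG_MDF}) that produces Lipschitz maps $\phi^j(t,\pi)$ together with feedbacks $\alpha^{l,j}(t,x,\pi)=\alpha^l(t,x,\phi^j(t,\pi))$, $\aleph^j(t,\pi)=\aleph(t,\phi^j(t,\pi))$, with the crucial property that the closed-loop fixed point $\mu^{l,j}$ satisfies $B_t=\phi^j(t,\mu^{l,j})$ \emph{exactly}, $\P$-a.e. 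This exactness is what removes the blowing-up Lipschitz constants from the comparison: along the approximate trajectory, $\alpha^{l,j}(t,X^{l,j}_t,\mu^{l,j})$ and $\aleph^j(t,\mu^{l,j})$ coincide with $\alpha^l(t,X^{l,j}_t,B)$ and $\aleph(t,B)$, so the comparison is between two Lipschitz feedbacks of $(t,x,\br)$ with uniformly bounded constants, and stability does apply. Your proposal does not produce any such exact identity, and a generic mollification $\aleph^j\to\widehat\aleph$ will not supply one; that is the missing idea.
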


\begin{proof}
    
    
    $\boldsymbol{{\rm Step\;1}}$  Since $\alpha$ is an $\left(\sigma\{ X_0, W_{t \wedge \cdot}, B_{t \wedge \cdot}\} \right)_{t \in [0,T]}$--predictable process and $\aleph$ is an $\left(\sigma\{ B_{t \wedge \cdot}\} \right)_{t \in [0,T]}$--predictable process, by abusing the notations, we find progressively measurable Borel maps $\alpha:[0,T] \x \R \x \Cc \x \Cc \to A$ and $\aleph:[0,T] \x \Cc \to \Er$ s.t. $\alpha_t=\alpha(t,X_0,W,B)$ and $\aleph_t=\aleph(t,B)$ $\mathrm{d}t \otimes \mathrm{d}\P$--a.e. Let $\br \in \Cc$. We set $\alpha^{\br}_t:=\alpha(t,X_0,W,\br)$ and $\aleph^{\br}_t:=\aleph(t,\br)$. Let $X^{\br}$ be the solution of
    \begin{align*}
        \mathrm{d}\left(X^{\br}_t - \sigma_0 \br(t) \right)
            =
            \overline{b}\left(t,\mub^{\br}_t,\aleph^{\br}_t\right)
            +
            b\left(t,X^{\br}_t,\mu^{\br}_t,\alpha^{\br}_t\right)\mathrm{d}t + \sigma(t,X^{\br}_t)\mathrm{d}W_t,\;\;\P\mbox{--a.e.}
    \end{align*}
    with $\;\mub^{\br}_t=\Lc(X^{\br}_t,\alpha^{\br}_t),\;\mu^{\br}_t= \Lc( X^{\br}_t).$ By uniqueness in distribution, we check that $\Lc(X,W|B=\br)=\Lc(X^{\br},W)$. 
    Similarly to the previous proof of \Cref{lemma:first_appro}, we use some shifting. We introduce the following quantities: for each $(m,\pi)$, $m[\br(t)](\mathrm{d}y,\mathrm{d}a):=\int_{\R^d} \delta_{x+\sigma_0 \br(t)}(\mathrm{d}y)m(\mathrm{d}x,\mathrm{d}a)$ and $\pi(t)[\br](\mathrm{d}y):=\int_{\R^d} \delta_{x+\sigma_0 \br(t)}(\mathrm{d}y)\pi(t)(\mathrm{d}x)$. We set $\overline{m}^{\br}_t:=\mub^{\br}_t[-\br(t)]$, $\mut^{\br}_t:=\mu^{\br}_t[-\br]$ and $Y^{\br}_t=X^{\br}_t-\sigma_0 \br(t)$ for all $t \in [0,T]$. The processes $\left(\overline{m}^{\br}_t,\mut^{\br}_t,Y^{\br}_t \right)_{t \in [0,T]}$ satisfies
    \begin{align*}
        \mathrm{d}Y^{\br}_t
            =
            \overline{b}\left(t,\overline{m}^{\br}_t[\br(t)],\aleph^{\br}_t\right)
            +
            b\left(t,Y^{\br}_t + \sigma_0 \br(t),\mut^{\br}[\br],\alpha^{\br}_t\right)\mathrm{d}t + \sigma(t,Y^{\br}_t + \sigma_0 \br(t))\mathrm{d}W_t \;\;\P\mbox{--a.e.}
    \end{align*}
    with $\overline{m}^{\br}_t(\mathrm{d}y,A)=\mut^{\br}_t$. We define the maps 
    \begin{align*}
        \overline{h}\left(t,\br,m,e \right)
        :=
        \overline{b}\left( t, m[\br(t)],e\right),\;\;h\left(t,\br,y,\pi,a \right)
        :=
        b\left(t,y+\sigma_0 \br(t), \pi[\br], a \right)\;\;\mbox{and}\;\;v(t,\br,y):=\sigma(t,y+\sigma_0 \br(t)).
    \end{align*}
    Under \Cref{assum:main1}, it is straightforward to check that 
    $$
        [0,T] \x \Cc \x \Pc_A \x \Er \x \R^d \x \Cc_{\Wc} \x A \ni (t,\br,m,e,y,\pi,a) \mapsto \left( \overline{h}\left(t,\br,m,e \right), h\left(t,\br,y,\pi,a \right), v(t,\br,y)  \right)
    $$
    is continuous in $(\br,m,e,y,\pi,a)$ for each $t$ and Lipschitz in $(m,y,\pi)$ uniformly in $(t,\br,e,a)$.
    We can rewrite $Y^{\br}$ as
    \begin{align*}
        \mathrm{d}Y^{\br}_t
            =
            \overline{h}\left(t,\br,\overline{m}^{\br}_t,\aleph^{\br}_t\right)
            +
            h\left(t,\br,Y^{\br}_t ,\mut^{\br},\alpha^{\br}_t\right)\mathrm{d}t + v(t,\br,Y^{\br}_t)\mathrm{d}W_t\;\;\P\mbox{--a.e.}
    \end{align*}
    By applying It\^o formula and taking the expectation, we find: for all $\varphi$
    \begin{align*}
        \mathrm{d}\langle \varphi, \mut^{\br}_t\rangle
        =
        \int_{\R^d \x A} \varphi'(y) \;\;\overline{h}(t,\br,\overline{m}^{\br}_t,\aleph^{\br}_t) + h \left(t,\br,y ,\mut^{\br},a\right)\;\;\overline{m}^{\br}_t(\mathrm{d}y,\mathrm{d}a) \mathrm{d}t + \frac{1}{2} \langle \varphi''(\cdot) v^2(t,\br,\cdot), \mut^{\br}_t \rangle \mathrm{d}t.
    \end{align*}
    Let us consider the Borel map $\R^d \ni x \mapsto \overline{m}^{\br}_t(x) \in \Pc(A)$ verifying $\overline{m}^{\br}_t= \overline{m}^{\br}_t(x)(\mathrm{d}a)\mut^{\br}_t(\mathrm{d}x)$. Notice that, we can choose $\overline{m}^{\br}_t(x)$ s.t. the map $[0,T] \x \R^d \x \Cc \ni(t,x,\br) \mapsto \overline{m}^{\br}_t(x) \in \Pc(A)$ is Borel measurable . 
    Since $\Gr(x)\mathrm{d}x$ is non--atomic, there exists a sequence of progressively Borel maps $\left( \alphat^{l} \right)_{l \ge 1}$ s.t. $[0,T] \x \R^d \x \Cc \ni (t,x,\br) \mapsto \alphat^{l} (t,x,\br) \in A$ and 
    \begin{align} \label{eq:converg_control}
        \lim_{l \to \infty} \delta_{ \tilde\alpha^{l} \left(t,x, \br \right)}(\mathrm{d}a){\rm G}(x)\mathrm{d}x=\overline{m}^{\br}_t(x)(\mathrm{d}a){\rm G}(x)\mathrm{d}x\;\mbox{in}\;\Wc_p
    \end{align}
    $\;\mbox{where }{\rm G}\mbox{ is a continuous density}$ $\Gr(x):=\frac{1}{1+|x|^p} \left(\int_{\R} \frac{1}{1+|y|^p} \mathrm{d}y \right)^{-1}.$
    Now, let $Y^{\br,l}$ be the solution of
    \begin{align*}
        \mathrm{d}Y^{\br,l}_t
            =
            \overline{h}\left(t,\br,\overline{m}^{\br,l}_t,\aleph^{\br}_t\right)
            +
            h\left(t,\br,Y^{\br,l}_t ,\mut^{\br,l}_t,\alphat^{l}(t,Y^{\br,l}_t,\br)\right)\mathrm{d}t + v(t,\br,Y^{\br,l}_t)\mathrm{d}W_t\;\;\P\mbox{--a.e.}
    \end{align*}
    with $\overline{m}^{\br,l}_t:=\Lc\left(Y^{\br,l}_t, \alphat^{l}(t,Y^{\br,l}_t,\br) \right)$ and $\mut^{\br,l}_t:=\Lc(Y^{\br,l}_t )$. Combined \eqref{eq:converg_control} and (an easy extension of) \cite[Proposition A.4.]{closed-loop-MFG_MDF}, we have
    \begin{align*}
        \lim_{l \to \infty}\left( \mut^{\br,l}, \delta_{ \overline{m}^{\br,l}_t}(\mathrm{d}m)\mathrm{d}t \right)
        =
        \left( \mut^{\br}, \delta_{ \overline{m}^{\br}_t}(\mathrm{d}m)\mathrm{d}t \right)\;\mbox{in}\;\Wc_p.
    \end{align*}
    We can also check that $ \lim_{l \to \infty}\left( \mut^{\br,l}[\br], \delta_{ \overline{m}^{\br,l}_t[\br]}(\mathrm{d}m)\mathrm{d}t \right)
        =
        \left( \mut^{\br}[\br], \delta_{ \overline{m}^{\br}_t[\br]}(\mathrm{d}m)\mathrm{d}t \right)$. Now, let us consider $X^{l}$ the process satisfying
        \begin{align*}
            \mathrm{d}X^{l}_t
            =
            \overline{b}\left(t,\mub^{l}_t,\aleph_t\right)
            +
            b\left(t,X^{l}_t,\mu^{l}_t,\alpha^{l}\left(t, X^{l}_t, B \right)\right)\mathrm{d}t + \sigma(t,X^{l}_t)\mathrm{d}W_t + \sigma_0\mathrm{d}B_t,\;\;\P\mbox{--a.e.}
        \end{align*}
        with        $\mub^{l}_t=\Lc\left(X^{l}_t,\alpha^{l}\left(t, X^{l}_t , B \right)|B_{t \wedge \cdot} \right),\;\mu^{l}_t= \left( X^{l}_t| B_{t \wedge \cdot} \right)$ and $\alpha^{l}(t,x,\br)=\alphat^{l}(t,x-\sigma_0 \br(t),\br)$. By uniqueness, we can check that $\Lc(Y^{\br,l}_\cdot,W_\cdot)=\Lc(X^{l}_\cdot - \sigma_0 B_\cdot,W_\cdot|B=\br)$. We can deduce that
        \begin{align*}
            \lim_{l \to \infty}\left( \mu^{l}, \delta_{ \mub^{l}_t}(\mathrm{d}m)\mathrm{d}t \right)
        =
        \left( \mu, \delta_{ \mub_t}(\mathrm{d}m)\mathrm{d}t \right)\;\mbox{in}\;\Wc_p\;\P\mbox{--a.e.}
        \end{align*}
        By using \cite[Proposition A.7.]{closed-loop-MFG_MDF}, up to another approximation, we can consider that the map $[0,T] \x \R^d \x \Cc \ni(t,x,b) \mapsto \left(\alpha^l(t,x,b),\aleph(t,b) \right) \in A \x \Er$  are Lipschitz in $(x,b)$ uniformly in $t$. 

        \medskip
        $\boldsymbol{{\rm Step\;2}}$ Let $l \ge 1$. We know that $\sigma_0$ is invertible. Then, we can use \cite[Proposition A.10.]{closed-loop-MFG_MDF} and find a sequence of Borel progressively measurable maps 
        $$
            \left( [0,T] \x \R \x \Cc_{\Wc} \ni (t,x,\pi) \mapsto \left( \alpha^{l,j}(t,x,\pi), \aleph^{j}(t,\pi), \phi^j(t,\pi) \right) \in A \x \Er \x \R^d  \right)_{ j \ge 1}
        $$
        Lipschitz in $(x,\pi)$ uniformly in $t$ s.t. $\left( \alpha^{l,j}(t,x,\pi), \aleph^{j}(t,\pi) \right)=\left( \alpha^{l}(t,x,\phi^j(t,\pi)), \aleph(t,\phi^j(t,\pi)) \right)$ and if we let  $X^{l,j}$ be the process satisfying
        \begin{align*}
            \mathrm{d}X^{l,j}_t
            =
            \overline{b}\left(t,\mub^{l,j}_t,\aleph^{l}(t,\mu^{l,j}_t)\right)
            +
            b\left(t,X^{l,j}_t,\mu^{l,j}_t,\alpha^{l,j}\left(t, X^{l,j}_t, \mu^{l,j} \right)\right)\mathrm{d}t + \sigma(t,X^{l,j}_t)\mathrm{d}W_t + \sigma_0\mathrm{d}B_t,\;\;\P\mbox{--a.e.}
        \end{align*}
        with        $\mub^{l,j}_t=\Lc\left(X^{l,j}_t,\alpha^{l,j}\left(t, X^{l,j}_t, \mu^{l,j} \right)|B_{t \wedge \cdot} \right),\;\mu^{l,j}_t= \left( X^{l,j}_t| B_{t \wedge \cdot} \right)$, we have $\P$--a.e. for all $t \in [0,T]$ and each $l,j \ge 1$, $B_t=\phi^k(t,\mu^{l,j})$,
        \begin{align*}
            \lim_{j \to \infty}\E \left[ \int_0^T \| \mu^{l,j}_t-\mu^{l}_t\|_{\rm TV}  \mathrm{d}t\right]=0\;\;\mbox{and}\;\;\lim_{j \to \infty}  \delta_{\left(\mub^{l,j}_t, \aleph^{l}(t,\mu^{l,j}_t) \right)}(\mathrm{d}m,\mathrm{d}e)\mathrm{d}t = \delta_{\left(\mub^{l}_t, \aleph^l_t \right)}(\mathrm{d}m,\mathrm{d}e)\mathrm{d}t \;\mbox{in}\;\Wc_p,\;\P\mbox{--a.e.}
        \end{align*}
    where $\|\|_{\rm TV}$ is the total variation distance. 
    By combining the previous 2 steps, we can deduce the proof of the Lemma.
        
\end{proof}

\medskip
Let us consider the random variables $(\mu,R)$ and the filtration $\G$ given in the (preamble of )\Cref{lemma:first_appro}.   By combining all the previous results, we are able to prove that: 
\begin{proposition} \label{prop:converg}
    There exist:
    \begin{itemize}
        \item a sequence  $(\mu^{k}, \mub^{k}, \aleph^{k})_{k \ge 1}$ s.t. $\mu^{k}$ is an $\Pc(\R^d)$--valued $\G$--adapted continuous process, $\mub^{k}$ is an $\Pc_A$--valued $\G$--predictable process and $\alepht^{k}$ is an $\Er$--valued $\G$--predictable process;

        \item a sequence $(\zeta^{k}, \zetab^{k}, \alepht^{k})_{k \ge 1}$ s.t. $\zeta^{k}$ is an $\Pc(\R^d)$--valued $(\sigma\{B_{t \wedge \cdot}\})_{t \in [0,T]}$--adapted continuous process, $\zetab^{k}$ is an $\Pc_A$--valued $(\sigma\{B_{t \wedge \cdot}\})_{t \in [0,T]}$--predictable process and $\aleph^{k}$ is an $\Er$--valued $(\sigma\{B_{t \wedge \cdot}\})_{t \in [0,T]}$--predictable process;

        \item a sequence of Borel progressively measurable maps $(\aleph^{j,k},\alpha^{j,k,\ell})$ s.t.  $[0,T] \x \Cc_{\Wc} \ni (t,\pi) \to \aleph^{j,k}(t,\pi_{t \wedge \cdot}) \in \Er$ and $[0,T] \x \R \x \Cc_{\Wc} \ni (t,x,\pi) \to \alpha^{j,k}(t,x,\pi_{t \wedge \cdot}) \in A$ are Lipschitz maps in $(x,\pi)$ uniformly in $t$ and, if we define $X^{j,k}:=X$ the solution of: $\aleph^{j,k}_t:=\aleph^{j,k}(t,\mu^{j,k})$, $\alpha^{j,k}_t=\alpha^{j,k}(t,X^{j,k}_t,\mu^{j,k})$, $\mub^{j,k}_t=\Lc(X_t,\alpha^{j,k}_t| B_{t \wedge \cdot}),\;\mu^{j,k}_t= ( X_t| B_{t \wedge \cdot} )$ and
    \begin{align*}
        \mathrm{d}X_t
            =
            \overline{b}\left(t,\mub^{j,k}_t,\aleph^{j,k}_t\right)
            +
            b\left(t,X_t,\mu^{j,k}_t,\alpha^{j,k}_t\right)\mathrm{d}t + \sigma(t,X_t)\mathrm{d}W_t + \sigma_0\mathrm{d}B_t,\;\;\;\P\mbox{--a.e.},
    \end{align*}
    \end{itemize}
    all these sequences satisfy: first, 
    \begin{align*}
        \lim_{k \to \infty}\lim_{j \to \infty} \Wc_p \left( \left( \mu^{j,k}, \delta_{\left( \mub^{j,k}_t, \aleph^{j,k}_t \right)}(\mathrm{d}m,\mathrm{d}e) \mathrm{d}t \right)
        ,
         \left( \zeta^{k}, \delta_{\left( \zetab^{k}_t, \tilde\aleph^{k}_t \right)}(\mathrm{d}m,\mathrm{d}e) \mathrm{d}t \right) \right)=0,\;\;\P\mbox{--a.e. }, 
    \end{align*}
    second, for each $k \ge 1$,
    \begin{align*}
        \Lc^\P \left( \zeta^{k}, \delta_{\left( \zetab^{k}_t, \tilde\aleph^{k}_t \right)}(\mathrm{d}m,\mathrm{d}e) \mathrm{d}t, B^k,W^k,X_0 \right)
        =
        \Lc^\P \left( \mu^{k}, \delta_{\left( \mu^{k}_t, \aleph^{k}_t \right)}(\mathrm{d}m,\mathrm{d}e) \mathrm{d}t, B^k,W^k,X_0 \right)
    \end{align*}
    and finally
    \begin{align*}
         \lim_{k \to \infty} \Wc_p \left(\left( \mu^{k}, \delta_{\left( \mu^{k}_t, \aleph^{k}_t \right)}(\mathrm{d}m,\mathrm{d}e) \mathrm{d}t \right)
         ,
         \left( \mu, R_t(\mathrm{d}m,\mathrm{d}e) \mathrm{d}t \right) \right)=0,\;\;\P\mbox{--a.e. }
    \end{align*}

\end{proposition}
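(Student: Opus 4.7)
The proposition bundles the conclusions of the four preceding lemmas into a single statement indexed by an outer $k$ (encoding the piece-wise constant replacement of the controls and the distributional transfer to a $B$-only-measurable contract) and an inner $j$ (encoding the Lipschitz closed-loop smoothing). My plan is to chain the lemmas in sequence and then perform a diagonal extraction that merges two intermediate indices into the single outer index $k$.

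First, I would apply \Cref{lemma:first_appro} to $(\mu,R)$, obtaining a $\G$--measurable sequence $(\mu^n,\mub^n,\aleph^n)$ driven by $\F$--predictable controls $(\alpha^n,\aleph^n)$ with $\mub^n_t=\Lc(X^n_t,\alpha^n_t\mid\Gc_T)$ and $\P$--a.e.\ $\Wc_p$--convergence to $(\mu,R)$. For each $n$, successive application of \Cref{lemma_piece-wiseAppr} and \Cref{lemma:law_equality}, followed by the lemma relating $S^k$ to $\Xt^k$, would produce a piece-wise constant approximation $(X^{n,k},\alpha^{n,k},\aleph^{n,k})$ together with a distributionally equivalent pair driven by an $(X_0,W,B)$--predictable $\alphat^{n,k}$ and a $B$--only-predictable $\alepht^{n,k}$. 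The equality of laws under the shifted Brownian motions $(W^k,B^k)$ coming from \Cref{lemma:law_equality} is exactly condition (ii) of the statement, and the $S^k$--lemma transfers the resulting weak convergence back to the original Brownian motions $(W,B)$ so that condition (iii) holds. A diagonal extraction $k\mapsto(n_k,k)$ then merges the two intermediate indices into a single one, yielding the $\G$--measurable triple $(\mu^k,\mub^k,\aleph^k)$ satisfying (iii) and the $B$--measurable triple $(\zeta^k,\zetab^k,\alepht^k)$ satisfying (ii). Condition (i) is obtained by applying the final lemma preceding the proposition to each $(\zeta^k,\zetab^k,\alepht^k)$: its measurability structure (control predictable with respect to $\sigma(X_0,W,B)$, contract predictable with respect to $\sigma(B)$) is precisely what that lemma requires, and its conclusion delivers the Lipschitz closed-loop maps $(\alpha^{j,k},\aleph^{j,k})$ together with the announced $\P$--a.e.\ $\Wc_p$--convergence of $(\mu^{j,k},\mub^{j,k},\aleph^{j,k})$ to $(\zeta^k,\zetab^k,\alepht^k)$ as $j\to\infty$.

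The main technical difficulty lies in the diagonal extraction: the preceding lemmas produce different modes of convergence ($L^p$, $\P$--a.e.\ $\Wc_p$, convergence in distribution), so one must pass from convergence in law to an a.s.\ statement along a further sub-sequence, exploiting metrizability and separability of $(\Pc_p,\Wc_p)$, before combining everything into a single $k$--indexed pair of sequences. Beyond this, careful bookkeeping of the three filtrations in play (the canonical common-noise filtration $\G$, the shifted $\sigma(W^k,B^k)$ produced by \Cref{lemma:law_equality}, and the terminal $\sigma(B)$ inherited by $\zetab^k$ and $\alepht^k$) is needed so that the measurability assertions of the proposition survive each extraction.
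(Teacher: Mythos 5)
Your proposal captures the correct overall strategy: the proposition is indeed obtained by chaining \Cref{lemma:first_appro}, \Cref{lemma_piece-wiseAppr}, \Cref{lemma:law_equality}, the $S^k$--lemma and the final (unnumbered) Lipschitz--smoothing lemma, with a diagonal extraction to collapse the two approximation indices (the $\ell$ of \Cref{lemma:first_appro} and the $k$ of \Cref{lemma_piece-wiseAppr}) into a single one. The filtration bookkeeping you describe is also right: \Cref{lemma:law_equality} produces the $\sigma(X_0,W,B)$/$\sigma(B)$--predictable controls required as input to the Lipschitz--smoothing lemma.

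There is, however, one small imprecision in your bookkeeping of which lemma supplies which conclusion. Applied to $(\tilde{\alpha}^k,\tilde{\aleph}^k)$, the Lipschitz--smoothing lemma drives the process $S^k$ with the \emph{original} Brownian motions $(W,B)$, so the inner $j$--limit of $(\mu^{j,k},\bar\mu^{j,k},\aleph^{j,k})$ is $(\eta^k,\bar\eta^k,\tilde{\aleph}^k)$ (the $S^k$--conditional laws), not $(\zeta^k,\bar\zeta^k,\tilde{\aleph}^k)$ (the $\widetilde X^k$--conditional laws from \Cref{lemma:law_equality}). The proposition's first conclusion is stated as a double limit $\lim_k\lim_j$ precisely because the inner $j$--limit need not equal $(\zeta^k,\cdot)$ for fixed $k$: the residual gap $\Wc_p((\eta^k,\cdot),(\zeta^k,\cdot))$ is what the $S^k$--lemma controls as $k\to\infty$. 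Conversely, conclusion (iii) does not rely on the $S^k$--lemma: it follows directly from the $\P$--a.e.\ convergence in \Cref{lemma:first_appro} composed with the $\mathrm L^p$--convergence in \Cref{lemma_piece-wiseAppr} along the diagonal. Relatedly, the a.s.\ statements in (i) and (iii) come from the $\mathrm L^p$--convergences of \Cref{lemma_piece-wiseAppr} and the $S^k$--lemma (which admit a.e.\ convergent subsequences), not from a generic ``convergence in law $\Rightarrow$ a.s.\ along a subsequence'' principle, which does not hold on a fixed probability space. These are corrections to the exposition, not gaps in the argument: once they are made, your chaining plus diagonal gives exactly the three claimed conclusions with the stated measurability.
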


\medskip
All the previous results have been a preparation to provide the following Proposition which states the approximation of MFG solution by a sequence of approximate strong MFG solutions.
\begin{proposition} \label{prop:approximation_by_strong}
    Let $\varepsilon \ge 0$ and $\Pr \in \Pcb_{\rm \varepsilon\mbox{-}mfg}$ associated to $(\Phi,\Zf)$. 
    There exist a sequence of non--negative numbers $(\varepsilon_\ell)_{\ell \ge 1}$ and a sequence $(\phi^\ell,\xi^\ell,\aleph^\ell,\alpha^\ell,\mub^\ell)_{\ell \ge 1}$ s.t. for each $\ell \ge 1$, $(\alpha^\ell,\xi^\ell,\aleph^\ell) \in \Cc_s$, $(\alpha^\ell,\mub^\ell) \in {\rm MFG}_S\left[\Cf^\ell,\varepsilon_\ell \right]$ where $\Cf^\ell:=(\phi^\ell,\xi^\ell,\aleph^\ell)$, 
    \begin{align*}
        \limsup_{\ell} \varepsilon_\ell \in [0,\varepsilon]\;\;\mbox{and}\;\;\lim_{\ell \to \infty}\Lc^\P \left( \mu^\ell, \delta_{\left( \mub^\ell_t, \aleph^\ell_t \right)}(\mathrm{d}m,\mathrm{d}e) \mathrm{d}t \right)
        =
        \Lc^{\Pr} \left( \mu, R \right)\;\;\mbox{in }\Wc_p.
    \end{align*}
    Besides,
    \begin{align*}
        \lim_{\ell \to \infty} J_{A,\mub^\ell}^{\Cf^\ell}(\alpha^\ell)
        =
        \E^{\mathrm{P}} \big[{\rm J}^{\Phi,\Zf}_A\left(\Gamma',\mu',R,\mu \right) \big]\;\;\mbox{and}\;\;\lim_{\ell \to \infty} J_{\Pr}^{\alpha^\ell,\mub^\ell}(\Cf^\ell)
        =
        \E^{{\Pr}}\big[{\rm J}^{R,\mu}_{\Pr}(\Phi,\Zf) \big].
    \end{align*}
\end{proposition}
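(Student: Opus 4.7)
The plan is to combine the Lipschitz closed--loop flow approximation of \Cref{prop:converg} with a standard atomless approximation of the kernels $(\Phi,\Zf)$, and then verify the $\varepsilon_\ell$--optimality by transferring the $\varepsilon$--optimality of $\Pr$ via the compactness of deviation rules (as in \Cref{lemma:deviating MFGstrong} and \Cref{prop_convergence:limitcontract}).

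First, I apply \Cref{prop:converg} to the flow $(\mu,R)$ of $\Pr$. This produces, for each $j,k\ge 1$, Lipschitz maps $\alpha^{j,k}:[0,T]\x\R^d\x\Cc_\Wc\to A$ and $\aleph^{j,k}:[0,T]\x\Cc_\Wc\to\Er$ inducing a closed--loop flow $(\mu^{j,k},\mub^{j,k},\aleph^{j,k}(\cdot,\mu^{j,k}))$ that converges (in $\Wc_p$, as $k\to\infty$ then $j\to\infty$) in distribution to $(\mu,R)$; a diagonal extraction then yields a single sequence indexed by $\ell$, which I relabel $(\alpha^\ell,\aleph^\ell,\mu^\ell,\mub^\ell)$. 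Crucially, by the construction inside Step 2 of the proof preceding \Cref{prop:converg}, $B_t$ itself is recovered as a Lipschitz functional of $\mu^\ell$ (through the relation $B_t=\phi^k(t,\mu^{l,j})$), so the whole pair $(R^\ell,B)$ is a deterministic, Lipschitz functional of $\mu^\ell$.

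Second, I construct the payment components. Since $\sigma$ is non--degenerate, $\Lc^\P(X^{\alpha^\ell}_T)$ admits a density comparable to $\Gr$, so the atomless--measure approximation already used in the proof preceding \Cref{prop:converg} (see the convergence \eqref{eq:converg_control}) provides Borel maps $\phi^\ell:\R^d\to\Ir$ with $\delta_{\phi^\ell(x)}(\mathrm{d}i)\Gr(x)\mathrm{d}x\to\Phi(x)(\mathrm{d}i)\Gr(x)\mathrm{d}x$ in $\Wc_p$. For the terminal contract $\xi^\ell$, I use the functional dependence $R^\ell=\Uc^\ell(\mu^\ell)$ just noted: the target kernel $\Zf(r)(\mathrm{d}e)\Lc^\Pr(R)(\mathrm{d}r)$ pulls back to a kernel on $\Cc_\Wc$ along the limit of $\Uc^\ell$, and another atomless--type approximation (combined with \cite[Proposition A.7.]{closed-loop-MFG_MDF} to obtain the Lipschitz regularity) yields Lipschitz maps $\hat{\xi}^\ell:\Cc_\Wc\to\Er$ such that $\delta_{\hat{\xi}^\ell(\mu^\ell)}(\mathrm{d}e)\Lc^\P(\mu^\ell)(\mathrm{d}\pi)$ converges to $\Zf(r)(\mathrm{d}e)\Lc^\Pr(R)(\mathrm{d}r)$ once transported through $\Uc^\ell$. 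Setting $\xi^\ell(\mub,\aleph):=\hat{\xi}^\ell(\mu)$ gives $(\alpha^\ell,\xi^\ell,\aleph^\ell)\in\Cc_s$, and $\Cf^\ell:=(\phi^\ell,\xi^\ell,\aleph^\ell)$ is a valid contract.

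Third, I define $\varepsilon_\ell:=\bigl(\sup_{\beta\in\Ac} J^{\Cf^\ell}_{A,\mub^\ell}(\beta)-J^{\Cf^\ell}_{A,\mub^\ell}(\alpha^\ell)\bigr)\vee 0$, so that $(\alpha^\ell,\mub^\ell)\in\mbox{{\rm MFG}}_S[\Cf^\ell,\varepsilon_\ell]$ by construction. To show $\limsup_\ell\varepsilon_\ell\le\varepsilon$, I pick $\beta^\ell\in\Ac$ realizing $\varepsilon_\ell-1/\ell$, form the associated control rules $\Pr^{\beta^\ell}\in\Pcb$ on the flow $(\mu^\ell,R^\ell)$, and use the $\Wc_p$--relative compactness underlying \Cref{prop_convergence:limitcontract} to extract a limit $\Qr\in\Pcb$ with $\Lc^\Qr(B,R)=\Lc^\Pr(B,R)$. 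By continuity of the reward functional together with the constructed convergences of $\delta_{\phi^\ell(x)}\Gr(x)\mathrm{d}x$ and of the kernel associated with $\hat{\xi}^\ell$, one passes to the limit in $J^{\Cf^\ell}_{A,\mub^\ell}(\beta^\ell)$ and obtains
\begin{align*}
\limsup_\ell J^{\Cf^\ell}_{A,\mub^\ell}(\beta^\ell)\le \E^{\Qr}\bigl[{\rm J}^{\Phi,\Zf}_A(\Gamma',\mu',R,\mu)\bigr]\le\E^{\Pr}\bigl[{\rm J}^{\Phi,\Zf}_A(\Gamma',\mu',R,\mu)\bigr]+\varepsilon,
\end{align*}
the last inequality being the $\varepsilon$--optimality of $\Pr$. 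The analogous continuity applied to $\alpha^\ell$ (which is not a deviation, just the fixed control inside the flow) gives $\lim_\ell J^{\Cf^\ell}_{A,\mub^\ell}(\alpha^\ell)=\E^\Pr[{\rm J}^{\Phi,\Zf}_A]$. Subtracting these two relations yields $\limsup_\ell\varepsilon_\ell\le\varepsilon$. Both convergence statements for the Agent's and Principal's rewards follow from the same continuity arguments applied to $J^{\Cf^\ell}_{A,\mub^\ell}(\alpha^\ell)$ and $J^{\alpha^\ell,\mub^\ell}_\Pr(\Cf^\ell)$.

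The main obstacle is engineering $\xi^\ell$ so as to simultaneously (i) factor through $\mu^\ell$ with Lipschitz dependence, for membership in $\Cc_s$, and (ii) produce the correct marginal $\Zf(r)(\mathrm{d}e)\Lc^\Pr(R)(\mathrm{d}r)$ in the limit. This is made feasible precisely because in the strong approximation of \Cref{prop:converg} the full measure $R^\ell$ is a Lipschitz functional of $\mu^\ell$, so the kernel on $\M$ can be pulled back to a kernel on $\Cc_\Wc$; the absolute continuity of $\Lc^\P(\mu^\ell)$ (induced by the non--degenerate common noise $\sigma_0\mathrm{d}B_t$) then sustains the Dirac atomless approximation needed to go from a general kernel to a Lipschitz deterministic map.
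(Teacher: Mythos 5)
Your proposal follows the paper's general architecture -- approximate $(\Phi,\Zf)$ by Lipschitz kernels, invoke \Cref{prop:converg} for the closed--loop flows, and transfer the $\varepsilon$--optimality of $\Pr$ to the approximating sequence -- but deviates in two places worth discussing.

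For the optimality verification, you pick near--optimizers $\beta^\ell$, form the deviation rules $\Pr^{\beta^\ell}\in\Pcb$ against the flow $(\mu^\ell,R^\ell)$, extract a $\Wc_p$--limit $\Qr\in\Pcb$ with $\Lc^\Qr(B,R)=\Lc^\Pr(B,R)$, and conclude by the $\varepsilon$--optimality of $\Pr$. This is a genuinely different route from the paper's Step~2, which avoids near--optimizer selection altogether and instead constructs, for \emph{each} $\beta$ and $k$, a ``mixed'' rule $\widehat{\Pr}^{\beta,k}$ whose state evolves in the \emph{limiting} environment $(\mu,R)$ but with feedback evaluated at the level--$k$ flow; this gives $\widehat{\Pr}^{\beta,k}\circ(B,R)^{-1}=\Pr\circ(B,R)^{-1}$ exactly, so the $\varepsilon$--optimality applies per $\beta$ without passing a supremum through a weak limit. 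What the paper's route buys is that the interchange of $\sup_\beta$ and $\limsup$ is handled by an explicit chain of (approximate) equalities via the equality in distribution $\Lc(\zeta^k,\Rt^k,B^k,W^k,X_0)=\Lc(\mu^k,R^k,B^k,W^k,X_0)$ and the time--freezing machinery of \Cref{lemma_piece-wiseAppr}--\Cref{lemma:law_equality}. What your route buys is brevity: the stability of $\Pcb$ under limits and the marginal convergence do the work. Your argument should close, but you should make explicit that the subsequence extraction needs to be done jointly over $\Pr^{\beta^\ell}$ and the contract approximants, and that the continuity of the rewards along this subsequence uses \Cref{prop:charac-convergence} (the density estimate from the non--degenerate diffusion).

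The place where the write--up is genuinely murky is the construction of $\xi^\ell$. You propose to ``pull back'' the kernel $\Zf(\rr)(\mathrm{d}e)\Lc^\Pr(R)(\mathrm{d}r)$ from $\M$ to $\Cc_\Wc$ ``along the limit of $\Uc^\ell$''. This cannot work as stated: $\Uc^\ell$ depends on $\ell$ and there is no canonical ``limit map'' $\Cc_\Wc\to\M$ to pull back against, since under $\Pr$ the measure $R$ is \emph{not} a deterministic functional of $\mu$ (only the opposite holds, cf.\ \Cref{rm:proba_equivalent}). The paper instead approximates $\Zf$ once and for all by Lipschitz maps $\xi^\ell:\M\to\Er$ (via \cite[Proposition C.1]{Lacker_carmona_delarue_CN} and \cite[Theorem 6.4.1]{Cobza2019}), and only then uses the fact -- which you correctly identified -- that in the strong approximating flow $R^\ell=\Uc^\ell(\mu^\ell)$ is a deterministic Lipschitz functional of $\mu^\ell$, so that $\widehat\xi^\ell:=\xi^\ell\circ\Uc^\ell$ furnishes the $\Cc_s$--representative. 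That ordering -- fix a $\Wc_p$--convergent Lipschitz approximation of $\Zf$ on $\M$ first, then compose with the level--$\ell$ functional -- sidesteps the ill--posed pullback you describe. With that repair, the rest of your argument goes through.
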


\begin{proof} 
For simplification, we can assume that $\Pr=\P \circ \left( X,W,B,R,\mu \right)^{-1}$ with the random variables $(X,W,B,R,\mu)$ defined on $(\Om,\H,\P)$. Therefore, we will only use the probability space $(\Om,\H,\P)$.

\medskip
$\boldsymbol{{\rm Step\;1}}$
    First, by combining the approximation of measurable function by continuous functions in \cite[Proposition C.1]{Lacker_carmona_delarue_CN} and the approximation of continuous function by Lipschitz functions in \cite[Theorem 6.4.1]{Cobza2019}, there exists a sequence of Lipschitz functions $\left(\xi^\ell: \M \to \Er \right)_{\ell \ge 1}$ and $\left(\phi^\ell: \R^d \to \Ir \right)_{\ell \ge 1}$ s.t.
    \begin{align*}
        \lim_{\ell \to \infty} \delta_{\xi^\ell(\rr)}\Lc^{\P}(R)(\mathrm{d}r)
        =\Zf(\rr)(\mathrm{d}e)\Lc^{\P}(R)(\mathrm{d}r)\;\mbox{and}\;\lim_{\ell \to \infty} \delta_{\phi^\ell(x)}(\mathrm{d}i)\Gr(x)\mathrm{d}x=\Phi(x)(\mathrm{d}i)\Gr(x)\mathrm{d}x.
    \end{align*}
    
    We are then using \Cref{prop:converg}, we consider the sequence $(\alpha^{j,k}, \mub^{j,k}, \aleph^{j,k})_{j,k \ge 1}$ given in the Proposition and we have 
    \begin{align*}
        \lim_{k \to \infty}\lim_{j \to \infty} \Lc^\P \left( \mu^{j,k}, \delta_{\left( \mub^{j,k}_t, \aleph^{j,k}_t \right)}(\mathrm{d}m,\mathrm{d}e) \mathrm{d}t \right)
        =
        \Lc^\P \left( \mu, R \right)\;\;\mbox{in }\Wc_p.
    \end{align*} 
    We can check that: in $\Wc_p$,
    \begin{align*}
        \lim_{\ell \to \infty} \lim_{k \to \infty} \lim_{j \to \infty} &\Lc^\P \left(  \xi^\ell(\mu^{j,k}), \delta_{\left( \mub^{j,k}_t,\; \aleph^{j,k}_t \right)}(\mathrm{d}m,\mathrm{d}e) \mathrm{d}t \right)
        =
        \Lc^\P \left( \xi, R \right)
    \end{align*}
and with the help of \Cref{prop:charac-convergence},
    \begin{align*}
        \lim_{\ell \to \infty} \lim_{k \to \infty} \lim_{j \to \infty} &\Lc^\P \left( \mu^{j,k},\; \delta_{\phi^\ell(x)}(\mathrm{d}i)\mu^{j,k}_T(\mathrm{d}x),\; \delta_{\left( \mub^{j,k}_t,\; \aleph^{j,k}_t \right)}(\mathrm{d}m,\mathrm{d}e) \mathrm{d}t \right)
        =
        \Lc^\P \left( \mu,\; \Phi(x)(\mathrm{d}i)\mu_T(\mathrm{d}x),\; R \right).
    \end{align*}
    By setting $\Cf^{j,k,\ell}:=(\phi^\ell,\xi^{\ell},\aleph^{j,k})$, this allows us to verify that
    \begin{align*}
        \lim_{\ell \to \infty} \lim_{k \to \infty} \lim_{j \to \infty} J_{A,\mub^{j,k}}^{\Cf^{j,k,\ell}}(\alpha^{j,k})
        =
        \E^{\mathrm{P}} \big[{\rm J}^{\Phi,\Zf}_A\left(\Gamma',\mu',R,\mu \right) \big]\;\;\mbox{and}\;\;\lim_{\ell \to \infty} \lim_{k \to \infty} \lim_{j \to \infty} J_{\Pr}^{\alpha^{j,k},\mub^{j,k}}(\Cf^{j,k,\ell})
        =
        \E^{{\Pr}}\big[{\rm J}^{R,\mu}_{\Pr}(\Phi,\Zf) \big].
    \end{align*}

    \medskip
    To complete the proof, 
    we need to check that $(\alpha^{j,k},\mub^{j,k}) \in {\rm MFG}_S[\Cf^{j,k,\ell},\varepsilon_{j,k,\ell}]$ for some sequence of non--negative numbers $(\varepsilon_{j,k,\ell})_{j,k,\ell \ge 1}$ verifying $\limsup_{\ell \to \infty}\limsup_{k \to \infty} \limsup_{j \to \infty}\varepsilon_{j,k,\ell} \in [0,\varepsilon]$.

    \medskip
    $\boldsymbol{{\rm Step\;2}}$ Let us recall that  $(\varepsilon^k,W^k,B^k)_{k \ge 1}$ is given in \Cref{lemma_piece-wiseAppr} and $(\mu^k,\mub^k,\aleph^k,\zeta^k,\zetab^k,\alepht^k)_{k \ge 1}$ is given in \Cref{prop:converg}. We set $\Rt^k:=\delta_{(\zetab^k_t,\tilde \aleph^k_t)}(\mathrm{d}m,\mathrm{d}e)\mathrm{d}t$ and $R^k:=\delta_{(\mub^k_t,\aleph^k_t)}(\mathrm{d}m,\mathrm{d}e)\mathrm{d}t$.  
    By using the equality in distribution given in \Cref{prop:converg}, it is straightforward that for each $\beta \in \Ac$,
    \begin{align*}
        \widetilde{\Qr}^{\beta,k}:=\Lc\left(\Xt^{\beta,k}, W^k, B^k, \Rt^k, \zeta^k \right)
        =
        \Lc\left(\Xh^{\beta,k}, W^k, B^k, R^k,\mu^k \right)=:\widehat{\Qr}^{\beta,k}
    \end{align*}
    with $(\Xt^{\beta,k},\Xh^{\beta,k})$ satisfying: $\Xt^{\beta,k}_t=\Xh^{\beta,k}_t=X_0$ for $t \in [0,\varepsilon^k]$, for $t \in [\varepsilon^k,T]$,
    \begin{align*}
        \mathrm{d}\Xt^{\beta,k}_t
            =
            \overline{b}\left(t,\zetab^k_t,\alepht^k_t\right)
            +
            b\left(t,\Xt^{\beta,k}_t,\zeta^k_t,\beta\left(t, \Xt^{\beta,k}_t, \Rt^k \right)\right)\mathrm{d}t + \sigma(t,\Xt^{\beta,k}_t)\mathrm{d}W^k_t + \sigma_0\mathrm{d}B^k_t,\;\;\P\mbox{--a.e.},
    \end{align*}
    and 
    \begin{align*}
        \mathrm{d}\Xh^{\beta,k}_t
            =
            \overline{b}\left(t,\mub^k_t,\aleph^k_t\right)
            +
            b\left(t,\Xh^{\beta,k}_t,\mu^k_t,\beta\left(t, \Xh^{\beta,k}_t, R^k \right)\right)\mathrm{d}t + \sigma(t,\Xh^{\beta,k}_t)\mathrm{d}W^k_t + \sigma_0\mathrm{d}B^k_t,\;\;\P\mbox{--a.e.}
    \end{align*}
    Consequently, if we consider $\Phi^\ell(\cdot):=\delta_{\phi^\ell(\cdot)}(\mathrm{d}i)$ and $\Zf^\ell(\cdot):=\delta_{\xi^\ell(\cdot)}(\mathrm{d}e)$, we find 
    \begin{align} \label{eq:first_equality}
        \sup_{\beta \in \Ac}\E^{\widetilde{\Qr}^{\beta,k}} \big[{\rm J}^{\Phi^\ell,\Zf^\ell}_A(\Gamma',\mu',R,\mu) \big]
        =
        \sup_{\beta \in \Ac}\E^{\widehat{\Qr}^{\beta,k}} \big[{\rm J}^{\Phi^\ell,\Zf^\ell}_A(\Gamma',\mu',R,\mu) \big].
    \end{align}
    Notice that, the canonical variable $\Gamma'$ is not well defined in the previous equality since $\widehat{\Qr}^{\beta,k}$ and $\widetilde{\Qr}^{\beta,k}$ do not belong to $\Pcb$. Indeed, the processes $\Xt^{\beta,k}$ and $\Xh^{\beta,k}$ are frozen on $[0,\varepsilon^k]$. We just froze $\Gamma'$ on $[0,\varepsilon^k] \x \R^d \x A$ and abuse the notation by considering that the distributions belong to $\Pcb$.

\medskip
    By using the almost surely convergence in \Cref{prop:converg} and the regularity of the coefficients, we can check that 
    \begin{align*}
        \lim_{k \to \infty}\left|\sup_{\beta \in \Ac}\E^{\widehat{\Qr}^{\beta,k}} \big[{\rm J}^{\Phi^\ell,\Zf^\ell}_A(\Gamma',\mu',R,\mu) \big]
        -
        \sup_{\beta \in \Ac}\E^{\widehat{\Pr}^{\beta,k}} \big[{\rm J}^{\Phi^\ell,\Zf^\ell}_A(\Gamma',\mu',R,\mu) \big] \right|=0
    \end{align*}
    where 
    \begin{align*}
        \widehat{\Pr}^{\beta,k}:=\Lc\left(X^{\beta,k},  W, B, R, \mu \right)
    \end{align*}
    with $X^{\beta,k}$ verifying
    \begin{align*}
        \mathrm{d}X^{\beta,k}_t
            =
            \int_{\Pc_A \x \Er}\overline{b}\left(t,m,e\right) R_t(\mathrm{d}m,\mathrm{d}e)
            +
            b\left(t,X^{\beta,k}_t,\mu,\beta\left(t, \Xh^{\beta,k}_t, R^k \right)\right)\mathrm{d}t + \sigma(t,X^{\beta,k}_t)\mathrm{d}W_t + \sigma_0\mathrm{d}B_t,\;\;\P\mbox{--a.e.}
    \end{align*}
    Since $(R^k_t)_{t \in [0,T]}$ is an $\G$--predictable process for each $k$, let us point out the fact that $(R^k_{t \wedge \cdot},B_{t \wedge \cdot})$ is a Borel map of $R_{t \wedge \cdot}$ (see also \Cref{rm:measurable_B}). Then, a projection argument like in \Cref{lemma:projection}  allows us to find a progressively Borel map $\Lambda^{\beta,k}:[0,T] \x \R^d \x \M \to \Pc(A)$ s.t. $\Lc(X^{\beta,k}_t|R_{t \wedge \cdot})=\Lc(X^{\beta,k}_t|R)=\Lc(Z^{\beta,k}_t|R)$, $\P$--a.e. with
    \begin{align*}
        \mathrm{d}Z^{\beta,k}_t
            =
            \int_{\Pc_A \x \Er}\overline{b}\left(t,m,e\right) R_t(\mathrm{d}m,\mathrm{d}e)
            +
            \int_A b\left(t,Z^{\beta,k}_t,\mu,a\right) \Lambda^{\beta,k}\left(t, Z^{\beta}_t, R \right)(\mathrm{d}a)\;\;\mathrm{d}t + \sigma(t,Z^{\beta,k}_t)\mathrm{d}W_t + \sigma_0\mathrm{d}B_t,\;\;\P\mbox{--a.e.}
    \end{align*}
    This leads to say that $\widehat{\Pr}^{\beta,k} \in \Pcb$ with $\widehat{\Pr}^{\beta,k} \circ (B,R)^{-1}=\Pr \circ (B,R)^{-1}$. As a result, we obtain that:,
    \begin{align} \label{eq:second_equality}
        &\limsup_{\ell \to \infty}\limsup_{k \to \infty}\sup_{\beta \in \Ac}\E^{\widehat{\Pr}^{\beta,k}} \big[{\rm J}^{\Phi^\ell,\Zf^\ell}_A(\Gamma',\mu',R,\mu) \big] \nonumber
        \\
        &=\limsup_{\ell \to \infty}\limsup_{k \to \infty}\bigg(\sup_{\beta \in \Ac}\E^{\widehat{\Pr}^{\beta,k}} \left[\int_0^T  \langle \overline{L}\left(t,\cdot,\cdot \right),R_t \rangle
        +
        \langle L\left(t, \cdot,\mu,\cdot \right), \Gamma_t' \rangle\;\;\mathrm{d}t +
         \int_{\R^d \x \Ir} \overline{g}(x,i) \Phi^\ell(x)(\mathrm{d}i)\mu_T'(\mathrm{d}x)
         \right] \nonumber
         \\
         &~~~~~~~~~~~~~~~~~~~~~~~~~~~~~~~~+ \E^{\P} \left[\int_{\Er}g(\mu,e)\mathfrak{Z}^\ell(R)(\mathrm{d}e)
         \right] \bigg) \nonumber
         \\
        &\le \limsup_{k \to \infty} \sup_{\beta \in \Ac}\E^{\widehat{\Pr}^{\beta,k}} \big[{\rm J}^{\Phi,\Zf}_A(\Gamma',\mu',R,\mu) \big]~\le 
        \E^{\Pr} \big[{\rm J}^{\Phi,\Zf}_A(\Gamma',\mu',R,\mu) \big] + \varepsilon.
    \end{align}

    Next, for any $\beta \in \Ac$, given $M^{j,k}:=\delta_{(\mub^{j,k}_t,\aleph^{j,k}_t)}(\mathrm{d}m,\mathrm{d}e)\mathrm{d}t$, we consider $S^{\beta,j,k}$ satisfying: $S^{\beta,j,k}=X_0$,
    \begin{align*}
        \mathrm{d}S^{\beta,j,k}_t
            =
            \overline{b}\left(t,\mub^{j,k}_t,\aleph^{j,k}_t\right)
            +
            b\left(t,S^{\beta,j,k}_t,\mu^{j,k},\beta\left(t, S^{\beta,j,k}_t, M^{j,k} \right)\right)\mathrm{d}t + \sigma(t,S^{\beta,j,k}_t)\mathrm{d}W_t + \sigma_0\mathrm{d}B_t,\;\;\P\mbox{--a.e.}
    \end{align*}
    Let us now considering $\St^{\beta,j,k}$ verifying: $\St^{\beta,j,k}_t=X_0$ for $t \in [0 , \varepsilon_k]$, and for $t \in [\varepsilon_k,T]$,
    \begin{align*}
        \mathrm{d}\St^{\beta,j,k}_t
            =
            \overline{b}\left(t,\zetab^{k}_t,\alepht^{k}_t\right)
            +
            b\left(t,\St^{\beta,j,k}_t,\zeta^{k},\beta\left(t, S^{\beta,j,k}_t, M^{j,k} \right)\right)\mathrm{d}t + \sigma(t,\St^{\beta,j,k}_t)\mathrm{d}W^k_t + \sigma_0\mathrm{d}B^k_t,\;\;\P\mbox{--a.e.}
    \end{align*} 
    By using the almost surely convergence in \Cref{prop:converg}, it is easy to verify that 
    $$
        \lim_{k \to \infty} \lim_{j \to \infty}\E\left[ \sup_{t \in [0,T]} |\St^{\beta,j,k}_t-S^{\beta,j,k}_t|^p\right]=0.
    $$
    Notice that $(M^{j,k}_t)_{t  \in [0,T]}$ is $\left(\sigma\{ B_{t \wedge \cdot}\} \right)_{t \in [0,T]}$--predictable. Therefore, we can find $\betat^{j,k}$ s.t. $\beta\left(t, S^{\beta,j,k}_t, M^{j,k} \right)=\betat^{j,k}\left(t, S^{\beta,j,k}_t, B \right)$. Also, by similar techniques mentioned \Cref{rm:measurable_B}, we can see that $B^k_{t \wedge \cdot}$ is a Borel map of $\Rt^k_{t \wedge \cdot}$. Thanks to some projection arguments (see \Cref{lemma:projection}), we can find a progressively Borel map $\Lambdat^{\beta,j,k}:[0,T] \x \R^d \x \M \to \Pc(A)$ s.t. we get that $\Lc(\St^{\beta,j,k}_t|B^k_{t \wedge \cdot})=\Lc(\St^{\beta,j,k}_t|B^k)=\Lc(\Zt^{\beta,j,k}_t|\Rt^k)$ where: $\Zt^{\beta,j,k}_t=X_0$ for $t \in [0,\varepsilon_k]$, and for $t \in [\varepsilon_k,T],$
    \begin{align*}
        \mathrm{d}\Zt^{\beta,j,k}_t
            =
            \overline{b}\left(t,\zetab^{k}_t,\alepht^{k}_t\right)
            +
            \int_A b\left(t,\Zt^{\beta,j,k}_t,\zeta^{k},a\right) \Lambdat^{\beta,j,k}\left(t, \Zt^{\beta,j,k}_t, \Rt^k \right)(\mathrm{d}a)\;\;\mathrm{d}t + \sigma(t,\Zt^{\beta,j,k}_t)\mathrm{d}W^k_t + \sigma_0\mathrm{d}B^k_t,\;\;\P\mbox{--a.e.}
    \end{align*}
    Consequently, by combining the previous result, if we set 
    \begin{align*}
        \widetilde{\Pr}^{\beta,j,k}:=\Lc\left(\Zt^{\beta,j,k}, W, B, \Rt^{k}, \zeta^{k} \right),
    \end{align*}
    by \Cref{eq:first_equality} and \Cref{eq:second_equality} we find that,
    \begin{align*}
        \limsup_{\ell \to \infty}\limsup_{k \to \infty}\limsup_{j \to \infty}\sup_{\beta \in \Ac}J^{\Cf^{j,k,\ell}}_{A,\mub^{j,k}}(\beta)
        &=
        \limsup_{\ell \to \infty}\limsup_{k \to \infty} \limsup_{j \to \infty} \sup_{\beta \in \Ac}\E^{\widetilde{\Pr}^{\beta,j,k}} \big[{\rm J}^{\Phi^\ell,\Zf^\ell}_A(\Gamma',\mu',R,\mu) \big]
        \\
        &= \limsup_{\ell \to \infty}\limsup_{k \to \infty} \sup_{\beta \in \Ac}\E^{\widetilde{\Qr}^{\beta,k}} \big[{\rm J}^{\Phi^\ell,\Zf^\ell}_A(\Gamma',\mu',R,\mu) \big]
        \\
        &=
        \limsup_{\ell \to \infty}\limsup_{k \to \infty} \sup_{\beta \in \Ac}\E^{\widehat{\Qr}^{\beta,k}} \big[{\rm J}^{\Phi^\ell,\Zf^\ell}_A(\Gamma',\mu',R,\mu) \big]
        \\
        &=
        \limsup_{\ell \to \infty}\limsup_{k \to \infty}\sup_{\beta \in \Ac}\E^{\widehat{\Pr}^{\beta,k}} \big[{\rm J}^{\Phi^\ell,\Zf^\ell}_A(\Gamma',\mu',R,\mu) \big]
        \le \E^{\Pr} \big[{\rm J}^{\Phi,\Zf}_A(\Gamma',\mu',R,\mu) \big] + \varepsilon.
    \end{align*}

    \medskip
    $\boldsymbol{{\rm Step\;3}}$
    For each $j,k, \ell \ge 1$, we set
    \begin{align*}
        \varepsilon_{j,k,\ell}
        :=
        \sup_{\beta \in \Ac} J^{\Cf^{j,k,\ell}}_{A,\mub^{j,k}}(\beta)
        -
        J^{\Cf^{j,k,\ell}}_{A,\mub^{j,k}}(\alpha^{j,k}).
    \end{align*}
    By definition, we have $\varepsilon_{j,k,\ell} \ge 0$ for each $j,k, \ell \ge 1$. Also, by using the previous parts, we obtain
    \begin{align*}
        \limsup_{\ell \to \infty}\limsup_{k \to \infty}\limsup_{j \to \infty}\varepsilon_{j,k,\ell}
        &=
        \limsup_{\ell \to \infty}\limsup_{k \to \infty}\limsup_{j \to \infty}\sup_{\beta \in \Ac} J^{\Cf^{j,k,\ell}}_{A,\mub^{j,k}}(\beta)
        -
        \limsup_{\ell \to \infty}\limsup_{k \to \infty}\limsup_{j \to \infty}J^{\Cf^{j,k,\ell}}_{A,\mub^{j,k}}(\alpha^{j,k})
        \\
        & =\limsup_{\ell \to \infty}\limsup_{k \to \infty}\limsup_{j \to \infty}\sup_{\beta \in \Ac} J^{\Cf^{j,k,\ell}}_{A,\mub^{j,k}}(\beta)
        -
        \E^{\Pr} \big[{\rm J}^{\Phi,\Zf}_A(\Gamma',\mu',R,\mu) \big]
        \le \varepsilon.
    \end{align*}
    This is enough to conclude the result.

\end{proof}

\medskip Let $\varepsilon \ge 0$, $\Cf=(\phi,\xi,\aleph)$ be a contract and $(\alpha,\mub) \in {\rm MFG}_S[\Cf,\varepsilon]$. We define the contract $\Cf^n:=(\phi,\xi^n,\aleph^n)$ and the  maps $\alphab^n:=\left( \alpha^{1,n},\cdots, \alpha^{n,n} \right) \in (\Ac_n)^n$ by
\begin{align*}
    \xi^{n}(\boldsymbol{x})
    :=
    \xi\left(R[\pi^n] \right),\;\;\aleph^{n}(t,\boldsymbol{x})
    :=
    \aleph(t,\pi^n)\;\;\mbox{and}\;\;\alpha^{i,n}(t,\boldsymbol{x})
    :=
    \alpha(t,x^i(t),\pi^n),\;1 \le i \le n,\;\mbox{with}\;\boldsymbol{x}:=\left(x^1,\cdots,x^n \right) \in (\Cc)^n,
\end{align*}
$\;\pi^n(t):=\frac{1}{n}\sum_{i=1}^n \delta_{x^i(t)}$, $\overline{\pi}^n(t):=\frac{1}{n}\sum_{i=1}^n \delta_{(x^i(t),\alpha^{i,n}(t,\boldsymbol{x}))}$ and $R[\pi^n]:= \delta_{ \left( \overline{\pi}(t), \aleph^n(t,\boldsymbol{x}) \right)}(\mathrm{d}m,\mathrm{d}e)\mathrm{d}t$. We now give the approximation by approximate Nash equilibria.

\begin{proposition} \label{prop:conv_n-player-strong}
    There exists a sequence of non--negative numbers $(\varepsilon_n)_{n \ge 1}$ s.t. $\limsup_{n} \varepsilon_n \in [0,\varepsilon]$ and for each $n \ge 1$, $\alphab^n$ belongs to ${\rm NE}[\Cf^n,\varepsilon_n]$, 
    \begin{align*}
        \lim_{n \to \infty}\Lc^\P \left( \varphi^n[\alphab^n], R[ \varphi^n[\alphab^n] ]
        \right)
        =
        \Lc^\P \left( \mu, \delta_{\left( \mub_t, \aleph_t \right)}(\mathrm{d}m,\mathrm{d}e) \mathrm{d}t \right)\;\;\mbox{in }\Wc_p
    \end{align*}
    and
    \begin{align*}
        \lim_{n \to \infty}\frac{1}{n}\sum_{i=1}^{n}J^{\Cf^n}_{n,i}(\alphab^{n})=
            J^{\Cf}_{A,\mub}({\alpha})\;\;\mbox{and}\;\;\lim_{n \to \infty} J_{n,\Pr}^{{\color{black}\alphab^n}}(\Cf^n)=J_{\Pr}^{\alpha,\mub}(\Cf).
    \end{align*}
\end{proposition}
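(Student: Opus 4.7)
\textbf{Proof plan for Proposition \ref{prop:conv_n-player-strong}.}

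The plan is to exploit the strong MFG structure and combine it with a propagation of chaos argument plus a Girsanov change of measure to handle single-player deviations, following the pattern established in the proofs of \Cref{lemma:deviating-player} and \Cref{prop:convegence_n-player}. Since $(\alpha,\mub) \in {\rm MFG}_S[\Cf,\varepsilon]$, there exist $(\widehat{\alpha},\widehat{\xi},\widehat{\aleph}) \in \Cc_s$, Lipschitz in $(x,\pi)$ uniformly in $t$, such that $\mub_t = \Lc(X^\alpha_t,\widehat{\alpha}(t,X^\alpha_t,\mu)|B_{t \wedge \cdot})$, $\aleph_t = \widehat{\aleph}(t,\mu)$ and $\xi(\mub,\aleph) = \widehat{\xi}(\mu)$ almost everywhere. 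By the construction of $\alphab^n$, the coefficients driving the $n$-particle system $\Xbb^n = \Xbb^{\aleph^n,\alphab^n}$ are Lipschitz functions of $(x^i,\pi^n)$, so this is a standard McKean--Vlasov interacting particle system with common noise.

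The first step is a classical propagation of chaos result (see for instance the techniques in \cite{closed-loop-MFG_MDF}), which yields $\lim_{n \to \infty} \E[\sup_{t \in [0,T]} \Wc_p(\varphi^n_t[\alphab^n],\mu_t)^p] = 0$. Combined with the Lipschitz regularity of $(\widehat{\alpha},\widehat{\xi},\widehat{\aleph})$, this gives $\aleph^n(t,\Xbb^n) = \widehat{\aleph}(t,\varphi^n[\alphab^n]) \to \aleph_t$, $\xi^n(\Xbb^n) \to \xi(\mub,\aleph)$ and $\overline{\varphi}^n_t[\alphab^n] \to \mub_t$ in the appropriate Wasserstein senses. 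Plugging these convergences into the definitions of the reward functionals, together with the continuity and polynomial growth assumptions in \Cref{assum:main1}, yields
\begin{align*}
    \lim_{n \to \infty} \frac{1}{n}\sum_{i=1}^n J^{\Cf^n}_{n,i}(\alphab^n) = J^{\Cf}_{A,\mub}(\alpha), \qquad \lim_{n \to \infty} J^{\alphab^n}_{n,\Pr}(\Cf^n) = J^{\alpha,\mub}_{\Pr}(\Cf),
\end{align*}
and the desired convergence of empirical distributions.

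The second step, which is the main obstacle, is to establish the approximate Nash property with $\limsup_n \varepsilon_n \le \varepsilon$. Fix $i \in \{1,\dots,n\}$ and $\beta \in \Ac_n$, and consider the deviation $\alphab^{n,i}_{\beta} := (\alpha^{1,n},\dots,\alpha^{i-1,n},\beta,\alpha^{i+1,n},\dots,\alpha^{n,n})$. Mimicking the Girsanov construction in the proof of \Cref{lemma:deviating-player}, one introduces a Radon--Nikodym density $Z^i_T$ under which the law of the undeviated system coincides with the law of the deviated system for player $i$. Since a single deviation only perturbs the empirical distribution by $O(1/n)$, one can then replace the empirical measure in the deviated dynamics by $\mub$ up to vanishing error. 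Applying the law of large numbers to the densities $\frac{1}{n}\sum_i Z^i_T$ (exactly as in \Cref{lemma:deviating-player}), the supremum over all admissible $\beta$ collapses to a supremum over $\Ac$ against the limiting mean field, giving
\begin{align*}
    \limsup_{n \to \infty} \sup_{\beta \in \Ac_n} J^{\Cf^n}_{n,i}(\alphab^{n,i}_{\beta}) \le \sup_{\beta \in \Ac} J^{\Cf}_{A,\mub}(\beta) \le J^{\Cf}_{A,\mub}(\alpha) + \varepsilon,
\end{align*}
where the last inequality is the $\varepsilon$--MFG optimality of $\alpha$. Setting $\varepsilon_n := \max_i \big(\sup_{\beta \in \Ac_n} J^{\Cf^n}_{n,i}(\alphab^{n,i}_\beta) - J^{\Cf^n}_{n,i}(\alphab^n)\big)^+$ and combining with the convergence $J^{\Cf^n}_{n,i}(\alphab^n) \to J^{\Cf}_{A,\mub}(\alpha)$ from the first step yields $\limsup_n \varepsilon_n \le \varepsilon$ and $\alphab^n \in {\rm NE}[\Cf^n,\varepsilon_n]$. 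The subtle point to control in this step is the uniformity of the Girsanov-and-chaos argument in $\beta \in \Ac_n$, since $\beta$ may in principle depend on all coordinates of $\Xbb^n$; this is handled exactly as in the last display of the proof of \Cref{lemma:deviating-player}, where the deviating player's contribution to $\overline{\varphi}^n_t[\cdot]$ is shown to vanish and the undeviated coordinates converge via propagation of chaos to the mean field $\mub$.
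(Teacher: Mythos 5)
The paper's actual proof is a one-line reduction: it defines absorbed coefficients $(\widehat{B},\widehat{L},\widehat{L}_{\Pr})(t,x,m,\pi,a) := (\overline{b},\overline{L},\overline{L}_{\Pr})(t,m,\aleph(t,\pi)) + (b,L,0)(t,x,\pi,a)$ and $(\widehat{g},\widehat{g}_{\Pr})(\pi) := (\overline{g},\overline{g}_{\Pr})(\pi,\xi(R[\pi]))$, notes that these inherit the Lipschitz regularity of the strong-MFG maps $(\widehat{\alpha},\widehat{\xi},\widehat{\aleph}) \in \Cc_s$, and then invokes \cite[Proposition 3.15]{closed-loop-MFG_MDF} directly. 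So what you are attempting to do is unfold that black box and re-derive it from scratch with propagation of chaos plus Girsanov. Your first step (chaos for the Lipschitz McKean--Vlasov particle system, and the resulting convergence of the two value functionals) is sound and is indeed part of what the cited proposition would deliver.

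The gap is in the deviation step. You write that the uniformity in $\beta \in \Ac_n$ "is handled exactly as in the last display of the proof of \Cref{lemma:deviating-player}", but that lemma only treats deviations of the specific form $\beta^{\alphab,i}(t,\boldsymbol{x}) = \beta(t,x^i(t),R^{\alphab}[\boldsymbol{x}])$, i.e. a fixed Lipschitz map of the deviating player's own state and the empirical-measure-flow, for which the Girsanov shift $\phi^i$ is a conditionally i.i.d.\ function across $i$ and the law-of-large-numbers step in \eqref{eq:change_of_proba_equality} applies. An arbitrary $\beta \in \Ac_n$ can depend on individual coordinates of $\Xbb^n$ (not just the empirical distribution), so $\phi^i$ need not have that conditional-independence structure and the argument does not carry over verbatim. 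Moreover, in \Cref{prop:convegence_n-player}, which you also cite, the reduction via \Cref{lemma:conv_adm-control} is used to regularize deviations in the \emph{limit} problem (where controls are maps of $(t,x,R)$), not to collapse arbitrary finite-player deviations; the inequality in that proposition points in the opposite direction from what you need here (it starts from $\alphab^n \in \mathrm{NE}[\Cf^n,\varepsilon_n]$ to deduce MFG optimality, whereas you must go from $\varepsilon$-MFG optimality to the Nash bound). Closing this gap --- showing that observing the individual coordinates of the other $n-1$ players gives the deviator no asymptotic advantage over observing $(B,R)$ alone --- is precisely the content of the cited \cite[Proposition 3.15]{closed-loop-MFG_MDF}, and the paper's reduction is a deliberate way to avoid having to reprove it. Your proposal would need either to import that statement explicitly or to supply the conditional-independence argument in full.
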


\begin{proof}
    This is just an application of \cite[Proposition 3.15]{closed-loop-MFG_MDF}. Indeed, given the map $(\xi,\aleph)$, let us observe that if we define the map $(\widehat{B}, \widehat{L},\widehat{L}_{\Pr}):[0,T] \x \R^d \x \M \x \Cc_{\Wc} \x A \to \R^d \x \R \x \R$ and $\left(\widehat{g},\widehat{g}_{\Pr} \right):\Cc_{\Wc} \to \R$ by
    \begin{align*}
        ( \widehat{B}, \widehat{L}, \widehat{L}_{\Pr} )(t,x,m,\pi,a):=
        \left( \overline{b}, \overline{L}, \overline{L}_{\Pr} \right)(t,m,\aleph(t,\pi)) + 
        \left( b, L, {\rm 0} \right)(t,x,\pi,a),\;\;\left(\widehat{g}, \widehat{g}_{\Pr} \right) (\pi):=\left(\overline{g}, \overline{g}_{\Pr} \right)(\pi,\xi(R[\pi]))
    \end{align*}
    the maps $(\widehat{B}, \widehat{L},\widehat{L}_{\Pr},\sigma,\sigma_0,\widehat{g}, \widehat{g}_{\Pr})$ satisfy the assumptions of \cite[Proposition 3.15]{closed-loop-MFG_MDF}. Therefore, we just apply \cite[Proposition 3.15]{closed-loop-MFG_MDF} to conclude the proof of the Proposition.
\end{proof}

\subsection{Existence of MFG} \label{sec:existence_MFG}
This part is dedicated to showing that the set ${\rm MFG}[\Cf]$ is non--empty for any contract $\Cf$. Let us first considering the case without common noise. We introduce the following quantities: for each $t \in [0,T]$, $(m,\pi)$ and $c \in \R^d$, $m[c](\mathrm{d}y,\mathrm{d}a):=\int_{\R^d} \delta_{x+\sigma_0 c}(\mathrm{d}y)m(\mathrm{d}x,\mathrm{d}a)$ and $\pi(t)[c](\mathrm{d}y):=\int_{\R^d} \delta_{x+\sigma_0 c }(\mathrm{d}y)\pi(t)(\mathrm{d}x)$. We define the maps

\begin{align*}
    \overline{h}\left(t,c,m,e \right)
        :=
        \overline{b}\left( t, m[c],e\right),\;\;\left(h,F\right)\left(t,c,y,\pi,a \right)
        :=
        \left(b,L\right)\left(t,y+\sigma_0 c, \pi[c], a \right)\;\;\mbox{and}\;\;v(t,c,y):=\sigma(t,y+\sigma_0 c).
    \end{align*}
Notice that for each $c$, the maps $(\overline{h},\overline{F})$, $(h,F)$ and $v$ are continuous in $(c,m,e,y,\pi,a)$ for each $t$ and Lipschitz in $(m,y,\pi)$ uniformly in $(t,c,e,a)$ and $v(\cdot,c,\cdot)$. Also, $\inf_{(t,c,y)} v v^\top >0.$

\medskip
We say $({\rm q,\pi })$ belongs to $\Pcb_{\rm no}$ if $\mathrm{d}t$--a.e. $\qr(t) \left(\{m:\;m(\mathrm{d}y,A)=\pi(t)\} \right)=1$ and $\pi(0)=\nu$.
Given $(\br,e)$, for $({\rm q,\pi }) \in \Pcb_{\rm no}$, we will say $ (\qrt,\pirt) \in  \Pcb^{\br,e}_{\rm no}[\qr,\pi]$ if $(\qrt,\pirt) \in \Pcb_{\rm no}$ and, for all $(t,f)$,
\begin{align*}
    0=\langle f, \pirt(t) \rangle
    &- \bigg[
    \langle f, \nu \rangle
    +
    \int_0^t \int_{\Pc_A} \int_{\R^d} f'(y)\overline{h}\left(r,\br(r),m,e(r) \right) \pirt(r)(\mathrm{d}y) \;{\rm q}(r)(\mathrm{d}m)\mathrm{d}r
    \\
    &~~~~+\int_0^t \int_{\Pc_A} \int_{\R^d \x A} f'(y) h\left(r,\br(r),y,\pi,a \right) m (\mathrm{d}y,\mathrm{d}a)\; \qrt(r)(\mathrm{d}m)\mathrm{d}r 
    +
    \int_0^t\frac{1}{2}\int_{\R} f''(y) v(r,\br(r),y)^2 \pirt(r)(\mathrm{d}y)\mathrm{d}r \bigg]
    \\
    &=:N^{b,e}_{\pi,\qr}[\pirt,\qrt](t,f).
\end{align*}
We also set 
\begin{align*}
    \overline{J}_{q,\pi}^{\phi,\br}[\pirt,\qrt]
    :=&
    \int_{[0,T] \x \Pc_A} \int_{\R^d \x A} F\left(t,\br(t),y,\pi,a \right)m(\mathrm{d}y,\mathrm{d}a)\qrt(t)(\mathrm{d}m)\mathrm{d}t
    +
    \int_{\R}g\left( y +\sigma_0\br(T), \phi\left(y+\sigma_0 \br(T) \right) \right) \pirt(T)(\mathrm{d}x).
\end{align*}

We define the set
\begin{align*}
    \Rc^{\phi,e,\br}_{\pi,\qr} 
    :=
    \arg \max_{(\pirt,\qrt) \in \Pcb^{\br,e}_{\rm no}[\qr,\pi] }\overline{J}_{q,\pi}^{\phi,\br}[\pirt,\qrt] 
\end{align*}

\begin{proposition}
    Under {\rm \Cref{assum:main1}}, for any $(\br,e)$ and $\phi$, there exists $(q,\pi)$ belonging to $\Rc^{\phi,e,\br}_{\pi,\qr} $. 
\end{proposition}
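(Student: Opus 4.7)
The plan is to establish a fixed point of the best-response correspondence $(\qr,\pi)\mapsto\Rc^{\phi,e,\br}_{\pi,\qr}$ for the no-common-noise MFG with frozen common-noise path $\br$ and instantaneous contract $e$, using the Kakutani--Fan--Glicksberg fixed point theorem. A crucial preliminary observation is that it is enough to exhibit a fixed point in the form $\qr(t)=\delta_{m(t)}$ for some $m\in\M(\Pc_A)$: indeed, if $\qrt(t)=\lambda\delta_{m_1}+(1-\lambda)\delta_{m_2}$ is feasible in $\Pcb^{\br,e}_{\rm no}[\qr,\pi]$, then necessarily $m_1(\cdot,A)=m_2(\cdot,A)=\pirt(t)$, and both the equation $N=0$ and the reward $\overline{J}_{q,\pi}^{\phi,\br}$ are linear integrals in the $\qrt$-slot, so their values coincide with those at the pure meta-strategy $\delta_{\lambda m_1+(1-\lambda)m_2}$. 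The search therefore reduces to $m\in\M(\Pc_A)$ with $\pirt(t):=m(t)(\cdot,A)$, and the whole feasibility-plus-reward structure becomes \emph{linear} in $m$ for any frozen environment.

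With this reduction, I would set up the Kakutani apparatus. Boundedness of $(b,\overline{b},\sigma)$, compactness of $A$, and $\nu\in\Pc_{p'}(\R^d)$ with $p'>p$ yield a uniform bound $\sup_t\int|x|^{p'}\,m(t)(\mathrm{d}x,A)\le C$ for every feasible $m$, so the set $K:=\{m\in\M(\Pc_A):\sup_t\int|x|^{p'}\,m(t)(\mathrm{d}x,A)\le C\}$ is convex and compact in a suitable $\Wc_p$-type topology on $\M(\Pc_A)$. The best-response map is
\[
    \Psi(m^{\rm env}):=\arg\max_{m\in K_{\rm FP}(m^{\rm env})}\overline{J}^{\phi,\br}_{\delta_{m^{\rm env}},\,m^{\rm env}(\cdot,A)}\!\big[m(\cdot,A),\delta_m\big],
\]
where $K_{\rm FP}(m^{\rm env})\subset K$ collects those $m$ for which $\pirt=m(\cdot,A)$ solves the linear FP-type equation $N^{b,e}_{m^{\rm env}(\cdot,A),\delta_{m^{\rm env}}}\equiv 0$. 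For fixed $m^{\rm env}$, $K_{\rm FP}(m^{\rm env})$ is non-empty (take $m(t)=\pirt(t)(\mathrm{d}x)\delta_{a_0}(\mathrm{d}a)$ with $a_0\in A$ and $\pirt$ solving the resulting parabolic Cauchy problem, well-posed by non-degeneracy of $\sigma\sigma^\top$), convex by linearity of the equation in $m$, and compact; the reward is continuous and linear in $m$, so $\Psi(m^{\rm env})$ is a non-empty convex compact subset of $K$.

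It remains to verify the closed-graph property: if $m^{\rm env,n}\to m^{\rm env}$ in $K$ and $m^n\in\Psi(m^{\rm env,n})$ with $m^n\to m$, the continuity of $(b,\overline{b},F,g)$ in $(x,\pi,a,m,e)$ uniform in $(t,e)$ from Assumption~\ref{assum:main1} permits passing to the limit in $N^{b,e}_{\cdot,\cdot}\equiv 0$, giving $m\in K_{\rm FP}(m^{\rm env})$; maximality is verified by approximating any competitor $m'\in K_{\rm FP}(m^{\rm env})$ by $m'_n\in K_{\rm FP}(m^{\rm env,n})$, obtained by solving the $n$-dependent linear equation while keeping fixed the kernel $x\mapsto m'(t)(x)(\mathrm{d}a)$, with the Lipschitz dependence of $(b,\overline{b})$ in $(x,\pi,m)$ yielding $m'_n\to m'$ in $\Wc_p$ and hence convergence of $\overline{J}[m'_n]$ to $\overline{J}[m']$. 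Applying Kakutani--Fan--Glicksberg then produces a fixed point $m^\star$, and the pair $(\qr,\pi):=(\delta_{m^\star},m^\star(\cdot,A))$ lies in $\Rc^{\phi,e,\br}_{\pi,\qr}$. The principal technical hurdle will be this approximation of competitors across varying environments uniformly in the terminal-time reward through $\pirt(T)$; once it is in place the remaining pieces are standard compactness/continuity arguments, and no convexity assumption on the coefficients is needed, consistently with the fact that only Assumption~\ref{assum:main1} is used.
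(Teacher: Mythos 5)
Your proposal takes a genuinely different route from the paper: you attempt a direct Kakutani--Fan--Glicksberg fixed-point construction in the no-common-noise relaxed space, whereas the paper delegates the existence for smooth terminal data $\phi^k\in C^\infty_c$ to an external result, {\rm\cite[Theorem 4.6]{open-loop-MFG_MDF}}, and then performs a regularization/limit argument $\phi^k\to\phi$ a.e., relying on {\rm\cite[Lemma 4.2 and Proposition 4.5]{open-loop-MFG_MDF}} for compactness and stability of the feasible sets. Your observation that the constraint $N=0$ and the reward $\overline{J}^{\phi,\br}_{q,\pi}$ depend on $\qrt$ only through its $\Pc_A$-barycenter, and that the argmax correspondence therefore has convex values without any coefficient convexity, is correct and is consistent with the paper using only \Cref{assum:main1} here.

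There is, however, a genuine gap that your closed-graph step does not address: $\phi$ is only Borel measurable, so the terminal payoff $\pirt(T)\mapsto\int_{\R}g\big(y+\sigma_0\br(T),\phi(y+\sigma_0\br(T))\big)\pirt(T)(\mathrm{d}y)$ is \emph{not} $\Wc_p$- or weakly-continuous. Your appeal to ``continuity of $(b,\overline{b},F,g)$ in $(x,\pi,a,m,e)$ from \Cref{assum:main1}'' gives continuity of $g(\cdot,\cdot)$, but not of the composite $x\mapsto g(x,\phi(x))$, which is what enters the reward. Without this, maximality cannot be passed to the limit in your Kakutani argument. This is precisely what the paper circumvents: it first applies the existing existence result to a mollified $\phi^k\in C^\infty_c$, for which the reward is continuous, and then uses the non-degeneracy of $\sigma$ (via the interior H\"older density estimates invoked in \Cref{prop:charac-convergence}, cf.\ {\rm\cite[Proposition A.1]{closed-loop-MFG_MDF}}) to show that the terminal marginals have densities converging locally uniformly, which combined with $\phi^k\to\phi$ a.e.\ and dominated convergence yields convergence of the reward. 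To make your fixed-point route work, you would need to build the same density machinery directly into the closed-graph verification of $\Psi$.

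A secondary, more cosmetic concern: the set $K$ of paths $t\mapsto m(t)\in\Pc_A$ (i.e.\ Dirac-type elements $\delta_{m(t)}(\mathrm{d}m)\mathrm{d}t$ of $\M(\Pc_A)$) with a uniform $p'$-moment bound is convex but \emph{not} $\Wc_p$-closed in $\M(\Pc_A)$: $\Wc_p$-limits of Dirac-valued disintegrations are general Young measures. Kakutani should therefore be run on the full relaxed set $\{q\in\M(\Pc_A):\; q \text{ satisfies the moment bound and the marginal constraint}\}$, which is compact; your barycenter reduction can then be applied a posteriori to extract a Dirac-type fixed point if desired, but it cannot be used to shrink the ambient compact convex set.
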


\begin{proof}
    This proof is essentially a simple extension of \cite[Section 4]{open-loop-MFG_MDF}. Let $(\phi^k)_{k \ge 1}$ be a regularization of $\phi$ i.e. for each $k \ge 1$, $\phi^k \in C^\infty_c(\R)$ and $\lim_{k \to \infty} \phi^k=\phi$ a.e. Then, for each $k \ge 1$, by \cite[Theorem 4.6]{open-loop-MFG_MDF}, we can find $(\pi^k,q^k) \in \Rc^{\phi^k,e,\br}_{\pi,\qr}$. Since the coefficients are bounded and $\nu \in \Pc_{p'}(\R^d)$ with $p' >p$, we know that the sequence $(\pi^k,q^k)_{k \ge 1}$ is relatively compact in $\Wc_p$ (see also \cite[Lemma 4.2]{open-loop-MFG_MDF}). Let $(\pi,q)$ be the limit of a specific convergent sub--sequence $(\pi^{k_j},q^{k_j})_{j \ge 1}$. Also, we can check that $(\pi,q) \in \Pcb^{\br,e}_{\rm no}[q,\pi]$, and by using similar techniques to \Cref{prop:charac-convergence}, we have 
    $$
        \lim_{j \to \infty} \overline{J}_{q^{k_j},\pi^{k_j}}^{\phi^{k_j},\br}[\pi^{k_j},q^{k_j}]
        =
        \overline{J}_{q,\pi}^{\phi,\br}[\pi,q].
    $$ 
    For any $ (\qrt,\pirt) \in  \Pcb^{\br,e}_{\rm no}[\qr,\pi]$, by techniques borrowed from \cite[Proposition 4.5.]{open-loop-MFG_MDF} , there exists $(\pirt^{j},\qrt^{j})_{j \ge 1}$ verifying: $(\pirt^{j},\qrt^{j}) \in \Pcb^{\br,e}_{\rm no}[\qr^{k_j},\pi^{k_j}]$ for each $j \ge 1$, $\lim_{j \to \infty} \left( \pirt^j,\qrt^j \right)=\left(\pirt,\qrt \right)$ in $\Wc_p$ and $\lim_{j \to \infty} \overline{J}_{q^{k_j},\pi^{k_j}}^{\phi^{k_j},\br}[\pirt^{j},\qrt^{j}]
        =
        \overline{J}_{q,\pi}^{\phi,\br}[\pirt,\qrt]$ . Consequently,
        \begin{align*}
            \overline{J}_{q,\pi}^{\phi,\br}[\pi,q]
            =
            \lim_{j \to \infty} \overline{J}_{q^{k_j},\pi^{k_j}}^{\phi^{k_j},\br}[\pi^{k_j},q^{k_j}]
            \ge 
            \lim_{j \to \infty} \overline{J}_{q^{k_j},\pi^{k_j}}^{\phi^{k_j},\br}[\pirt^{j},\qrt^{j}]
            =
            \overline{J}_{q,\pi}^{\phi,\br}[\pirt,\qrt].
        \end{align*}
\end{proof}
We recall that ${\rm L}^p([0,T];\Er)$ is the ${\rm L}^p$--space of Borel measurable map $e:[0,T] \to \Er$ s.t. $\int_0^T\rho_{\Er}(e(t),e_0)^p \mathrm{d}t < \infty$ for some $e_0 \in \Er$. With the distance $\Wc(e,e')^p:=\int_0^T\rho_{\Er}(e(t),e'(t))^p \mathrm{d}t,$ $\left({\rm L}^p([0,T];\Er), \Wc \right)$ is a Polish space.
\begin{proposition} \label{prop:mfg_no}
    There exists a universally measurable map $\Psi: \Cc \x {\rm L}^p([0,T];\Er) \to \Cc_{\Wc} \x \M$ s.t. for each $(\br,e,z)$, $\Psi(\br,e) \in \Rc^{\phi,e,\br}_{\Psi(\br,e)}$.
\end{proposition}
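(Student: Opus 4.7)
The plan is to obtain $\Psi$ as a universally measurable selector of the set--valued map
\begin{align*}
    (\br,e) \longmapsto \Phi(\br,e) := \left\{(\pi,q) \in \Cc_{\Wc} \x \M\;:\;(\pi,q) \in \Rc^{\phi,e,\br}_{\pi,q}\right\},
\end{align*}
which by the preceding Proposition has non--empty values at every $(\br,e)$. It therefore suffices to verify that the graph $\Gamma := \{(\br,e,\pi,q) : (\pi,q) \in \Phi(\br,e)\}$ lies in a sufficiently rich $\sigma$--algebra (analytic or co--analytic) on the product Polish space $\Cc \x {\rm L}^p([0,T];\Er) \x \Cc_{\Wc} \x \M$, after which a Jankov--von Neumann type uniformization theorem yields the desired universally measurable $\Psi$.

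The measurability of $\Gamma$ is established in three steps. First, the admissibility constraint
\begin{align*}
    C := \left\{(\br,e,\pi,q,\pirt,\qrt) : (\pirt,\qrt) \in \Pcb^{\br,e}_{\rm no}[\qr,\pi]\right\}
\end{align*}
is Borel, since the defining equality $N^{b,e}_{\pi,\qr}[\pirt,\qrt](t,f) = 0$ reduces by density to countably many scalar equalities (it is enough to range $t$ over $\Q \cap [0,T]$ and $f$ over a countable dense subset of $C^\infty_c(\R^d)$), each of which is jointly Borel in its variables thanks to the continuity and Lipschitz properties of $\overline{h},h,v$ guaranteed by \Cref{assum:main1}. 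Second, the reward functional $(\br,e,\pi,q,\pirt,\qrt) \mapsto \overline{J}^{\phi,\br}_{q,\pi}[\pirt,\qrt]$ is Borel, the sensitive term $\int g(y + \sigma_0 \br(T), \phi(y + \sigma_0 \br(T)))\,\pirt(T)(\mathrm{d}y)$ being jointly Borel because $g$ is continuous while $\phi:\R^d \to \Ir$ is Borel with values in the compact set $\Ir$. Third, the value function $V(\br,e,\pi,q) := \sup_{(\pirt,\qrt) \in \Pcb^{\br,e}_{\rm no}[\qr,\pi]} \overline{J}^{\phi,\br}_{q,\pi}[\pirt,\qrt]$ is upper semi--analytic as a supremum of a Borel function over a Borel parametrised family; since $V \ge \overline{J}^{\phi,\br}_{q,\pi}[\pi,q]$ whenever $(\pi,q)$ is admissible, the fixed--point condition reduces to the reverse inequality and
\begin{align*}
    \Gamma = \left(C \cap \{\pirt=\pi,\;\qrt=q\}\right) \cap \left\{V(\br,e,\pi,q) \le \overline{J}^{\phi,\br}_{q,\pi}[\pi,q]\right\},
\end{align*}
exhibiting $\Gamma$ as the intersection of a Borel set with the sublevel set of an upper semi--analytic function, hence co--analytic.

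The main obstacle is that $\Gamma$ is co--analytic rather than analytic, so the classical Jankov--von Neumann theorem does not apply verbatim. The standard remedy, which I would invoke, is the extended uniformization theorem for the $\sigma$--algebra generated by analytic sets (equivalently, Kondo's uniformization theorem for co--analytic sets): any such set with non--empty sections over its projection admits a universally measurable selector defined on the projection. Since the projection of $\Gamma$ onto $\Cc \x {\rm L}^p([0,T];\Er)$ coincides with the full space by the preceding Proposition, this delivers the required universally measurable $\Psi$ with $\Psi(\br,e) \in \Phi(\br,e)$ for every $(\br,e)$.
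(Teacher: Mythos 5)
Your proof takes essentially the same route as the paper's: you establish (a) the Borel measurability of the admissibility constraint set by reducing the Fokker--Planck relations to a countable intersection over a dense family of test points/functions, (b) the upper semi-analyticity (hence universal measurability) of the value function as a supremum of a Borel functional over a Borel-parametrised family, and (c) the fixed-point set $\Gamma$ as the intersection of a Borel set with a sub-level set of the value function, and then you invoke a uniformization/selection theorem. All of this matches the paper, which defines the Borel sets $K^1_t[f,\widetilde v,\widetilde h]$, $K^2[v,h]$, $K^2_t[f,v,h]$, deduces universal measurability of $\overline{V}^\phi$ from \cite[Proposition~2.21]{karoui2013capacities}, and writes $\Sc^\star$ as the intersection of the $\overline V^\phi$-sub-level set with the Borel constraint set, before applying \cite[Theorem~2.20]{karoui2013capacities}.

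The one place where you diverge, and where there is a genuine gap, is the final selection step. You appeal to ``Kondo's uniformization theorem for co-analytic sets'' and assert (in parentheses) that it is equivalent to a uniformization theorem for the $\sigma$-algebra generated by analytic sets, and that it produces a universally measurable selector. Neither assertion is quite right. Kondo's $\Pi^1_1$-uniformization theorem produces a selector whose \emph{graph} is co-analytic; the preimage of a Borel set under such a map is obtained by projecting a co-analytic set, hence is $\Sigma^1_2$, and $\Sigma^1_2$ sets are not provably universally measurable in ZFC alone (this fails, for example, in $L$). The classical Jankov--von Neumann theorem, by contrast, produces a selector that is measurable with respect to the $\sigma$-algebra generated by analytic sets --- hence universally measurable --- but it is stated for \emph{analytic} sets, not co-analytic ones. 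Thus neither of the two results you name delivers, on its own, a universally measurable selector from a co-analytic graph. The paper sidesteps the issue by invoking a specific selection theorem, \cite[Theorem~2.20]{karoui2013capacities}, formulated within El Karoui--Tan's paved-space framework so that it applies to the universally measurable set $\Sc^\star$ arising here; you should cite that result (or the analogous development in Bertsekas--Shreve, where the value function and argmax selector are handled with care about the analytic vs.~universally measurable distinction) rather than Kondo's theorem. Apart from this last step, your structural argument is correct and mirrors the paper's proof.
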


\begin{proof}
    This proof is largely inspired by the proof of \cite[Lemma 4.7.]{djete2019mckean}. We will apply a selection measurable theorem. We introduce 
    \begin{align*}
        K^1_{t}[f, \widetilde{v}, \widetilde{h}]
        :=
        \left\{
            \left(\br,e,\pi,\qr,\pirt,\qrt \right):\;N^{\br,e}_{\pi,\qr}[\pirt,\qrt](t,f)=0,\;\; \int_{[0,T] \x \Pc_A} \widetilde{h}(t) \left( \langle \widetilde{v}, m(\mathrm{d}x,A) \rangle - \langle \widetilde{v} , \pirt(t) \rangle \right) \qrt(t)(\mathrm{d}m)\mathrm{d}t=0
        \right\}
    \end{align*}
    and
    \begin{align*}
        K^2[v,h]
        :=
        \left\{
            \left(\br,e,\pi,\qr,\pirt,\qrt \right):\; \int_{[0,T] \x \Pc_A} h(t) \left( \langle v, m(\mathrm{d}x,A) \rangle - \langle v , \pi(t) \rangle \right) \qr(t)(\mathrm{d}m)\mathrm{d}t=0
        \right\}.
    \end{align*}
    Let us observe that for any $(t,f,v,h,\widetilde{v},\widetilde{h})$, the sets $K^1_{t}[f, \widetilde{v}, \widetilde{h}]$ and $K^2[v,h]$ are Borel sets. We get that
    \begin{align*}
        \left\{ \left(\br,e,\pi,q,\pirt,\qrt \right):\;\;(\pirt,\qrt) \in \Pcb^{\br,e}_{\rm no}[\qr,\pi],\;(\pi,\qr) \in \Pcb_{\rm no}
        \right\}
        =
        \bigcap_{(t,f,v,h,\widetilde{v},\widetilde{h}) \in \X} K^1_{t}[f, \widetilde{v}, \widetilde{h}] \cap K^2[v,h]
    \end{align*}
    where $\X$ is a suitable countable dense set of $[0,T] \x C^\infty_c \x C^\infty_c \x C([0,T]) \x C^\infty_c \x C([0,T])$. Therefore, since the map $\Cc \x {\rm L}^p([0,T];\Er) \x (\Cc_{\Wc} \x \M)^2 \ni (\br,e,\pi,q,\pirt,\qrt) \mapsto \overline{J}_{q,\pi}^{\phi,\br}[\pirt,\qrt] \in \R $ is Borel measurable, by \cite[Proposition 2.21.]{karoui2013capacities}, the map $\Cc \x {\rm L}^p([0,T];\Er)\x \Cc_{\Wc} \x \M \ni (\br,e,\pi,q) \mapsto \overline{V}^{\phi}[\br,e,\pi,q] \in \R $ is universally measurable where
    \begin{align*}
        \overline{V}^{\phi}[\br,e,\pi,q]
        :=
        \sup_{(\tilde\pi,\tilde q) \in \Pcb^{\br,e}_{\rm no}[\qr,\pi] }\overline{J}_{q,\pi}^{\phi,\br}[\pirt,\qrt].
    \end{align*}
    If we define
    \begin{align*}
        K^2_{t}[f,v,h]
        :=
        \left\{
            \left(\br,e,\pi,q\right):\;N^{\br,e}_{\pi,\qr}[\pi,q](t,f)=0,\;\;\int_{[0,T] \x \Pc_A} h(t) \left( \langle v, m(\mathrm{d}x,A) \rangle - \langle v , \pi(t) \rangle \right) \qr(t)(\mathrm{d}m)\mathrm{d}t=0
        \right\},
    \end{align*}
    we find that the set of mean field game solutions $\Sc^\star \subset \Cc \x {\rm L}^p([0,T;\Er]) \x \Cc_{\Wc} \x \M$ is a universally measurable set because
    \begin{align*}
        \Sc^\star:=\left\{ (\br,e,\pi,q):\;(\pi,q) \in \Rc^{\phi,e,\br}_{\pi,\qr}
        \right\}
        =
        \left\{ 
        \left(\br,e,\pi,q\right):\; \overline{V}^{\phi}[\br,e,\pi,q] \le \overline{J}_{q,\pi}^{\phi,\br}[\pi,q]
        \right\} \bigcap \left( \cap_{(t,f,v,h) \in \Z} K^2_{t}[f,v,h] \right)
    \end{align*}
    where $\Z$ is a suitable countable dense set of $[0,T] \x C^\infty_c \x C^\infty_c \x C([0,T])$.
    Consequently, by applying \cite[Theorem 2.20.]{karoui2013capacities}, there exists a universally measurable map $\Psi:\Cc \x {\rm L}^p([0,T];\Er) \to \Cc_{\Wc} \x \M$ verifying $\Psi(\br,e) \in \Rc^{\phi,e,\br}_{\Psi(\br,e)}$ for each $(\br,e)$.
\end{proof}

\begin{proposition}
    Under {\rm \Cref{assum:main1}} and {\rm \Cref{assum:convexity}}, for any contract $\Cf=(\phi,\xi,\aleph)$, the set ${\rm MFG}[\Cf]$ is non--empty.
\end{proposition}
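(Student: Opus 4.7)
The plan is to reduce the MFG with common noise to the no--common--noise setting of Proposition \ref{prop:mfg_no} via a pathwise shift, and then to upgrade the resulting measure--valued fixed point to a feedback solution via the Filippov--Roxin--type construction of Proposition \ref{prop:relaxed_to_weak}. Concretely, I would introduce $Y_t := X_t - \sigma_0 B_t$ and observe that, conditionally on a realization $(\br,e) \in \Cc \x {\rm L}^p([0,T];\Er)$ of the common information $(B,\aleph)$, the dynamics of $Y$ are driven exactly by the shifted coefficients $(\overline{h},h,v)$ defined just before Proposition \ref{prop:mfg_no} and constitute precisely a no--common--noise MFG of controls with parameters $(\br,e)$ and terminal payment shape $\phi$.

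Proposition \ref{prop:mfg_no} then provides a universally measurable selector $\Psi : \Cc \x {\rm L}^p([0,T];\Er) \to \Cc_{\Wc} \x \M$ such that $\Psi(\br,e) \in \Rc^{\phi,e,\br}_{\Psi(\br,e)}$ for every $(\br,e)$. I would evaluate $\Psi$ at the random variables $(B,\aleph)$ — which is admissible since $\aleph$ is an $\Er$--valued $\H$--predictable process and hence induces an ${\rm L}^p([0,T];\Er)$--valued measurable variable — obtaining a $\sigma\{B,\aleph\}$--measurable pair $(\pi^\star,q^\star)$. Unshifting by $\sigma_0 B_t$ pathwise produces a candidate $\Pc_A$--valued $\H$--predictable process $\mub^\star$ together with $\mu^\star_t := \mub^\star_t(\mathrm{d}x,A)$, and I would set $R := \delta_{(\mub^\star_t,\aleph_t)}(\mathrm{d}m,\mathrm{d}e)\mathrm{d}t$. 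Through the measure--valued equivalence of Proposition \ref{prop:eq--measure--valuedMFG_control}, this data corresponds to a probability on the canonical space that satisfies all the requirements of an element of $\Pcb_{\rm mfg}$, since the consistency and optimality conditions are by construction reduced, conditionally on $(B,\aleph)$, to those of the shifted no--common--noise MFG solved by $\Psi$.

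Finally, Assumption \ref{assum:convexity} allows me to invoke Proposition \ref{prop:relaxed_to_weak}, which upgrades this element of $\Pcb_{\rm mfg}$ to one in $\Pcb_{0,\rm mfg}$ carrying a genuine Borel feedback control $\alpha$ and Dirac contract kernels $(\Phi,\Zf)$. Pushing this back through Proposition \ref{prop:eq--measure--valuedMFG_control} produces a pair $(\alpha,\mub) \in {\rm MFG}[\Cf]$, establishing non--emptiness.

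The main obstacle is the measurability bookkeeping: one must check that composing the universally measurable selector $\Psi$ with the random paths $(B,\aleph)$ and then unshifting by $\sigma_0 B$ yields a process $\mub^\star$ that is actually $\G$--predictable, and that the optimality property inherited from the pathwise no--common--noise solution survives conditioning, i.e.\ that for every $\Pr \in \Pcb$ with $\Lc^\Pr(B,R) = \Lc^{\Pr^\star}(B,R)$ the inequality \eqref{eq:optimality-relaxed} reduces, after disintegration, to the pathwise optimality furnished by $\Psi$. A secondary subtlety is verifying that the Filippov--Roxin step preserves compatibility with the $(t,x,\mub,\aleph)$--dependence required in the definition of ${\rm MFG}[\Cf]$ given in Section \ref{section:limitproblem}, which should follow from the separable structure highlighted in Assumption \ref{assum:convexity}.
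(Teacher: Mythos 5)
Your proposal follows essentially the same route as the paper's proof of \Cref{thm:existence_mfg}: the pathwise shift $Y_t := X_t - \sigma_0 B_t$ reducing to the no--common--noise measure--valued MFG, the universally measurable selector $\Psi$ of \Cref{prop:mfg_no} applied to $(B,\aleph)$, unshifting and verifying the $\Pcb_{\rm mfg}$ optimality condition by disintegrating over the common information, and finally the Filippov--Roxin step of \Cref{prop:relaxed_to_weak} together with \Cref{prop:eq--measure--valuedMFG_control} to recover a genuine pair $(\alpha,\mub)\in{\rm MFG}[\Cf]$. The only inaccuracy is that your intermediate invocation of \Cref{prop:eq--measure--valuedMFG_control} to reach $\Pcb_{\rm mfg}$ would be circular in that direction (it takes an element of ${\rm MFG}[\Cf]$ as input); the paper instead builds the control rule $\overline{\Pr}$ directly from the unshifted data and checks the optimality inequality pathwise, which is exactly the verification you flag as the main ``measurability bookkeeping'' in your final paragraph.
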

\begin{proof}
    We know that $\aleph$ is an $\Er$--valued $\F$--predictable process 
    s.t. $(\aleph,B)$, $\iota$ and $W$ are independent. With the map $\Psi$ given in \Cref{prop:mfg_no}, we define $\Rt:=\widetilde{Q}_t(\mathrm{d}m)\delta_{\aleph_t}(\mathrm{d}e)\mathrm{d}t$ with
\begin{align*}
    (\widetilde{Q}, \widetilde{\eta} )
    :=
    \Psi \left(B, \aleph \right)\;\;\mbox{and we set}\;\left( \Gc_t:=\sigma \{ B_{t \wedge \cdot}, \Rt_{t \wedge \cdot}, \widetilde{\eta}_{t \wedge \cdot} \} \right)_{t \in [0,T]}.
\end{align*}
We know that $\P$--a.e. $N_{\tilde \eta,\tilde Q}[\widetilde{\eta},\widetilde{Q}](t,f)=0$ for all $(t,f)$. Then, we can see that  $\Lc(Y_t| B,\aleph)=\Lc(Y_t| \Gc_t)=\widetilde{\eta}_t$, $\P$--a.e. where $Y$ satisfies $\Lc(Y_0)=\nu$ and
\begin{align*}
        \mathrm{d}Y_t
            =
            \int_{\Pc_A}\overline{h}\left(t,B_t,m,\aleph_t\right) \widetilde{Q}_t(\mathrm{d}m)
            +
            \int_{\Pc_A}\int_A h\left(t,B_t,Y_t ,\widetilde{\eta},a\right)m^{Y_t}(\mathrm{d}a) \widetilde{Q}_t(\mathrm{d}m)\;\;\;\mathrm{d}t + v(t,B_t,Y_t)\mathrm{d}W_t\;\;\P\mbox{--a.e.}
    \end{align*}
    If we define $X_\cdot:=Y_\cdot+ \sigma_0 B_\cdot$. The process $X$ follows the dynamics
    \begin{align*}
    \mathrm{d}X_t=\int_{\Pc_A \x \Er}\overline{b}\left(t,m,e\right)K_t(\mathrm{d}m,\mathrm{d}e)
            +
            \int_{\Pc_A}\int_A b\left(t,X_t,\mu,a\right)m^{X_t}_t(\mathrm{d}a)K_t(\mathrm{d}m,\Er)\mathrm{d}t + \sigma(t,X_t)\mathrm{d}W_t + \sigma_0\mathrm{d}B_t\;\;\P\mbox{--a.e.}
    \end{align*}
    where $K_t(\mathrm{d}m,\mathrm{d}e)\mathrm{d}t:=\int_{\Pc_A} \delta_{m'[B_t]}(\mathrm{d}m)\widetilde{Q}_t(\mathrm{d}m')\delta_{\aleph_t}(\mathrm{d}e)\mathrm{d}t$ and $\mu_t:=\widetilde{\eta}_t[B_t]$ for each $t \in [0,T]$.
    We can find a Borel map $\Lambda:[0,T] \x \R^d \x \M \to \Pc(A)$ s.t. $\Lambda(t,X_t,K)(\mathrm{d}a):=\int_{\Pc_A}m^{X_t}(\mathrm{d}a) K_t(\mathrm{d}m,\Er)$. Notice that $K_t\left( \{ (m,e):\;m(\mathrm{d}x,A)=\mu_t  \} \right)=1$, $\mathrm{d}\P \otimes \mathrm{d}t$--a.e.
    
    \medskip
    Let $\overline{\Pr}:=\P \circ \left(X,W,B,K,\mu \right)^{-1} \in \Pcb$. Let us check the optimality condition of a MFG solution. 
    Let $\Pr \in \Pcb$ s.t. $\Lc^\Pr(B,R)=\Lc^{\overline{\Pr}}(B,R)$. We know that we can write $\Pr=\P \circ \left(X',W,B,R,\mu \right)^{-1}$ with 
    \begin{align*}
    \mathrm{d}X'_t=\int_{\Pc_A \x \Er}\overline{b}\left(t,m,e\right)R_t(\mathrm{d}m,\mathrm{d}e)
            +
            \int_A b\left(t,X'_t,\mu,a\right)\Lambda'(t,X'_t,R)(\mathrm{d}a)\mathrm{d}t + \sigma(t,X'_t)\mathrm{d}W_t + \sigma_0\mathrm{d}B_t\;\;\P\mbox{--a.e.}
    \end{align*}
    We define $\mut_t:=\Lc(X'_t - \sigma_0 B_t|\Gc_t)$ and $\widetilde{H}:=\delta_{\overline{v}_t}(\mathrm{d}m)\mathrm{d}t$ where $\overline{v}_t:=\E \left[ \delta_{X'_t- \sigma_0 B_t}(\mathrm{d}y) \Lambda'(t,X_t',R)(\mathrm{d}a) |\Gc_t\right]$. We then find that, $\P$--a.e. $\om \in \Om$, $(\widetilde{\mu},\widetilde{H})\in \Pcb_{\rm no}$, $N_{\tilde \eta,\tilde Q}[\widetilde{\mu},\widetilde{H}](t,f)=0$ for all $(t,f)$, and 
    \begin{align*}
         \overline{J}_{\tilde Q,\tilde \eta}^{\phi,\aleph,B}[\widetilde{\mu},\widetilde{H}]
        \le 
        \overline{J}_{\tilde Q,\tilde \eta}^{\phi,\aleph,B}[\widetilde Q,\widetilde \eta].
    \end{align*}
    We recall that $\xi:\M \to \Er$ is a Borel map, consequently, 
    \begin{align*}
        \E^{\Pr} \left[ \Jr_A^{\Phi,\Zf} (\Lambda',\mu',R,\mu) \right]
        &=
        \E^{\Pr} \left[ \overline{J}_{\tilde Q,\tilde \eta}^{\phi,\aleph,B}[\widetilde{\mu},\widetilde{H}] + \int_{[0,T] \x \Pc_A \x \Er} \overline{L}\left( t, m,e\right) R_t(\mathrm{d}m,\mathrm{d}e)\mathrm{d}t + \overline{g} \left( \mu, \xi(R) \right)\right]
        \\
        &\le 
        \E^{\Pr} \left[ \overline{J}_{\tilde Q,\tilde \eta}^{\phi,\aleph,B}[\widetilde Q,\widetilde \eta] + \int_{[0,T] \x \Pc_A \x \Er} \overline{L}\left( t, m,e\right) R_t(\mathrm{d}m,\mathrm{d}e)\mathrm{d}t + \overline{g} \left( \mu, \xi(R) \right)\right]
        \\
        &=
        \E^{\overline{\Pr}} \left[ \Jr_A^{\Phi,\Zf} (\Lambda',\mu',R,\mu) \right]
    \end{align*}

    \medskip
    We can deduce that $\overline{\Pr} \in \Pcb_{\rm mfg}$. And, with \Cref{prop:relaxed_to_weak}, we get $\Prt \in \Pcb_{\rm mfg}^W$ s.t. $\E^{\overline{\Pr}} \left[ \Jr_A^{\Phi,\Zf} (\Gamma',\mu',R,\mu) \right] \le \E^{\widetilde{\Pr}} \left[ \Jr_A^{\Phi,\Zf} (\Gamma',\mu',R,\mu) \right]$. 
    
\end{proof}

\subsection{Proof of the main results} \label{sec:proofs_main}

\subsubsection{Proof of \Cref{thm:convergence_limit-contract}}

\begin{itemize}
    \item We begin by proving the first point.
    This first point is essentially an application of \Cref{prop_convergence:limitcontract}. Indeed, let $(\alpha^\ell,\mub^\ell,\Cf^\ell,\varepsilon_\ell)_{\ell \ge 1}$ be the sequence satisfying the property stated in \Cref{thm:convergence_limit-contract}. By setting $\Pr^\ell:=\P \circ \left( X^{\alpha^\ell}, W, B, R^\ell, \mu^\ell \right)^{-1}$ where $R^\ell:=\delta_{(\mub^\ell_t,\aleph^\ell_t)}(\mathrm{d}m,\mathrm{d}e)\mathrm{d}t$ for each $\ell \ge 1$, by \Cref{prop_convergence:limitcontract}, we know that the sequence $(\Pr^\ell)_{\ell \ge 1}$ is relatively compact in $\Wc_p$ and each limit point $\Pr$ of a convergent sub--sequence $\left(\Pr^{\ell_k} \right)_{k \ge 1}$ belongs to $\Pcb_{\rm mfg}$ associated to $(\Phi,\Zf)$ that satisfies: $\lim_{k \to \infty} \Lc^\P \left( \xi^{\ell_k},\; R^{\ell_k} \right) 
        =
        \Zf(r)(\mathrm{d}e)\Lc^{\Pr} ( R)(\mathrm{d}r)$ and 
    \begin{align*}
        \lim_{k \to \infty} \Lc^\P \left( \mu^{\ell_k},\; \delta_{\phi^{n_k}(x)}(\mathrm{d}i)\mu^{\ell_k}_T(\mathrm{d}x),\; R^{\ell_k} \right) 
        =
        \Lc^{\Pr} \left( \mu,\; \Phi(x)(\mathrm{d}i)\mu_T(\mathrm{d}x),\; R \right)\;\mbox{in}\;\Wc_p.
    \end{align*} 
    Let $\Qr^{k}:=\Lc^\P \left( \mu^{\ell_k},\; \delta_{\phi^{n_k}(x)}(\mathrm{d}i)\mu^{\ell_k}_T(\mathrm{d}x),\;\xi^{\ell_k},\; R^{\ell_k} \right)$. It is straightforward that the sequence $(\Qr^k)_{k \ge 1}$ is relatively compact in $\Wc_p$. Let $\Qr=\P \circ \left( \mu,\kappa, \xi,R \right)^{-1}$ be the limit of a convergent sub--sequence. Let us keep the same notation for the sequence and the sub--sequence for simplification. By the convergence showed previously and the definition of the variables, we easily check that $\mu=H(R)$ where $H$ is a Borel map and $\kappa=\Phi(x)(\mathrm{d}i)\mu_T(\mathrm{d}x)$ $\P$--a.e.. Consequently,
    \begin{align*}
        &\lim_{k \to \infty} J^{\alpha^{\ell_k},\mub^{\ell_k}}_{\Pr}(\Cf^{\ell_k})
        \\
        &=
        \lim_{k \to \infty}\E \left[ U \left( \int_{\R^d} \Upsilon( x) \mu^{\ell_k}_T(\mathrm{d}x) - \int_{\R^d} g_{\Pr}\left(x,\phi^{\ell_k}(x) \right) \mu^{\ell_k}_T(\mathrm{d}x)
        -\overline{g}_{\Pr} \left(\mu^{\ell_k},\xi^{\ell_k} \right)
        -
        \langle \overline{L}_{\Pr} , R^{\ell_k} \rangle\right) \right]
        \\
        &=
        \E\left[ U \left( \int_{\R^d} \Upsilon( x) \mu_T(\mathrm{d}x) - \int_{\R^d} g_{\Pr}\left(x,i \right) \Phi(x)(\mathrm{d}i)\mu_T(\mathrm{d}x)
        -\overline{g}_{\Pr} \left(\mu,\xi \right)
        -
        \langle \overline{L}_{\Pr} , R \rangle\right) \right]
        \\
        &=
        \int_{\M \x \Er} U \left( \int_{\R^d} \Upsilon( x) H_T(\rr)(\mathrm{d}x) - \int_{\R^d} g_{\Pr}\left(x,i \right) \Phi(x)(\mathrm{d}i)H_T(\rr)(\mathrm{d}x)
        -\overline{g}_{\Pr} \left(H(\rr),\xi \right)
        -
        \langle \overline{L}_{\Pr} , \rr \rangle\right) \Zf(r)(\mathrm{d}e) \Lc(R)(\mathrm{d}r)
        \\
        &=
        \E^{{\Pr}}\big[{\rm J}^{R,\mu}_{\Pr}(\Phi,\Zf) \big].
    \end{align*}
    This is true for any sub--sequence of $(\Qr)_{k \ge 1}$, we can then deduce the convergence just proved for the all sequence.

    \medskip
    By combining \Cref{prop:relaxed_to_weak} and \Cref{prop:eq--measure--valuedMFG_control}, there exist a contract $\Cf=(\phi,\xi,\aleph)$ and $(\alpha, \mub) \in {\rm MFG}[\Cf]$ satisfying
    \begin{align*}
        \E^{{\Pr}}\big[{\rm J}^{\Phi,\Zf}_A(\Gamma',\mu',R,\mu) \big] \le J_{A,\mub}^{\Cf}(\alpha)\;\;\mbox{and}\;\;\E^{{\Pr}}\big[{\rm J}^{R,\mu}_{\Pr}(\Phi,\Zf) \big] \le J^{\alpha,\mub}_{\Pr}(\Cf).
    \end{align*}
    This is enough to conclude the proof of the first part of the Theorem.

    \item This is a direct application of \Cref{prop:approximation_by_strong}.
\end{itemize}

\subsubsection{Proof of \Cref{thm:existence_contract}}

Since $R_0$ is s.t. $\Xi$ is non--empty, let $(\alpha^\ell,\mub^\ell,\Cf^\ell)_{\ell \ge 1}$ be the sequence satisfying: for each $\ell \ge 1$, $\Cf^\ell=(\phi^\ell,\xi^\ell,\aleph^\ell)$ belongs to $\Xi$, $(\alpha^\ell,\mub^\ell) \in \underline{\rm MFG}[\Cf^\ell]$ and $V_{\Pr} \le J^{\alpha^\ell, \mub^\ell}_{\Pr}(\Cf^\ell) + \frac{1}{2^\ell}$. By considering $\varepsilon_\ell=0$ for each $\ell \ge 1$, we can apply \Cref{thm:convergence_limit-contract} and find a sub--sequence $(\ell_j)_{j \ge 1}$, a contract $\Cf^\star$ and $(\alpha^\star,\mub^\star)$ s.t. $(\alpha^\star,\mub^\star) \in \mbox{{\rm MFG}}[\Cf^\star]$, 
$$
    \lim_{j \to \infty}J^{\Cf^{\ell_j}}_{A,\mub^{\ell_j}}({\alpha^{\ell_j}})\le
            J^{\Cf^\star}_{A,\mub^\star}({\alpha^\star})~~~~~\mbox{\rm and}~~~~\lim_{j \to \infty} J^{\alpha^{\ell_j},\mub^{\ell_j}}_{\Pr}(\Cf^{\ell_j}) \le
            J^{\alpha^\star,\mub^\star}_{\Pr}(\Cf^\star).
$$
Consequently, we deduce that $V_{\Pr} \le J^{\Cf^\star}_{A,\mub^\star}({\alpha^\star})$. Since $(\alpha^\star,\mub^\star) \in {\rm MFG}[\Cf^\star]$ and $R_0 \le J^{\Cf^\star}_{A,\mub^\star}({\alpha^\star})$, in fact $\Cf^\star \in \Xi$ and $V^{\Pr} = J^{\Cf^\star}_{A,\mub^\star}({\alpha^\star})$. The shape of the control is deduced by \Cref{prop:relaxed_to_weak} and \Cref{prop:eq--measure--valuedMFG_control}.

\subsubsection{Proof of \Cref{cor:limit_strong_valuefunc}}

Notice that, for $(\alpha,\mub) \in \underline{\rm MFG}[\Cf]$, by the second point of \Cref{thm:convergence_limit-contract}, there exist a sequence of non--negative numbers $(\varepsilon_{\ell})_{\ell \ge 1}$ s.t. $\lim_{\ell \to \infty} \varepsilon_\ell=0,$ and a sequence $(\alpha^\ell,\mub^\ell,\phi^\ell, \xi^\ell,\aleph^\ell)_{\ell \ge 1}$ s.t. for each $\ell \ge 1$, $\Cf^\ell=(\phi^\ell,\xi^\ell,\aleph^\ell)$ is a contract, $(\alpha^\ell, \mub^\ell )\in \mbox{{\rm MFG}}_S[\Cf^\ell,\varepsilon_\ell]$ and $\lim_{\ell \to \infty}J^{\Cf^{\ell}}_{A,\mub^\ell}(\alpha^{\ell})=J^{\Cf}_{A,\mub}({\alpha}).$ Therefore, by setting $\varepsilon_{0,\ell}:=\left|J^{\Cf^{\ell}}_{A,\mub^\ell}(\alpha^{\ell})-J^{\Cf}_{A,\mub}(\alpha) \right|$,
$$
    J^{\Cf^{\ell}}_{A,\mub^\ell}(\alpha^{\ell}) = J^{\Cf}_{A,\mub}(\alpha) + J^{\Cf^{\ell}}_{A,\mub^\ell}(\alpha^{\ell})-J^{\Cf}_{A,\mub}(\alpha)
    \ge 
    R_0 + J^{\Cf^{\ell}}_{A,\mub^\ell}(\alpha^{\ell})-J^{\Cf}_{A,\mub}(\alpha)
    \ge 
    R_0 - \varepsilon_{0,\ell}.
$$
We have  $(\alpha^\ell, \mub^\ell )\in \underline{{\rm MFG}}_S[\Cf^\ell,\varepsilon_\ell,\varepsilon_{0,\ell}]$.  Since, $\lim_{\ell \to \infty} \varepsilon_{\ell}=\lim_{\ell \to \infty} \varepsilon_{0,\ell}=0$, we deduce that $\Xi_S[\varepsilon,\varepsilon_0]$ is non--empty for each $\varepsilon,\varepsilon_0>0$.

\medskip
Let $(\Cf^{\ell,\varepsilon,\varepsilon_0}, \alpha^{\ell,\varepsilon,\varepsilon_0}, \mub^{\ell,\varepsilon,\varepsilon_0})_{\ell \ge 1, \varepsilon >0, \varepsilon_0 >0}$ be a sequence s.t. $\Cf^{\ell,\varepsilon,\varepsilon_0} \in \Xi[\varepsilon,\varepsilon_0]$, $(\alpha^{\ell,\varepsilon,\varepsilon_0},\mub^{\ell,\varepsilon,\varepsilon_0}) \in \underline{\rm MFG}_S[\Cf,\varepsilon,\varepsilon_0]$ and for each $\ell \ge 1$, $\varepsilon, \varepsilon>0$,
\begin{align*}
    V^S_{\Pr}[\varepsilon,\varepsilon_0] \le J^{\alpha^{\ell,\varepsilon,\varepsilon_0},\mub^{\ell,\varepsilon,\varepsilon_0}}_{\Pr}(\Cf^{\ell,\varepsilon,\varepsilon_0}) + \frac{1}{2^\ell}.
\end{align*}
We apply the first point of \Cref{thm:convergence_limit-contract} to find a sub--sequence $(\ell_j,\varepsilon_j,\varepsilon_{0,j})_{j \ge 1}$, a contract $\Cf$ and $(\alpha,\mub) \in {\rm MFG}[\Cf]$ s.t. $\lim_{j \to \infty}J^{\alpha^{\ell_j,\varepsilon_j,\varepsilon_{0,j}},\mub^{\ell_j,\varepsilon_j,\varepsilon_{0,j}}}_{\Pr}(\Cf^{\ell_j,\varepsilon_j,\varepsilon_{0,j}}) \le J^{\alpha,\mub}_{\Pr}(\Cf)$. It is straightforward that $(\alpha,\mub) \in \underline{\rm MFG}[\Cf]$.  This allows us to deduce that
\begin{align*}
    \limsup_{(\varepsilon,\varepsilon_0) \to (0,0)} V^S_{\Pr}[\varepsilon,\varepsilon_0]  \le V_{\Pr}.
\end{align*}

\medskip
Now, let $\Cf$ be a contract and $(\alpha,\mub) \in \underline{\rm MFG}[\Cf]$. Next, we apply the second point of \Cref{thm:convergence_limit-contract} (combined with the previous point) and find a sequence of non--negative numbers $(\varepsilon_{\ell})_{\ell \ge 1}$ s.t. $\lim_{\ell \to \infty} \varepsilon_\ell=0,$ and a sequence $(\alpha^\ell,\mub^\ell,\phi^\ell, \xi^\ell,\aleph^\ell)_{\ell \ge 1}$ s.t. for each $\ell \ge 1$, $\Cf^\ell=(\phi^\ell,\xi^\ell,\aleph^\ell)$ is a contract, $(\alpha^\ell, \mub^\ell )\in \underline{{\rm MFG}}_S[\Cf^\ell,\varepsilon_\ell,\varepsilon_{0,\ell}]$ and $\lim_{\ell \to \infty}J^{\Cf^{\ell}}_{A,\mub^\ell}(\alpha^{\ell})=
            J^{\Cf}_{A,\mub}({\alpha})~~~~~\mbox{\rm and}~~~~\lim_{\ell \to \infty} J^{\alpha^{\ell},\mub^\ell}_{\Pr}(\Cf^\ell)=
            J^{\alpha,\mub}_{\Pr}(\Cf).$ 

\medskip            
For each $\delta, \delta_0 >0$, there exists $\ell(\delta,\delta_0)$ s.t. for each $\ell \ge \ell(\delta,\delta_0)$, $\varepsilon_\ell \le \delta$ and $\varepsilon_{0,\ell} \le \delta_0$. Therefore, for each $\ell \ge \ell(\delta,\delta_0)$,
\begin{align*}
    J^{\alpha^{\ell},\mub^\ell}_{\Pr}(\Cf^\ell) \le V_{\Pr}^S[\delta,\delta_0].
\end{align*}
Then, $J^{\alpha,\mub}_{\Pr}(\Cf) \le V^S_{\Pr}[\delta,\delta_0]$ for each $\delta, \delta_0 >0$. This leads to $V_{\Pr} \le \liminf_{(\varepsilon,\varepsilon_0) \to (0,0)} V^S_{\Pr}[\varepsilon,\varepsilon_0]$. By combining, all the results,
$$
    V_{\Pr} = \lim_{(\varepsilon,\varepsilon_0) \to (0,0)} V^S_{\Pr}[\varepsilon,\varepsilon_0]. 
$$
We can conclude the proof since the second point follows immediately.

\subsubsection{Proof of \Cref{thm:conv:n-player}}

\begin{itemize}
    \item We start by proving the first point. This proof is quite close to the proof of \Cref{thm:convergence_limit-contract}. Let $(\alphab^n,\phi^n,\xi^n,\aleph^n)_{n \ge 1}$ be a sequence given as in the Theorem. We set $\Pr^n
    :=
    \frac{1}{n} \sum_{i=1}^n \P \circ \left( X^i, W^i, B, R^n, \varphi^n[\alphab^n] \right)^{-1}$ 
$\mbox{where}\;R^n:=\delta_{\left( \overline{\varphi}^n_t[\alphab^n], \aleph^n(t,\Xbb_t)\right)}(\mathrm{d}m,\mathrm{d}e)\mathrm{d}t.$ By \Cref{prop:convegence_n-player}, the sequence $(\Pr^n)_{n \ge 1}$ is relatively compact in $\Wc_p$ and any limit point $\Pr$ of a convergent sub--sequence $\left(\Pr^{n_k} \right)_{k \ge 1}$ is a MFG solution i.e. belongs to $\Pcb_{\rm mfg}$ associated to $(\Phi,\Zf)$ which verifies: $\lim_{k \to \infty} \Lc^\P \left( \xi^{n_k}(\Xbb),\; R^{n_k} \right) 
        =
        \Zf(r)(\mathrm{d}e)\Lc^{\Pr} \left( R \right)(\mathrm{d}r)$ and 
    \begin{align*}
        \lim_{k \to \infty} \Lc^\P \left( \varphi^{n_k}[\alphab^{n_k}],\; \delta_{\phi^{n_k}(x)}(\mathrm{d}i)\varphi^{n_k}_T[\alphab^{n_k}](\mathrm{d}x),\; R^{n_k} \right) 
        =
        \Lc^{\Pr} \left( \mu,\; \Phi(x)(\mathrm{d}i)\mu_T(\mathrm{d}x),\; R \right)\;\mbox{in}\;\Wc_p.
    \end{align*} 
    Similarly to the proof of \Cref{thm:convergence_limit-contract}, we combine \Cref{prop:relaxed_to_weak} and \Cref{prop:eq--measure--valuedMFG_control} and find a contract $\Cf=(\phi,\xi,\aleph)$ and $(\alpha, \mub) \in {\rm MFG}[\Cf]$ satisfying $\E^{{\Pr}}\big[{\rm J}^{\Phi,\Zf}_A(\Gamma',\mu',R,\mu) \big] \le J_{A,\mub}^{\Cf}(\alpha)\;\;\mbox{and}\;\;\E^{{\Pr}}\big[{\rm J}^{R,\mu}_{\Pr}(\Phi,\Zf) \big] \le J^{\alpha,\mub}_{\Pr}(\Cf).$
    We can then conclude the proof of the first part of the Theorem.

    \item This second point is just an application of \Cref{prop:approximation_by_strong} combined with \Cref{prop:conv_n-player-strong}.
\end{itemize}

\subsubsection{Proof of \Cref{cor:conv_strong_limit_n-player}}

The proof of this Corollary follows the same idea as the proof of \Cref{cor:limit_strong_valuefunc}. By mimicking the proof of \Cref{cor:limit_strong_valuefunc} combined with the second part of \Cref{thm:conv:n-player}, we see that, for each $\varepsilon, \varepsilon_0 >0$, there exists $\overline{n}$ s.t. for each $n  \ge \overline{n}$, $\Xi^{\rm dist}_n[\varepsilon, \varepsilon_0]$ is non--empty. 

\medskip
Let $(\Cf^{n,\varepsilon,\varepsilon_0},\alphab^{n,\varepsilon,\varepsilon_0})_{n \ge 1, \varepsilon>0, \varepsilon_0 >0}$ be a sequence s.t. $\Cf^{n,\varepsilon,\varepsilon_0} \in \Xi_n[\varepsilon,\varepsilon_0]$, $\alphab^{n,\varepsilon,\varepsilon_0} \in \underline{\rm NE}[\Cf^{n,\varepsilon,\varepsilon_0},\varepsilon, \varepsilon_{0}]$ and for $n \ge 1$ 
$$
    V_{n,\Pr}[\varepsilon,\varepsilon_{0}] \le J^{\alphab^{n,\varepsilon,\varepsilon_0}}_{n,\Pr}(\Cf^{n,\varepsilon,\varepsilon_0}) + \frac{1}{2^n}.
$$
By \Cref{thm:conv:n-player}, there exist a sub--sequence $(n_j, \varepsilon_j,\varepsilon_{0,j})_{j \ge 1}$, a contract $\Cf$ and $(\alpha,\mub) \in {\rm MFG}[\Cf]$ satisfying 
$$
    \lim_{j \to \infty}J^{\alphab^{n_j,\varepsilon_j,\varepsilon_{0,j}}}_{n_j,\Pr}(\Cf^{n_j,\varepsilon_j,\varepsilon_{0,j}}) \le J^{\alpha,\mub}_{\Pr}(\Cf).
$$
It is straightforward that $(\alpha,\mub) \in \underline{\rm MFG}[\Cf]$.   This allows us to see that 
$$
    \limsup_{(\varepsilon,\varepsilon_0) \to (0,0)}\limsup_{n \to \infty} V_{n,\Pr}[\varepsilon, \varepsilon_{0}] \le V_{\Pr}.
$$ 
Next, let $\Cf$ be an admissible contract and $(\alpha,\mub) \in \underline{\rm MFG}[\Cf]$, by \Cref{thm:conv:n-player}, there exist  a sequence of non--negative numbers $(\varepsilon_n)_{n \ge 1}$ s.t. $\lim_{n \to \infty} \varepsilon_n=0,$ and a sequence $(\alphab^n,\Cf^n)_{n \ge 1}$ with $\Cf^n$ a $distributed$ contract and $\alphab^n \in {\rm NE}^{\rm dist}[\Cf^n,\varepsilon_n]$ for each $n \ge 1$, and we have $\lim_{n \to \infty} J_{n,\Pr}^{{\color{black}\alphab^n}}(\Cf^n)=J_{\Pr}^{\alpha,\mub}(\Cf)$. We set 
$$
    {\varepsilon}_{0,n}:=\left|J^{\Cf^n}_{n,i}(\alphab^n) - J^{\Cf}_{A,\mub}(\alpha) \right|
    =
    \left|J^{\Cf^n}_{n,1}(\alphab^n) - J^{\Cf}_{A,\mub}(\alpha) \right|.
$$
We have $\alphab^n \in \underline{\rm NE}^{\rm dist}[\Cf^n,\varepsilon_n, {\varepsilon}_{0,n}]$. Since $\lim_{n \to \infty} {\varepsilon}_{0,n}=\lim_{n \to \infty} \varepsilon_n=0$, let $\delta, \delta_0 >0$, there exists $n(\delta,\delta_0) \in \N^*$ s.t. for any $n \ge n(\delta,\delta_0)$, we have $\varepsilon_n \le \delta$ and $\varepsilon_{0,n} \le \delta_0$. Consequently, for any $n \ge n(\delta,\delta_0)$, 
\begin{align*}
    J_{n,\Pr}^{{\color{black}\alphab^{n}}}(\Cf^{n})
    \le 
    V_{n,\Pr}^{\rm dist}[\delta,\delta_0].
\end{align*}
Then $J_{\Pr}^{\alpha,\mub}(\Cf) \le \liminf_{n \to \infty} V_{n,\Pr}^{\rm dist}[\delta,\delta_0]$, for any $\delta, \delta_0 >0$. This leads to $V_{\Pr} \le \liminf_{(\delta,\delta_0) \to (0,0)}\liminf_{n \to \infty} V_{n,\Pr}^{\rm dist}[\delta,\delta_0].$
By summarizing, we find
$$
    V_{\Pr} \le \liminf_{(\delta,\delta_0) \to (0,0)}\liminf_{n \to \infty} V_{n,\Pr}^{\rm dist}[\delta,\delta_0] \le \limsup_{(\delta,\delta_0) \to (0,0)}\limsup_{n \to \infty} V^{\rm dist}_{n,\Pr}[\delta,\delta_0] \le V_{\Pr}
$$
and
$$
    V_{\Pr} \le \liminf_{(\delta,\delta_0) \to (0,0)}\liminf_{n \to \infty} V_{n,\Pr}[\delta,\delta_0] \le \limsup_{(\delta,\delta_0) \to (0,0)}\limsup_{n \to \infty} V_{n,\Pr}[\delta,\delta_0] \le V_{\Pr}.
$$
We deduce the proof of the Corollary. 

\bibliographystyle{plain}

\bibliography{MFG-PA-ArxivVersion1}

\begin{appendix}

\section{Technical results}


Let $(\Sc,\Delta)$ be a Polish space and $v$ be a progressively Borel map s.t. $[0,T] \x \M(\Sc) \x \R \ni (t,z,x) \to v(t,z,x) \in \R^{d \x d}$ is Lipschitz in $x$ uniformly in $(t,z)$ and $m \Ir_{d} \le v v^\top \le M \Ir_{d}$ for some $0 < m \le M$. 
On the probability space $\left(\Om,\H,\P \right)$, let $(b_t, Z_t)_{t \in [0,T]}$ be an $\R^d \x \Pc(\Sc)$--valued $\H$--predictable process with $(Z_t)_{t \in [0,T]}$ independent of $(W,\iota)$. The process $Z=(Z_t)_{t \in [0,T]}$ will be consider as an element of $\M(\Sc)$. We consider $X$ the process satisfying: $X_0:=\iota$, $\P$--a.e.
\begin{align*}
    \mathrm{d}X_t
    =b_t \mathrm{d}t + v(t,Z,X_t) \mathrm{d}W_t.
\end{align*}
We set $\mu=(\mu_t)_{t \in [0,T]}$ by $\mu_t=\Lc(X_t|Z)$ for all $t \in [0,T]$. Let $\widehat{X}$ be the process satisfying: $\widehat{X}_0=\iota$, $\P$--a.e.
\begin{align} \label{Eq:appendix_projection}
    \mathrm{d}\widehat{X}_t
    =\widehat{b}(t, \widehat{X}_t,\mu, Z) \mathrm{d}t + v(t,Z,\widehat{X}_t) \mathrm{d}W_t\;\;\mbox{where}\;\;\widehat{b}(t,x,\pi,z):=\E\Big[b_t \Big| X_t=x, \mu_{t \wedge \cdot}= \pi(t \wedge \cdot), Z_{t \wedge \cdot} = z(t \wedge \cdot)  \Big].
\end{align}

\begin{lemma} \label{lemma:projection}
    The process $\widehat{X}$ verifies:
    \begin{align*}
        \Lc(X_t| Z)
        =
        \Lc(\widehat{X}_t| Z)=\Lc(\widehat{X}_t| Z_{t \wedge \cdot}, \mu_{t \wedge \cdot}),\;\;\mbox{for all}\;t \in [0,T],\;\;\P\mbox{--a.e.}
    \end{align*}
\end{lemma}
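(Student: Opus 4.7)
The approach is a Markovian projection (Brunick–Shreve / Gyöngy type) argument, executed at the level of the conditional Fokker–Planck equation given $Z$.

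First I would derive the FPK equation satisfied by $\mu=(\mu_t)_{t\in[0,T]}$. Fix $f\in C_c^\infty(\R^d)$ and apply It\^o's formula to $f(X_t)$. Since $W$ is independent of $(Z,\iota)$ by assumption, the stochastic integral remains a martingale after conditioning on $Z$; hence
\[
\langle f,\mu_t\rangle = \langle f,\nu\rangle + \int_0^t \E\!\left[\nabla f(X_s)^{\top} b_s \,\big|\, Z\right] \mathrm{d}s + \frac12\int_0^t \E\!\left[\mathrm{tr}\big(vv^{\top}(s,Z,X_s)\nabla^2 f(X_s)\big)\,\big|\, Z\right] \mathrm{d}s.
\]
By the tower property together with the fact that $(\mu_{s\wedge\cdot},Z_{s\wedge\cdot})$ is $\sigma(Z)$-measurable, and by the very definition of $\widehat b$ in \eqref{Eq:appendix_projection},
\[
\E\!\left[\nabla f(X_s)^{\top}b_s\,\big|\,Z\right] = \int_{\R^d}\nabla f(x)^{\top}\widehat b(s,x,\mu,Z)\,\mu_s(\mathrm{d}x).
\]
Similarly the diffusion term becomes $\int v v^\top(s,Z,x)$ integrated against $\mu_s$. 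Thus $\mu$ solves, $\P$--a.e., a linear (in the measure) parabolic equation with coefficients $\widehat b(\cdot,\cdot,\mu,Z)$ and $v(\cdot,Z,\cdot)$, and initial law $\nu$.

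Next I would repeat exactly the same computation for $\widehat X$. The key observation is that the drift appearing in \eqref{Eq:appendix_projection} is $\widehat b(s,\widehat X_s,\mu,Z)$, where $\mu$ (the conditional law of the original process) enters only as a \emph{parameter} which is $\sigma(Z)$-measurable. Since $(W,\iota)$ remains independent of $Z$, conditioning on $Z$ yields that $\nu_t:=\Lc^{\P}(\widehat X_t|Z)$ satisfies
\[
\langle f,\nu_t\rangle = \langle f,\nu\rangle + \int_0^t\int_{\R^d}\nabla f(x)^{\top}\widehat b(s,x,\mu,Z)\,\nu_s(\mathrm{d}x)\,\mathrm{d}s + \frac12\int_0^t\int_{\R^d}\mathrm{tr}\!\big(vv^{\top}(s,Z,x)\nabla^2 f(x)\big)\nu_s(\mathrm{d}x)\,\mathrm{d}s.
\]
Thus $\mu$ and $\nu$ solve the \emph{same} linear Fokker–Planck equation, with the same frozen (random but $\sigma(Z)$-measurable) drift and diffusion, and the same initial datum $\nu$. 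Because $v$ is Lipschitz in $x$ and satisfies the non-degeneracy bound $m \mathrm{I}_d \le vv^{\top}\le M\mathrm{I}_d$, and $\widehat b$ inherits $L^p$-integrability in $s$ from $b_{\cdot}$, one may invoke a standard weak uniqueness result for linear Fokker–Planck equations (e.g.\ Trevisan's superposition principle combined with weak uniqueness of the associated martingale problem, which holds under the present non-degenerate Lipschitz hypothesis on $v$) to conclude $\mu_t=\nu_t$ $\P$--a.e., for all $t\in[0,T]$. The main obstacle is precisely this FPK-uniqueness step: it is the only place where the non-degeneracy of $v$ is truly used, and one has to verify that $\widehat b$ is sufficiently integrable — in all applications in the paper $b_{\cdot}$ is bounded by Assumption~2.1, so $\widehat b$ is bounded too, and standard results apply.

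Finally, for the last identity $\Lc(\widehat X_t|Z)=\Lc(\widehat X_t|Z_{t\wedge\cdot},\mu_{t\wedge\cdot})$, I would observe that the drift $\widehat b(s,\cdot,\mu,Z)$ and diffusion $v(s,Z,\cdot)$ in \eqref{Eq:appendix_projection} are progressively measurable in $(\mu,Z)$, so by weak (in fact strong) solvability there exists a Borel functional $F_t$ such that $\widehat X_t = F_t(\iota,W_{t\wedge\cdot},Z_{t\wedge\cdot},\mu_{t\wedge\cdot})$. Since $(\iota,W)$ is independent of $Z$ and $\mu_{t\wedge\cdot}$ is $\sigma(Z)$-measurable, Fubini gives
\[
\Lc\big(\widehat X_t\,\big|\,Z\big)(\omega) = \Lc\big(F_t(\iota,W_{t\wedge\cdot},z,\pi)\big)\Big|_{z=Z_{t\wedge\cdot}(\omega),\ \pi=\mu_{t\wedge\cdot}(\omega)} = \Lc\big(\widehat X_t\,\big|\,Z_{t\wedge\cdot},\mu_{t\wedge\cdot}\big)(\omega),
\]
which closes the argument.
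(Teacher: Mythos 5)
Your proof is correct and follows essentially the same Markovian-projection route as the paper: derive the conditional Fokker--Planck equation for $\mu$ given $Z$ via It\^o and independence, use the tower property to recast the drift as $\widehat b$, and match it against the conditional law of $\widehat X$. The paper leaves the FPK-uniqueness step and the reduction of the conditioning $\sigma$-algebra implicit (it simply writes ``this is enough''), whereas you spell them out explicitly; this is a faithful elaboration rather than a different argument.
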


\begin{proof}
    For simplification we only treat the case $d=1$. Since $Z$ is independent of $(W,\iota)$, 
    by applying It\^o Formula and taking the conditional expectation w.r.t. $\sigma\{Z\}$, we check that: for any twice differentiable bounded map $\varphi:\R \to \R$, and $s \le t$,
    \begin{align*}
        \langle \varphi, \mu_t \rangle -  \langle \varphi, \mu_s \rangle - \int_s^t \int_{\R} v^2 (t,Z,x)\varphi''(x) \mu_r(\mathrm{d}x) \mathrm{d}r
        =
        \int_s^t\int_{\R} \varphi'(x) \E[b_r|X_r=x,Z] \mu_r(\mathrm{d}x) \mathrm{d}r.
    \end{align*}
    Therefore, there exists a progressively Borel measurable map $H^\varphi: [0,T] \x C([0,T];\Pc(\R)) \x \M(\Sc) \to  \R$ s.t. 
    $$
        H^{\varphi}(t,\mu,Z)=\int_{\R} \varphi'(x) \E[b_t|X_t=x,Z] \mu_t(\mathrm{d}x),\;\;\mathrm{d}t \otimes \mathrm{d}\P\mbox{--a.e.}
    $$
    Consequently by conditioning w.r.t. $\sigma\{Z_{t \wedge \cdot}, \mu_{t \wedge \cdot} \}$, we obtain that $H^{\varphi}(t,\mu,Z)=\int_{\R} \varphi'(x) \E[b_t|X_t=x,\mu_{t \wedge \cdot},Z_{t \wedge \cdot}] \mu_t(\mathrm{d}x)$, $\mathrm{d}t \otimes \mathrm{d}\P$--a.e. This is true for any $\varphi$. This is enough to see that the process $\widehat{X}$ defined in \eqref{Eq:appendix_projection} satisfies the statement of the Lemma. We can conclude.
\end{proof}

\medskip
Now, we consider that the random variable $Z$ is independent of $(W^i,\iota^i)_{ i \ge 1}$ s.t. $\E[\Delta(Z,z_0)^{p'}]< \infty$ for some $z_0 \in \Sc$. For each $i \ge 1$, $X^{i,n}$ is the $\R^d$--valued $\F$--adapted process satisfying: $X^{i,n}_0=\iota^i$,
\begin{align*}
    \mathrm{d}X^{i,n}_t
    =
    b^{i,n}_t \mathrm{d}t + v(t,Z,X^{i,n}_t)\mathrm{d}W^i_t
\end{align*}
where for each $n \ge 1$, $(b^{i,n})_{ 1 \le i \le n}$ is a sequence of $\R^d$--valued $\F$--predictable processes, and the following condition is satisfies: 
\begin{align} \label{eq:unifor_bound_b}
    \sup_{n \ge 1} \frac{1}{n} \sum_{i=1}^n \E \left[\int_0^T |b^{i,n}_t|^{p'} \mathrm{d}t\right] < \infty.
\end{align}
Let $(\phi^n)_{n \ge 1}$ be a sequence of Borel map verifying $\phi^n:\R^d \to \Vc$ where $\Vc$ is some Polish space and
\begin{align*}
    \lim_{n \to \infty}\delta_{\phi^n(x)}(\mathrm{d}v)\Gr(x)\mathrm{d}x
    =
    \Rr(x)(\mathrm{d}v)\Gr(x)\mathrm{d}x\;\mbox{for the weak convergence}
\end{align*}
where $\Rr$ is a kernel i.e. $\R^d \ni x \mapsto \Rr(x) \in \Pc(\Vc)$ is a Borel map and $\Gr$ is the density $\Gr(x):=\frac{1}{1+|x|^p}\left(\int_{\R^d} \frac{1}{1+|y|^p} \mathrm{d}y \right)^{-1}$. Recall that $1 <p < p'$. We define the sequence $(\Pr^n)_{n \ge 1} \subset \Pc \left(\Pc(\Vc \x \R^d) \x \Cc_\Wc \x \Sc \right)$ by
\begin{align*}
    \Pr^n
    =
    \P \circ \left( \delta_{\phi^n(x)}(\mathrm{d}v) \varphi^n_T(\mathrm{d}x),\;\; \varphi^n,\;Z \right)^{-1}\;\;\mbox{where}\;\;\varphi^n:=(\varphi^n_t)_{t \in [0,T]}:=\left(\frac{1}{n} \sum_{i=1}^n \delta_{X^i_t}(\mathrm{d}x) \right)_{t \in [0,T]}.
\end{align*}

\begin{proposition} \label{prop:charac-convergence}
    The sequence $(\Pr^n)_{n \ge 1}$ is relatively compact in $\Wc_p$. In addition, each limit point $\Pr=\P \left( \kappa,\mu, Z\right)^{-1}$ verifies: $\P$--a.e.
    \begin{align*}
        \Rr(x)(\mathrm{d}v)\mu_T(\mathrm{d}x)=\kappa(\mathrm{d}v,\mathrm{d}x).
    \end{align*}
\end{proposition}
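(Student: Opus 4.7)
The plan splits into two parts: relative compactness of $(\Pr^n)$ and identification of the limit.

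For relative compactness, I would apply BDG and Gronwall to the SDE for each $X^{i,n}$, using boundedness of $v$, $\iota^i\sim\nu\in\Pc_{p'}(\R^d)$, and the hypothesis $\sup_n n^{-1}\sum_i\E[\int_0^T|b^{i,n}_t|^{p'}\,\mathrm{d}t]<\infty$, to obtain
\[
    \sup_{n\ge 1}\frac{1}{n}\sum_{i=1}^n\E\Big[\sup_{t\in[0,T]}|X^{i,n}_t|^{p'}\Big]<\infty.
\]
Standard path-increment estimates then yield tightness of $\varphi^n$ in $(\Cc_{\Wc},\Wc_p)$ with uniform $p'$-moments of $\varphi^n_T$; the moment condition $\E[\Delta(Z,z_0)^{p'}]<\infty$ handles the $\Sc$-component; and the $\Pc(\Vc\times\R^d)$-component inherits tightness of its $\R^d$-marginal from $\varphi^n_T$, while tightness of its $\Vc$-marginal follows from the hypothesis $\delta_{\phi^n(x)}\Gr(x)\mathrm{d}x\to\Rr(x)\Gr(x)\mathrm{d}x$ (weakly convergent, hence tight), combined with the density transfer described below. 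Together, $(\Pr^n)$ is relatively compact in $\Wc_p$.

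For identification, take a convergent subsequence $\Pr^{n_k}\to\Pr=\P\circ(\kappa,\mu,Z)^{-1}$ in $\Wc_p$ and, via Skorokhod's representation, assume $\P$-a.s.\ convergence. The $\R^d$-marginal of $\delta_{\phi^n(x)}\varphi^n_T$ is $\varphi^n_T\to\mu_T$ in $\Wc_p$ $\P$-a.s., so the $\R^d$-marginal of $\kappa$ equals $\mu_T$, and $\kappa$ disintegrates as $\kappa(\mathrm{d}v,\mathrm{d}x)=K(x)(\mathrm{d}v)\,\mu_T(\mathrm{d}x)$ for some $\P$-a.s.\ Borel kernel $K$. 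By a separability argument, showing $K=\Rr$ $\mu_T$-a.s., $\P$-a.s., reduces to verifying, for countably many bounded continuous $g:\Vc\to\R$, $h:\R^d\to\R$, and $H:\Cc_{\Wc}\times\Sc\to\R$, the identity
\[
    \lim_{k\to\infty}\E\Big[\int\bigl(g(\phi^{n_k}(x))-\bar g(x)\bigr)h(x)\,\varphi^{n_k}_T(\mathrm{d}x)\,H(\varphi^{n_k},Z)\Big]=0,
\]
together with $\int\bar g\,h\,\mathrm{d}\varphi^{n_k}_T\to\int\bar g\,h\,\mathrm{d}\mu_T$ in probability, where $\bar g(x):=\int g(v)\,\Rr(x)(\mathrm{d}v)$.

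The main obstacle is these two convergences, since $\bar g$ is only Borel and weak convergence against continuous test functions is insufficient. The plan is to exploit the non-degeneracy $\inf vv^\top>0$ together with the $L^{p'}$-drift control to apply a Krylov-type estimate: the averaged law $\bar m^n_T:=n^{-1}\sum_i\Lc(X^{i,n}_T)$ admits a density $\bar p^n_T$ uniformly controlled in some $L^q_{\mathrm{loc}}(\R^d)$, and $(\bar p^n_T)_n$ is relatively compact in $L^1_{\mathrm{loc}}(\R^d)$. A smooth cutoff $\chi_R$ handles tails via $\bar m^n_T(\{|x|>R\})\to 0$ uniformly in $n$ from the $p'$-moment bound; on $\{|x|\le 2R\}$, where $\Gr\ge c_R>0$, the relevant integrals can be rewritten against $\Gr(x)\mathrm{d}x$. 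The hypothesis $\delta_{\phi^n(x)}\Gr(x)\mathrm{d}x\to\Rr(x)\Gr(x)\mathrm{d}x$ translates exactly into weak-$*$ convergence of $g\circ\phi^n-\bar g$ to $0$ in $L^\infty(\Gr\mathrm{d}x)$ tested against $L^1(\Gr\mathrm{d}x)$; combined with the $L^1(\Gr\mathrm{d}x)$-compactness of $h\chi_R\bar p^{n_k}_T/\Gr$ (from the Krylov bound), this closes the first identity. A Lusin-type approximation of $\bar g$ by continuous functions, again controlled uniformly via $(\bar p^n_T)$, handles the second. Letting $R\to\infty$ concludes the identification $\kappa(\mathrm{d}v,\mathrm{d}x)=\Rr(x)(\mathrm{d}v)\mu_T(\mathrm{d}x)$ $\P$-a.s.
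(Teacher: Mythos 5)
Your plan has two genuine gaps, both in the identification step. The first concerns compactness of the averaged density $\bar p^n_T$. A Krylov-type estimate gives a uniform bound $\sup_n \|\bar p^n_T\|_{L^q(K)} < \infty$ for some $q>1$ and any compact $K$; by de la Vall\'ee-Poussin/Dunford--Pettis this is uniform integrability, hence only \emph{weak} $L^1_{\mathrm{loc}}$ compactness. But the key limit pairs $\varepsilon_{n_k}:=g\circ\phi^{n_k}-\bar g$, which converges to $0$ only weak-$*$ in $L^\infty(\Gr\,\mathrm{d}x)$ (the $\phi^n$ oscillate and have no pointwise limit), against $h\chi_R\bar p^{n_k}_T/\Gr$. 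A weak $\times$ weak-$*$ pairing need not converge; you need \emph{strong} convergence on the density side, which requires equicontinuity and therefore a modulus of continuity, not just an $L^q$ bound. This is precisely why the paper invokes Aronson--Serrin H\"older estimates for the conditional densities $f^n_{\ib}(z)(T,\cdot)$, uniform in $n$ and in $z$, and then Ascoli--Arzel\`a gives strong locally uniform compactness. The boundedness, Lipschitz continuity and uniform ellipticity of $v$ plus the $L^{p'}$ drift control make the Aronson machinery available; you should replace the Krylov $L^q$ bound by that.

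The second gap is structural: your identity is linear in $\kappa$. Passing to the limit yields $\E[\langle g\otimes h,\kappa\rangle\,H(\mu,Z)]=\E[\langle g\otimes h,\Rr\mu_T\rangle\,H(\mu,Z)]$ for all $g,h,H$, i.e.\ $\E[\kappa\mid\sigma\{\mu,Z\}]=\Rr\mu_T$. Since $\kappa$ is not a priori $\sigma\{\mu,Z\}$-measurable, this is strictly weaker than the pathwise identity $\kappa=\Rr\mu_T$ the proposition asserts. The paper instead tests with arbitrary products $\prod_k\langle G^k,\mu_T\rangle\prod_q\int F^q\,\mathrm{d}\kappa$, obtaining all joint moments and hence $\Lc(\mu_T,\kappa)=\Lc(\mu_T,\Rr\mu_T)$; because $\Rr\mu_T$ is a Borel function of $\mu_T$, this distributional equality upgrades to the a.s.\ one via $\E\big[\,|\langle h,\kappa\rangle-\langle h,\Rr\mu_T\rangle|\,\big]=0$. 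To run your version you would need, for each $Q\ge 1$, uniform H\"older control of the $Q$-point conditional densities $\Lc(X^{i_1,n}_T,\dots,X^{i_Q,n}_T\mid Z)$ along distinct index tuples (the non-distinct ones contribute $O(1/n)$), which is exactly what the paper's proof does.
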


\begin{proof}
    By using condition \eqref{eq:unifor_bound_b} and $\E[\Delta(Z,z_0)^{p'}]< \infty$, the relative compactness of $(\Pr^n)_{n \ge 1}$ follows from classical techniques. We take $\Pr=\P \left( \kappa,\mu, Z\right)^{-1}$ the limit of a sub--sequence. For simplicity, we use the same notation for the sequence and its sub--sequence. Let us show the second point. For this purpose, we first show that: for any bounded continuous $(F^q)_{1 \le q \le Q}$ and $(G^k)_{1 \le k \le K}$, we have
    \begin{align} \label{eq:test_func}
        \E\left[ \prod_{k=1}^{K} \langle G^k, \mu_T \rangle \prod_{q=1}^Q \int_{\Vc \x \R^d} F^q(v,x)\kappa(\mathrm{d}v,\mathrm{d}x)  \right]
        =
        \E\left[ \prod_{k=1}^{K} \langle G^k, \mu_T \rangle \prod_{q=1}^Q \int_{\Vc \x \R^d} F^q(v,x)\Rr(x)(\mathrm{d}v)\mu_T(\mathrm{d}x)  \right].
    \end{align}
    We only prove this result for $Q=K=2$. The generalization is straightforward from this case. Notice that, for each $n \ge 1$,
    \begin{align*}
        &\E\left[ \prod_{k=1}^{2} \langle G^k, \varphi^n_T \rangle \prod_{q=1}^2 \frac{1}{n} \sum_{i=1}^n F^q\left(\phi^n(X^{i,n}_T),X^{i,n}_T \right)  \right]
        \\
        &=\frac{1}{n^4} \sum_{i_1,i_2,i_3,i_4} \E 
        \left[
        \int_{\R^4} G^1(x^1)G^2(x^2) F^1(\phi^n(x^3),x^3) F^2(\phi^n(x^4),x^4) \Lc\left(X^{i_1}_T,X^{i_2}_T, X^{i_3}_T, X^{i_4}_T |Z\right)(\mathrm{d}x^1,\mathrm{d}x^2,\mathrm{d}x^3,\mathrm{d}x^4)
        \right].
    \end{align*}
    When the indexes $\boldsymbol{i}:=(i_1,i_2,i_3,i_4)$ are all distinct, since $m \Ir_{d \x d} \le v v^\top \le M \Ir_{d \x d}$ and $Z$ is independent of $(W^i,\iota^i)_{i \ge 1}$, for any $z \in \Sc$, the probability $\Lc\left(X^{i_1}_T,X^{i_2}_T, X^{i_3}_T, X^{i_4}_T |Z=z\right)$ has a density $f^{n}_{\boldsymbol{i}}(z)(T,\xb)$  w.r.t. the Lebesgue measure, we noted $\xb:=(x^1,x^2,x^3,x^4)$. We can choose $f^n_{\ib}$ s.t. $[0,T] \x \Sc \x \R^4 \ni (t,z,\xb) \mapsto f^n_{\ib}(z)(t,\xb) \in \R$ is Borel measurable.  By \cite[Proposition A.1.]{closed-loop-MFG_MDF} (see also \cite[Theorem 4]{AronsonSerrin67} and \cite[Theorem 6.2.7]{FK-PL-equations}), for each compact $\Gamma \subset \R^4$, there exists $\delta \in (0,1)$ (independent of $z$) s.t.
    \begin{align*}
        \sup_{n \ge 1}\; \sup_{z \in \Sc} \sup_{\ib \mbox{ distinct}}\sup_{\xb \neq \xb', (\xb,\xb') \in \Gamma \x \Gamma} \frac{|f^n_{\ib}(z)(T,\xb)-f^n_{\ib}(z)(T,\xb')|}{|\xb-\xb'|^{\delta}} < \infty.
    \end{align*}
    We can then conclude, for any bounded map $h$, the sequence of maps 
    $$
        \left(\R^4 \ni \xb \mapsto \frac{1}{n^4} \sum_{\ib \mbox{ distinct}}\E\left[ h(Z) f^n_{\ib}(Z)(T,\xb) \right] \in \R \right)_{n \ge 1}
    $$
    is relatively compact in the set of continuous functions for the locally uniform topology. Using the weak convergence, we can show that the whole sequence converges and that the limit is $P(\xb)$ that verifies: for any continuous map $C$
    \begin{align*}
        \int_{\R^4}C(\xb)P(\xb) \mathrm{d}\xb
        =
        \E\left[ \int_{\R^4} C(\xb)h(Z) \mu_T(\mathrm{d}x^1) \mu_T(\mathrm{d}x^2)\mu_T(\mathrm{d}x^3) \mu_T(\mathrm{d}x^4) \right].
    \end{align*}
    Consequently, we can see that
    \begin{align} \label{eq:uniform_cong}
        \lim_{n \to \infty}\sup_{\xb \in \Gamma} \left| \frac{1}{n^4} \sum_{\ib \mbox{ distinct}}\E\left[ h(Z) f^n(Z)(T,\xb) \right]
        -
        \E\left[ h(Z) f(Z)(T,\xb) \right] \right|\;\mbox{for any compact }\Gamma \subset \R^d 
    \end{align}
    $\mbox{where}\;f(Z)(T,\xb)\;\mbox{is the density of}\;\E\left[ \mu_T(\mathrm{d}x^1) \mu_T(\mathrm{d}x^2)\mu_T(\mathrm{d}x^3) \mu_T(\mathrm{d}x^4)|Z \right] \in \Pc(\R^4)$. We can then rewrite 
    \begin{align*}
        &=
        \frac{1}{n^4} \sum_{\ib\;\mbox{distincts}} \E 
        \left[
        \int_{\R^4} G^1(x^1)G^2(x^2) F^1(\phi^n(x^3),x^3)F^2(\phi^n(x^4),x^4) \Lc\left(X^{i_1}_T,X^{i_2}_T, X^{i_3}_T, X^{i_4}_T |Z\right)(\mathrm{d}x^1,\mathrm{d}x^2,\mathrm{d}x^3,\mathrm{d}x^4)
        \right]
        \\
        &=
        \int_{\R^4} G^1(x^1)G^2(x^2)\Gr^{-1}(x^3)  \Gr^{-1}(x^4)\frac{1}{n^4} \sum_{\ib\;\mbox{distinct}}\E 
        \left[f^n_{\ib}(Z)(T,\xb))\right] \mathrm{d}x^1\mathrm{d}x^2\prod_{q=3}^4 \left(\int_{A}F^{q-2}(a,x^q) \delta_{\phi^n(x^q)}(\mathrm{d}a)\Gr(x^q)\mathrm{d}x^{q} \right).
    \end{align*}
    By combining the locally uniform convergence of the sequence in \eqref{eq:uniform_cong}, the weak convergence and the fact that $\frac{1}{n} \sum_{i=1}^n \E \left[ \sup_{t \in [0,T]} |X^{i,n}_t|^{p'} \right] < \infty$, we obtain 
    \begin{align*}
        &\lim_{n \to \infty}\int_{\R^4} G^1(x^1)G^2(x^2)\Gr^{-1}(x^3)  \Gr^{-1}(x^4)\frac{1}{n^4} \sum_{\ib\;\mbox{distinct}}\E 
        \left[f^n_{\ib}(Z)(T,\xb))\right] \mathrm{d}x^1\mathrm{d}x^2\prod_{q=3}^4 \int_{A}F^{q-2}(a,x^q) \delta_{\phi^n(x^q)}(\mathrm{d}a)\Gr(x^q)\mathrm{d}x^{q}
        \\
        &=
        \int_{\R^4} G^1(x^1)G^2(x^2)\Gr^{-1}(x^3)  \Gr^{-1}(x^4)\E 
        \left[f(Z)(T,\xb))\right] \mathrm{d}x^1\mathrm{d}x^2\prod_{q=3}^4 \int_{A}F^{q-2}(a,x^q) \Rr(x^q)(\mathrm{d}a)\Gr(x^q)\mathrm{d}x^{q}
        \\
        &=
        \int_{\R^4} G^1(x^1)G^2(x^2)\E 
        \left[f(Z)(T,\xb))\right] \mathrm{d}x^1\mathrm{d}x^2\prod_{q=3}^4 \int_{A}F^{q-2}(a,x^q) \Rr(x^q)(\mathrm{d}a)
        \mathrm{d}x^{q}
        \\
        &=
        \E\left[\int_{\R^4} G^1(x^1)G^2(x^2) \int_{A}F^{3}(a,x^3) \Rr(x^3)(\mathrm{d}a) \int_{A}F^{4}(a,x^4) \Rr(x^4)(\mathrm{d}a) \mu_T(\mathrm{d}x^1) \mu_T(\mathrm{d}x^2)\mu_T(\mathrm{d}x^3) \mu_T(\mathrm{d}x^4)   \right].
    \end{align*}
    By combining our results and the fact that $(G^1,G^2,F^1,F^2)$ are bounded, we get
    \begin{align*}
        &\E\left[ \prod_{k=1}^{2} \langle G^k, \mu_T \rangle \prod_{q=1}^2 \int_{\Ar \x \R^d} F^q(a,x)\kappa(\mathrm{d}a,\mathrm{d}x)  \right]
        \\
        &=\lim_{n \to \infty}\E\left[ \prod_{k=1}^{2} \langle G^k, \varphi^n_T \rangle \prod_{q=1}^2 \frac{1}{n} \sum_{i=1}^n F^q\left(\phi^n(X^{i,n}_T),X^{i,n}_T \right)  \right]
        \\
        &=\lim_{n \to \infty}\frac{1}{n^4} \sum_{i_1,i_2,i_3,i_4} \E 
        \left[
        \int_{\R^4} G^1(x^1)G^2(x^2) F^1(\phi^n(x^3),x^3) F^2(\phi^n(x^4),x^4) \Lc\left(X^{i_1}_T,X^{i_2}_T, X^{i_3}_T, X^{i_4}_T |Z\right)(\mathrm{d}x^1,\mathrm{d}x^2,\mathrm{d}x^3,\mathrm{d}x^4)
        \right]
        \\
        &=\lim_{n \to \infty}\frac{1}{n^4} \sum_{\ib\;\mbox{distinct}} \E 
        \left[
        \int_{\R^4} G^1(x^1)G^2(x^2) F^1(\phi^n(x^3),x^3) F^2(\phi^n(x^4),x^4) \Lc\left(X^{i_1}_T,X^{i_2}_T, X^{i_3}_T, X^{i_4}_T |Z\right)(\mathrm{d}x^1,\mathrm{d}x^2,\mathrm{d}x^3,\mathrm{d}x^4)
        \right]
        \\
        &=
        \E\left[\int_{\R^4} G^1(x^1)G^2(x^2) \int_{A}F^{3}(a,x^3) \Rr(x^3)(\mathrm{d}a) \int_{A}F^{4}(a,x^4) \Rr(x^4)(\mathrm{d}a) \mu_T(\mathrm{d}x^1) \mu_T(\mathrm{d}x^2)\mu_T(\mathrm{d}x^3) \mu_T(\mathrm{d}x^4)   \right]
        \\
        &=
        \E\left[ \prod_{k=1}^{2} \langle G^k, \mu_T \rangle \prod_{q=1}^2 \int_{\Ar \x \R^d} F^q(a,x)\Rr(x)(\mathrm{d}a)\mu_T(\mathrm{d}x)  \right].
    \end{align*}
    We conclude the result in \eqref{eq:test_func}. Since \eqref{eq:test_func} is true for any $(F^q)_{1 \le q \le Q}$ and $(G^k)_{1 \le k \le K}$, by \cite[Proposition A.3.]{djete2019general}, we can say that $\Lc^{\P}(\mu_T, \kappa)=\Lc^{\P}\left(\mu_T, \Rr(x)(\mathrm{d}v)\mu_T(\mathrm{d}x) \right)$. This leads to $\kappa=\Rr(x)(\mathrm{d}v)\mu_T(\mathrm{d}x)$, $\P$--a.e. Indeed, for a continuous bounded map $h$, by the equality in distribution we have
    \begin{align*}
        \E^{\P} \left[ \left| \langle h, \kappa \rangle - \int_{\Vc \x \R^d } h(v,x) \Rr(x)(\mathrm{d}v)\mu_T(\mathrm{d}x) \right| \right]
        =
        0.
    \end{align*}
\end{proof}

Let us consider a closed convex set $(U,\Delta^U)$, a Polish space $(\Zc,\Delta^{\Zc})$ and a Borel measurable map $(f,h):\Zc \x U \to \R^d \x \R$. We assume that for any $\eta \in \Zc$, the set 
$$
    \left\{ \left(f(\eta,u), z \right):\;z \le h(\eta,u),\;\;u \in U\right\}
$$
is a closed convex set.

\begin{proposition} \label{prop:projection}
    There exists a universally measurable map $\Uc: \Zc \x \R^d \x \R \to U$ s.t. for any $q \in \Pc(U)$, we have 
    \begin{align*}
        \int_{U} f(\eta,u')q(\mathrm{d}u')
        =
        f \left(\eta, \;\Uc \left(\eta,\; \int_{U} f(\eta,u')q(\mathrm{d}u'),\;\int_{U} h(\eta,u')q(\mathrm{d}u') \right) \right)
    \end{align*}
    and
    \begin{align*}
        \int_{U} h(\eta,u')q(\mathrm{d}u')
        \le
        h \left(\eta, \;\Uc \left(\eta,\; \int_{U} f(\eta,u')q(\mathrm{d}u'),\;\int_{U} h(\eta,u')q(\mathrm{d}u') \right) \right).
    \end{align*}
\end{proposition}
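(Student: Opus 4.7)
\textbf{Proof proposal for Proposition \ref{prop:projection}.}

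The proof is a Filippov--Roxin style argument combined with a Jankov--von Neumann measurable selection. I will proceed in two main steps.

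\emph{Step 1 (pointwise existence).} Fix $\eta \in \Zc$ and denote $C(\eta) := \{(f(\eta,u), z) \in \R^d \x \R : z \le h(\eta,u),\; u \in U\}$, which is closed and convex by hypothesis. For each $q \in \Pc(U)$ with $\int_U (|f(\eta,u')| + |h(\eta,u')|)\,q(\mathrm{d}u') < \infty$ (the only case of interest, as this holds under the linear-growth hypotheses from \Cref{assum:main1} whenever the proposition is invoked in the paper), I claim that
\[
\Bigl(\int_U f(\eta,u')\,q(\mathrm{d}u'),\; \int_U h(\eta,u')\,q(\mathrm{d}u')\Bigr) \in C(\eta).
\]
Since $f$ and $h$ are only Borel, I prove this by Hahn--Banach separation rather than by weak approximation: if the integral were outside $C(\eta)$, there would exist $(\alpha,\beta) \in \R^d \x \R$ and $c \in \R$ with $\alpha \cdot \int f\,\mathrm{d}q + \beta \int h\,\mathrm{d}q > c \ge \alpha \cdot y + \beta z$ for every $(y,z) \in C(\eta)$. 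Since $C(\eta)$ is unbounded downward in the $z$-coordinate, necessarily $\beta \ge 0$; choosing $z = h(\eta,u)$ yields $\alpha \cdot f(\eta,u) + \beta h(\eta,u) \le c$ for all $u \in U$. Integrating against $q$ produces a contradiction. Hence $(\int f\,\mathrm{d}q, \int h\,\mathrm{d}q) \in C(\eta)$, which by the very definition of $C(\eta)$ means there exists $u^\star \in U$ with $f(\eta,u^\star) = \int f\,\mathrm{d}q$ and $h(\eta,u^\star) \ge \int h\,\mathrm{d}q$.

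\emph{Step 2 (measurable selection).} Consider the set
\[
\Gamma := \bigl\{(\eta, v, w, u) \in \Zc \x \R^d \x \R \x U : f(\eta,u) = v,\; h(\eta,u) \ge w\bigr\}.
\]
Because $f$ and $h$ are Borel measurable and $U$ is Polish, $\Gamma$ is a Borel subset of $\Zc \x \R^d \x \R \x U$. By Step 1, every triple of the form $(\eta, \int f\,\mathrm{d}q, \int h\,\mathrm{d}q)$ belongs to the projection of $\Gamma$ onto $\Zc \x \R^d \x \R$. Applying the Jankov--von Neumann selection theorem to $\Gamma$ yields a universally measurable map $\Uc_0$ from this projection into $U$ whose graph lies in $\Gamma$; extending $\Uc_0$ by an arbitrary fixed element $u_0 \in U$ off the projection produces a universally measurable $\Uc : \Zc \x \R^d \x \R \to U$. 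By construction, for every $q \in \Pc(U)$, $\Uc(\eta, \int f\,\mathrm{d}q, \int h\,\mathrm{d}q)$ satisfies both required relations.

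\emph{Expected main obstacle.} The delicate step is Step 1: unlike the classical Filippov--Roxin setting where $f,h$ are continuous and one argues by weak approximation of $q$ by discrete measures, here $f,h$ are only Borel, which forces the Hahn--Banach route and in turn requires care about integrability (to apply Fubini/separation) and about the sign of $\beta$ (forced by the downward unboundedness of $C(\eta)$ in its last coordinate). Step 2 is then essentially bookkeeping, with the only subtle point being the verification that $\Gamma$ is Borel; this is immediate here because $U$ is Polish and $(f,h)$ are jointly Borel.
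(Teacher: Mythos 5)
Your proof is correct and takes essentially the same route as the paper's: Filippov--Roxin convexity plus a universally measurable selection on the Borel graph $\{(\eta,v,w,u): f(\eta,u)=v,\; w\le h(\eta,u)\}$ (the paper invokes a Jankov--von Neumann type selection theorem from El Karoui and Tan for this step). Your Step 1 via Hahn--Banach separation is a genuine and welcome supplement: the paper simply asserts, without proof, that $\bigl(\int f\,\mathrm{d}q,\int h\,\mathrm{d}q\bigr)$ lies in the projection of the graph, whereas you supply the actual argument, correctly noting that the downward unboundedness of the convex set forces the sign of the vertical component of the separating functional.
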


\begin{proof}
    We apply the measurable selection Theorem of \cite[Theorem 2.20.]{karoui2013capacities}. Indeed, let us first observe that the set $A:=\{ (\eta,y,z,u):\;f(\eta,u)=y,\;z \le h(\eta,u) \}$ is a Borel set. Then, by \cite[Theorem 2.20.]{karoui2013capacities}, there exists a universally measurable map $\Uc: \Zc \x \R^d \x \R \to U$ s.t. 
    \begin{align*}
        \left\{
            \left(\eta,y,z, \Uc(\eta,y,z) \right)
        \right\}
        \subset
        A
    \end{align*}
    and
    \begin{align*}
        \Pi_{\Zc \x \R^d \x \R}(A)
        :=
        \left\{ (\eta,y,z):\;\;\exists u \in U\;\mbox{s.t.}\;f(\eta,u)=y,\;z \le h(\eta,u) 
        \right\}
        =
        \left\{
            (\eta,y,z):\;\;\Uc(\eta,y,z) \in U
        \right\}.
    \end{align*}
    Given the assumption verified by $(f,h)$, for any $q \in \Pc(U)$, $\left( \eta,\;\int_U f(\eta,u')q(\mathrm{d}u'), \int_U h(\eta,u')q(\mathrm{d}u') \right)$ belongs to  $\Pi_{\Zc \x \R^d \x \R}(A)$. This means that 
    \begin{align*}
        \int_{U} f(\eta,u')q(\mathrm{d}u')
        =
        f \left(\eta, \;\Uc \left(\eta,\; \int_{U} f(\eta,u')q(\mathrm{d}u'),\;\int_{U} h(\eta,u')q(\mathrm{d}u') \right) \right)
    \end{align*}
    and
    \begin{align*}
        \int_{U} h(\eta,u')q(\mathrm{d}u')
        \le
        h \left(\eta, \;\Uc \left(\eta,\; \int_{U} f(\eta,u')q(\mathrm{d}u'),\;\int_{U} h(\eta,u')q(\mathrm{d}u') \right) \right).
    \end{align*}
\end{proof}

\end{appendix}

\end{document}